\newtheorem{thm}{Theorem}[section]
\newtheorem{cor}{Corollary}[section]
\newtheorem{lemma}{Lemma}[section]
\theoremstyle{definition}
\newtheorem{defn}{Definition}[section]
\newtheorem{conj}{Conjecture}[section]
\theoremstyle{remark}
\newtheorem{remark}{Remark}[section]
\numberwithin{equation}{section}
\def\cO{{\cal O}}
\def\cA{{\cal A}}
\def\ra{\rightarrow}
\def\bra{\langle}
\def\ket{\rangle}
\def\cA{{\mathcal A}}
\def\cB{{\mathcal B}}
\def\cC{{\mathcal C}}
\def\cD{{\mathcal D}}
\def\cE{{\mathcal E}}
\def\cF{{\mathcal F}}
\def\cH{{\mathcal H}}
\def\cI{{\mathcal I}}
\def\cJ{{\mathcal J}}
\def\cM{{\mathcal M}}
\def\cN{{\mathcal N}}
\def\cO{{\mathcal O}}
\def\cR{{\mathcal R}}
\def\cS{{\mathcal S}}
\def\cV{{\mathcal V}}
\def\cW{{\mathcal W}}
\def\cZ{{\mathcal Z}}
\def\gg{{\mathfrak g}}
\def\gl{{\mathfrak l}}
\def\go{{\mathfrak o}}
\def\gp{{\mathfrak p}}
\def\gs{{\mathfrak s}}
\DeclareMathOperator{\Hom}{Hom}
\DeclareMathOperator{\sgn}{sgn}
\newfont{\german}{eufm10}
\begin{document}
\pagestyle{plain}

\title
{Cosets of free field algebras via arc spaces}

\author{Andrew R. Linshaw}
\address{Department of Mathematics, University of Denver, C. M. Knudson Hall, 2390 S. York St. Denver, CO 80210}
\email{andrew.linshaw@du.edu}
\thanks{A. Linshaw is supported by Simons Foundation Grant \#635650 and NSF Grant DMS-2001484. He thanks T. Creutzig for helpful discussions and for suggesting the duality appearing in Theorem \ref{thm:levelranktypeB}.}

\author{Bailin Song}
\address{School of Mathematical Sciences, University of Science and Technology of China, Jinzhai Road 96, Hefei 230026, Anhui, P. R. China}
\email{bailinso@ustc.edu.cn}
\thanks{B. Song is supported by NSFC No. 12171447}


{\abstract \noindent Using the invariant theory of arc spaces, we find minimal strong generating sets for certain cosets of affine vertex algebras inside free field algebras that are related to classical Howe duality. These results have several applications. First, for any vertex algebra $\cV$, we have a surjective homomorphism of differential algebras $\mathbb{C}[J_{\infty}(X_{\cV})] \rightarrow \text{gr}^F(\cV)$; equivalently, the singular support of $\cV$ is a closed subscheme of the arc space of the associated scheme $X_{\cV}$. We give many new examples of classically free vertex algebras (i.e., this map is an isomorphism), including $L_k(\gs\gp_{2n})$ for all positive integers $n$ and $k$. We also give new examples where the kernel of this map is nontrivial but is finitely generated as a differential ideal. Next, we prove a coset realization of the subregular $\cW$-algebra of $\gs\gl_n$ at critical level that was previously conjectured by Creutzig, Gao, and the first author. Finally, we give some new level-rank dualities involving affine vertex superalgebras.}

\keywords{vertex algebra; coset construction; level-rank duality; arc space}
\maketitle
\section{Introduction}

\subsection{Invariant theory of arc spaces}
In a series of papers \cite{LS1,LS2,LS3}, we have proven the arc space analogues of the first and second fundamental theorems of invariant theory for the general linear, special linear, and symplectic groups over an arbitrary algebraically closed field $K$. We briefly recall these results. First, given an irreducible scheme $X$ of finite type over $K$, the arc space $J_\infty(X)$ is determined by its functor of points. For every $K$-algebra $A$, we have a bijection
$$\Hom(\text{Spec}\ A, J_\infty(X))\cong\Hom(\text{Spec}\ A[[t]], X).$$ If $i: X\to Y$ is a morphism of schemes, we get a morphism of schemes $i_{\infty}:J_{\infty}(X)\to J_{\infty}(Y)$. 

Given an algebraic group $G$ over $K$, $J_{\infty}(G)$ is again an algebraic group. If $V$ is a finite-dimensional $G$-module, there is an induced action of $J_{\infty}(G)$ on $J_{\infty}(V)$, and the invariant ring $K[J_{\infty}(V)]^{J_{\infty}(G)}$ was studied in our earlier paper \cite{LSS1} with Schwarz. The quotient morphism $V\to V/\!\!/G$ induces a morphism $J_\infty(V)\to J_\infty(V/\!\!/G)$, so we have a morphism \begin{equation} \label{equ:invariantmap} J_\infty(V)/\!\!/J_\infty(G)\to J_\infty(V/\!\!/G).\end{equation}
In particular, we have a ring homomorphism
\begin{equation} \label{equ:invariantringmap} K[J_{\infty}(V/\!\!/G)] \rightarrow K[J_{\infty}(V)]^{J_{\infty}(G)}.\end{equation} In the case $K = \mathbb{C}$, if $V/\!\!/G$ is smooth or a complete intersection, and $K[V]$ has no nontrivial one-dimensional $G$-invariant subspaces, it was shown in \cite{LSS1} that \eqref{equ:invariantringmap} is an isomorphism, although in general it is neither injective nor surjective. The following results were proven in \cite{LS1,LS2,LS3}.

\begin{thm}\label{arcspaceinvt}  \begin{enumerate}
\item For $n\geq 1$, let $G = GL_n(K)$ and let $W = K^{\oplus n}$ be the standard module. Let $V = W^{\oplus p} \oplus (W^*)^{\oplus q}$ be the sum of $p$ copies of $W$ and $q$ copies of the dual module $W^*$. Then for all $p,q$, \eqref{equ:invariantringmap} is an isomorphism.
\item For $n\geq 2$ an even integer, let $G = Sp_n(K)$ and let $W = K^{\oplus n}$ be the standard module. Let $V = W^{\oplus p}$ be the sum of $p$ copies of $W$. Then for all $p$, \eqref{equ:invariantringmap} is an isomorphism.
\item For $n\geq 2$, let $G = SL_n(K)$ and let $W = K^{\oplus n}$ be the standard module. Let $V = W^{\oplus p} \oplus (W^*)^{\oplus q}$ be the sum of $p$ copies of $W$ and $q$ copies of the dual module $W^*$. Then 
\begin{itemize}
\item[(i)] If $p,q \leq n+2$, \eqref{equ:invariantringmap} is an isomorphism.
\item[(ii)] If $\text{max}\{p,q\} > n+2$, \eqref{equ:invariantringmap} is surjective but not injective.  When $\text{char} \ K = 0$, its kernel coincides with the nilradical $\cN \subseteq K[J_{\infty}(V/\!\!/G)]$, and an explicit finite generating set for $\cN$ as a differential ideal is given by Corollary 4.4 of \cite{LS3}.
\end{itemize}
\end{enumerate}
\end{thm}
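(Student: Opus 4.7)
The plan is to leverage classical invariant theory (FFT and SFT) for each of $GL_n$, $Sp_n$, and $SL_n$, and lift the results to arc spaces. In each case, the classical FFT provides explicit polynomial generators for $K[V]^G$ --- contractions between copies of $W$ and $W^*$ for $GL_n$, symplectic pairings $\omega(w_i,w_j)$ for $Sp_n$, together with additional $n$-fold determinants for $SL_n$ --- and the SFT describes the ideal of relations (determinantal or Pfaffian relations of appropriate size). Correspondingly, $K[J_\infty(V/\!\!/G)]$ is generated as a differential algebra by these classical generators modulo the differential ideal of the classical relations, and \eqref{equ:invariantringmap} sends $\partial_t^k$ of a classical generator to the obvious $J_\infty(G)$-invariant on $J_\infty(V)$.

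The key step is to show that these derived invariants exhaust $K[J_\infty(V)]^{J_\infty(G)}$. I would exploit the natural $\mathbb{Z}_{\geq 0}$-grading on $K[J_\infty(V)]$ in which $\partial_t^k$ has weight $k$: the invariant ring decomposes into finite-dimensional graded pieces whose dimensions can be computed via characters of $G$-isotypic components of $\mathrm{Sym}(V^* \otimes K[t])$, combined with classical Cauchy/Littlewood identities. A Hilbert-series comparison then supplies surjectivity. When $V/\!\!/G$ is smooth or a complete intersection --- covering $GL_n$ and $Sp_n$ for all $p,q$, and $SL_n$ with $p,q \leq n+2$ --- the general result of \cite{LSS1} upgrades this to an isomorphism.

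The remaining case is $SL_n$ with $\max\{p,q\} > n+2$, where $V/\!\!/G$ fails to be a complete intersection and nilpotents appear in $K[J_\infty(V/\!\!/G)]$. Here the approach is to first compute $K[J_\infty(V)]^{J_\infty(SL_n)}$ using the decomposition of $SL_n$-invariants as $GL_n$-invariants extended by determinants of $n$-tuples (together with the already established $GL_n$ case), which yields surjectivity. One then identifies the kernel with the nilradical $\cN$ by producing explicit nilpotent elements obtained from $\partial_t$ applied to low-degree SFT relations in characteristic zero.

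The main obstacle lies precisely in this final case: producing a \emph{finite} differential generating set for $\cN$. A priori each application of $\partial_t$ to an SFT relation yields new relations, and one must isolate a small closed family that generates the rest as a differential ideal. This requires a delicate combinatorial argument tailored to the Young-diagram structure underlying the $SL_n$ SFT, together with a verification that the quotient of $K[J_\infty(V/\!\!/G)]$ by the differential ideal generated by the proposed list is reduced; this is the content of Corollary 4.4 of \cite{LS3}.
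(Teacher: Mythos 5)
This theorem is not proved in the present paper; it is quoted from \cite{LS1,LS2,LS3}, where the stated method is the construction of a standard monomial basis for $K[J_{\infty}(V)]^{J_{\infty}(G)}$ extending the classical standard monomial theory. Measured against that, your proposal has two genuine gaps.

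First, your surjectivity step rests on a Hilbert-series comparison computed ``via characters of $G$-isotypic components.'' This cannot work as stated: $J_{\infty}(G)$ is not reductive (the kernel of $J_{\infty}(G)\to G$ is pro-unipotent), so the graded dimensions of $K[J_{\infty}(V)]^{J_{\infty}(G)}$ are not given by trivial-isotypic multiplicities of $G$ on $\mathrm{Sym}(V^*\otimes K[t])$; a character/Molien computation only bounds the much larger space of $G$-invariants. The paper itself warns that \eqref{equ:invariantringmap} is in general neither injective nor surjective, so surjectivity cannot come from a soft counting argument. The cited works get around this precisely by exhibiting an explicit basis (standard monomials in the derived FFT generators) of the invariant ring and matching it against a basis of the target.

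Second, your reduction of parts (1) and (2) to the smooth/complete-intersection theorem of \cite{LSS1} is incorrect. For $GL_n$ the quotient $V/\!\!/G$ is the determinantal variety of $q\times p$ matrices of rank at most $n$, and for $Sp_n$ it is a Pfaffian variety; for large $p,q$ these are neither smooth nor complete intersections, so \cite{LSS1} does not apply. The entire content of (1) and (2) is that the map is an isomorphism \emph{anyway} in these non--complete-intersection cases, and this is exactly what the standard monomial machinery of \cite{LS1,LS2} is built to prove. Your treatment of the $SL_n$ case, including the deferral of the finite differential generation of the nilradical to Corollary 4.4 of \cite{LS3}, is in the right spirit, but it sits on top of the two unsupported steps above.
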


These results were proven by constructing a standard monomial basis for the invariant spaces which extends the standard monomial basis in the classical setting. In this paper, we present some applications of these results to vertex algebras, and throughout the paper we will assume that $K = \mathbb{C}$.

\subsection{Vertex algebra coset problem} Given a vertex algebra $\cA$ and a subalgebra $\cV \subseteq \cA$, the coset $\cC = \text{Com}(\cV, \cA)$ is the subalgebra of $\cA$ which commutes with $\cV$. If $\cV$ is a homomorphic image of an affine vertex algebra $V^k(\gg)$ for some Lie algebra $\gg$, $\cC$ is called an affine coset and it is just the invariant space $\cA^{\gg[t]}$. Many interesting vertex algebras can be realized as affine cosets, including the principal $\cW$-algebras of types $A$, $B$, $C$, $D$ as well as principal $\cW$-superalgebras of $\go\gs\gp_{1|2n}$ \cite{ACL2,CL4}. There is a large class of vertex superalgebras $\cA^k$ which depend continuously on the parameter $k$ and admit a homomorphism $V^k(\gg)\rightarrow \cA^k$, such that the coset $\cC^k =  \text{Com}(V^k(\gg), \cA^k)$ can be described for generic values of $k$ by passing to the large $k$ limit, which is isomorphic to a certain orbifold of a free field algebra. This method was developed by the first author and Creutzig in \cite{CL1,CL3,CL4} and applies when $\cA^k$ is any $\cW$-algebra $\cW^k(\gg,f)$ where $\gg$ is a simple Lie superalgebra and $f$ is an even nilpotent element of $\gg$.

However, there are many affine cosets that cannot be studied using these methods. For example, given a finite-dimensional Lie algebra $\gg$ and finite-dimensional $\gg$-modules $V$ and $W$, there are induced homomorphisms $$V^{-k}(\gg)\rightarrow \cS(V),\qquad V^l(\gg) \rightarrow \cE(W),\qquad V^{-k+l}(\gg) \rightarrow \cS(V) \otimes \cE(W).$$ Here $\cS(V)$ and $\cE(W)$ denote the $\beta\gamma$-system and $bc$-system associated to $V$ and $W$, respectively, and $k,l$ are certain positive rational numbers. We denote the images of these affine vertex algebras by $\tilde{V}^{-k}(\gg)$, $\tilde{V}^{l}(\gg)$, and $\tilde{V}^{-k+l}(\gg)$, respectively. When $\gg$ is one of the classical Lie algebras and $V,W$ are sums of copies of the standard module, cosets of the form 
\begin{equation} \label{intro:cosets} \begin{split} & \text{Com}(\tilde{V}^{-k}(\gg),\cS(V)) = \cS(V)^{\gg[t]},\qquad \text{Com}(\tilde{V}^{l}(\gg),\cE(W)) = \cE(W)^{\gg[t]},
\\ & \text{Com}(\tilde{V}^{-k+l}(\gg),\cS(V) \otimes \cE(W)) = (\cS(V) \otimes \cE(W))^{\gg[t]},\end{split} \end{equation} are related to classical Howe duality \cite{H}, and have been studied by several authors \cite{LSS2,AKMPP,Gai}. One of the difficulties in describing $\cS(V)^{\gg[t]}$ is that when $k < h^{\vee}$, $\tilde{V}^{-k}(\gg)$ can be a quotient of $V^{-k}(\gg)$ which is not the simple quotient, and it need not act semisimply on $\cS(V)$; the same can hold for the other cosets in \eqref{intro:cosets}.

 A method for studying such cosets using the invariant theory of arc spaces was introduced in our joint paper with Schwarz \cite{LSS2}. First, $\cS(V)$ has a good increasing filtration such that the associated graded algebra 
 $$\text{gr}(\cS(V)) \cong \mathbb{C}[J_{\infty}(V\oplus V^*)],$$ and $\text{gr}(\cS(V))^{\gg[t]} \cong \mathbb{C}[J_{\infty}(V\oplus V^*)]^{J_{\infty}(G)}$. Here $G$ is a connected Lie group whose action is infinitesimally generated by the action of $\gg$. Via the inclusion $f: \cS(V)^{\gg[t]} \hookrightarrow \cS(V)$, the filtration on $\cS(V)$ induces a filtration on $\cS(V)^{\gg[t]}$, and we denote its associated graded algebra by $\text{gr}_f( \cS(V)^{\gg[t]})$. There is a homomorphism
\begin{equation} \label{intro:inclusion} \text{gr}_f(\cS(V)^{\gg[t]}) \hookrightarrow \text{gr}(\cS(V))^{\gg[t]}, \end{equation} which need not be surjective. However, if both \eqref{intro:inclusion} and the map
$\mathbb{C}[J_{\infty}((V\oplus V^*)/\!\!/G)] \rightarrow \mathbb{C}[J_{\infty}(V\oplus V^*)]^{J_{\infty}(G)}$ given by \eqref{equ:invariantringmap} are surjective, the generators of $\mathbb{C}[V\oplus V^*]^G$ will give rise to strong generators for $\cS(V)^{\gg[t]}$ as a vertex algebra. Finally, suppose that $U$ is a $G$-module such that for all $m\geq 1$, \eqref{equ:invariantringmap} is surjective for $V = (U\oplus U^*)^{\oplus m}$. Then for $V= U^{\oplus m}$ and $W = U^{\oplus r}$, we can use this approach to find strong generating sets for $\cE(W)^{\gg[t]}$ and $(\cS(V) \otimes \cE(W))^{\gg[t]}$ as well.

In \cite{LSS2}, we considered the cases where $\gg$ is one of the classical Lie algebras and $V$ is a sum of $m$ copies of the standard module. We were able to describe $\cS(V)^{\gg[t]}$ in all cases when $(V \oplus V^*)/\!\!/ G$ is an affine space, and all cases where it is a complete intersection and $\gg = \gs\gl_n$ or $\gs\gp_{2n}$, namely,
\begin{enumerate}
\item $\gg = \gs\gl_n$ and $V = (\mathbb{C}^n)^{\oplus m}$ for all $m \leq n$.
\item $\gg = \gs\go_{n}$ and $V = (\mathbb{C}^{n})^{\oplus m}$ for all $m < \frac{n}{2}$.
\item $\gg = \gs\gp_{2n}$ and $V = (\mathbb{C}^{2n})^{\oplus m}$ for all $m \leq n+1$.
\item $\gg = \gg\gl_n$ and $V = (\mathbb{C}^n)^{\oplus m}$ for all $m < n$.
\end{enumerate}

Now that Theorem \ref{arcspaceinvt} has been established, we improve upon these results by finding minimal strong generating sets for $\cS(V)^{\gg[t]}$ in the following cases:
\begin{enumerate}
\item $\gg = \gs\gl_n$ and $V = (\mathbb{C}^n)^{\oplus m}$ for all $m > n$.
\item $\gg = \gs\gp_{2n}$ and $V = (\mathbb{C}^{2n})^{\oplus m}$ for all $m > n+1$.
\item $\gg = \gg\gl_n$ and $V = (\mathbb{C}^n)^{\oplus m}$ for all $m \geq  n$.
\end{enumerate}
In case (3), \eqref{intro:inclusion} fails to be surjective, so we cannot use Theorem \ref{arcspaceinvt} directly; instead, we make use of the structure of $\cS(V)^{\gs\gl_n[t]}$. Unfortunately, the case $\gg = \gs\go_n$ and $V = (\mathbb{C}^n)^{\oplus m}$ for $m\geq \frac{n}{2}$ cannot be studied using these methods because \eqref{intro:inclusion} fails to be surjective \cite{LSS2}.

We expect that in case (1), $\cS(V)^{\gs\gl_n[t]}$ can be identified with vertex algebras appearing in other contexts, such as $\cW$-algebras. For example, when $n=m$, so that $V$ is the space of $n\times n$ matrices, $\cS(V)$ has two commuting actions of $V^{-n}(\gs\gl_n)$. It was conjectured in \cite{CGL} that $\cS(V)^{\gs\gl_n[t] \oplus \gs\gl_n[t]}$ is isomorphic to the Feigin-Semikhatov algebra $\cW^{(2)}_n$ at critical level $-n$ \cite{FS}, (which is isomorphic to the $\cW$-algebra $\cW^{-n}(\gs\gl_n, f_{\text{subreg}})$ associated to $\gs\gl_n$ with its subregular nilpotent \cite{G}), and this was proven for $n=2,3,4$. We will prove this conjecture for all $n$. This implies that $\cS(V)^{\gs\gl_n[t]}$ is isomorphic to a certain quotient of $V^{-n}(\gs\gl_n) \otimes \cW^{-n}(\gs\gl_n, f_{\text{subreg}})$.

We will also find minimal strong generating sets for $\cE(W)^{\gg[t]}$ in the following cases:
\begin{enumerate}
\item $\gg = \gs\gl_n$, $W  = (\mathbb{C}^n)^{\oplus r}$ for all $r \geq 1$.
\item $\gg = \gs\gp_{2n}$, $W = (\mathbb{C}^{2n})^{\oplus r}$ for all $r\geq 1$.
\item $\gg = \gg\gl_n$, $W  = (\mathbb{C}^n)^{\oplus r}$ for all $r \geq 1$.
\end{enumerate}
Finally, we find minimal strong generating sets for $(\cS(V) \otimes \cE(W))^{\gg[t]}$ in the following cases:
\begin{enumerate}
\item $\gg = \gs\gl_n$, $V = (\mathbb{C}^n)^{\oplus m}$, and $W  = (\mathbb{C}^n)^{\oplus r}$ for all $m,r \geq 1$.
\item $\gg = \gg\gl_n$, $V = (\mathbb{C}^n)^{\oplus m}$, and $W  = (\mathbb{C}^n)^{\oplus r}$ for all $m,r \geq 1$.
\item $\gg = \gs\gp_{2n}$, $V = (\mathbb{C}^{2n})^{\oplus m}$, and $W  = (\mathbb{C}^{2n})^{\oplus r}$ for all $m,r\geq 1$.
\end{enumerate}

\subsection{Singular support and associated scheme}
There are two well-known functors from the category of VOAs to the category of commutative rings. The first, which was introduced by Zhu \cite{Z}, assigns to a vertex algebra $\cV$ a commutative ring $R_{\cV}$. It is defined as the vector space quotient of $\cV$ by the span of all elements of the form $a_{(-2)} b$ for all $a,b \in \cV$. The normally ordered product on $\cV$ descends to a commutative, associative product on $R_{\cV}$. Strong generators for $\cV$ give rise to generators for $R_{\cV}$; in particular, $R_{\cV}$ is finitely generated if and only if $\cV$ is strongly finitely generated. Recall that $\cV$ is said to be {\it $C_2$-cofinite} if $R_{\cV}$ is finite-dimensional as a vector space. This is a key starting assumption in Zhu's work on modularity of characters of modules for rational vertex algebras \cite{Z}, and it implies that $\cV$ has finitely many simple $\mathbb{Z}_{\geq 0}$-graded modules.

The second functor comes from Li's canonical decreasing filtration $F^{\bullet} \cV$ that is defined on any VOA $\cV$, such that the associated graded algebra $\text{gr}^F(\cV)$ is a differential graded commutative ring \cite{Li}. If $\cV$ has finite-dimensional weight spaces (which is the case in all our examples), $\cV$ is linearly isomorphic to $\text{gr}^F(\cV)$, and a strong generating set for $\cV$ gives rise to a generating set for $\text{gr}^F(\cV)$ as a differential algebra. In fact, $R_{\cV}$ can be identified with the zeroth graded component of $\text{gr}^F(\cV)$, so $R_{\cV}$ generates $\text{gr}^F(\cV)$ as a differential algebra. If $\cV$ is {\it freely generated} by a set of fields $\{\alpha_i\}$, then $\text{gr}^F(\cV)$ is just the differential polynomial algebra generated by $\{\alpha_i\}$. However, if $\cV$ is not freely generated, it is a difficult and important problem to find all differential algebraic relations in $\text{gr}^F(\cV)$.

Following Arakawa \cite{Ar2,Ar3}, we define the {\it associated scheme} of $\cV$ to be
$$X_{\cV} = \text{Spec} \ R_{\cV},$$
which is an affine Poisson scheme. The {\it singular support} of $\cV$ is defined to be
$$\text{SS}(\cV) = \text{Spec} \ \text{gr}^F(\cV),$$ 
which is a vertex Poisson scheme. Let $(R_{\cV})_{\infty}$ denote the affine coordinate ring of the arc space $J_{\infty}(X_{\cV})$. By its universal property, there is a surjective homomorphism of differential rings
\begin{equation} \label{eq:ss} \Phi: (R_{\cV})_{\infty} \rightarrow \text{gr}^F(\cV).\end{equation} Equivalently, there is a closed embedding $\text{SS}(\cV)\hookrightarrow J_{\infty}(X_{\cV})$.
In the terminology of van Ekeren and Heluani \cite{EH1}, $\cV$ is called {\it classically free} if \eqref{eq:ss} is an isomorphism. This property plays an important role in their computations of chiral homology. It is easy to see that any freely generated vertex algebra is classically free; this includes all free field algebras, universal affine vertex algebras, and universal $\cW$-algebras. If $\cV$ is not freely generated, the phenomenon is much more subtle, but examples are known including the simple affine vertex algebras $L_k(\gs\gl_2)$ for $k \in \mathbb{N}$, and the Virasoro minimal models $\text{Vir}_{2,q}$ for all odd $q\geq 3$ \cite{EH1}. Further examples also appear in recent work of Li and Milas \cite{L,LM}. 

However, not all vertex algebras are classically free. For example, $\text{Vir}_{p,q}$ is classically free if and only if $p = 2$ \cite{EH1}. The singlet algebras of type $\cW(2,2p-2)$ for $p\geq 2$ introduced by Adamovi\'c \cite{A} are not classically free; this was shown in \cite{AL} in the cases $p=2,3$ and the general case is similar. We say that $\cV$ is {\it classically free at the level of varieties} if \eqref{eq:ss} induces an isomorphism of reduced schemes. This is the case in all known examples where $\cV$ is simple, and was proven by Arakawa and Moreau whenever $\cV$ is simple and quasi-lisse \cite{AM}. If \eqref{eq:ss} fails to be injective, its kernel is always a differential ideal and one can ask whether it is finitely generated as a differential ideal \cite{AL}. There is currently only one example in the literature where this has been proven, namely $\text{Vir}_{3,4}$; it was shown in \cite{AEH} that $\text{ker} \ \Phi$ is generated as a differential ideal by one element.

In this paper, we will describe $\text{gr}^F(\cV)$ for certain affine cosets of the form $\cS(V)^{\gg[t]}$ and $\cE(V)^{\gg[t]}$, and we will compute the kernel of \eqref{eq:ss}. As a result, we give many new examples of classically free vertex algebras which are not freely generated. For example, we will show that $L_n(\gs\gp_{2r})$ is classically free for all $r,n \in \mathbb{N}$, generalizing and providing a conceptual proof of van Ekeren and Heluani's result for $L_n(\gs\gl_2)$ \cite{EH1}. This implies that Theorem 10.2.1 of \cite{EH2}, namely the vanishing of the first chiral homology $H_1^{\text{ch}}(V)$, holds for $V = L_n(\gs\gp_{2r})$. We also give many new examples where \eqref{eq:ss} is not injective, where we have an explicit finite generating set for $\text{ker} \ \Phi$ as a differential ideal.

\subsection{Level-rank dualities involving affine vertex superalgebras} 
It is well known that for positive integers $n,m,r$, there is an embedding $L_m(\gs\gl_n) \otimes L_m(\gs\gl_r) \otimes \cH \hookrightarrow L_m(\gs\gl_{r+n})$ and the following level-rank duality holds:
$$\text{Com}(L_{n+r}(\gs\gl_m), L_n(\gs\gl_m) \otimes L_{r}(\gs\gl_m)) \cong \text{Com}(L_m(\gs\gl_n) \otimes L_m(\gs\gl_r) \otimes \cH, L_m(\gs\gl_{r+n})).$$
This duality has appeared in \cite{ABI,F,NT,Wal} and was proven by Jiang and Lin \cite{JL} as well as \cite{ACL2}. It is natural to ask if there is a similar duality where the positive integer $r$ is replaced with $-r$, and $L_m(\gs\gl_{r+n})$ is replaced with the affine Lie superalgebra $L_m(\gs\gl_{r|n})$. A weaker statement of this kind was proven by Creutzig, Riedler and the first author in \cite{CLR}. It says that 
\begin{equation} \label{thm:CLR} \text{Com}(\tilde{V}^{-n+r}(\mathfrak{sl}_m), A^{-n}(\mathfrak{sl}_{m}) \otimes L_r(\mathfrak{sl}_m))\cong \text{Com}(\tilde{V}^{-m}(\mathfrak{sl}_n) \otimes L_m(\mathfrak{sl}_r) \otimes \cH, A^m(\mathfrak{sl}_{r|n})).\end{equation}
In this notation, $$A^{-n}(\mathfrak{sl}_{m}) = \cS(nm)^{\gg\gl_n[t]},$$ which is an extension of $\tilde{V}^{-n}(\gs\gl_m)$, and 
$$A^m(\mathfrak{sl}_{r|n}) =  (\cS(mn) \otimes \cE(mr))^{\gg\gl_m[t]},$$ 
which is an extension of $\tilde{V}^m(\mathfrak{sl}_{r|n})$. Also, $\tilde{V}^{-n+r}(\mathfrak{sl}_m)$ denotes the image of $V^{-n+r}(\mathfrak{sl}_m)$ under the diagonal map $V^{-n+r}(\mathfrak{sl}_m) \rightarrow A^{-n}(\mathfrak{sl}_{m}) \otimes L_r(\mathfrak{sl}_m)$.

The question was raised in \cite{CLR} whether this can be improved by replacing $A^{-n}(\mathfrak{sl}_{m})$ and $A^m(\mathfrak{sl}_{r|n})$ with $\tilde{V}^{-n}(\mathfrak{sl}_{m})$ and $\tilde{V}^m(\mathfrak{sl}_{r|n})$, respectively. We will see that $A^{-n}(\mathfrak{sl}_{m}) = \tilde{V}^{-n}(\gs\gl_m)$ for all $m < n$ and $m\geq 2n+1$, and $A^m(\mathfrak{sl}_{r|n}) = \tilde{V}^m(\mathfrak{sl}_{r|n})$ for all $m,r,n\geq 1$, so we can indeed improve this result. In the case $m=n$, $A^{-n}(\mathfrak{sl}_{m})$ cannot be replaced with $\tilde{V}^{-n}(\gs\gl_m)$. We are not able to determine this in the range $n<m < 2n+1$.

We conclude by giving the analogous level-rank dualities in types $B$, $C$, and $D$.

\section{Vertex algebras} \label{section:VOAs}
In this paper, we will follow the formalism developed in \cite{K}. A {\it vertex algebra} is the data $(\cA, Y, L_{-1}, 1)$, where
\begin{enumerate} 
\item $\cA = \cA_{\bar{0}} \oplus \cA_{\bar{1}}$ is a $\mathbb Z_2$-graded vector space over $\mathbb{C}$. For $a \in \cA_i$, $|a| = i$ for $i = \bar{0}, \bar{1}$.
\item $Y$ is an even linear map
$$Y : \cA \rightarrow \text{End} (\cA) [[z, z^{-1}]],\qquad Y(a) = a(z) = \sum_{n\in \mathbb{Z}}a_{(n)}z^{-n-1}.$$ Here $z$ is a formal variable and $a(z)$ is called the field corresponding to $a$.  
\item $1 \in \cA$ is a distinguished element called the vacuum vector. 
\item $L_{-1}$ is an even endomorphism of $\cA$.
\end{enumerate}
They satisfy the following axioms:
\begin{itemize}
\item \textit{Vacuum axiom}: $L_{-1}1 =0$; $1(z)= \text{Id}$; for $a\in \cA$, $n\geq 0$, $a_{(n)}1=0$ and $a_{(-1)}1=a$,
\item \textit{Translation invariance axiom}: For $a\in \cA$, $[L_{-1}, Y(a)] =\partial a(z)$,
\item \textit{Locality axiom}: Let $z,w$ be formal variables. For homogeneous $a, b \in \cA$, $(z-w)^k [a(z), b(w)]=0$ for some $k\geq 0$, where $[a(z), b(w)] = a(z) b(w) - (-1)^{|a| |b|} b(w) a(x)$. \end{itemize}

For $a,b\in \cA$ and $n\in \mathbb Z_{\geq 0}$, the $n^{\text{th}}$ product is denoted by $a_{(n)}b$, and the operator product expansion (OPE) is given by
$$a(z)b(w)\sim \sum_{n\geq 0} (a_{(n)}b)(w)(z-w)^{-n-1}.$$ Here $\sim$ means modulo the terms which are regular at $z= w$.
The Wick product (or normally ordered product) of $a(z)$ and $b(z)$ is
$$:a(z)b(z):  \ =(a_{(-1)}b)(z) = a(z)_-b(z)\ +\ (-1)^{|a||b|} b(z)a(z)_+,$$ where $a(z)_-=\sum_{n<0}a_{(n)}z^{-n-1}$ and $a(z)_+=\sum_{n\geq 0}a_{(n)}z^{-n-1}$.  The other negative products are given by
$$:(\partial^na(z))b(z):\ =n!(a_{(-n-1)}b)(z),\qquad \partial = \frac{d}{dz}.$$
For $a_1, \dots , a_k\in \cA$, their iterated Wick product is defined to be
$$:a_1(z)\cdots a_k(z):\ =\ :a_1(z)b(z):,\quad \quad b(z)=\ :a_2(z)\cdots a_k(z):.$$
We often omit the formal variable $z$ when no confusion can arise.

A vertex algebra $\cA$ is said to be {\it generated} by a subset $S=\{\alpha^i|\ i\in I\}$ if $\cA$ is spanned by words in the letters $\alpha^i$, and all products, for $i\in I$ and $n\in\mathbb{Z}$. We say that $S$ {\it strongly generates} $\cA$ if $\cA$ is spanned by words in the letters $\alpha^i$, and all products for $n<0$. Equivalently, $\cA$ is spanned by $$\{ :\partial^{k_1} \alpha^{i_1}\cdots \partial^{k_m} \alpha^{i_m}:| \ i_1,\dots,i_m \in I,\ k_1,\dots,k_m \geq 0\}.$$ 

Suppose that $S$ is an ordered strong generating set $\{\alpha^1, \alpha^2,\dots\}$ for $\cA$ which is at most countable. We say that $S$ {\it freely generates} $\cA$, if $\cA$ has a Poincar\'e-Birkhoff-Witt basis consisting of all normally ordered monomials 
\begin{equation} \label{freegen} \begin{split} & :\partial^{k^1_1} \alpha^{i_1} \cdots \partial^{k^1_{r_1}}\alpha^{i_1} \partial^{k^2_1} \alpha^{i_2} \cdots \partial^{k^2_{r_2}}\alpha^{i_2}
 \cdots \partial^{k^n_1} \alpha^{i_n} \cdots \partial^{k^n_{r_n}} \alpha^{i_n}:,\qquad 
 1\leq i_1 < \dots < i_n,
 \\ & k^1_1\geq k^1_2\geq \cdots \geq k^1_{r_1},\quad k^2_1\geq k^2_2\geq \cdots \geq k^2_{r_2},  \ \ \cdots,\ \  k^n_1\geq k^n_2\geq \cdots \geq k^n_{r_n},
 \\ &  k^{t}_1 > k^t_2 > \dots > k^t_{r_t} \ \ \text{if} \ \ \alpha^{i_t}\ \ \text{is odd}. 
 \end{split} \end{equation}

A conformal structure with central charge $c$ is a Virasoro vector $L(z) = \sum_{n\in \mathbb{Z}} L_n z^{-n-2} \in \cA$ satisfying
\begin{equation} \label{virope} L(z) L(w) \sim \frac{c}{2}(z-w)^{-4} + 2 L(w)(z-w)^{-2} + \partial L(w)(z-w)^{-1},\end{equation} such that in addition, $L_{-1} \alpha = \partial \alpha$ for all $\alpha \in \cA$, and $L_0$ acts diagonalizably on $\cA$. We say that $\alpha$ has conformal weight $d$ if $L_0(\alpha) = d \alpha$, and we denote the conformal weight $d$ subspace by $\cA[d]$. In all our examples, the conformal weight grading will be either by $\mathbb{Z}_{\geq 0}$ or $\frac{1}{2} \mathbb{Z}_{\geq 0}$.

We recall some identities that hold in any vertex algebra $\cA$.  For any fields $a,b,c \in \cA$,
\begin{equation} \label{deriv} (\partial a)_{(n)} b = -na_{(n-1)} b\qquad \forall n\in \mathbb{Z},\end{equation}
\begin{equation} \label{commutator} a_{(n)} b  = (-1)^{|a||b|} \sum_{p \in \mathbb{Z}} (-1)^{p+1} (b_{(p)} a)_{(n-p-1)} 1,\qquad \forall n\in \mathbb{Z},\end{equation}
\begin{equation} \label{nonasswick} :(:ab:)c:\  - \ :abc:\ 
=  \sum_{n\geq 0}\frac{1}{(n+1)!}\big( :(\partial^{n+1} a)(b_{(n)} c):\ +
(-1)^{|a||b|} (\partial^{n+1} b)(a_{(n)} c):\big)\ .\end{equation}
\begin{equation} \label{ncw} a_{(n)}
(:bc:) -\ :(a_{(n)} b)c:\ - (-1)^{|a||b|}\ :b(a_{(n)} c): \ = \sum_{i=1}^n
\binom{n}{i} (a_{(n-i)}b)_{(i-1)}c, \qquad \forall n \geq 0.
\end{equation}
Given $a,b,c \in  \cA$ and integers $m,n \geq 0$, the following identities are known as Jacobi relations of type $(a,b,c)$. 
\begin{equation} \label{jacobi} a_{(r)}(b_{(s)} c) = (-1)^{|a||b|} b_{(s)} (a_{(r)}c) + \sum_{i =0}^r \binom{r}{i} (a_{(i)}b)_{(r+s - i)} c.\end{equation}

\subsection{Affine vertex algebras} Let $\gg$ be a simple, finite-dimensional Lie (super)algebra. The {\it universal affine vertex algebra} $V^k(\gg)$ is freely generated by fields $X^{\xi}$ which are linear in $\xi \in \gg$ and satisfy 
\begin{equation}
X^{\xi}(z) X^{\eta}(w) \sim k ( \xi, \eta) (z-w)^{-2} + X^{[\xi,\eta]}(w)(z-w)^{-1}.
\end{equation}
Here $(\cdot ,\cdot )$ denotes the normalized Killing form $\frac{1}{2h^{\vee}} \langle \cdot,\cdot \rangle$. For all $k\neq -h^{\vee}$, $V^k(\gg)$ has the Sugawara Virasoro vector
\begin{equation} \label{sugawara} L^{\gg}  = \frac{1}{2(k+h^{\vee})} \sum_{i=1}^n :X^{\xi_i} X^{\xi'_i}: \end{equation} of central charge $c = \frac{k\ \text{sdim}(\gg)}{k+h^{\vee}}$. Here $\xi_i$ runs over a basis of $\gg$, and $\xi'_i$ is the dual basis with respect to $(\cdot,\cdot)$.

As a module over $\widehat{\gg} = \gg[t,t^{-1}] \oplus \mathbb{C}$, $V^k(\gg)$ is isomorphic to the vacuum $\widehat{\gg}$-module. For generic $k$, $V^k(\gg)$ is a simple vertex algebra, but for certain rational values of $k \geq -h^{\vee}$ which were classified by Gorelik and Kac \cite{GK}, $V^k(\gg)$ is not simple, and we denote by $L_k(\gg)$ its simple graded quotient.

In the case $\gg = \go\gs\gp_{m|2n}$, we take the dual Coxeter number to be $$h^{\vee} = \frac{2n+2-m}{2}.$$ Then the bilinear form on $\go\gs\gp_{m|2n}$ is normalized so that it coincides with the usual bilinear form on $\gs\gp_{2n}$, and we have the embedding \begin{equation} \label{osp:convention} V^{k}(\gs\gp_{2n}) \otimes V^{-2k}(\gs\go_m) \rightarrow V^k(\go\gs\gp_{m|2n}).\end{equation}

\subsection{$\beta\gamma$-system} 
Let $V$ be a finite-dimensional complex vector space. The $\beta\gamma$-system $\cS(V)$ was introduced in \cite{FMS}. It is freely generated by even fields $\beta^{x}$, $\gamma^{x'}$ which are linear in $x\in V$, $x'\in V^*$, and satisfy
\begin{equation}\begin{split} & \beta^x(z)\gamma^{x'}(w)\sim\langle x',x\rangle (z-w)^{-1}, \qquad \gamma^{x'}(z)\beta^x(w)\sim -\langle x',x\rangle (z-w)^{-1},
\\ & \beta^x(z)\beta^y(w)\sim 0, \qquad \gamma^{x'}(z)\gamma^{y'}(w)\sim 0.\end{split} \end{equation} 
Here $\bra,\ket$ denotes the pairing between $V^*$ and $V$. If we fix a basis $x_1,\dots, x_n$ for $V$ and dual basis $x'_1,\dots, x'_n$ for $V^*$, we often denote the corresponding fields by $\beta^1,\dots, \beta^n$ and $\gamma^1,\dots, \gamma^n$, so that \begin{equation} \label{eq:betagammaope} \begin{split} \beta^i(z)\gamma^{j}(w) &\sim \delta_{i,j} (z-w)^{-1},\quad \gamma^{i}(z)\beta^j(w)\sim -\delta_{i,j} (z-w)^{-1},\\  \beta^i(z)\beta^j(w) &\sim 0,\qquad\qquad\qquad \gamma^i(z)\gamma^j (w)\sim 0,\end{split} \end{equation} and we denote $\cS(V)$ by $\cS(n)$. It has Virasoro element $$L^{\cS} = \frac{1}{2} \sum_{i=1}^n \big(:\beta^{i}\partial\gamma^{i}: - :\partial\beta^{i}\gamma^{i}:\big)$$ of central charge $-n$, under which $\beta^{i}$, $\gamma^{i}$ are primary of weight $\frac{1}{2}$. The symplectic group $Sp_{2n}$ is the full automorphism group of $\cS(n)$ preserving $L^{\cS}$. In fact, there is a homomorphism $L_{-1/2}(\gs\gp_{2n}) \rightarrow \cS(n)$ whose zero modes infinitesimally generate the action of $Sp_{2n}$. 
There is an additional $\mathbb{Z}$-grading on $\cS(n)$ which we call the {\it charge}. Define 
\begin{equation}\label{defchargebg} e = \sum_{i=1}^n : \beta^{i}\gamma^{i}:.\end{equation} The zero mode $e(0)$ acts diagonalizably on $\cS(n)$. The charge grading is just the eigenspace decomposition of $\cS(n)$ under $e(0)$, and $\beta^{i}$, $\gamma^{i}$ have charges $-1$, $1$, respectively.

\subsection{$bc$-system} There is a similar vertex superalgebra $\cE(V)$ known as a $bc$-system \cite{FMS}. It is freely generated by odd fields $b^{x}$, $c^{x'}$ which are linear in $x\in V$, $x'\in V^*$, and satisfy
\begin{equation}\begin{split} & b^x(z) c^{x'}(w)\sim\langle x',x\rangle (z-w)^{-1}, \qquad c^{x'}(z) b^x(w)\sim \langle x',x\rangle (z-w)^{-1},
\\ & b^x(z) b^y(w)\sim 0, \qquad c^{x'}(z) c^{y'}(w)\sim 0.\end{split} \end{equation} 
If we fix a basis $x_1,\dots, x_n$ for $V$ and dual basis $x'_1,\dots, x'_n$ for $V^*$, we often denote the corresponding fields by $b^1,\dots, b^n$ and $c^1,\dots, c^n$, so that 
\begin{equation} \label{eq:betagammaope} \begin{split} b^i(z) c^{j}(w) &\sim \delta_{i,j} (z-w)^{-1},\quad c^{i}(z) b^j(w)\sim \delta_{i,j} (z-w)^{-1},\\  b^i(z) b^j(w) &\sim 0,\qquad\qquad\qquad c^i(z)c^j (w)\sim 0,\end{split} \end{equation} and we denote $\cE(V)$ by $\cE(n)$. It has Virasoro element 
$$L^{\cE}= \frac{1}{2} \sum_{i=1}^n \big(-:b^{i}\partial c^{i}: + :\partial b^{i} c^{i}:\big)$$
 of central charge $n$, under which $b^{i}$, $c^{i}$ are primary of weight $\frac{1}{2}$. The orthogonal group $O_{2n}$ is the full automorphism group of $\cE(n)$ preserving $L^{\cE}$, and there is a homomorphism $L_{1}(\gs\go_{2n}) \rightarrow \cE(n)$ which infinitesimally generates the action of $O_{2n}$. 
As above, $\cE(n)$ has an additional $\mathbb{Z}$-grading called charge, given by the eigenvalue of the zero mode of the operator
\begin{equation}\label{defchargebc} e = -\sum_{i=1}^n : b^{i}c^{i}:.\end{equation} Then $b^{i}$, $c^{i}$ have charges $-1$, $1$, respectively.

\subsection{Free fermion algebra} If $V$ is a vector space with a symmetric, nondegenerate form $\langle, \rangle$, one can also associate to $V$ the {\it free fermion algebra} $\cF(V)$. It is a vertex superalgebra with odd generators $\phi^u$ which are linear in $u \in V$, and satisfy
 $$\phi^u(z) \phi^v(w) \sim \langle u, v \rangle (z-w)^{-1}.$$ 
 If we fix an orthonormal basis $v_1,\dots, v_n$ for $V$ relative to $\langle, \rangle$, we denote the corresponding fields by $\phi^{1},\dots, \phi^n$, and they satisfy $\phi^{i} (z) \phi^{j}(w) \sim \delta_{i,j} (z-w)^{-1}$. We often denote $\cF(V)$ by $\cF(n)$. We have the Virasoro element  $$L^{\cF} =  -\frac{1}{2} \sum_{i=1}^n  :\phi^i \partial \phi^i:$$ of central charge $\frac{n}{2}$, under which $\phi^i$ is primary of weight $\frac{1}{2}$. The full automorphism group of $\cF(n)$ is the orthogonal group $O_n$, and there is a homomorphism $L_1(\gs\go_n) \rightarrow \cF(n)$ which infinitesimally generates the $O_n$-action. Also, note that $\cE(n) \cong \cF(2n)$ as vertex algebras.

 \subsection{Affine vertex algebra actions on free field algebras}
In this section, we recall several well-known homomorphisms from an affine vertex (super)algebra $V^k(\gg)$ to some free field algebras $\cB$. We often use the notation $\tilde{V}^k(\gg)$ to denote the image of such a homomorphism, which need not be either the universal algebra $V^k(\gg)$ or its simple quotient $L_k(\gg)$.

Let $\gg$ be a simple finite-dimensional Lie algebra, and let $V$ be a finite-dimensional $\gg$-module via $\rho_1:\gg\rightarrow \gg\gl(V)$. There is an induced homomorphism
\begin{equation} \label{def:rho12even} V^{-k}(\gg)\ra \cS(V),\qquad X^{\xi} \mapsto  - \sum_{i=1}^{\text{dim} \ V} :\gamma^{x'_i}\beta^{\rho_1(\xi)(x_i)}:. \end{equation} Here $k$ is given as follows: the bilinear form $\text{Tr}(\rho_1(\xi)\rho_1(\eta))$ on $\gg$ is equal to $k$ times  the normalized Killing form. Then $\{\beta^{x_i}\}$ and $\{\gamma^{x'_i}\}$ transform under $\gg$ as $V$ and $V^*$, respectively.

An important case is $\gg = \gg\gl_n$ and $V$ the standard module $\mathbb{C}^n$; this gives the embedding 
\begin{equation} \label{glnlevel-1} L_{-1}(\gg\gl_n) = \cH \otimes L_{-1}(\gs\gl_n) \rightarrow \cS(n).\end{equation} More generally, for $V = (\mathbb{C}^n)^{\oplus m}$, $\cS(V) = \cS(nm)$ admits a homomorphism
\begin{equation} \label{conformalemb:slnslmbg} V^{-m}(\gs\gl_{n}) \otimes V^{-n}(\gs\gl_m) \otimes \cH \rightarrow \cS(nm), \end{equation} whose image $\tilde{V}^{-m}(\gs\gl_{n}) \otimes \tilde{V}^{-n}(\gs\gl_m) \otimes \cH$ is conformally embedded in $\cS(nm)$.

Next, $\gs\gp_{2n}\subseteq \gg\gl_{2n}$ consists of block matrices of the form $$\bigg[ \begin{matrix}  A & B \cr  C & -A^T \end{matrix} \bigg], \qquad A,B,C\in \gg\gl_n, \qquad B=B^T, \qquad C = C^T.$$ In terms of the basis $\{e_{i,j}|\ 1\leq i\leq 2n, \ 1\leq j\leq 2n\} \subseteq \gg\gl_{2n}$, a standard basis for $\gs\gp_{2n}$ is $$e_{j,k+n} + e_{k,j+n}, \qquad  -e_{j+n,k} - e_{k+n,j}, \qquad e_{j,k} - e_{n+k, n+j}, \qquad 1\leq j, \qquad k\leq n.$$ 

There is a homomorphism $V^{-m/2}(\gs\gp_{2n}) \rightarrow \cS(nm)$ given by
\begin{equation} \begin{split} & X^{e_{j,k+n} + e_{k,j+n}} \mapsto \sum_{i=1}^m : \gamma^{x'_{i,j}} \gamma^{x'_{i,k}}:, 
\\ & X^{-e_{j+n,k} - e_{k+n,j}} \mapsto \sum_{i=1}^m :\beta^{x_{i,j}}\beta^{x_{i,k}}:,
\\ & X^{e_{j,k} - e_{n+k, n+j}} \mapsto \sum_{i=1}^m :\gamma^{x'_{i,j}} \beta^{x'_{i,k}}:. \end{split} \end{equation}
Note that for each $i = 1,\dots, m$, $U_i = \text{span}\{\gamma^{i,1},\dots, \gamma^{i,n}, \beta^{i,1},\dots, \beta^{i,n}\}$ is a copy of the standard $\gs\gp_{2n}$-module. In fact, there is a commuting homomorphism $$V^{-2n}(\gs\go_m) \rightarrow \cS(nm)$$ which is a special case of \eqref{def:rho12even} with $\gg = \gs\go_m$ and $V = (\mathbb{C}^m)^{\oplus n}$. Combining these maps yields a homomorphism
\begin{equation} \label{conformalemb:sp2nsombg}  V^{-m/2}(\gs\gp_{2n}) \otimes V^{-2n}(\gs\go_m) \rightarrow \cS(nm),\end{equation} whose image $\tilde{V}^{-m/2}(\gs\gp_{2n}) \otimes \tilde{V}^{-2n}(\gs\go_m)$ is conformally embedded in $\cS(nm)$.

\begin{remark} One more case of \eqref{def:rho12even} was studied in \cite{LSS2}, namely, $\gg = \gs\gp_{2n}$ and $V = (\mathbb{C}^{2n})^{\oplus m}$. In this case, we have a homomorphism $V^{-m}(\gs\gp_{2n}) \otimes V^{-2n}(\gs\go_{2m}) \rightarrow \cS(2nm)$ whose image is conformally embedded. However, after a change of basis this is equivalent to \eqref{conformalemb:sp2nsombg} with $m$ replaced with $2m$, so we need not consider this as a separate case.
\end{remark}

Next, we consider affine algebra actions on $bc$-systems and free fermion algebras. Let $\gg$ be a simple finite-dimensional Lie algebra, and let $W$ be a finite-dimensional $\gg$-module via $\rho_2:\gg\rightarrow \gg\gl(W)$. There is an induced homomorphism
\begin{equation} \label{def:rho12odd} V^{l}(\gg)\ra \cE(W),\qquad X^{\xi} \mapsto  \sum_{i=1}^{\text{dim} \ W} : c^{y'_i} b^{\rho_2(\xi)(y_i)}:.\end{equation} Here $\ell$ is determined as follows: the bilinear form $\text{Tr}(\rho_2(\xi)\rho_2(\eta))$, on $\gg$ is equal to $l$ times the normalized Killing form. Then $\{b^{y_i}\}$ and $\{c^{y'_i}\}$ transform under $\gg$ as $V$ and $V^*$, respectively. 

For example, if $\gg = \gg\gl_n$ and $W = \mathbb{C}^n$, we have an embedding 
 \begin{equation} \label{glnlevel1} L_{1}(\gg\gl_n) = \cH \otimes L_{1}(\gs\gl_n) \rightarrow \cE(n).\end{equation} Similarly, for $V = (\mathbb{C}^n)^{\oplus m}$, $\cE(V) = \cE(nm)$ admits a homomorphism
\begin{equation} \label{conformalemb:slnslmbc} L_{m}(\gs\gl_{n}) \otimes L_{n}(\gs\gl_m) \otimes \cH \rightarrow \cE(nm), \end{equation} whose image is conformally embedded in $\cE(nm)$. 

Another special case is $\gg = \gs\gp_{2n}$ and $V = (\mathbb{C}^{2n})^{\oplus m}$. In this case, we have a homomorphism
\begin{equation} \label{sp2n-sp2mE} L_{m}(\gs\gp_{2n}) \otimes  L_n(\gs\gp_{2m}) \rightarrow \cE(2nm),\end{equation} whose image is conformally embedded. Note that $\cE(2nm) \cong \cF(4nm)$ which is an extension of $L_1(\gs\go_{4nm})$, and this is equivalent to the well-known conformal embedding $L_{m}(\gs\gp_{2n}) \otimes L_n(\gs\gp_{2m}) \rightarrow L_1(\gs\go_{4nm})$.

Next, there is a conformal embedding
\begin{equation} \label{son-somF} L_m(\gs\go_n) \otimes L_n(\gs\go_m) \rightarrow \cF(nm), \end{equation}  which is equivalent to the well-known conformal embedding $ L_m(\gs\go_n) \otimes L_n(\gs\go_m) \rightarrow L_1(\gs\go_{nm})$.

Finally, we consider certain actions of affine vertex (super)algebras in tensor products of $\beta\gamma$-systems and $bc$-systems. Let $\gg$ be a simple Lie algebra and let $\rho_1:\gg\rightarrow \gg\gl(V)$ and  $\rho_2:\gg\rightarrow \gg\gl(W)$ be finite-dimensional $\gg$-modules, as above. There is then a homomorphism
\begin{equation} \label{def:rhobcbg} V^{-k+l}(\gg)\ra \cS(V) \otimes \cE(W),\qquad X^{\xi} \mapsto  - \sum_{i=1}^{\text{dim} \ V} :\gamma^{x'_i}\beta^{\rho(\xi)(x_i)}: + \sum_{j=1}^{\text{dim} \ W} : c^{y'_j} b^{\rho_2(\xi)(y_j)}:.\end{equation}

We also have the following well-known homomorphisms whose images are conformally embedded.
\begin{equation} \label{super:1} V^{-m+r}(\gg\gl_n) \otimes V^n (\gs\gl_{r|m}) \rightarrow \cS(nm) \otimes \cE(nr),\end{equation}
\begin{equation} \label{super:2}V^{-\frac{m}{2}+r}(\gs\gp_{2n}) \otimes V^{n}(\go\gs\gp_{m|2r}) \rightarrow \cS(nm) \otimes \cE(2nr),\end{equation}
\begin{equation} \label{super:3} V^{-2n+1+ r}(\gs\go_{2m+1}) \otimes V^{-m-\frac{1}{2}}(\go\gs\gp_{r+1|2n}) \rightarrow \cS(n(2m+1)) \otimes \cF(2m+1) \otimes \cF(r(2m+1)).
\end{equation}
In \eqref{super:3}, we have used the homomorphism $V^{-1/2}(\go\gs\gp_{1|2n}) \rightarrow \cS(n) \otimes \cF(1)$, which yields the diagonal map $V^{-k/2}(\go\gs\gp_{1|2n}) \rightarrow \cS(nk) \otimes \cF(k)$ for all $k$.

\subsection{Filtrations}
We recall the canonical decreasing filtration introduced by Li \cite{Li}  that exists on any vertex algebra $\cV$ and is independent of the conformal weight grading. We have
$$F^0(\cV) \supseteq F^1(\cV) \supseteq \cdots,$$ where $F^p(\cV)$ is spanned by elements of the form
$$:\partial^{n_1} a^1 \partial^{n_2} a^2 \cdots \partial^{n_r} a^r:,$$ 
where $a^1,\dots, a^r \in \cV$, $n_i \geq 0$, and $n_1 + \cdots + n_r \geq p$. Note that $\cV = F^0(\cV)$ and $\partial F^i(\cV) \subseteq F^{i+1}(\cV)$. Set $$\text{gr}^F(\cV) = \bigoplus_{p\geq 0} F^p(\cV) / F^{p+1}(\cV),$$ and for $p\geq 0$ let 
$$\sigma_p: F^p(\cV) \ra F^p(\cV) / F^{p+1}(\cV) \subseteq \text{gr}^F(\cV)$$ be the projection. Note that $\text{gr}^F(\cV)$ is a graded commutative algebra with product
$$\sigma_p(a) \sigma_q(b) = \sigma_{p+q}(a_{(-1)} b),$$ for $a \in F^p(\cV)$ and $b \in F^q(\cV)$. We say that the subspace $F^p(\cV) / F^{p+1}(\cV)$ has degree $p$. Note that $\text{gr}^F(\cV)$ has a differential $\partial$ defined by $$\partial( \sigma_p(a) ) = \sigma_{p+1} (\partial a),$$ for $a \in F^p(\cV)$. Also, $\text{gr}^F(\cV)$ has a Poisson vertex algebra structure \cite{Li}; for $n\geq 0$, we define $$\sigma_p(a)_{(n)} \sigma_q(b) = \sigma_{p+q-n} a_{(n)} b.$$ Finally, Zhu's commutative algebra $R_{\cV}$ is isomorphic to the subalgebra $F^0(\cV) / F^1(\cV)\subseteq \text{gr}^F(\cV)$, and $\text{gr}^F(\cV)$ is generated by $R_{\cV}$ as a differential algebra \cite{Li}. 

Next, we recall the notion of a {\it good increasing filtration} $G_{\bullet} \cV$ on a vertex algebra $\cV$. This is a $\mathbb{Z}_{\geq 0}$-filtration
\begin{equation} 0 = G_{-1}\cA \subseteq G_0 \cV \subseteq G_1\cV \subseteq G_2 \cV \subseteq \cdots,  \qquad \cV = \bigcup_{k\geq 0}
G_k \cV, \end{equation} such that $G_0 \cV = \mathbb{C}$, and for all
$a \in G_k \cV$, $b\in G_l \cV$, we have
\begin{equation} \label{goodi} a_{(n)} b\in  G_{k+l} \cV,  \qquad \text{for}\
n<0,\end{equation}
\begin{equation} \label{goodii} a_{(n)} b \in  G_{k+l-1} \cV \qquad \text{for}\
n\geq 0.\end{equation}
Elements $a \in  G_{d} \cV \setminus G_{d-1} \cV$ are said to have degree $d$.

The associated graded algebra $\text{gr}_G(\cV) = \bigoplus_{p\geq 0} G_p \cV / G_{p-1} \cV$ is a
$\mathbb{Z}_{\geq 0}$-graded associative, supercommutative algebra with a
unit $1$ under a product induced by the Wick product on $\cV$. For each $p\geq 1$ we have the projection \begin{equation} \label{projmap}\phi_p: G_p \cV \ra G_p \cV /  G_{p-1} \cV \subseteq \text{gr}_G(\cV).\end{equation} 
Moreover, $\text{gr}_G(\cV)$ has a derivation $\partial$ of degree zero which is induced by the operator $\partial = \frac{d}{dz}$ on $\cV$, and
for each $a\in G_p\cV$ and $n\geq 0$, the operator $a_{(n)}$ on $\cV$
induces a derivation of degree $p-1$ on $\text{gr}_G(\cV)$. These derivations give $\text{gr}_G(\cV)$ the structure of a vertex Poisson algebra.

In fact, if $\cV$ has a grading by conformal weight $\cV = \bigoplus_d \cV[d]$ where $d\in \mathbb{Z}_{\geq 0}$ or $d \in \mathbb{Z}_{\geq 0}$, there is a standard construction of such filtrations \cite{Li}. Suppose that $\cV$ has a strong generating set consisting of fields $\{\alpha^i|\ i \in I\}$ of conformal weight $d_i$. In particular, $\alpha^i_{(n)} \alpha^j$ is a linear combination of normally ordered monomials 
$$ :\partial^{k^1_1} \alpha^{i_1} \cdots \partial^{k^1_{r_1}}\alpha^{i_1} \partial^{k^2_1} \alpha^{i_2} \cdots \partial^{k^2_{r_2}}\alpha^{i_2}
 \cdots \partial^{k^n_1} \alpha^{i_n} \cdots \partial^{k^n_{r_n}} \alpha^{i_n}:$$ where $r_1 d_{i_1} + \cdots+ r_n d_{i_n} \leq p$, then this defines a good increasing filtration on $\cV$.

The following useful observation is due to Arakawa \cite{Ar1}. 
\begin{lemma} \label{Araprop} Let $\cV$ be a conformal vertex algebra, where $\cV_{\Delta}$ is the subspace of conformal weight $\Delta$. Then
$$F^p(\cV_{\Delta}) = G_{\Delta-p} \cV_{\Delta},$$ where $F^p(\cV_{\Delta}) = \cV_{\Delta} \cap F^p(\cV)$ and $G_p(\cV_{\Delta}) = \cV_{\Delta} \cap G_p(\cV)$. Therefore $\text{gr}^F(\cV) \cong \text{gr}_G(\cV)$ as Poisson vertex algebras. In particular, $\text{gr}_G(\cV)$ does not depend on choice of strong generating set used to defined the filtration.
\end{lemma}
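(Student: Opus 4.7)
The plan is to compare both filtrations on each conformal weight subspace $\cV_\Delta$ separately by using a common spanning set: normally ordered monomials in the strong generators $\{\alpha^i\}$, where $\alpha^i$ has weight $d_i$. The key observation is that for any such monomial
$$M \ = \ :\partial^{k_1}\alpha^{i_1}\cdots\partial^{k_s}\alpha^{i_s}:\ \in\ \cV_\Delta,$$
the weight decomposition $\Delta=\sum_j k_j+\sum_j d_{i_j}$ is immediate, the "derivative count" $\sum_j k_j$ places $M$ in $F^{\sum k_j}(\cV)$ by Li's definition, and the "generator weight sum" $\sum_j d_{i_j}$ places $M$ in $G_{\sum d_{i_j}}(\cV)$ by the definition of the good increasing filtration recalled just before the lemma. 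Thus, within $\cV_\Delta$, the condition $\sum k_j\geq p$ for $F^p$ is equivalent to the condition $\sum d_{i_j}\leq\Delta-p$ for $G_{\Delta-p}$, so the desired identity $F^p(\cV_\Delta)=G_{\Delta-p}(\cV_\Delta)$ will follow provided both filtrations, on $\cV_\Delta$, are spanned by generator monomials subject to the appropriate inequality.

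The inclusion $G_{\Delta-p}(\cV_\Delta)\subseteq F^p(\cV_\Delta)$ is immediate from the definition of $G$ and the observation above. For the reverse inclusion, I would start with an arbitrary defining element $:\partial^{n_1}a^1\cdots\partial^{n_r}a^r:\in F^p(\cV)\cap \cV_\Delta$, where $\sum_j n_j\geq p$, and expand each $a^j$ (of pure weight) as a linear combination of normally ordered generator monomials of the same weight. This reduces the problem to iterated Wick products of generators and their derivatives, which I would then flatten into ordinary normal ordered monomials using the non-associativity identity \eqref{nonasswick} repeatedly. The crucial accounting is that every correction term produced by \eqref{nonasswick} either gains an additional derivative via a $\partial^{n+1}$ factor or involves a positive-mode product $b_{(n)}c$; both effects strictly raise the $F$-level, so the corrections land in $F^{p+1}(\cV_\Delta)$, while each flat leading monomial retains derivative count $\geq p$ and hence lies in $G_{\Delta-p}(\cV_\Delta)$. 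This gives $F^p(\cV_\Delta)\subseteq G_{\Delta-p}(\cV_\Delta)+F^{p+1}(\cV_\Delta)$. Iterating terminates because $F^{\Delta+1}(\cV_\Delta)=0$ (the derivative count of any nonzero generator monomial of weight $\Delta$ is at most $\Delta$), and the successive corrections absorb into $G_{\Delta-p}(\cV_\Delta)$ using $G_{\Delta-p-k}\subseteq G_{\Delta-p}$. Combining both inclusions yields the claimed equality.

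Passing to quotients, $F^p(\cV_\Delta)/F^{p+1}(\cV_\Delta)\cong G_{\Delta-p}(\cV_\Delta)/G_{\Delta-p-1}(\cV_\Delta)$, and summing over $p$ and $\Delta$ produces the isomorphism $\text{gr}^F(\cV)\cong \text{gr}_G(\cV)$ of graded vector spaces. Since both Poisson vertex algebra structures are induced from the Wick product and the modes $a_{(n)}$ for $n\geq 0$ on $\cV$ by the same procedure, and the bidegree accounting (for fields of weights $\Delta_1,\Delta_2$ with $F$-degrees $p_1,p_2$, the Wick product lies in $F$-degree $p_1+p_2$ on weight $\Delta_1+\Delta_2$, equivalently $G$-degree $(\Delta_1-p_1)+(\Delta_2-p_2)$) is compatible with the re-indexing $p\leftrightarrow \Delta-p$, the isomorphism respects all Poisson vertex operations. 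Finally, because $\text{gr}^F(\cV)$ is defined intrinsically, without reference to any generating set, the resulting $\text{gr}_G(\cV)$ is independent of the strong generating set used to define $G_\bullet$.

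The main obstacle is the reverse inclusion: going from the general defining spanning set of $F^p$ (with \emph{arbitrary} inputs $a^j\in\cV$) to a spanning set in terms of generator monomials alone, while keeping control of both filtration degrees. The non-associativity of the Wick product is the essential combinatorial nuisance, but identity \eqref{nonasswick} ensures that every correction ascends strictly in $F$, allowing the induction on the $F$-level to terminate within the fixed (finite) conformal weight $\Delta$.
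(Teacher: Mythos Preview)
The paper does not prove this lemma; it simply records it as an observation due to Arakawa \cite{Ar1} (this is Proposition~2.6.1 in that reference). Your argument is correct and is essentially the standard one: on each weight space $\cV_\Delta$, both filtrations are controlled by the same data on generator monomials via the identity $\Delta=\sum_j k_j+\sum_j d_{i_j}$, so the re-indexing $p\leftrightarrow \Delta-p$ matches them.

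One point of precision: your justification that $F^{\Delta+1}(\cV_\Delta)=0$ because ``the derivative count of any nonzero generator monomial of weight $\Delta$ is at most $\Delta$'' is slightly circular, since expressing elements of $F^p$ in terms of generator monomials with derivative count $\geq p$ is precisely the hard inclusion you are establishing. The non-circular argument is immediate from Li's definition itself: a spanning element $:\partial^{n_1}a^1\cdots\partial^{n_r}a^r:$ of $F^p(\cV)$, with each $a^j$ homogeneous of weight $w_j\geq 0$, has weight $\sum n_j+\sum w_j\geq \sum n_j\geq p$, so its component in $\cV_\Delta$ vanishes whenever $p>\Delta$. With this in place, your downward induction on $p$ terminates and the rest of the argument goes through as you wrote it.
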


Suppose that $\cA$ and $\cB$ are vertex algebras with good increasing filtrations $G_{\bullet} \cA$ and $G_{\bullet} \cB$, and let
$$f: \cA \hookrightarrow \cB,$$ be an injective homomorphism such that $f(G_p \cA) \subseteq G_p\cB$. We then have a homomorphism of Poisson vertex algebras $\text{gr}(f): \text{gr}_G(\cA) \rightarrow \text{gr}_G(\cB)$, but this map need not be injective. For example, consider the embedding  $f: \cH^{\mathbb{Z}_2} \hookrightarrow \cH$, where $\cH^{\mathbb{Z}_2}$ denotes the $\mathbb{Z}_2$-orbifold of the Heisenberg algebra $\cH$. It is well known that $\cH^{\mathbb{Z}_2}$ is strongly generated by the Virasoro field $L$ and a weight $4$ field $W$ \cite{DN}. Then the image $\phi_4(W)$ of $W$ in $\text{gr}_G(\cH^{\mathbb{Z}_2})$ is nilpotent (equivalently, it is nilpotent in $\text{gr}^F(\cH^{\mathbb{Z}_2})$ \cite{AL}), but $\text{gr}_G(\cH)$ is the polynomial ring $\mathbb{C}[\alpha, \partial \alpha, \partial^2 \alpha,\dots]$, so $\phi_4(W)$ lies in the kernel of $\text{gr}(f)$.

The map $f:\cA \rightarrow \cB$ also induces a good increasing filtration $G^{f}_{\bullet} \cA$ on $\cA$ as follows:
$$G^{f}_p \cA = \{ a \in \cA|\ f(a) \in G_p \cB\}.$$ We denote by $\text{gr}_f(\cA)$ the associated graded object. Note that we always have an injective map
$$\text{gr}_f(\cA) \hookrightarrow \text{gr}_G(\cB),$$ but in general $\text{gr}_f(\cA)  \neq \text{gr}_G(\cA)$. In general, it is difficult to determine when $\text{gr}_f(\cA) \cong \text{gr}_G(\cA)$, but the following criterion will suffice for all our examples.

\begin{lemma} \label{lem:filtration} Let $\cA$ and $\cB$ be vertex algebras with good increasing filtrations $G_{\bullet} \cA$ and $G_{\bullet} \cB$, and let $$f: \cA \hookrightarrow \cB$$ be a homomorphism such that $f(G_p\cA) \subseteq G_p\cB$.
Suppose that $$\{\tilde{\omega}_i\} \subseteq \text{gr}_f(\cA) \subseteq \text{gr}_G(\cB)$$ is a generating set for $\text{gr}_f(\cA)$ as a differential algebra with the following properties.
\begin{itemize}
\item[(i)] $\tilde{\omega}_i$ is homogeneous of degree $d_i > 0$ in $\text{gr}_G(\cB)$, for all $i$.
\item[(ii)] There exist fields $\omega_i(z) \in G_{d_i} \cA $ such that $\phi_{d_i}(f(\omega_i)) = \tilde{\omega}_i$, for all $i$.
\end{itemize}

Then 
\begin{enumerate}
\item $\cA$ is strongly generated by $\{\omega_i\}$.
\item $f(\cA) \cap G_p \cB = f(G_p\cA)$. It follows that the map $\text{gr}(f): \text{gr}_G(\cA) \rightarrow \text{gr}_G(\cB)$ is injective, and $\text{gr}_{f}(\cA) \cong \text{gr}_G(\cA)$.
\item If in addition, we have $\text{wt} \ \omega_i = d_i$, then $\text{gr}_G(\cA) \cong \text{gr}^F(\cA)$ as well.
\end{enumerate}
\end{lemma}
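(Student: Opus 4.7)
The plan is to prove parts (1) and (2) by simultaneous induction on $p$; part (3) will then be an immediate consequence of Arakawa's lemma. The reason for the simultaneous induction is that strong generation of $G_p\cA$ requires that, after subtracting off a polynomial in the $\omega_i$'s whose image agrees with that of $a$ in $\text{gr}_G(\cB)$, the remainder lies in $G_{p-1}\cA$; but \emph{a priori} one only knows the remainder lies in $f^{-1}(G_{p-1}\cB)$, and closing this gap is exactly part (2) at the previous level.

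Introduce the statements $(1_p)$: every element of $G_p\cA$ is a $\mathbb{C}$-linear combination of normally ordered monomials $:\partial^{k_1}\omega_{i_1}\cdots\partial^{k_r}\omega_{i_r}:$ with $d_{i_1}+\cdots+d_{i_r}\leq p$; and $(2_p)$: $f(\cA)\cap G_p\cB = f(G_p\cA)$. The base case $p=0$ is clear since $G_0\cA = G_0\cB = \mathbb{C}$ and $f(1)=1$. For the inductive step, assume $(1_{p-1})$ and $(2_{p-1})$. Given $a\in G_p\cA$, the class $\phi_p(f(a))$ lives in the degree-$p$ part of $\text{gr}_f(\cA)\subseteq \text{gr}_G(\cB)$, so by the generating hypothesis it equals $P(\tilde{\omega})$ for some differential polynomial $P$ of total $d$-degree $p$ in the $\tilde{\omega}_i$. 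Condition (ii) allows us to lift $P$ to a normally ordered polynomial $P(\omega)\in\cA$ built from the chosen preimages $\omega_i\in G_{d_i}\cA$; since $\partial$ is a degree-zero derivation on $\text{gr}_G(\cA)$ and the Wick product is additive on degrees in any good increasing filtration, $P(\omega)\in G_p\cA$, and by construction $\phi_p(f(P(\omega))) = P(\tilde{\omega}) = \phi_p(f(a))$. Thus $f(a - P(\omega)) \in G_{p-1}\cB$. Applying $(2_{p-1})$ yields $a - P(\omega) \in G_{p-1}\cA$, and $(1_{p-1})$ expresses this remainder as a normally ordered polynomial in the $\omega_i$'s, so $a$ is one as well, establishing $(1_p)$. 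For $(2_p)$, the identical argument applied to an arbitrary $a\in\cA$ with $f(a)\in G_p\cB$ produces $P(\omega)\in G_p\cA$ with $f(a - P(\omega)) \in G_{p-1}\cB$, and then $(2_{p-1})$ forces $a - P(\omega) \in G_{p-1}\cA$, whence $a\in G_p\cA$.

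Part (1) is the union of the $(1_p)$, and part (2) is the union of the $(2_p)$; the injectivity of $\text{gr}(f):\text{gr}_G(\cA)\to\text{gr}_G(\cB)$ and the isomorphism $\text{gr}_f(\cA)\cong\text{gr}_G(\cA)$ follow formally from the equality $f(G_p\cA) = f(\cA)\cap G_p\cB$. For part (3), under the additional hypothesis $\text{wt}\ \omega_i = d_i$, the good increasing filtration on $\cA$ produced by the strong generating set $\{\omega_i\}$ via the standard construction recalled earlier in the excerpt satisfies the compatibility with conformal weight required by Arakawa's lemma, so $\text{gr}^F(\cA) \cong \text{gr}_G(\cA)$. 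The main obstacle is exactly the interdependence flagged at the start: injectivity of $f$ alone does not transfer filtration-lowering information from $\cB$ back to $\cA$ without strictness, and strictness is itself part of the desired conclusion, so one cannot cleanly separate (1) from (2).
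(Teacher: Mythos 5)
Your proof is correct, and its core mechanism is exactly the paper's: express the leading symbol $\phi_p(f(a))$ as a homogeneous differential polynomial $P(\tilde{\omega}_i)$ of total degree $p$, lift it to the normally ordered polynomial $P(\omega_i)$ (which lies in $G_p\cA$ precisely because each $\omega_i \in G_{d_i}\cA$), and observe that the remainder drops one level in the $\cB$-filtration. The only real difference is organizational. You run a simultaneous induction, invoking $(2_{p-1})$ inside the proof of $(1_p)$ to pull the remainder back into $G_{p-1}\cA$. The paper instead proves (1) without any appeal to (2): it simply iterates the correction step, so that at each stage the remainder $\alpha - \alpha' - \alpha'' - \cdots$ is only known to have image in a lower $G_q\cB$ (not to lie in $G_q\cA$), and since every correction is an explicit normally ordered polynomial in the $\omega_i$ of total degree $\leq q$, the descent terminates with $f$ of the remainder equal to zero, at which point injectivity of $f$ forces the remainder to vanish. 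Statement (2) is then proved by a separate induction identical to your $(2_p)$ step. So your closing remark that one ``cannot cleanly separate (1) from (2)'' is overstated: strictness of the filtration is not needed for strong generation, only for the identification $\text{gr}_f(\cA) \cong \text{gr}_G(\cA)$. That said, your interleaved induction is perfectly valid, proves the same statements, and arguably makes the degree bookkeeping more explicit; part (3) via Lemma \ref{Araprop} is handled the same way in both arguments.
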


 \begin{proof} For (1), let $\alpha \in G_p\cA$ be nonzero. Since $f$ is injective, $f(\alpha) \neq 0$, and by assumption $f(\alpha) \in G_p\cB$. Then there exists $q \leq p$ such that $f(\alpha) \in G_q \cB \setminus  G_{q-1} \cB$, so $\phi_q(f(\alpha))$ can be expressed as a normally ordered polynomial $P(\tilde{\omega}_i)$ in $\tilde{\omega}_i$ and their derivatives, of total degree $q$. Let $\alpha' \in \cA$ be the corresponding normally ordered polynomial in the fields $\omega_i$ and their derivatives. By our assumptions, $\alpha' \in G_q \cA \subseteq G_p \cA$, and $\alpha - \alpha'$ has the property that $f(\alpha - \alpha') \in G_{q-1} \cB$. Continuing this process, we can find some $\alpha'' \in G_{q-1} \cA \subseteq G_p \cA$ which is a normally ordered polynomial in $\omega_i$ and their derivatives, such that $f(\alpha - \alpha ' - \alpha'') = 0$, and hence $\alpha = \alpha' + \alpha''$ since $f$ is injective.

Next, we have $f(G_p\cA) \subseteq f(\cA) \cap G_p \cB$, so to prove (2), we need to show  
$$f(\cA) \cap G_p \cB \subseteq f(G_p\cA).$$ Suppose this holds for all degrees less than $p$. Let $\beta \in ( f(\cA) \cap G_p\cB)\setminus (f(\cA) \cap G_{p-1}\cB)$. Then we can write $\phi_p(\beta)$ as a polynomial $P(\tilde{\omega}_i)$ in $\tilde{\omega}_i$ and their derivatives, of total degree $p$. Let $\alpha \in \cA$ be the corresponding normally ordered polynomial in $\omega_i$ and their derivatives. By assumption, $\alpha \in G_p(\cA)$ and $f(\alpha) - \beta$ lies in $f(\cA) \cap G_{p-1}\cB$. By induction, 
$$ f(\alpha) - \beta \in f(G_{p-1}\cA),$$ that is, $f(\alpha) - \beta   = f(\gamma)$ for some $\gamma \in G_{p-1}\cA$. Then  $\alpha  - \gamma \in G_p\cA$ and $f(\alpha - \gamma) = \beta$, as claimed. 

Finally, (3) is immediate from Lemma \ref{Araprop}. \end{proof}

\begin{remark} The notion of good increasing $\mathbb{Z}_{\geq 0}$-filtrations can easily be modified to include $\frac{1}{2}\mathbb{Z}_{\geq 0}$-filtrations, where $G_{i}\cA \subseteq G_{i+ \frac{1}{2}} \cA$ for all $i \in \frac{1}{2} \mathbb{Z}$. Also, the notion works for vertex superalgebras with no modification, and the statement of Lemma \ref{lem:filtration} continues to hold. \end{remark}

The examples we need are the following. We give $\cS(V)$, $\cE(V)$, and $\cF(V)$ the following good increasing filtrations: \begin{enumerate}
\item $G_{\frac{r}{2}} \cS(V)$ is spanned by the monomials
$$\{:\partial^{k_1} \beta^{x_1} \cdots \partial^{k_s}\beta^{x_s}\partial^{l_1} \gamma^{y'_1}\cdots \partial^{l_t}\gamma^{y'_t}:|\ x_i\in V,\ y'_i\in V^*,\  k_i,l_i\geq 0,\  s+t \leq r\}.$$
\item $G_{\frac{r}{2}} \cE(V)$ is spanned by the monomials
$$\{:\partial^{k_1} b^{x_1} \cdots \partial^{k_s} b^{x_s}\partial^{l_1} c^{y'_1}\cdots \partial^{l_t}c^{y'_t}:|\ x_i\in V,\ y'_i\in V^*,\  k_i,l_i\geq 0,\  s+t \leq r\}.$$
\item $G_{\frac{r}{2}} \cF(V)$  is spanned by the monomials
$$\{:\partial^{k_1} \phi^{x_1} \cdots \partial^{k_s} \phi^{x_s}:|\  x_i\in V, \ k_i \geq 0, \ s \leq r\}.$$
\end{enumerate}
Similarly, for any affine vertex (super)algebra $V^k(\gg)$, we define $G_r V^k(\gg)$ to be the span of all monomials in the generators $X^{\xi}$ and their derivatives of length at most $r$. Then all the homomorphisms $f: V^k(\gg) \rightarrow \cB$ in the previous subsection satisfy $f(G_r V^k(\gg)) \subseteq G_r \cB$.

For $\cV = \cS(V)$, $\cE(V)$, or $\cF(V)$, we have $\text{gr}_G(\cV) \cong \text{gr}^F(\cV)$ by Lemma \ref{Araprop}, so for simplicity of notation, we will always denote $\text{gr}_G(\cV)$ by $\text{gr}(\cV)$.

\section{Coset construction}
\begin{defn} Let $\cA$ be a vertex algebra, and let $\cV$ be a subalgebra. The commutant of $\cV$ in $\cA$, denoted by $\text{Com}(\cV,\cA)$, is the subalgebra of elements $a\in\cA$ such that $[v(z),a(w)] = 0$ for all $v\in\cV$. Equivalently, $v_{(n)} a = 0$ for all $v\in\cV$ and $n\geq 0$.\end{defn}

This was introduced by Frenkel and Zhu \cite{FZ}, and is a standard way to construct new vertex algebras from old ones. If $\cA$ and $\cV$ have Virasoro vectors $L^{\cA}$ and $L^{\cV}$, then $\cC = \text{Com}(\cV,\cA)$ has Virasoro vector $L^{\cA} - L^{\cV}$, and $\cV \otimes \cC\hookrightarrow \cA$ is a conformally embedding. If $\cV$ is a homomorphic image of $V^k(\gg)$, we call $\cC$ an {\it affine coset}, and it is just the invariant space $\cA^{\gg[t]}$.

In this paper, the main examples of affine cosets we study have the following form.
\begin{equation}\label{coset:bg}  \text{Com}(\tilde{V}^{-k}(\gg), \cS(V)) = \cS(V)^{\gg[t]},\end{equation}
\begin{equation} \label{coset:bc} \text{Com}(\tilde{V}^{k}(\gg), \cE(W)) = \cE(W)^{\gg[t]},\end{equation}
\begin{equation} \label{coset:bgbc} \text{Com}(\tilde{V}^{-k+ l}(\gg), \cS(V) \otimes \cE(W)) = (\cS(V) \otimes \cE(W))^{\gg[t]}.\end{equation}

\subsection{The method of arc spaces} We now recall the approach to studying these cosets using the invariant theory of arc spaces that was introduced in \cite{LSS2}. First, with respect to the good increasing filtration $G_{\bullet} \cS(V)$ defined above, we have
\begin{equation} \label{structureofgrs} \text{gr}(\cS(V))\cong \mathbb{C}[\bigoplus_{k\geq 0} (V_k\oplus V^*_k)], \qquad V_k = \{\beta^{x}_{k} |\ x\in V\}, \qquad V^*_k = \{\gamma^{x'}_{k} |\  x'\in V^*\},\end{equation} as commutative algebras. Here $\beta^{x}_{k}$ and $\gamma^{x'}_{k}$ are the images of $\beta^{x}_{(-k-1)} 1 = \frac{1}{k!}\partial^k \beta^{x}$ and $\gamma^{x'}_{(-k-1)} 1 = \frac{1}{k!}\partial^k \gamma^{x'}$ in $\text{gr}(\cS(V))$ under the projection $G_1 \cS(V) \rightarrow G_1\cS(V) / G_0 \cS(V) \subseteq \text{gr}(\cS(V))$, respectively. Note that this notation is slightly different from \cite{LSS2}, and is chosen so that the derivation $\partial$ on $\text{gr}(\cS(V))$ is given by
\begin{equation}\label{actionofpartial} \partial \beta^x_k = (k+1) \beta^x_{k+1}, \qquad \partial \gamma^{x'}_k = (k+1) \gamma^{x'}_{k+1}.\end{equation}
Then \eqref{def:rho12even} induces an action of $\gg[t]$ on $\text{gr}(\cS(V))$ by derivations of degree zero, defined on generators by 
\begin{equation}\label{actiontheta}\xi t^r(\beta^x_i) = \beta^{\rho(\xi)(x)}_{i-r}, \qquad \xi t^r(\gamma^{x'}_i) = \gamma^{\rho^*(\xi)(x')}_{i-r}.\end{equation}
We therefore have an isomorphism of differential algebras $$\text{gr}(\cS(V)) \cong  \mathbb{C}[J_{\infty}(V\oplus V^*)]$$ which is in fact an isomorphism of $\gg[t]$-modules. Here the differential $\partial$ on $\mathbb{C}[J_{\infty}(V\oplus V^*)]$ is normalized as in \cite{LS1,LS2,LS3}.

Next, the inclusion map $f: \cS(V)^{\gg[t]} \hookrightarrow \cS(V)$ induces a filtration $G^f_{\bullet}  \cS(V)^{\gg[t]}$ on $\cS(V)^{\gg[t]}$ where $$G^f_{p}  \cS(V)^{\gg[t]} =  \cS(V)^{\gg[t]} \cap G_p \cS(V).$$ 
We have an induced injective map
\begin{equation}\label{injgamm1} \text{gr}_f(\cS(V)^{\gg[t]})\hookrightarrow \text{gr}(\cS(V))^{\gg[t]} \cong  \mathbb{C}[J_{\infty}(V\oplus V^*)]^{\gg[t]} \cong  \mathbb{C}[J_{\infty}(V\oplus V^*)]^{J_{\infty}(G)},\end{equation} 
where $G$ is a connected Lie group whose action on $\cS(V)$ is infinitesimally generated by the action of $\gg$. In general, \eqref{injgamm1} can fail to be surjective. If generators for $\mathbb{C}[J_{\infty}(V\oplus V^*)]^{J_{\infty}(G)}$ as a differential algebra are known, to check the surjectivity of \eqref{injgamm1} it suffices to check that the generators lie in the image. Finally, there is always a map
\begin{equation}\label{injgamm2}  \mathbb{C}[J_{\infty}((V\oplus V^*)/\!\!/G)] \rightarrow \mathbb{C}[J_{\infty}(V\oplus V^*)]^{J_{\infty}(G)}.\end{equation}
If \eqref{injgamm2} is surjective, the generators of $\mathbb{C}[V\oplus V^*]^G$ will generate $\mathbb{C}[J_{\infty}(V\oplus V^*)]^{J_{\infty}(G)}$ as a differential algebra. If both \eqref{injgamm1} and \eqref{injgamm2} are surjective, we therefore obtain a strong finite generating set for $\cS(V)^{\gg[t]}$ as a vertex algebra.

We next recall how certain cosets of the form $\cE(W)^{\gg[t]}$ and $(\cS(V) \otimes \cE(W))^{\gg[t]}$ given by \eqref{coset:bc} and \eqref{coset:bgbc} can be studied using similar methods. First, given an algebraic group $G$ and finite-dimensional $G$-modules $\tilde U$ and $U$, let $\tilde U_j\cong \tilde U^*$ for $j\geq 0$, and fix a basis $\{x_{1,j},\dots, x_{m,j}\}$ for $\tilde U_j$. Let $S^{\tilde U} = \mathbb{C}[\bigoplus_{j\geq 0} \tilde U_j]$. The map $\mathbb{C}[J_{\infty}(\tilde U)]  \rightarrow S^{\tilde U}$ sending $x_i^{(j)}\mapsto x_{i,j}$ is an isomorphism of differential algebras, where the differential $D$ on $S^{\tilde U}$ is given by $D(x_{i,j}) = (j+1) x_{i,j+1}$.

For $j\geq 0$, let $U_j\cong U^*$ and let $L^U= \bigwedge \bigoplus_{j\geq 0} U_j$. Fix a basis $\{y_{1,j},\dots, y_{n,j}\}$ for $U_j$ and extend the differential on $S^{\tilde U}$ to an even differential $D$ on $S^{\tilde U}\otimes L^U$, defined on generators by $D(y_{i,j}) = (j+1) y_{i,j+1}$. There is an action of $J_{\infty}(G)$ on $S^{\tilde U}\otimes L^U$, and we may consider the invariant ring $(S^{\tilde U}\otimes L^U)^{J_{\infty}(G)}$. 
Let $S^{\tilde U}_0 = \mathbb{C}[{\tilde U}_0] \subseteq S^{\tilde U}$ and $L^U_0 = \bigwedge (\tilde{U}_0)\subseteq L$, and let $\bra (S^{\tilde U}_0\otimes L^U_0)^G\ket$ be the differential algebra generated by $(S^{\tilde U}_0\otimes L^U_0)^G$, which lies in $(S^{\tilde U}\otimes L^U)^{J_{\infty}(G)}$.

Since $G$ acts on the direct sum $\tilde U\oplus U^{\oplus k}$ of $k$ copies of $U$, we have a map 
\begin{equation} \label{kcopiesofv}\mathbb{C}[J_{\infty}(\tilde U\oplus U^{\oplus k} /\!\!/ G)] \rightarrow \mathbb{C}[J_{\infty}(\tilde U\oplus U^{\oplus k})]^{J_{\infty}(G)}.\end{equation}

\begin{thm} \label{oddgen} 
\begin{enumerate}

\item Suppose that \eqref{kcopiesofv} is surjective for all $k\geq 0$. Then $$(S^{\tilde U}\otimes L^U)^{J_{\infty}(G)} =\bra (S^{\tilde U}_0\otimes L^U_0)^G\ket,$$ which generalizes Theorem 7.1 of \cite{LSS2}. In particular, if we fix a generating set $\{\alpha_1,\dots, \alpha_k\}$ for $(S^{\tilde U}_0\otimes L^U_0)^G$, then  $\{\alpha_1,\dots, \alpha_k\}$ generates $(S^{\tilde U}\otimes L^U)^{J_{\infty}(G)}$ as a differential algebra.

\item Suppose that \eqref{kcopiesofv} is an isomorphism for all $k\geq 0$. Let $\{f_1,\dots, f_r\}$ be a generating set for the ideal of relations among $\{\alpha_1,\dots, \alpha_k\}$. Then the ideal of relations among $\{\alpha_1,\dots, \alpha_k\}$ and their derivatives is generated as a differential ideal by $\{f_1,\dots, f_r\}$.
\end{enumerate}
\end{thm}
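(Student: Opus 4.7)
The plan is to prove both parts by a polarization/restitution argument that reduces the claim for the anticommuting variables in $L^U$ to the purely symmetric setting covered by the hypothesis on \eqref{kcopiesofv}. Since $L^U=\bigoplus_{k\geq 0} L^U_k$ is graded by exterior degree, and this grading is preserved by $J_\infty(G)$ and by the differential $D$, it suffices to treat each degree $k$ separately. For fixed $k$, I set $B_k := S^{\tilde U}\otimes S^{U^{\oplus k}}$ and define a linear restitution map $\Psi_k : B_k \to S^{\tilde U}\otimes L^U$ by sending the multilinear monomial $y^{(1)}_{a_1,j_1}\cdots y^{(k)}_{a_k,j_k}$ to $y_{a_1,j_1}\wedge\cdots\wedge y_{a_k,j_k}$ (with copies listed in natural order) and killing non-multilinear monomials. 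A direct check shows that $\Psi_k$ is $J_\infty(G)$-equivariant, commutes with $D$, is $S_k$-equivariant for the sign representation on the target, and restricts to a surjection of the multilinear subspace onto $S^{\tilde U}\otimes L^U_k$.

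For part (1), given $f\in(S^{\tilde U}\otimes L^U_k)^{J_\infty(G)}$, I would lift to a sign-antisymmetric, multilinear $\tilde f\in B_k^{J_\infty(G)}$ with $\Psi_k(\tilde f)=f$, obtained by averaging any multilinear preimage of $f$ over $S_k$ with signs. The hypothesis that \eqref{kcopiesofv} is surjective for $k$ copies then gives $B_k^{J_\infty(G)}=\langle(B_k)_0^G\rangle$ as a differential algebra. I choose multi-homogeneous generators $\{h_\nu\}\subset(B_k)_0^G$ with multidegrees $\mathbf{d}_\nu\in\mathbb{Z}_{\geq 0}^k$ and expand $\tilde f$ as a differential polynomial in the $h_\nu$. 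Because $\tilde f$ is multilinear, every surviving monomial $D^{m_1}h_{\nu_1}\cdots D^{m_\ell}h_{\nu_\ell}$ must have factor multidegrees that are $0/1$ vectors partitioning $\{1,\dots,k\}$. Set $\beta_\nu := \Psi_k(h_\nu)\in(S^{\tilde U}_0\otimes L^U_0)^G$; since $\Psi_k$ commutes with $D$, and $\Psi_k(\prod_j D^{m_j}h_{\nu_j}) = \pm\prod_j D^{m_j}\beta_{\nu_j}$ (super-commutative product with sign from reordering exterior factors), $f=\Psi_k(\tilde f)$ is a differential polynomial in the $\beta_\nu$. Any generating set $\{\alpha_1,\dots,\alpha_k\}$ of $(S^{\tilde U}_0\otimes L^U_0)^G$ then expresses each $\beta_\nu$ polynomially, yielding the conclusion of (1).

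For part (2), assume \eqref{kcopiesofv} is an isomorphism for all $k$, so that the kernel of the map from differential polynomials in the chosen generators of $(B_k)_0^G$ onto $B_k^{J_\infty(G)}$ is the differential ideal generated by the classical relations in $(B_k)_0^G$. Given a relation $R$ among $\{\alpha_i, D^m\alpha_i\}$ in $(S^{\tilde U}\otimes L^U)^{J_\infty(G)}$ of exterior degree $k$, I would polarize $R$ to a multilinear, sign-antisymmetric element $\tilde R\in B_k^{J_\infty(G)}$; since $\Psi_k$ is injective on the sign-antisymmetric multilinear subspace, $\tilde R$ itself vanishes in $B_k^{J_\infty(G)}$. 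By the isomorphism hypothesis, $\tilde R$ lies in the differential ideal generated by the classical relations in $(B_k)_0^G$, which by the classical polarization theorem are generated, as a $G$-ideal, by the polarizations of $f_1,\dots,f_r$. Applying $\Psi_k$ converts this into the desired statement that $R$ lies in the differential ideal generated by $\{f_1,\dots,f_r\}$ in $(S^{\tilde U}\otimes L^U)^{J_\infty(G)}$.

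The main technical obstacle is the bookkeeping around $\Psi_k$: since it is only linear and not a ring homomorphism, its behavior on products must be verified separately on multilinear pieces with disjoint $0/1$ multidegrees, where the super-commutative product law emerges only after careful sign tracking under antisymmetrization. For part (2), the additional delicate step is the polarization-restitution correspondence between the classical relation ideals on both sides, which must be formulated precisely in the presence of the auxiliary $S^{\tilde U}$ factor and the $\mathbb{Z}^k_{\geq 0}$-grading by the $k$ copies.
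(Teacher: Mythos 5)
Your strategy is the same as the paper's: reduce the odd variables to the purely even setting by passing to the multilinear, sign-antisymmetric part of $S^{\tilde U}\otimes S^{U^{\oplus k}}$ (the paper's $T^A_{\sgn}\subseteq S^A$), your restitution map $\Psi_k$ agreeing on multilinear elements with the paper's $\Phi_A=\phi_A\circ\psi_A$. Your argument for part (1) --- lift an invariant to the sign-isotypic multilinear subspace, expand it in multidegree-homogeneous generators of $(B_k)_0^G$, observe that multilinearity forces the surviving factors to have $0/1$ multidegrees partitioning the $k$ copies, and push down by restitution using the signed multiplicativity on disjointly supported factors --- is exactly the paper's proof of (1).

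Part (2), however, has a genuine gap at the final step. You assert that the classical relations among the generators of $(B_k)_0^G$ are ``generated, as a $G$-ideal, by the polarizations of $f_1,\dots,f_r$.'' This is unjustified and false in general: $(B_k)_0^G$ has generators that are not multilinear in the $k$ copies (hence are not polarizations of any $\alpha_t$), and the relations among the full generating set are not controlled by the $f_j$. Fortunately the claim is also unnecessary: what the argument actually needs is only that, after writing the polarized relation as $\sum_j f_j D^{m_j} r_j$ with each $r_j$ a level-zero relation in $(B_k)_0^G$, the restitution of each surviving $r_j$ is again a level-zero relation among $\alpha_1,\dots,\alpha_k$, hence lies in the ideal $(f_1,\dots,f_r)$ by hypothesis. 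To make ``applying $\Psi_k$'' to such an expression legitimate you must also (a) work at the level of the free differential superpolynomial algebras on the two generating sets --- a relation is a formal expression, not the zero element of $B_k$, so you need a map of formal expressions intertwining the two evaluation maps (the paper's $\Psi_B:Q^B\to P^e$ with $\pi_P\circ\Psi_B=\Phi_B\circ\pi_Q$) --- and (b) normalize the multilinear generators of $(B_k)_0^G$ other than the designated polarizations of the $\alpha_t$ so that their restitutions vanish; the paper replaces $\beta_i$ by $\beta_i-\iota_{B_i,A}(\psi_{B_i}(\tilde \beta_i))$ for exactly this purpose. Without that normalization, $\Psi_k$ of a monomial in the generators of $(B_k)_0^G$ need not be a polynomial in the $\alpha_t$ of the required form, and the intertwining identity on which the restitution step rests breaks down.
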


\begin{proof} For each integer $a \geq 1$, let $S^{U,a}$ be the copy of $S^U$ with generators $z^{a}_{i,j}$ for $i=1,\dots, n$, and $j\geq 0$.  
For $A=\{a_1,\dots, a_d\}$ with $a_i<a_{i+1}$, let 
 $$S^A= S^{\tilde U} \otimes (S^{U,a_1}\otimes \cdots \otimes S^{U,a_d}),\qquad S^A_0= S_0^{\tilde U} \otimes (S_0^{U,a_1}\otimes \cdots \otimes S_0^{U,a_d}).$$
 $S^A$ is $\mathbb Z_{\geq 0}^A$-graded if we give each generator $z^{a_r}_{i,j}$ the multidegree $(0,\dots, 1,\dots, 0)$ with $1$ in the $a_r^{\text{th}}$ position. Let  $T^A$ be the subspace of $S^A$ which is linear in $S^{U,a_1},\dots, S^{U,a_d}$, that is, $T^A$ consists of elements of the form 
\begin{equation} \label{deftildep} \overline{p} = \sum_{|I|,|J|} f_{|I|, |J|}  \otimes (z^{a_1}_{i_1,j_1}\otimes \cdots \otimes z^{a_d}_{i_d,j_d}).\end{equation} In this notation, $|I| = (i_1,\dots, i_d)$ and $|J| = (j_1,\dots, j_d)$ are ordered lists, $f_{|I|, |J|}$ are elements of $S^{\tilde U}$, and the above sum is finite. We will suppress the index of summation and use the shorthand $\overline{p} = \sum f  \otimes (z^{a_1}_{i_1,j_1}\otimes \cdots \otimes z^{a_d}_{i_d,j_d})$. Observe that  
\begin{equation} \label{iso:commalge}S^A \cong \mathbb{C}[J_{\infty}(\tilde U \oplus (\bigoplus_{a=1}^d U^a)],\end{equation} 
which is a commutative algebra. If $B=\{b_1,\dots, b_e\}\subseteq A$ with $b_i< b_{i+1}$, we have the natural embedding 
$$\iota_{B,A}: S^B\to S^A, \qquad f\otimes f^{b_1}\otimes\cdots\otimes f^{b_e}\mapsto f\otimes f^{a_1}\otimes\cdots\otimes f^{a_d},$$
with $f^a=1$ if $a\notin B$. If $C=A\backslash B$, $\bar p\in S^B$ and $\bar q\in S^{C}$ then 
$\iota_{B,A}(\bar p)\iota_{C,A}(\bar q)\in S^A$.

Observe next that the permutation group $\mathfrak{S}_d$ acts on $S^A$, and preserves $T^A$.  For $\sigma \in \mathfrak{S}_d$,
 $$ \sigma(\overline{p}) = \sum f  \otimes (z^{a_1}_{i_{\sigma(1)},j_{\sigma(1)}}\otimes \cdots \otimes z^{a_d}_{i_{\sigma(d)},j_{\sigma(d)}}).$$ Let $T^A_{\sgn} \subseteq T^A$ denote the subspace  $$\{\overline{p} \in T^A|\  \sigma(\overline{p}) = \sgn(\sigma) \overline{p},\ \text{for all}\ \sigma \in \mathfrak{S}_d\}.$$ 
We have a retraction
$$\psi_A: T^A \to T^A_{\sgn},\quad \psi_A(p)=\frac 1 {d!}\sum_{\sigma\in \mathfrak{S}_d} \sgn(\sigma) \sigma(\overline{p}).$$
For each $d \geq 0$, let $S^{\tilde U} \otimes L^{U,d} \subseteq S^{\tilde U}\otimes L^{U}$ be the subspace of degree $d$ in the variables $y_{i,j}$. Then we have linear isomorphisms
$$\phi_A: T^A_{\sgn} \rightarrow S^{\tilde U} \otimes L^{U,d}, \qquad  \sum f  \otimes (z^{a_1}_{i_1,j_1}\otimes \cdots \otimes z^{a_d}_{i_{d},j_{d}}) \mapsto \sum f  \otimes (y_{i_1,j_1}\wedge \cdots \wedge y_{i_{d},j_{d}}).$$
We have $\phi_A D=D\phi_A$ .
Let $\Phi_A=\phi_A\circ\psi_A:T^A\to S^{\tilde U}\otimes L^{U,d}$, which has the following properties:
\begin{enumerate}
	\item $\Phi_A(\phi^{-1}_A(p))=p$, for any $p\in S^{\tilde U}\otimes L^{U,d}$;
	\item $\Phi_A(D\bar p)=D\Phi_A(\bar p)$, for $\bar p\in T^A$.
	\item $\Phi_A(\sigma(\bar p))=\sgn(\sigma) \Phi_A(\bar p)$, for $\bar p\in T^A$.
	\item  Suppose $\sigma\in \mathfrak S _d $ with $a_{\sigma(1)}<\cdots<a_{\sigma(e)} $ and $a_{\sigma(e+1)}<\cdots<a_{\sigma(d)}$. Let $B=\{a_{\sigma(1)}, \cdots, a_{\sigma(e)} \}$,  $C=\{a_{\sigma(e+1)}, \cdots, a_{\sigma(d)} \}$, $\bar p\in T^B$, and $\bar q\in T^{C}$. Then 
	$\Phi_A(\iota_{B,A}(\bar p)\iota_{C,A}(\bar q))=\sgn(\sigma) \Phi_B(\bar p)\Phi_C(\bar q)$.
\end{enumerate}
The following properties of the action of $J_\infty(G)$ on $S^A$ are apparent:
\begin{enumerate}
	\item The action of $J_\infty(G)$ preserves the grading of $\mathbb Z_{\geq 0}^A$. 
	\item $T^A$ is a $J_\infty(G)$-invariant subspace;
	\item the actions of $\mathfrak{S}_d$ and $J_\infty(G)$ commute;
	\item $\iota_{B,A}$, $\phi_A$, $\psi_A$ and $\Phi_A$ are $J_\infty(G)$-equivariant.
\end{enumerate}

Suppose  \eqref{kcopiesofv} is surjective for all $k\geq 0$, so that $(S^A)^{J_\infty(G)}$ is generated by $(S^A_0)^G$ as a differential algebra. 
We can choose homogeneous generators $\{\beta_1,\dots, \beta_u,\beta_{u+1},\dots, \beta_{u+v}\}$ for $ (S^A_0)^G$. In this notation, the first $u$ generators $\{\beta_1,\dots, \beta_u\}$ are linear in each copy of $S^{U,a} $ which appears, and the remaining generators $\beta_{u+1},\dots, \beta_{u+v}$ are at least quadratic in one of these copies. Then there is $B_i\subseteq A$, $1\leq i\leq u$, $\tilde \beta_i\in T^{B_i}$, such that $\beta_i=\iota_{B_i,A}(\tilde \beta_i)$.

For statement (1), if $p\in S^{\tilde U}\otimes L^{U,d}$ is a $J_\infty(G)$-invariant element, then $\phi^{-1}_A(p)$ is $J_\infty(G)$-invariant.  We have
$$\phi^{-1}_A(p)=\sum c\,D^{k_1}\beta_{i_1}\cdots D^{k_l}\beta_{i_l}.$$
with   $1\leq i_j\leq u$ since $\phi^{-1}_A(p)\in T^A_{\sgn} \subseteq T^A$. So
$$\phi^{-1}_A(p)=\sum c\,D^{k_1}\iota_{B_{i_1},A}(\tilde \beta_{i_1})\cdots D^{k_l}\iota_{B_{i_l}, A}(\tilde \beta_{i_l}),$$ with $\cup_j B_{i_j}=A$  and $B_{i_j}\cap B_{i_{j'}}=\emptyset$.  Thus 
$$p=\Phi_A(\phi^{-1}_A(p))=\sum\pm c\, D^{k_1}\Phi_{B_{i_1}}(\tilde \beta_{i_1})\cdots D^{k_l}\Phi_{B_{i_l}}(\tilde \beta_{i_l}).$$ 
Since $\Phi_{B_{i_j}}(\beta_{i_j})$ are $G$-invariant element in $S_0^{\tilde U}\otimes L_0^{U}$, $(S^{\tilde U}\otimes L^U)^{J_{\infty}(G)} =\bra (S^{\tilde U}_0\otimes L^U_0)^G\ket$.

For statement (2), 
we may assume that the generators $\alpha_t$ of $(S^{\tilde U}_0\otimes L^U_0)^G$ are homogeneous of degree $e_t$ in the variables $y_{i,0}$. Note that $\alpha_t$ is odd (respectively even) if and only if $e_t$ is odd (respectively even). Let $P$ be the differential polynomial superalgebra on generators $X_1,\dots, X_k$ and their derivatives, with appropriate parity, so that $(S^{\tilde U}\otimes L^U)^{J_{\infty}(G)}$ is the quotient of $P$ by the homomorphism of differential superalgebras $$\pi_P: P \rightarrow (S^{\tilde U}\otimes L^U)^{J_{\infty}(G)},\qquad  
D^k X_t \mapsto D^k \alpha_t.$$ We will view relations among $\alpha_1,\dots, \alpha_k$ and their derivatives, as elements of $\text{ker} \ \pi_P$. Note that $P$ has a compatible grading if we assign $X_t$ the degree $e_t$, and we denote by $P^e$ the subspace of degree $e$.

For $t = 1,\dots, k$, let 
 \begin{equation}\label{betasigma}\beta_t^\sigma=\sigma(\iota_{C_t,A}(\phi_{C_t}^{-1}(\alpha_t))), \quad  C_t=\{a_1,\dots, a_{e_t}\},\quad \sigma\in  \mathfrak{S}_d.
 	\end{equation}
 Now $\{\beta^\sigma_t|\ t = 1,\dots, k,\ \sigma \in  \mathfrak{S}_d\}\cup \{\beta_1,\dots, \beta_{u+v}\}$ is a set of generators of $(S_0^A)^G$.
We may assume that 
 \begin{equation} \label{signinvt} \Phi_{B_i}(\tilde \beta_i)= 0, \quad 1\leq i\leq u .\end{equation}
This is because $\phi_{B_i}$  is a linear isomorphism, so $\iota_{B_i,A}(\psi_{B_i}(\tilde \beta_i))$ can be generated by $\beta^\sigma_t$ since  $\Phi_{B_i}(\tilde \beta_i)$ is generated by $\alpha_t$.
Therefore we can replace $\beta_i$ by $\beta_i-\iota_{B_i,A}(\psi_{B_i}(\tilde \beta_i))$. 

Let $Q$ be the differential polynomial algebra on generators $Y^\sigma_t$, $Y_1,\dots, Y_{u+v}$, and their derivatives. Then $Q$ is $\mathbb Z_{\geq 0}^A$-graded such that the multidegree of $D^kY_t$ is the same as the multidegree of $D^k\beta_t$. So $(S^A)^{J_{\infty}(G)}$ is the quotient of $Q$ by the homomorphism of differential algebras 
$$\pi_Q: Q  \rightarrow (S^A)^{J_{\infty}(G)}, \qquad D^k Y^\sigma_t \mapsto D^k \beta^\sigma_t,\quad D^k Y_i \mapsto D^k \beta_i.$$ 
Then relations among $\beta^\sigma_t$ and $\beta_i$ and their derivatives are just elements of the kernel of $\pi_Q$. Under the assumption that \eqref{kcopiesofv} is an isomorphism for all $k\geq 1$, $\text{ker}\ \pi_Q$ is generated as a differential ideal by polynomials in $Y^\sigma_t$ and $Y_i$ , i.e., elements with no derivatives, which are homogeneous in the $\mathbb Z_{\geq 0}^A$-grading.

For $B\subseteq A$, let  $g_B\in \mathbb Z_{\geq 0}^A$  with $g_B(a)=0$  if $a\notin B$ and $g_B(a)=1$ if $a\in B$.  Let $Q^B \subseteq Q$ denote the homogeneous subspace of multidegree $g_B$. Clearly $Q^B$ is a differential subspace of $Q$, i.e., it is closed under the action of $D$. Let $e$ be the number of elements in $B$.
We have a linear map of differential spaces 
$$\Psi_{B}: Q^B \rightarrow P^e,$$ defined on monomials as follows:
$\Psi_B(M)=0$ if $M$ is a monomial containing some $Y_i$, and 
$$\Psi_B(D^{k_1}Y^{\sigma_1}_{t_1}\cdots D^{k_l}Y^{\sigma_l}_{t_l})=\sgn(\sigma) D^{k_1}X_{t_1}\cdots D^{k_l} X_{t_l},$$
where $\sigma$ is a permutation of $B$ such that the sequence
$$\sigma(\sigma_1(a_1)),\dots, \sigma(\sigma_1(a_{e_{t_1}})),\dots, \sigma(\sigma_l(a_{e_{t_l}}))$$
is in increasing order.
The map $\Psi_B$ has the following properties:
\begin{enumerate}
	\item $\Psi_B(DM)=D\Psi_B(M)$;
	\item $\pi_P \circ \Psi_B = \Phi_{B} \circ \pi_Q$;
	\item  Let $\sigma\in \mathfrak S _{d} $ with $a_{\sigma(1)}<\cdots<a_{\sigma(e)} $ and $a_{\sigma(e+1)}<\cdots<a_{\sigma(d)}$. Let $B=\{a_{\sigma(1)}, \dots, a_{\sigma(e)} \}$, $C=\{a_{\sigma(e+1)}, \dots, a_{\sigma(d)} \}$, $\bar p\in T^B$, $\bar q\in T^{C}$, $M_1 = \pi_Q^{-1}(\bar p)$, and $M_2 = \pi_Q^{-1}(\bar q)$. Then $\Psi_B (M_1)\Psi_C(M_2)=\sgn(\sigma)\Psi_A(M_1M_2)$.
	\end{enumerate}

Now if $R\in \text{ker} \ \pi_P$ is a relation of $D^k \alpha_t$, we can assume $R$ is homogeneous of degree $d$.
Assume 
$$R=\sum c \, D^{k_1}X_{i_1}\cdots D^{k_l}X_{i_l}.$$
Let $d_j=e_{i_1}+\cdots +e_{i_j}$ and $\sigma_j\in\mathfrak  S_d$ a permutation such that $\sigma_j(i)=d_{j-1}+i \mod d$.
Consider 
$$\tilde R=\sum c  \, D^{k_1}\beta^{\sigma_1}_{i_1}\cdots D^{k_l}\beta^{\sigma_l}_{i_l}\in T^A.$$
\begin{equation}\label{eqnR} \psi_A(\tilde R)=\frac 1{d!}\sum_{\sigma\in \mathfrak S_d}\sgn(\sigma) \sum c  \, D^{k_1}(\beta^{\sigma\sigma_1}_{i_1})\cdots D^{k_l}(\beta^{\sigma \sigma_l}_{i_l}).
\end{equation}
 Replacing  $\beta^\sigma_t$ by $Y^\sigma_t$ in the expression \eqref{eqnR}, we get a polynomial 
 $$r=\frac 1{d!}\sum_{\sigma\in \mathfrak S_d}\sgn(\sigma) \sum c  \, D^{k_1}\sigma(Y^{\sigma\sigma_1}_{i_1})\cdots D^{k_l}(Y^{\sigma \sigma_l}_{i_l})$$
  in $Q^A$ with $\Psi_A(r)= R$ .
Since $$\Phi_A(\tilde R)=\sum c\, D^{k_1}\alpha_{i_1} \cdots D^{k_l}\alpha_{i_l}=\pi_P(R)=0,$$ 
$\pi_Q(r)=\psi_A(\tilde R)=0$.
Thus $r=\sum f_j D^{k_j} r_j$, and $r_j$ is a polynomial in $R^\sigma_t$ and $Y_i$, which are homogeneous with respect to the $\mathbb Z_{\geq 0}^A$-grading. Since $r\in Q^A$, $f_j$ can be chosen to be  homogeneous as well. There are $B_j\subseteq A$, $C_j=A\backslash B_j$ such that $f_j\in Q^{B_j}$ and $r_j\in Q^{C_j}$.
Then $R=\Psi_A(r)=\sum \pm \Psi_{B_i}(f_j)D^{k_i}\Psi_{C_i}(r_j)$, and 
$\Psi_{C_i}(r_j)\in \text{ker}\ \pi_P$ is an element of level zero (i.e. involving no derivatives). This completes the proof. \end{proof}

In view of Theorem \ref{arcspaceinvt}, Theorem \ref{oddgen}(2) applies to the case $G = GL_n$ and $\tilde U =V^{\oplus  k},\  U=V^{\oplus l}$ with $V=\mathbb{C}^n \oplus (\mathbb{C}^n)^*$, and the case $G = Sp_{2n}$ and $\tilde U =V^{\oplus  k} , U=V^{\oplus l}$ with $V=\mathbb{C}^{2n}$. Theorem \ref{oddgen}(1) applies to the case $G=SL_n$ and $\tilde U =V^{\oplus  k}, \ U=V^{\oplus l}$ with $V=\mathbb{C}^n \oplus (\mathbb{C}^n)^*$.

\section{The case $\gg = \gs\gl_n$}
For $n\geq 2$, let $\gg = \gs\gl_n$ and let $V =(\mathbb{C}^n)^{\oplus m}$ be the sum of $m$ copies of the standard module $\mathbb{C}^n$. We regard $V$ as the space of $n \times m$ matrices, so that the homomorphism \eqref{conformalemb:slnslmbg} corresponds to the left and right actions of $\gs\gl_n$ and $\gs\gl_m$ on $V$, respectively. From now on, we use the generators $\beta^{ij}, \gamma^{ij}$ for $i= 1,\dots , n$ and $j = 1, \dots, m$, satisfying $\beta^{ij}(z) \gamma^{kl}(w) \sim \delta_{i,k} \delta_{j,l} (z-w)^{-1}$. The generator of $\cH$ is then $e = \sum_{i=1}^n \sum_{j=1}^m :\beta^{ij} \gamma^{ij}:$.

\begin{thm}  \label{main:sln} For all $n\geq 2$ and $m\geq 1$, $\text{Com}(\tilde{V}^{-m}(\gs\gl_n), \cS(nm))= \cS(nm)^{\gs\gl_{n}[t]}$ is an extension of $\tilde{V}^{-n}(\gg\gl_m)$. It is strongly generated by  
$$X^{ij} = \sum_{k=1}^n :\beta^{ki} \gamma^{kj}:\ \in \tilde{V}^{-n}(\gg\gl_m),$$ together with $2\binom{m}{n}$ additional fields of conformal weight $\frac{n}{2}$, if $m\geq n$:
$$D_{j_1,\dots, j_n} = \left|\begin{matrix} \beta^{1j_1}& \cdots &  \beta^{1j_n} \cr  \vdots  & & \vdots  \cr   \beta^{n j_1}  & \cdots &  \beta^{n j_n} \end{matrix} \right|, \qquad D'_{j_1,\dots, j_n} =  \left|\begin{matrix} \gamma^{1j_1} & \cdots & \gamma^{1j_n} \cr  \vdots  & & \vdots  \cr  \gamma^{n j_1} & \cdots & \gamma^{n j_n} \end{matrix} \right|,$$ for all sets $ \{j_1,\dots, j_n\}\subseteq \{1,\dots, m\}$ of distinct indices.
\end{thm}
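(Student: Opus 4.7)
The plan is to prove the theorem by the arc space strategy of Section 3, combining Theorem \ref{arcspaceinvt}(3) with Lemma \ref{lem:filtration}.

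First, I would verify that all three families of proposed generators lie in the coset $\cS(nm)^{\gs\gl_n[t]}$. Since the fields $\beta^{ij}$ pairwise have trivial OPE and similarly for the $\gamma^{ij}$, the determinant expressions $D_{j_1,\dots,j_n}$ and $D'_{j_1,\dots,j_n}$ are unambiguously defined as iterated normally ordered products summed with sign over row permutations. The $X^{ij}$ are the generators of the commuting image $\tilde V^{-n}(\gg\gl_m)$ from the dual pair embedding \eqref{conformalemb:slnslmbg} and so are manifestly in the coset. For the determinants, I would use \eqref{ncw} together with the fact that the OPE $X^\xi(z)\beta^{ij}(w)$ has only a simple pole (and similarly for $\gamma$) to reduce the action of $X^\xi_{(k)}$ on $D_{j_1,\dots,j_n}$ for $k\geq 0$ to a single row-by-row sum; antisymmetry in the rows then makes this sum into the classical determinant of the action of $\xi\in\gs\gl_n$ on $D$, which vanishes since $D$ is $SL_n$-invariant.

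Second, I would pass to the associated graded side. With the good increasing filtration $G_{\bullet}\cS(nm)$ of Section 2, we have $\text{gr}(\cS(nm))\cong \mathbb{C}[J_\infty(V\oplus V^*)]$ with $V=(\mathbb{C}^n)^{\oplus m}$, carrying the induced $J_\infty(SL_n)$-action. The associated graded images of our generators are
\begin{equation*}
\phi_1(X^{ij})=\sum_{k=1}^n \gamma^{ki}_0\beta^{kj}_0,\qquad \phi_{n/2}(D_{j_1,\dots,j_n})=\det(\beta^{i j_k}_0),\qquad \phi_{n/2}(D'_{j_1,\dots,j_n})=\det(\gamma^{i j_k}_0),
\end{equation*}
which are precisely the first fundamental theorem generators of $\mathbb{C}[V\oplus V^*]^{SL_n}$. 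Theorem \ref{arcspaceinvt}(3), applied with $p=q=m$, shows that the map \eqref{injgamm2} is surjective in all regimes, so these classical invariants generate $\mathbb{C}[J_\infty(V\oplus V^*)]^{J_\infty(SL_n)}$ as a differential algebra. Combined with the observation that all these classical generators already lie in the image of \eqref{injgamm1}, this forces \eqref{injgamm1} to be surjective and hence an isomorphism, giving $\text{gr}_f(\cS(nm)^{\gs\gl_n[t]})\cong\mathbb{C}[J_\infty(V\oplus V^*)]^{J_\infty(SL_n)}$.

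Third, I would conclude by invoking Lemma \ref{lem:filtration} with $\cA=\cS(nm)^{\gs\gl_n[t]}$, $\cB=\cS(nm)$, and $f$ the inclusion: each proposed strong generator is homogeneous of the correct filtration degree ($1$ for $X^{ij}$, $n/2$ for $D$ and $D'$), and its symbol equals the corresponding differential-algebra generator identified in the previous step. The lemma then yields strong generation of $\cS(nm)^{\gs\gl_n[t]}$ by the listed fields, together with the identification $\text{gr}_G(\cA)\cong\text{gr}^F(\cA)\cong\mathbb{C}[J_\infty(V\oplus V^*)]^{J_\infty(SL_n)}$. The main obstacle I anticipate is the first step: the verification that the $D_{j_1,\dots,j_n}$ and $D'_{j_1,\dots,j_n}$ are killed by every nonnegative mode of $X^\xi$ for $\xi\in\gs\gl_n$ requires a careful induction on $n$ using \eqref{ncw} and the antisymmetrization; once that lift is secured, the remaining steps are formal consequences of Theorem \ref{arcspaceinvt}(3) and Lemma \ref{lem:filtration}.
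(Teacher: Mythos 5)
Your proposal is correct and follows essentially the same route as the paper: surjectivity of \eqref{injgamm2} from Theorem \ref{arcspaceinvt}(3), verification that the determinant fields are $\gs\gl_n[t]$-invariant so that \eqref{injgamm1} is surjective, and then Lemma \ref{lem:filtration}. The only difference is cosmetic — the paper disposes of your ``main obstacle'' in one line by noting that $D_{j_1,\dots,j_n}$ depends only on the $\beta$'s, so no double contractions with the quadratic generators of $\tilde{V}^{-m}(\gs\gl_n)$ are possible and the nonnegative modes act by the classical derivation, which is exactly what your \eqref{ncw} induction establishes.
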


\begin{proof} The case $1 \leq m < n$ is given by Theorem 4.1 of \cite{LSS2}; in this case $\tilde{V}^{-n}(\gg\gl_m) = V^{-n}(\gg\gl_n)$, which is simple. The case $n=2$ follows from Theorem 4.3 of \cite{LSS2}. In this case, $\cS(2m)^{\gs\gl_{2}[t]}$ is a homomorphic image of $V^{-2}(\gs\go_{2m})$; see also Proposition 8.1 of \cite{AKMPP}.

In the general case, \eqref{injgamm2} is surjective by Theorem \ref{arcspaceinvt} (3). Since $D_{j_1,\dots, j_n}$ depends only on the $\beta^{ij}$, there are no double contractions with the generators of $\tilde{V}^{-m}(\gs\gl_n)$, so $D_{j_1,\dots, j_n}$ is $\gs\gl_n[t]$-invariant. Similarly, $D'_{j_1,\dots, j_n}$ is $\gs\gl_n[t]$-invariant. Therefore \eqref{injgamm1} is surjective as well, so the generators of the classical invariant ring $\mathbb{C}[(\mathbb{C}^n \oplus (\mathbb{C}^n)^*)^{\oplus m}]^{SL_n}$ give rise to a generating set for $\text{gr}_f(\cS(nm)^{\gs\gl_n[t]})$ as a differential algebra. By Lemma \ref{lem:filtration}, the corresponding fields strongly generate $\cS(nm)^{\gs\gl_n[t]}$ as a vertex algebra.
\end{proof}

The next statement follows immediately from Theorem \ref{main:sln} together with Theorem 3.5 of \cite{LSS2}.

\begin{cor} The Zhu algebra $A(\cS(nm)^{\gs\gl_n[t]})$ is isomorphic to the ring of invariant differential operators $\cD(V)^{SL_n}$ for $V =(\mathbb{C}^n)^{\oplus m}$. \end{cor}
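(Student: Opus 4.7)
The plan is to reduce the corollary to a general criterion (Theorem 3.5 of \cite{LSS2}) asserting that the Zhu algebra of $\cS(V)^{\gg[t]}$ is isomorphic to the invariant Weyl algebra $\cD(V)^{G}$ whenever a strong generating set for $\cS(V)^{\gg[t]}$ can be lifted from classical generators of $\mathbb{C}[V\oplus V^*]^G$ via the surjectivity of the maps \eqref{injgamm1} and \eqref{injgamm2}. The task is thus to check that these hypotheses are met for $G = SL_n$ acting on $V = (\mathbb{C}^n)^{\oplus m}$, after which the conclusion is immediate.

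First, I would recall that the Zhu algebra of the ambient $\beta\gamma$-system is $A(\cS(nm)) \cong \cD(V)$, with $[\beta^{ij}]$ and $[\gamma^{kl}]$ realizing the canonical Weyl generators. The $\gs\gl_n$-action on $\cS(nm)$ passes to the $SL_n$-action on $\cD(V)$, and the inclusion $\cS(nm)^{\gs\gl_n[t]} \hookrightarrow \cS(nm)$ induces an injective associative-algebra homomorphism
\[
\phi: A(\cS(nm)^{\gs\gl_n[t]}) \hookrightarrow A(\cS(nm))^{SL_n} = \cD(V)^{SL_n}.
\]
So everything comes down to showing surjectivity of $\phi$.

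Second, by Theorem \ref{main:sln}, the coset $\cS(nm)^{\gs\gl_n[t]}$ is strongly generated by the fields $X^{ij}$ together with the determinants $D_{j_1,\dots,j_n}$ and $D'_{j_1,\dots,j_n}$ (when $m\geq n$); these are precisely the lifts to the vertex algebra of the classical $SL_n$-invariant polarizations and $\beta$- and $\gamma$-determinants in $\mathbb{C}[V\oplus V^*]^{SL_n}$. Under $\phi$, the $X^{ij}$ map to the degree-$(1,1)$ polarization operators in $\cD(V)^{GL_n}$, and the $D_{j_1,\dots,j_n}$, $D'_{j_1,\dots,j_n}$ map to the corresponding determinantal elements of $\cD(V)^{SL_n}$. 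By the first fundamental theorem of invariant theory for $SL_n$ applied to the symbol $V\oplus V^*$ (together with the standard filtration argument that lifts a generating set for $\text{gr}\,\cD(V)^{SL_n} = \mathbb{C}[V\oplus V^*]^{SL_n}$ to a generating set for $\cD(V)^{SL_n}$ itself), these images generate $\cD(V)^{SL_n}$ as an associative algebra. Therefore $\phi$ is surjective, hence an isomorphism.

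The only substantive point is the verification that the hypotheses of Theorem 3.5 of \cite{LSS2} hold here; but this is exactly the surjectivity of \eqref{injgamm1} and \eqref{injgamm2} used in the proof of Theorem \ref{main:sln}, which in turn rests on Theorem \ref{arcspaceinvt}(3). Beyond this, there is no serious obstacle, in accord with the claim that the corollary follows immediately.
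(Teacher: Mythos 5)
Your proposal matches the paper's argument: the paper derives this corollary immediately from Theorem \ref{main:sln} together with Theorem 3.5 of \cite{LSS2}, which is exactly the reduction you carry out, with the hypotheses verified via the surjectivity of \eqref{injgamm1} and \eqref{injgamm2} established in the proof of Theorem \ref{main:sln}. Your additional sketch of how the Zhu-algebra map to $\cD(V)^{SL_n}$ is shown to be an isomorphism is a reasonable unpacking of the cited theorem and introduces no new ideas beyond it.
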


The proof of the next statement is the same as the proof of Theorem 3.5 of \cite{LSS2}. 
\begin{cor} The Zhu commutative algebra $R_{\cS(nm)^{\gs\gl_n[t]}}$ is isomorphic to the ring of invariant polynomial functions $\mathbb{C}[V \oplus V^*]^{SL_n}$.
\end{cor}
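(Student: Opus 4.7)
The plan is to follow the template of the proof of Theorem 3.5 of \cite{LSS2}, now using the strong generators provided by Theorem \ref{main:sln}. By definition $R_{\cV} = F^0(\cV)/F^1(\cV)$. Since the generators $X^{ij}$, $D_{j_1,\dots,j_n}$, $D'_{j_1,\dots,j_n}$ from Theorem \ref{main:sln} satisfy $\text{wt} = d_i$ (their conformal weight equals their filtration degree in the $G_{\bullet}$-filtration inherited from $\cS(nm)$), Lemma \ref{lem:filtration}(3) combined with Lemma \ref{Araprop} gives $\text{gr}^F(\cV) \cong \text{gr}_G(\cV)$, and under this identification $R_{\cV}$ becomes the weight-equals-degree subspace $\bigoplus_{\Delta} G_{\Delta}\cV_{\Delta}/G_{\Delta-1}\cV_{\Delta}$.

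Next I would invoke the surjectivity of \eqref{injgamm1} already established in the proof of Theorem \ref{main:sln}; combined with Lemma \ref{lem:filtration}(2), this yields an isomorphism of bi-graded Poisson vertex algebras
$$\text{gr}_G(\cV) \;\cong\; \mathbb{C}[J_{\infty}(V\oplus V^*)]^{J_{\infty}(SL_n)}$$
that respects both conformal weight and filtration degree. The task therefore reduces to identifying the weight-equals-degree piece of the right-hand side. Since $\beta^{ij}_k$ has filtration degree $\tfrac{1}{2}$ and conformal weight $\tfrac{1}{2}+k$ (and similarly for $\gamma^{ij}_k$), a monomial has weight equal to degree precisely when it involves only the zero-jets $\beta^{ij}_0, \gamma^{ij}_0$, so the weight-equals-degree subspace of $\mathbb{C}[J_{\infty}(V\oplus V^*)]$ is just $\mathbb{C}[V\oplus V^*]$. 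By \eqref{actiontheta}, $\xi t^r$ annihilates every zero-jet whenever $r > 0$, so $\gs\gl_n[t]$-invariance on this subalgebra collapses to classical $SL_n$-invariance, giving $\mathbb{C}[V\oplus V^*]^{SL_n}$ and hence the claim.

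The main point of care is the bookkeeping in the first step: one must verify that the identification of $R_{\cV} = F^0/F^1$ with the weight-equals-degree subspace of $\text{gr}_G(\cV)$ matches the analogous weight-equals-degree subspace of $\mathbb{C}[J_\infty(V\oplus V^*)]^{J_\infty(SL_n)}$ under the isomorphism of the second step. Once this matching of gradings is transparent, the result is immediate from the classical first fundamental theorem for $SL_n$ on $V \oplus V^*$.
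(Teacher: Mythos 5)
Your proof is correct and follows the route the paper intends: the paper simply cites the proof of Theorem 3.5 of \cite{LSS2}, and your argument is the natural instantiation of that method using the machinery already established here (surjectivity of \eqref{injgamm1} from Theorem \ref{main:sln}, Lemma \ref{lem:filtration}, and Lemma \ref{Araprop} to identify $R_{\cV}$ with the weight-equals-degree part of $\text{gr}^F(\cV) \cong \mathbb{C}[J_{\infty}(V\oplus V^*)]^{J_{\infty}(SL_n)}$, which is $\mathbb{C}[V\oplus V^*]^{SL_n}$). The bookkeeping you flag is handled exactly as you describe, so no gap remains.
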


Theorem \ref{main:sln} allows us to give a complete description of the singular support of $\cS(nm)^{\gs\gl_n[t]}$.

\begin{cor}  \label{main:slnclassicalfree} \begin{enumerate}
\item For all $n\geq 2$ and $ m \leq n+2$, $\cS(nm)^{\gs\gl_n[t]}$ is classically free. 
\item For $m > n+2$, $\cS(nm)^{\gs\gl_n[t]}$ is not classically free. However, the kernel of the homomorphism \begin{equation} \label{AM:casesln} \mathbb{C}[J_{\infty}((V \oplus V^*)/\!\!/SL_n)] \rightarrow \text{gr}^F(\cS(nm)^{\gs\gl_n[t]}),\end{equation} coincides with the nilradical of $\mathbb{C}[J_{\infty}((V \oplus V^*)/\!\!/SL_n)]$. Therefore $\cS(nm)^{\gs\gl_n[t]}$ is classically free at the level of varieties, and $\text{SS}(\cS(nm)^{\gs\gl_n[t]})$ is just the reduced scheme of $J_{\infty}((V \oplus V^*)/\!\!/SL_n)$.
\item The kernel of \eqref{AM:casesln} is finitely generated as a differential ideal.
\end{enumerate}
\end{cor}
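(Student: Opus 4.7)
The plan is to identify the map $\Phi$ in \eqref{AM:casesln} with the arc space invariant map \eqref{equ:invariantringmap} for $G = SL_n$ acting on $V \oplus V^*$, and then read all three conclusions directly off Theorem \ref{arcspaceinvt}(3).

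First I would assemble the necessary identifications. By Theorem \ref{main:sln} and Lemma \ref{lem:filtration}(3), we have $\text{gr}^F(\cS(nm)^{\gs\gl_n[t]}) \cong \text{gr}_f(\cS(nm)^{\gs\gl_n[t]})$. The proof of Theorem \ref{main:sln} shows that \eqref{injgamm1} is surjective in this setting (the strong generators $X^{ij}, D_{j_1,\dots,j_n}, D'_{j_1,\dots,j_n}$ hit generators of the arc space invariant ring), which yields
$$\text{gr}^F(\cS(nm)^{\gs\gl_n[t]}) \cong \mathbb{C}[J_{\infty}(V\oplus V^*)]^{J_{\infty}(SL_n)}.$$
Combined with the preceding corollary $R_{\cS(nm)^{\gs\gl_n[t]}} \cong \mathbb{C}[V\oplus V^*]^{SL_n}$, we have $(R_{\cV})_{\infty} \cong \mathbb{C}[J_{\infty}((V\oplus V^*)/\!\!/SL_n)]$. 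Both $\Phi$ and \eqref{equ:invariantringmap} are homomorphisms of differential algebras, and both agree on the degree-zero component $R_{\cV} \cong \mathbb{C}[V\oplus V^*]^{SL_n}$; by the universal property of the arc space ring they must coincide.

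Second, I would apply Theorem \ref{arcspaceinvt}(3) with $p = q = m$. When $m \leq n+2$, part (i) gives that \eqref{equ:invariantringmap} is an isomorphism, which is exactly (1). When $m > n+2$, part (ii) says \eqref{equ:invariantringmap} is surjective with kernel equal to the nilradical $\cN$ of $(R_{\cV})_{\infty}$; since killing the nilradical is precisely passage to the reduced scheme, this gives (2) together with the identification $\text{SS}(\cS(nm)^{\gs\gl_n[t]}) = (J_{\infty}((V\oplus V^*)/\!\!/SL_n))_{\text{red}}$. For (3), Corollary 4.4 of \cite{LS3}, cited inside Theorem \ref{arcspaceinvt}(3)(ii), provides an explicit finite generating set for $\cN = \ker \Phi$ as a differential ideal.

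The only point demanding any care is the compatibility of $\Phi$ with \eqref{equ:invariantringmap} under the identifications above. Once one checks that the restriction of $\Phi$ to the degree-zero subring $R_{\cV} \subseteq \text{gr}^F(\cV)$ matches the inclusion $\mathbb{C}[V\oplus V^*]^{SL_n} \hookrightarrow \mathbb{C}[J_{\infty}(V\oplus V^*)]^{J_{\infty}(SL_n)}$ of constant arcs, the three conclusions reduce mechanically to Theorem \ref{arcspaceinvt}(3) and Corollary 4.4 of \cite{LS3}. No substantive obstacle is anticipated; the entire content of the corollary has been pre-packaged by Theorem \ref{main:sln} and the invariant-theoretic results of \cite{LS3}.
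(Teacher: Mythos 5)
Your proposal is correct and follows essentially the same route as the paper: the paper likewise uses Theorem \ref{main:sln} together with Lemma \ref{lem:filtration} to identify $\text{gr}^F(\cS(nm)^{\gs\gl_n[t]})$ with $\mathbb{C}[J_{\infty}(V\oplus V^*)]^{J_{\infty}(SL_n)}$ and then reads off all three statements from Theorem \ref{arcspaceinvt}(3) and Corollary 4.4 of \cite{LS3}. The only cosmetic difference is that the paper disposes of the range $m<n$ separately by observing the coset is a universal affine vertex algebra (hence freely generated and classically free), whereas your uniform argument also covers that case.
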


\begin{proof} In the case $m<n$, since $\cS(nm)^{\gs\gl_n[t]} \cong \tilde{V}^{-n}(\gg\gl_m)$ which is a universal affine vertex algebra, there is nothing to prove since all universal affine vertex algebras are classically free. For all $m\geq n$, it follows from Theorem \ref{main:sln} and Lemma \ref{lem:filtration} that $$\text{gr}_f(\cS(nm)^{\gs\gl_n[t]}) \cong \text{gr}(\cS(nm)^{\gs\gl_n[t]}) \cong \text{gr}^F(\cS(nm)^{\gs\gl_n[t]}) \cong  \mathbb{C}[J_{\infty}(V \oplus V^*)]^{J_{\infty}(SL_n)}.$$

For $m \leq n+2$, it follows from Theorem 1.1 (3) that for $V = (\mathbb{C}^n)^{\oplus m}$ the map \eqref{injgamm2} is an isomorphism, so the above map is an isomorphism as well. Finally, the kernel is given explicitly and the statement about the nilradical is proven in Corollary 4.4 of \cite{LS3}.
\end{proof}

There is only one other example in the literature where the kernel of \eqref{eq:ss} is nontrivial and is known to be differentially finitely generated, namely $\text{Vir}_{3,4}$ \cite{AEH}. The examples given by Corollary \ref{main:slnclassicalfree} are the first examples which are not $C_2$-cofinite.

In the cases $m\geq n$, it is an interesting question whether $\cS(nm)^{\gs\gl_n[t]}$ can be identified with vertex algebras appearing in other contexts, such as $\cW$-algebras. We now consider the cases $m = n$ and $m = n+1$.

\subsection{The case $m = n$}
In this case, $\cS(n^2)^{\gs\gl_n[t]}$ is strongly generated by the generators of $V^{-n}(\gs\gl_n)$, the Heisenberg field $e = \sum_{i,j=1}^n :\beta^{ij} \gamma^{ij}:$, together with two fields 
$$D^+ = D_{1,\dots,n}= \left|\begin{matrix} \beta^{11}& \cdots &  \beta^{1n} \cr  \vdots  & & \vdots  \cr   \beta^{n1}  & \cdots &  \beta^{nn} \end{matrix} \right|, \qquad D^-  = D'_{1,\dots,n}=  \left|\begin{matrix} \gamma^{11} & \cdots & \gamma^{1n} \cr  \vdots  & & \vdots  \cr  \gamma^{n1} & \cdots & \gamma^{nn} \end{matrix} \right|,$$ of conformal weight $\frac{n}{2}$. For convenience, we replace the Heisenberg field $e$ with $J = -\frac{1}{n} e$. We have the following OPEs:
\begin{equation} \begin{split} & J(z) J(w) \sim -(z-w)^{-2},
\\ & J(z) D^{\pm}(w) = \pm  D^{\pm}(w)(z-w)^{-1}.
\\ & D^+(z) D^-(w) \sim n! (z-w)^{-n} -  n! J(w)(z-w)^{-n+1} + \dots.
\end{split} \end{equation}
The remaining terms in $D^+_{(n-r-1)} D^-$ for $r = 2,\dots, n-1$ depend only on $J$ and elements of the center of $V^{-n}(\gs\gl_n)$. Note also that the images of the two commuting copies of $V^{-n}(\gs\gl_n)$ in $\cS(n^2)$ have the same center, which is the differential polynomial algebra with generators $\nu_2,\dots, \nu_n$ of weights $2,\dots, n$.

In \cite{CGL}, for $n\geq 2$, the coset 
$$\text{Com}(V^{-n}(\gs\gl_n) \otimes V^{-n}(\gs\gl_n), \cS(n^2)) = \cS(n^2)^{\gs\gl_n[t] \oplus \gs\gl_n[t]}$$ was considered, and it was shown to be freely generated by the Heisenberg field $J$, the fields $D^{\pm}$, and the fields $\nu_2,\dots, \nu_{n-1}$ when $n\geq 3$. The reason that the remaining central element $\nu_n$ is not needed is that there exists a relation of weight $n$:
\begin{equation} \label{relation:omega_n} :D^+ D^-: - P(J, \nu_2,\dots, \nu_n) = 0, \end{equation} where $P$ is a normally ordered polynomial in $\{J, \nu_2,\dots, \nu_n\}$ and their derivatives, and the coefficient of $\nu_n$ is nonzero. Therefore $\nu_n$ can be eliminated from the strong generating set.

In particular, $\cS(n^2)^{\gs\gl_n[t] \oplus \gs\gl_n[t]}$ has the same strong generating type as the universal $\cW$-algebra $\cW^k(\gs\gl_n, f_{\text{subreg}})$ associated to $\gs\gl_n$ with its subregular nilpotent element, which was shown by Genra \cite{G} to be isomorphic to the Feigin-Semikhatov algebra $\cW^{(2)}_n$. Note that in the case $n=2$, $f_{\text{subreg}} = 0$ so $\cW^k(\gs\gl_2, f_{\text{subreg}}) \cong V^k(\gs\gl_2)$. The following result was conjectured in \cite{CGL}, and was proven for $n=2,3,4$ by direct computation.

\begin{thm} \label{casen=m} For all $n\geq 2$, $$\cS(n^2)^{\gs\gl_n[t] \oplus \gs\gl_n[t]} \cong \cW^{-n}(\gs\gl_n, f_{\text{subreg}}).$$
\end{thm}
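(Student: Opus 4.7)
The plan is to identify both sides as specializations of a single universal one-parameter family of vertex algebras, strongly freely generated by fields of the same conformal weights, and then to match the parameter on both sides. As already recalled in the excerpt, $\cS(n^2)^{\gs\gl_n[t] \oplus \gs\gl_n[t]}$ is strongly freely generated by $\{J, D^+, D^-, \nu_2, \ldots, \nu_{n-1}\}$ of weights $(1, n/2, n/2, 2, 3, \ldots, n-1)$, where the relation \eqref{relation:omega_n} established in \cite{CGL} is used to remove $\nu_n$ from the strong generating set produced by Theorem \ref{main:sln}. On the other side, Genra's theorem \cite{G} identifies $\cW^k(\gs\gl_n, f_{\text{subreg}})$ with the Feigin-Semikhatov algebra $\cW^{(2)}_n$ \cite{FS}, which is strongly freely generated by fields of the same weights for every $k$, in particular at $k = -n$. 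Moreover the leading OPE data agree in shape: $JJ \sim -(z-w)^{-2}$, $J D^\pm \sim \pm D^\pm(z-w)^{-1}$, $D^\pm D^\pm \sim 0$, $D^+ D^- \sim n!(z-w)^{-n} + \cdots$, with $\nu_2,\ldots,\nu_{n-1}$ central with respect to $J$ and $D^\pm$ up to derivatives.

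The technical core is a rigidity statement: \emph{any} vertex algebra strongly freely generated by fields of the above weights and satisfying OPEs of the above shape is determined by a single scalar parameter. This is proved by a Jacobi-identity induction on conformal weight, using \eqref{jacobi} for the triples $(J, D^+, D^-)$, $(D^+, \nu_r, D^-)$, $(\nu_r, \nu_s, D^\pm)$, and so on, to propagate the unspecified $(k)$-product coefficients from the initial data. The argument is in the spirit of the uniqueness theorems for $1$-parameter families developed in \cite{CL1,CL3,CL4}. The free-generation property on both sides is essential, since it precludes null vectors that could otherwise carry extra moduli and break the one-parameter structure.

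Granting the rigidity statement, the proof concludes by pinning down the parameter with a single scalar invariant computed on both sides. The natural candidate is the central charge of the Virasoro field of the coset, built from $L^{\cS}$ on $\cS(n^2)$ (of central charge $-n^2$) and the canonical Virasoro of $\cW^{-n}(\gs\gl_n, f_{\text{subreg}})$ produced by the Kac-Roan-Wakimoto BRST construction at the critical level. Should the central charge alone fail to distinguish the family near $k=-n$ (a real concern because the Sugawara construction collapses at critical level on the affine side), a single further structure constant such as the coefficient of $:J \nu_2:$ in $(D^+)_{(n-2)} D^-$ can be extracted by direct computation on each side and used in its place. The principal obstacle is the rigidity statement itself: the Jacobi propagation is combinatorially heavy and must be controlled uniformly in $n$, and one must verify that at each intermediate conformal weight the unspecified OPE coefficients reduce to exactly one free parameter after the free-generation constraints are imposed.
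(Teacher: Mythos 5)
Your overall strategy---free generation plus partial OPE data plus a rigidity argument---is the same in spirit as the paper's, but your rigidity statement is not correct as formulated, and the hypothesis you omit is exactly the hard part. The fields $\nu_2,\dots,\nu_{n-1}$ are central, so every Jacobi identity involving them is vacuous: the triples $(D^+,\nu_r,D^-)$ and $(\nu_r,\nu_s,D^\pm)$ propagate nothing. What the Jacobi identities of type $(J,D^+,D^-)$ do determine are the coefficients of monomials in $J$ and its derivatives appearing in $D^+_{(n-i-1)}D^-$, and, recursively, of mixed monomials; they place no restriction whatsoever on the coefficients of pure monomials in the central fields. A priori the structure therefore carries one free coefficient for each such monomial in each weight, not a single scalar parameter. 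The paper eliminates these moduli not by Jacobi propagation but by an inductive change of variables $\omega_i\mapsto\tilde\omega_i'$ on the central generators, and this is only possible because of an extra property you never state or verify: $D^{\pm}$ generate the whole coset as a vertex algebra, equivalently $D^+_{(n-i-1)}D^- = \lambda_i\nu_i+\cdots$ with $\lambda_i\neq 0$ for every $i=2,\dots,n-1$ (and likewise $\mu_i\neq 0$ on the $\cW$-algebra side, which is Theorem 4.4 of \cite{GKu}). If some $\lambda_i$ vanished, $\nu_i$ would decouple as an honest commutative tensor factor and the two algebras would not be isomorphic, so this nondegeneracy cannot be dispensed with.

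Establishing $\lambda_i\neq 0$ on the coset side is itself a nontrivial computation---the paper does it by projecting $D^+_{(n-l-1)}D^-$ into $\text{gr}(\cS(n^2))$, identifying the result with an element of the center $\cZ(\gg\gl_l)$ via the restriction $\cZ(\gg\gl_n)\to\cZ(\gg\gl_l)$, and tracking the coefficient of the cyclic monomial $x^{12}x^{23}\cdots x^{l-1,l}x^{l,1}$---and your proposal contains no substitute for this step. Finally, once the generation property is in hand there is no residual parameter left to pin down: the coefficients $n!$ and $-n!\,J$ in the $D^+D^-$ OPE are computed identically on both sides, so the central-charge (or $:J\nu_2:$-coefficient) matching step is unnecessary; as you note yourself, the central charge would in any case be a delicate invariant to rely on at critical level.
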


The idea of the proof is that $\cS(n^2)^{\gs\gl_n[t] \oplus \gs\gl_n[t]}$ and $\cW^{-n}(\gs\gl_n, f_{\text{subreg}})$ share some properties such as strong generating type, graded character and a few features of the OPE relations. Then we will show that there is a unique vertex algebra satisfying these properties, up to isomorphism.

Although the full OPE algebra of $\cW^{-n}(\gs\gl_n, f_{\text{subreg}})$ is given explicitly in \cite{GKu}, we only need the following more qualitative statement, which follows from Theorem 3.14, Proposition 4.2, and Theorem 4.4 of \cite{GKu}.
\begin{lemma} \label{properties:subreg} $\cW^{-n}(\gs\gl_n, f_{\text{subreg}})$ has the following features:
\begin{enumerate} 
\item It is freely generated by a Heisenberg field $J$, fields $G^{\pm}$ of weight $\frac{n}{2}$, and central fields $\omega_2,\dots, \omega_{n-1}$ of weights $2,\dots, n-1$ when $n\geq 3$.
\item These fields satisfy
\begin{equation} \label{eq:FS1} \begin{split} & J(z) J(w) \sim -(z-w)^{-2},
\\ & G^+(z) G^-(w) \sim n! (z-w)^{-n} -  n! J(w)(z-w)^{-n+1} + \dots,
\\ & J(z) G^{\pm}(w) = 
\pm G^{\pm}(w)(z-w)^{-1}.
\end{split} \end{equation}
\item $G^{\pm}$ generate $\cW^{-n}(\gs\gl_n, f_{\text{subreg}})$ as a vertex algebra. Equivalently, for all $i = 2,\dots, n-1$,
$$G^+_{(n-i-1)} G^- = \mu_i \omega_i + \cdots,$$ where $\mu_i \neq 0$ and the remaining terms are normally ordered monomials in $J,\omega_2,\dots, \omega_{i-1}$ and their derivatives.
\end{enumerate}
\end{lemma}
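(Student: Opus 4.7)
The plan is to assemble the three items by appealing to the explicit quantum Drinfeld–Sokolov construction of $\cW^{-n}(\gs\gl_n, f_{\text{subreg}})$, combined with Genra's isomorphism $\cW^{k}(\gs\gl_n, f_{\text{subreg}}) \cong \cW^{(2)}_n$ with the Feigin–Semikhatov algebra \cite{G}. At generic level, Feigin–Semikhatov provides a distinguished strong generating set consisting of a Heisenberg field $J$, two weight-$\frac{n}{2}$ fields $G^{\pm}$ of opposite $J$-charge, and $n-2$ bosonic fields $\omega_2,\dots,\omega_{n-1}$ of weights $2,\dots,n-1$. To verify statement (1) at $k = -n$, I would compare the graded character of $\cW^{-n}(\gs\gl_n,f_{\text{subreg}})$ (computed via the good grading for $f_{\text{subreg}}$) with the character of the free vertex algebra on this generating set; the two agree, so the generators remain free at critical level. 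The statement that $\omega_2,\dots,\omega_{n-1}$ become central at $k=-n$ is the general fact that all non-conformal Feigin–Semikhatov generators lie in the center at the critical level.

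For statement (2), the OPE $J(z)J(w)$ and the $J$-charges of $G^\pm$ are immediate from the Feigin–Semikhatov normalization. The OPE $G^+(z)G^-(w)$ is the main piece of data. Its leading pole coefficient is a scalar, which at $k = -n$ I would compute to be $n!$ by direct evaluation in the Feigin–Semikhatov realization (for instance, through the free-field screening presentation). The subleading $(z-w)^{-n+1}$ coefficient must be a field of weight one and charge zero; the only such field up to scalar is $J$, and evaluating the coefficient at $k=-n$ yields $-n!\, J$.

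For statement (3), I would argue by induction on $i$ that $\omega_i$ lies in the vertex subalgebra generated by $\{J, G^+, G^-\}$. By conformal weight, charge, and parity considerations, the coefficient $G^+_{(n-i-1)} G^-$ is a field of weight $i$ and $J$-charge zero, hence a normally ordered polynomial in $J, \omega_2, \dots, \omega_i$ and their derivatives. Writing it as $\mu_i \omega_i + (\text{terms in } J,\omega_2,\dots,\omega_{i-1})$, the inductive step reduces to verifying $\mu_i \neq 0$ at $k = -n$. The coefficients $\mu_i(k)$ are rational functions of the level that are nonzero in the free-field limit (since the Feigin–Semikhatov generators are algebraically independent for generic $k$), and I would check by inspection that none of them has a pole or zero at $k=-n$.

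The main obstacle is the nondegeneracy $\mu_i \neq 0$ at the critical level. This is not automatic because $\cW^{-n}(\gs\gl_n, f_{\text{subreg}})$ has a large center and one must rule out the possibility that the natural polynomial coupling $G^+_{(n-i-1)} G^-$ to $\omega_i$ degenerates precisely at $k=-n$. This reduces to a finite explicit calculation in the Feigin–Semikhatov OPE formulas, which I would do by tracking the leading behavior of the relevant structure constants as rational functions of $k$.
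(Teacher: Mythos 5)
The paper does not prove this lemma from first principles: it reads all three statements directly off the explicit critical-level OPE computations of Genra--Kuwabara \cite{GKu} (their Theorem 3.14, Proposition 4.2, and Theorem 4.4), which is exactly the reference that computes the full OPE algebra of $\cW^{-n}(\gs\gl_n,f_{\text{subreg}})$ at $k=-n$. Your route --- start from the generic-level Feigin--Semikhatov presentation and specialize to $k=-n$ --- is genuinely different, and parts of it are fine or even simpler than you make them: free generation by fields of the stated weights holds at \emph{every} level by the general structure theory of universal $\cW$-algebras (no character comparison needed), and once one knows $G^+_{(n-2)}G^-$ is forced by weight and charge to be a multiple of $J$, the coefficient $-n!$ follows from the Jacobi identity of type $(J,G^+,G^-)$ together with $J_{(1)}J=-1$ and $G^+_{(n-1)}G^-=n!$, with no appeal to a free-field realization.

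The genuine gap is statement (3), which is also the only part of the lemma that carries real weight in the paper's uniqueness argument for Theorem \ref{casen=m}. You correctly identify $\mu_i(-n)\neq 0$ as the crux, but you do not establish it: you defer it to ``a finite explicit calculation'' to be done ``by inspection,'' without having the explicit formulas for $\mu_i(k)$ in hand. Note that this is not a single finite check --- it is one non-vanishing statement for each $n\geq 3$ and each $2\leq i\leq n-1$ --- so either a closed formula for $\mu_i(k)$ or a structural argument at $k=-n$ is required, and that is precisely the content supplied by \cite{GKu}. Moreover, your stated reason that $\mu_i(k)$ is generically nonzero ``since the Feigin--Semikhatov generators are algebraically independent for generic $k$'' is a non sequitur: the generators are algebraically independent (indeed freely generating) at all levels, including any level where some $\mu_i$ might vanish; algebraic independence says nothing about whether $\omega_i$ appears in $G^+_{(n-i-1)}G^-$. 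Generic non-vanishing does follow from the large-$k$ free-field limit, but the issue is exactly whether $k=-n$ is an exceptional value, and that question is left unanswered. As written, the proposal therefore does not prove part (3); the clean fix is to invoke the explicit critical-level structure constants of \cite{GKu}, as the paper does.
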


\begin{lemma} $\cS(n^2)^{\gs\gl_n[t] \oplus \gs\gl_n[t]}$ is generated by $D^{\pm}$ as a vertex algebra. Equivalently, for all $i = 2,\dots, n-1$,
$$D^+_{(n-i-1)} D^- = \lambda_i \nu_i + \cdots,$$ where $\lambda_i \neq 0$ and the remaining terms are normally ordered monomials in $J,\nu_2,\dots, \nu_{i-1}$ and their derivatives.
\end{lemma}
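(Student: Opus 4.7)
The plan is to reduce everything to proving $\lambda_i \neq 0$, after which the first assertion follows by induction on $i$: combining the given relation $D^+_{(n-2)} D^- = -n! J$ with the OPE formula $D^+_{(n-i-1)} D^- = \lambda_i \nu_i + R_i$ and the strong generation of $\cS(n^2)^{\gs\gl_n[t] \oplus \gs\gl_n[t]}$ by $\{J, D^\pm, \nu_2, \dots, \nu_{n-1}\}$ from \cite{CGL} lets me recover each $\nu_i$ from $D^\pm$ and the $\nu_j$ for $j < i$. Hence I focus on establishing the OPE identity with $\lambda_i \neq 0$.

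The first step will be a weight-and-charge argument. The element $D^+_{(n-i-1)} D^-$ has conformal weight $i$ and charge $0$ under $J$, and lies in $\cS(n^2)^{\gs\gl_n[t] \oplus \gs\gl_n[t]}$. Since $D^\pm$ carry charges $\pm 1$ and any $D^+ D^-$ pair has conformal weight at least $n > i$, no $D^\pm$ can occur in a charge-zero weight-$i$ normally ordered monomial; by weight, only $\nu_j$ with $j \leq i$ may appear. This forces
$$D^+_{(n-i-1)} D^- = \lambda_i \nu_i + R_i,$$
where $R_i$ is a differential polynomial in $J, \nu_2, \dots, \nu_{i-1}$.

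To prove $\lambda_i \neq 0$, I will pass to the associated graded with respect to the length filtration $G_\bullet \cS(n^2)$ in which each $\beta^{ij}$ or $\gamma^{ij}$ has length $1$. Then $D^\pm \in G_n$ with leading symbols $\det B$ and $\det \Gamma$, where $B = (\beta^{ij}_0)$ and $\Gamma = (\gamma^{ij}_0)$. A Wick expansion shows that the $(z-w)^{-(n-i)}$ coefficient of $D^+(z)D^-(w)$ at leading length $2i$ arises entirely from exactly $n-i$ simple contractions $\beta^{r,\sigma(r)}(z) \cdot \gamma^{r,\tau(r)}(w)$ with $\sigma(r) = \tau(r)$, since any contraction involving a Taylor shift of $\beta$ or $\gamma$ strictly reduces the remaining length and so lies in $G_{2i-1}$. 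Summing with signs by the generalized Laplace expansion and then applying the Cauchy-Binet identity gives
$$\text{gr}_{2i}\bigl(D^+_{(n-i-1)} D^-\bigr) = (n-i)! \sum_{|R|=|T|=i} \det(B_{R,T}) \det(\Gamma_{R,T}) = (n-i)! \, \text{tr}\bigl(\Lambda^i(B\Gamma^T)\bigr).$$

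Finally, under the realization $V^{-n}(\gs\gl_n) \hookrightarrow \cS(n^2)$ with $e_{ab} \mapsto Y^{ab} = \sum_k :\beta^{ak}\gamma^{bk}:$, the leading symbol of $Y^{ab}$ is $(B\Gamma^T)_{ab}$. Hence the Segal--Sugawara-type generator of the Feigin--Frenkel center to which $\nu_i$ is proportional modulo products of the $\nu_j$ with $j < i$ has leading symbol $p_i := \text{tr}((B\Gamma^T)^i)$, yielding $\text{gr}_{2i}(\nu_i) = d_i p_i + (\text{polynomial in } p_1, \dots, p_{i-1})$ for some nonzero $d_i$. Newton's identity gives $\text{tr}(\Lambda^i(B\Gamma^T)) = \tfrac{(-1)^{i-1}}{i} p_i + (\text{polynomial in } p_1, \dots, p_{i-1})$, and $\text{gr}_{2i}(R_i)$ is a polynomial in $p_1, \dots, p_{i-1}$ only (no $p_i$, as $R_i$ is built from strictly lower $\nu_j$ and $J$, and derivatives reduce length). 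Matching the $p_i$-coefficient in $\text{gr}_{2i}(D^+_{(n-i-1)}D^-) = \lambda_i \text{gr}_{2i}(\nu_i) + \text{gr}_{2i}(R_i)$ yields $\lambda_i d_i = (n-i)! (-1)^{i-1}/i \neq 0$, so $\lambda_i \neq 0$. The main technical obstacle is the sign bookkeeping in the Laplace expansion and verifying rigorously that derivative-type contractions lie in $G_{2i-1}$; Cauchy--Binet is the key tool that makes the combinatorial sum of minor-products transparent.
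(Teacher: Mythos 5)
Your argument is correct, and at the structural level it follows the same strategy as the paper: both proofs reduce the nonvanishing of $\lambda_i$ to a leading-symbol computation in $\text{gr}(\cS(n^2))$, where the length-$2i$ part of $D^+_{(n-i-1)}D^-$ is recognized as $\pm(n-i)!$ times a sum of products of complementary $i\times i$ minors of $B=(\beta^{ij}_0)$ and $\Gamma=(\gamma^{ij}_0)$, and where the contributions of derivatives and of the lower generators $J,\nu_2,\dots,\nu_{i-1}$ are seen to live in strictly smaller length or to involve only the lower power sums. Where you genuinely diverge is in how the coefficient of $\nu_i$ is then extracted. The paper restricts along $\gg\gl_i\hookrightarrow\gg\gl_n$ and detects $\nu_i$ by the presence of the cyclic monomial $x^{12}x^{23}\cdots x^{i-1,i}x^{i,1}$, which occurs in exactly one product of minors and in no other term of the restricted symbol; you instead sum all the minor products via Cauchy--Binet to obtain $(n-i)!\,\text{tr}(\Lambda^i(B\Gamma^T))$ and apply Newton's identities to read off the coefficient of $p_i=\text{tr}((B\Gamma^T)^i)$. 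Your route buys an explicit value $\lambda_i d_i=(-1)^{i-1}(n-i)!/i$ relative to the normalization $d_i$ of $\nu_i$, at the cost of the Laplace-expansion sign bookkeeping you correctly flag as the delicate point; the paper's single-monomial detector sidesteps that bookkeeping entirely, since any overall nonzero constant suffices. Both versions ultimately rest on the same two inputs: the free generation of $\cS(n^2)^{\gs\gl_n[t]\oplus\gs\gl_n[t]}$ by $J, D^{\pm},\nu_2,\dots,\nu_{n-1}$ from \cite{CGL} (which underlies your weight-and-charge reduction to the form $\lambda_i\nu_i+R_i$), and the fact that the symbol of $\nu_i$ involves the degree-$i$ Casimir with nonzero coefficient.
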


\begin{proof} First, the projection $\phi_{n}(:D^+ D^-:)$ onto the component of degree $n$ (which has degree $2n$ in the variables $\beta^{ij}_k$ and $\gamma^{ij}_k$), actually lies in the subalgebra
$$\mathbb{C}[\beta^{ij}_0, \gamma^{ij}_0]^{GL_n \times GL_n} \cong \cZ(\gg\gl_n),$$ where $\cZ(\gg\gl_n)$ denotes the center of $U(\gg\gl_n)$. As such, we can write $\phi_{n}(:D^+ D^-:)$ as a polynomial in the Casimirs $J,  \nu_2, \dots, \nu_n$. Here by abuse of notation, we do not distinguish between the fields $J, \nu_2,\dots, \nu_n \in \cS(n^2)^{\gs\gl_n[t]\oplus \gs\gl_n[t]}$ and their images in $\cZ(\gg\gl_n)$. As above, the coefficient of $\nu_n$ is nonzero.

Let $x^{ij} = \phi_1(X^{ij})$ be the image of $X^{ij}$ in the degree $1$ part of $\text{gr}(\cS(n^2))$, and observe that $\nu_n$ can be characterized as the only monomial of degree $n$ in $J, \nu_2,\dots, \nu_n$ which contains the term $x^{12} x^{23} \cdots x^{n-1,n} x^{n,1}$ with nonzero coefficient.

Now for all $l$ with $2< l < n$, the projection $\phi_{l}(D^+_{(n-l-1)} D^-)$ will contain the degree $l$ element 
 $$d_{i_1,\dots, i_l} d'_{i_1,\dots, i_l} = \phi_l(:D_{i_1,\dots, i_l} D'_{i_1,\dots, i_l}:)$$ with coefficient $\pm (n-l)!$. As above, $d_{i_1,\dots, i_l} d'_{i_1,\dots, i_l}$ can be expressed as a polynomial in the generators  $J', \nu_2',\dots, \nu'_l$ of $\cZ(\gg\gl_l)$, and the coefficient of $\nu'_l$ is nonzero.

Next, the embedding $i: \gg\gl_l\hookrightarrow \gg\gl_n$ gives a restriction map $i^*: \gg\gl_n^*\rightarrow \gg\gl_l^*$ and induced surjection
$$i^*:\cZ(\gg\gl_n) \cong S(\gg\gl_n)^{GL_n} \rightarrow S(\gg\gl_l)^{GL_l} \cong \cZ(\gg\gl_l).$$
This map is also injective when restricted to the component of degree $l$. In particular, given a polynomial $p(J, \nu_2,\dots, \nu_l)$ of degree $l$ in $S(\gg\gl_n)^{GL_n}$, the coefficient of $\nu_l$ in $p$ nonzero if and only if the coefficient of $x^{12} x^{23} \cdots x^{l-1,l} x^{l,1}$ in $i^*(p)$ is nonzero. Since $x^{12} x^{23} \cdots x^{l-1,l} x^{l,1}$ has nonzero coefficient in $d_{i_1,\dots, i_l} d'_{i_1,\dots, i_l}$ and cannot appear in any other terms in $i^*(\phi_l (D^+_{(n-l-1)} D^-))$, $\nu_l$ must appear with nonzero coefficient in $D^+_{(n-l-1)} D^-$, as claimed.
\end{proof}

{\it Proof of Theorem \ref{casen=m}.}
It suffices to prove that any two vertex algebras satisfying the properties of Lemma \ref{properties:subreg} must be isomorphic. So let $\cA$ be another such vertex algebra which is freely generated by a Heisenberg field $\tilde{J}$, fields $\tilde{G}^{\pm}$ of weight $\frac{n}{2}$, and central fields $\tilde{\omega}_2,\dots, \tilde{\omega}_{n-1}$ with OPE relations
\begin{equation}\label{partialOPE:generalA}  \begin{split} &\tilde{J}(z) \tilde{J}(w) \sim -n^2 (z-w)^{-2},\qquad \tilde{G}^+(z) \tilde{G}^-(w) \sim n! (z-w)^{-n} - n! \tilde{J} (w)(z-w)^{-n+1} + \dots,
\\ & J(z) \tilde{G}^{\pm}(w) =  \pm \tilde{G}^{\pm}(w) (z-w)^{-1}.
\end{split} \end{equation}
Suppose furthermore that for all $2\leq r \leq n-1$, $\tilde{G}^+_{(n-r-1)} \tilde{G}^{-} = \tilde{\mu}_{r} \tilde{\omega}_r + \cdots$, where the remaining terms depend only on $J, \tilde{\omega}_2,\dots, \tilde{\omega}_{n-1}$ and their derivatives, and $\tilde{\mu}_r\neq 0$.

To show that $ \cW^{-n}(\gs\gl_n, f_{\text{subreg}}) \cong \cA$, it suffices to show that we can choose new generators $\tilde{\omega}'_2,\dots, \tilde{\omega}'_{n-1}$ for the central algebra generated by $\tilde{\omega}_2,\dots, \tilde{\omega}_{n-1}$ such that OPEs agree, that is, the map $\cW^{-n}(\gs\gl_n, f_{\text{subreg}}) \rightarrow  \cA$ defined by
\begin{equation} \label{correspondence} J \mapsto \tilde{J}, \qquad G^{\pm} \mapsto \tilde{G}^{\pm}, \qquad \omega_i \mapsto \tilde{\omega}'_i,\end{equation}
preserves OPEs and is therefore an isomorphism of vertex algebras since both sides are freely generated by the given fields.

For $2\leq i\leq n-1$, let $\cM^i \subseteq \cW^{-n}(\gs\gl_n, f_{\text{subreg}})$ denote the set of all normally ordered monomials of weight $i$ in the variables $\omega_2,\dots, \omega_{n-1}$ and their derivatives, but not including the term $\omega_i$ itself. Let $\cJ^i$ be the set of normally ordered monomials in $J$ and its derivatives of weight $i$.

Similarly, let $\tilde{\cM}^i$ and $\tilde{\cJ}^i$ be the corresponding sets in $\cA$ consisting of normally ordered monomials in $\tilde{\omega}_2,\dots, \tilde{\omega}_{n-1}$ and their derivatives not including $\tilde{\omega}_i$ itself, and normally ordered monomials in $\tilde{J}$ and its derivatives, respectively.

For $1<r<i$, let $\cM^r \cJ^{i-r}$ denote the monomials of weight $i$ which are a product of some $M \in \cM^r$ and $N \in \cJ^{i-r}$, and similarly define $\tilde{\cM}^r \tilde{\cJ}^{i-r}$. Then 
 \begin{equation} \begin{split} & G^+_{(n-i-1)} G^- = \mu_i \omega_i + \cdots,
\\ & \tilde{G}^+_{(n-i-1)} \tilde{G}^- = \tilde{\mu}_i \tilde{\omega}_i + \cdots, \end{split} \end{equation} where the remaining terms are linear combinations of elements of $\cM^i$, $\cJ^i$, and $\cM^r \cJ^{i-r}$ in the first line, and of elements of $\tilde{\cM}^i$, $\tilde{\cJ}^i$, and $\tilde{\cM}^r \tilde{\cJ}^{i-r}$ in the second line, respectively.

It is easy to check that the Jacobi identities of type $(J, G^+, G^-)$ together with \eqref{partialOPE:generalA} uniquely determine all coefficients of all monomials in $\cJ^i$, for all $i = 2,\dots, n-1$. Also, for all monomials of the form $MN \in \tilde{\cM}^r \tilde{\cJ}^{i-r}$ with $M \in \tilde{\cM}^{r}$ and $N \in \tilde{\cM}^{i-r}$, the coefficient of $MN$ appearing in $\tilde{G}^+_{(n-i-1)} \tilde{G}^-$ is uniquely determined in terms of the coefficient of $M$ in $\tilde{G}^+_{(n-r-1)} \tilde{G}^-$. Note, however, that the Jacobi identities do not provide any restrictions on the coefficients of monomials in $\cM^i$.

Recall that in $\cW^{-n}(\gs\gl_n, f_{\text{subreg}})$ and $\cA$, we have respectively,
\begin{equation} \begin{split} & G^+_{(n-3)} G^- = \mu_2 \omega_2 + \cdots,
\\ & \tilde{G}^+_{(n-3)} \tilde{G}^- = \tilde{\mu}_2 \tilde{\omega}_2 + \cdots,\end{split} \end{equation} Here the remaining terms lie in $\cJ^2$ and $\tilde{\cJ}^2$, respectively, and hence are completely determined. Therefore they must agree under \eqref{correspondence}.. 

We define $$\tilde{\omega}'_2 = \frac{\tilde{\mu}_2}{\mu_2} \tilde{\omega}_2 \in \cA.$$ Then 
$$\tilde{G}^+_{(n-3)} \tilde{G}^- = \tilde{\mu}_2 \frac{\mu_2}{\tilde{\mu}_2} \tilde{\omega}'_2 + \cdots = \mu_2 \tilde{\omega}'_2 + \cdots,$$ which now agrees with $G^+_{(n-3)} G^-$ under \eqref{correspondence}.

Inductively, we assume that for all $i \leq r$, we have defined new variables $\tilde{\omega}'_1,\dots, \tilde{\omega}'_r \in \cA$ which commute with $\tilde{J}$, such that all structure constants in $\tilde{G}^+_{(n-i-1)} \tilde{G}^-$ agree with those in $G^+_{(n-i-1)} G^-$. In particular, for $i\leq r+1$, the set $\tilde{\cM}^i$ has been replaced with the set $(\tilde{\cM}')^i$ of normally ordered monomials of weight $i$ in the new variables $\tilde{\omega}'_1,\dots, \tilde{\omega}'_r$. 

Now consider $G^+_{(n-r-2)} G^-$, which has weight $r+1$, and write
$$G^+_{(n-r-2)} G^- = \mu_{r+1} \omega_{r+1} + \sum_{j} a_j M^{r+1}_j + \cdots,$$ 
$$\tilde{G}^+_{(n-r-2)} \tilde{G}^- = \tilde{\mu}_{r+1} \tilde{\omega}_{r+1} + \sum_{j} b_j \tilde{M}^{r+1}_j + \cdots,$$
where $a_j, b_j$ are constants and the remaining terms which either lie in $\cJ^{r+1}$ or $\cM^{a} \cJ^{r+1-a}$ (respectively $\tilde{\cJ}^{r+1}$ or $(\tilde{\cM}')^{a} \tilde{\cJ}^{r+1-a}$), are completely determined by previous data.

We now define $$\tilde{\omega}'_{r+1} = \frac{\tilde{\mu}_{r+1}}{\mu_{r+1}} \bigg(\tilde{\omega}_{r+1} + \sum_j \frac{b_j}{\tilde{\mu}_{r+1}}\tilde{M}^{r+1}_j - \sum_j \frac{a_j}{\tilde{\mu}_{r+1}} \tilde{M}^{r+1}_j\bigg).$$ Then
\begin{equation} \begin{split} \tilde{G}^+_{(n-r-2)} \tilde{G}^- & = \tilde{\mu}_{r+1} \bigg( \frac{\mu_{r+1}}{\tilde{\mu}_{r+1}}  \tilde{\omega}'_{r+1} - \sum_j \frac{b_j}{\tilde{\mu}_{r+1}} \tilde{M}^{r+1}_j + \sum_j  \frac{a_j}{\tilde{\mu}_{r+1}}\tilde{M}^{r+1}_j\bigg) + \sum_{j} b_j \tilde{M}^{r+1}_j + \cdots,
\\ & = \mu_{r+1} \tilde{\omega}'_{r+1} + \sum_j  a_j \tilde{M}^{r+1}_j + \cdots. \end{split} \end{equation}
Therefore with this change of variables, the structure constants in $\tilde{G}^+_{(n-r-2)} \tilde{G}^-$ now agree with those of $G^+_{(n-r-2)} G^-$ with respect to monomials in $\{\tilde{J}, \tilde{\omega}'_2,\dots, \tilde{\omega}'_{r+1}\}$ and $\{J, \omega_2,\dots,\omega_{r+1}\}$, respectively. Inductively, we can choose new variables $\tilde{\omega}'_2,\dots, \tilde{\omega}'_{n-1}$ where this holds, and then by the above discussion the full OPE algebras agree. $\qquad \qquad \qquad \qquad \qquad \qquad \qquad \   \Box$
 
\bigskip

Combining Theorem \ref{casen=m} with Theorem 8.1 of \cite{CGL}, we immediately obtain
\begin{cor} The Zhu algebra $A(\cW^{-n}(\gs\gl_n, f_{\text{subreg}}))$ is isomorphic to the ring of invariant differential operators $\cD(n^2)^{SL_n \times SL_n}$, where $\cD(n^2)$ denotes the Weyl algebra of rank $n^2$. 
\end{cor}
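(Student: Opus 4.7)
The plan is to obtain this statement as a formal consequence of two prior results, with essentially no new computation. First, Theorem \ref{casen=m}, which we have just proven, supplies an isomorphism of vertex algebras
$$\cS(n^2)^{\gs\gl_n[t] \oplus \gs\gl_n[t]} \cong \cW^{-n}(\gs\gl_n, f_{\text{subreg}}).$$
Second, Theorem 8.1 of \cite{CGL} identifies the Zhu algebra of the double coset on the left-hand side with the ring of $SL_n\times SL_n$-invariants in the Weyl algebra $\cD(n^2)$ of rank $n^2$, where the two factors of $SL_n$ act by the left and right regular actions on the space of $n\times n$ matrices that underlies the $\beta\gamma$-system.

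The argument itself is then a one-line composition. Since the assignment $\cV \mapsto A(\cV)$ is a functor from vertex algebras to associative algebras, any vertex algebra isomorphism descends to an isomorphism of Zhu algebras. Applying this functoriality to Theorem \ref{casen=m} gives
$$A\bigl(\cS(n^2)^{\gs\gl_n[t]\oplus \gs\gl_n[t]}\bigr) \;\cong\; A\bigl(\cW^{-n}(\gs\gl_n, f_{\text{subreg}})\bigr),$$
and combining this with Theorem 8.1 of \cite{CGL} produces the desired isomorphism with $\cD(n^2)^{SL_n\times SL_n}$.

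There is essentially no obstacle to overcome here; all of the difficulty has been packaged into the proof of Theorem \ref{casen=m} (establishing the vertex algebra isomorphism in the first place) and into the invariant-theoretic analysis of \cite{CGL} (computing the Zhu algebra of the coset). The only point one could conceivably worry about is whether the identification of generators is compatible on both sides, but because functoriality of Zhu transports the entire OPE structure uniformly, this compatibility is automatic and requires no separate verification.
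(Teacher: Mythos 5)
Your proposal is correct and matches the paper exactly: the paper derives this corollary by combining Theorem \ref{casen=m} with Theorem 8.1 of \cite{CGL}, which is precisely the composition of the vertex algebra isomorphism with the known Zhu algebra computation that you describe. The functoriality of $\cV \mapsto A(\cV)$ under vertex algebra isomorphisms is indeed all that is needed, and no further verification is required.
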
 

This allows the irreducible positive energy modules for $\cW^{-n}(\gs\gl_n, f_{\text{subreg}})$ to be studied via the representation theory of $\cD(n^2)^{SL_n \times SL_n}$. Even though $\cD(m)$ has no nontrivial finite-dimensional modules for all $m\geq 1$, it turns out that $\cD(n^2)^{SL_n \times SL_n}$ admits a class of finite-dimensional irreducible modules which were described in \cite{CGL} for $n=2, 3, 4$. The corresponding irreducible $\cW^{-n}(\gs\gl_n, f_{\text{subreg}})$-modules will have finite-dimensional graded components, and it is an interesting problem to classify them.

We have one more consequence of Theorem \ref{casen=m}.

\begin{cor} \label{cor:n=m} For all $n\geq 2$, $$\cS(n^2)^{\gs\gl_n[t]} \cong (V^{-n}(\gs\gl_n) \otimes \cW^{-n}(\gs\gl_n, f_{\text{subreg}})) / \cI,$$ where $\cI$ is the ideal generated by
 \begin{equation*}  \left\{
\begin{array}{ll} \{\omega_k - \nu_k|\ k = 2,\dots, n-1\} \cup \{:G^+ G^-:\  - P(J, \nu_2,\dots, \nu_n)\}, & n >2
\smallskip
\\ \{:G^+ G^-:\  - P(J, \nu_2)\} & n=2. \\
\end{array} 
\right. \end{equation*} 
Here $P$ is the same normally ordered polynomial appearing in \eqref{relation:omega_n}.
\end{cor}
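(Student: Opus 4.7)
The plan is to construct the natural map and then identify its kernel with $\cI$ through an associated-graded comparison. For the map itself, I combine two embeddings into $\cS(n^2)^{\gs\gl_n[t]}$: the one sending $V^{-n}(\gs\gl_n)$ onto the image $\tilde V^{-n}(\gs\gl_n)$ of one of the two commuting copies of $V^{-n}(\gs\gl_n)$ (the one coming from the left $\gs\gl_n$-action on $\cS(n^2)$; its image lies in $\cS(n^2)^{\gs\gl_n[t]}$ because the right action is what we are taking invariants under), and the isomorphism of Theorem~\ref{casen=m} realizing $\cW^{-n}(\gs\gl_n,f_{\text{subreg}})$ as the double coset $\cS(n^2)^{\gs\gl_n[t]\oplus\gs\gl_n[t]}\subseteq \cS(n^2)^{\gs\gl_n[t]}$. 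By the very definition of the double coset, these two subalgebras commute inside $\cS(n^2)^{\gs\gl_n[t]}$, so multiplication gives a vertex algebra homomorphism $\phi:V^{-n}(\gs\gl_n)\otimes \cW^{-n}(\gs\gl_n,f_{\text{subreg}})\to \cS(n^2)^{\gs\gl_n[t]}$.

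Next I verify $\cI\subseteq\ker\phi$. For $2\le k\le n-1$, the central element $\omega_k$ of $\cW^{-n}(\gs\gl_n,f_{\text{subreg}})$ corresponds under Theorem~\ref{casen=m} to the $k$-th generator of the center of $\cS(n^2)^{\gs\gl_n[t]\oplus\gs\gl_n[t]}$, which is the common Feigin--Frenkel central element of the two critical-level copies of $V^{-n}(\gs\gl_n)$; thus $\phi(\omega_k)=\phi(\nu_k)$. The second batch of generators of $\cI$ is precisely relation~\eqref{relation:omega_n}, which already holds in $\cS(n^2)^{\gs\gl_n[t]\oplus\gs\gl_n[t]}$ with $D^\pm$ in place of $G^\pm$. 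Hence $\phi$ descends to $\bar\phi:(V^{-n}(\gs\gl_n)\otimes \cW^{-n}(\gs\gl_n,f_{\text{subreg}}))/\cI\to \cS(n^2)^{\gs\gl_n[t]}$, and surjectivity of $\bar\phi$ is immediate from Theorem~\ref{main:sln}: the images of $\{X^{ij}\}$, $J$, and $G^\pm\mapsto D^\pm$ exhaust a strong generating set.

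Injectivity is the hard step, and I would prove it by passing to Li's filtration on both sides. By Corollary~\ref{main:slnclassicalfree} (applicable since $m=n\le n+2$), $\gr^F(\cS(n^2)^{\gs\gl_n[t]})\cong \mathbb{C}[J_\infty((V\oplus V^*)/\!\!/ SL_n)]$, and by Theorem~\ref{arcspaceinvt}(3)(i) together with the classical second fundamental theorem for $SL_n$, this is the differential polynomial algebra generated by $X^{ij}, d, d'$ modulo the differential ideal generated by $dd'-\det(X^{ij})$. On the source side, $\gr^F$ of each tensor factor is a free differential polynomial algebra (both $V^{-n}(\gs\gl_n)$ and $\cW^{-n}(\gs\gl_n,f_{\text{subreg}})$ are freely generated). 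Quotienting by the graded version of $\cI$, the relations $\omega_k-\nu_k$ eliminate $\omega_2,\dots,\omega_{n-1}$ in favor of polynomials in the $x^{ij}$, while the graded form of $:G^+G^-:-P(J,\nu_2,\dots,\nu_n)$ contributes a single relation of the form $g^+g^-=\det(X^{ij})$ at top filtration degree (lower-degree $J$-dependent terms drop out). Under the bijection $x^{ij}\leftrightarrow X^{ij}$, $j\leftrightarrow J$, $g^\pm\leftrightarrow d,d'$, the two presentations coincide, so $\gr^F(\bar\phi)$ is an isomorphism and hence so is $\bar\phi$.

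The main obstacle is the last identification: one must show that the leading-filtration-degree term of $P(J,\nu_2,\dots,\nu_n)$ equals $\det(X^{ij})$ up to nonzero scalar. This comes down to two ingredients: first, that $\nu_n$ appears in $P$ with nonzero coefficient (proven in the lemma preceding the proof of Theorem~\ref{casen=m}), and second, that the classical limit of $\nu_n$ in $\gr^F(V^{-n}(\gs\gl_n))$ is the degree-$n$ Casimir $\det(X^{ij})$ (a standard fact about the Feigin--Frenkel center whose classical counterpart is the symmetric-algebra Casimir). Both inputs are already implicit in the proof of Theorem~\ref{casen=m} and the identification $\det(X^{ij})=dd'$ via $X=Y^TX'$, so no substantively new computation is required beyond assembling these pieces.
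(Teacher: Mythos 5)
Your proposal is correct and follows essentially the same route as the paper: build the surjection $\phi$ from Theorem \ref{casen=m} and the commuting embeddings, observe $\cI\subseteq\ker\phi$, and deduce equality by comparing associated graded algebras via Corollary \ref{main:slnclassicalfree} and the fact that $(V\oplus V^*)/\!\!/SL_n$ is a hypersurface whose single defining relation $dd'=\det(x^{ij})$ is the symbol of \eqref{relation:omega_n}. The paper's own proof is just a terser version of this argument, so your additional detail (in particular the explicit matching of presentations and the identification of the leading term of $P$ with the determinant Casimir) is a faithful expansion rather than a different method.
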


\begin{proof} Since $\text{Com}(V^{-n}(\gs\gl_n), \cS(n^2)^{\gs\gl_n[t]}) \cong  \cW^{-n}(\gs\gl_n, f_{\text{subreg}})$, we have a surjective homomorphism 
$$\phi:V^{-n}(\gs\gl_n) \otimes \cW^{-n}(\gs\gl_n, f_{\text{subreg}}) \rightarrow \cS(n^2)^{\gs\gl_n[t]},$$ and $\text{ker}\ \phi$ clearly contains $\cI$. The fact that $\cI = \text{ker}\ \phi$ is apparent from the isomorphism
$$\text{gr}_f(\cS(n^2)^{\gs\gl_n[t]}) \cong \text{gr}(\cS(n^2))^{\gs\gl_n[t]} \cong \mathbb{C}[J_{\infty}(V\oplus V^*)]^{J_{\infty}(SL_n)} \cong \mathbb{C}[J_{\infty}((V\oplus V^*)/\!\!/SL_n)],$$ for $V = \mathbb{C}^{n^2}$, since $(V\oplus V^*)/\!\!/SL_n$ is a hypersurface with defining relation corresponding to \eqref{relation:omega_n}.
\end{proof}

\subsection{The case $m = n+1$}

For $n\geq 2$, it follows from Theorem \ref{main:sln} that $\cS(n(n+1))^{\gs\gl_n[t]}$ is an extension of $\tilde{V}^{-n}(\gg\gl_{n+1}) = \cH \otimes \tilde{V}^{-n}(\gs\gl_{n+1})$ by $2(n+1)$ fields of weight $\frac{n}{2}$ of the form 
$$D^{+,r} := D_{i_1,\dots, i_{r-1},\widehat{i_r}, i_{r+r},\dots, i_{n+1}},\qquad D^{-,r} :=D'_{i_1,\dots, i_{r-1},\widehat{i_r}, i_{r+r},\dots, i_{n+1}},\qquad r = 1,\dots, n+1.$$ Note that $\tilde{V}^{-n}(\gs\gl_{n+1}) = V^{-n}(\gs\gl_{n+1})$ since this algebra is simple by Theorem 0.2.1 of \cite{GK}; this follows from the fact that for $k = -n$ the shifted level $k + h^{\vee} = 1$. Also, the fields $\{D^{+,r}\}$ and $\{D^{-,r}\}$ transform under $\gg\gl_{n+1}$ as the standard module $\mathbb{C}^{n+1}$ and the dual module $(\mathbb{C}^{n+1})^*$, respectively.

We recall the family of $\cW$-algebras studied by the first author and Creutzig in \cite{CL3} which were called {\it hook-type}. For positive integers $n,m$, recall that $\gs\gl_{n+m}$ has the decomposition
$$\mathfrak{sl}_{n+m} = \mathfrak{sl}_n \oplus \mathfrak{gl}_m \oplus \big(\mathbb{C}^n \otimes (\mathbb{C}^m)^* \big)\ \oplus \big(( \mathbb{C}^n)^* \otimes \mathbb{C}^m\big).$$
Recall next that conjugacy classes of nilpotents $f\in \mathfrak{sl}_N$ correspond to partitions of $N$. 
For $N = n+m$, let $f_{n,m} \in \mathfrak{sl}_{n+m}$ be the nilpotent which is principal in $\mathfrak{sl}_n$ and trivial in $\mathfrak{gl}_m$. It corresponds to the hook-type partition $N = n + 1 + \dots + 1$, and the corresponding $\cW$-algebra $\cW^k(\gs\gl_{n+m}, f_{n,m})$ is a common generalization of the affine vertex algebra $V^k(\gs\gl_{n+1})$, the principal $\cW$-algebra $\cW^k(\gs\gl_n)$, the subregular $\cW$-algebra $\cW^k(\gs\gl_{n+1}, f_{\text{subreg}})$, and the minimal $\cW$-algebra $\cW^k(\gs\gl_{m+2}, f_{\text{min}})$.

It is convenient to replace $k$ by the shifted level $\psi = k+n+m$, and following \cite{CL3}, we use the notation
$\cW^{\psi}(n,m) = \cW^k(\gs\gl_{n+m}, f_{n+m})$. It has the following features:
\begin{enumerate}
\item Virasoro element $L^{\cW}$ and affine subalgebra $V^{\psi-m-1}(\gg\gl_m)$.
\item For $n\geq 3$, additional even fields $\omega_3,\dots, \omega_n$ of conformal weights $3,\dots, n$ which commute with $V^{\psi-m-1}(\gg\gl_m)$.
\item Fields $\{G^{\pm,r}|\ r = 1,\dots, m\}$ of weight $\frac{n+1}{2}$ such that $\{G^{+,r}\}$ and $\{G^{-,r}\}$ transform in the standard and dual $\gg\gl_m$-modules, and are primary with respect to the action of $V^{\psi -m-1}(\gg\gl_m)$.
\item Freely generated by the generators of $V^{\psi - m - 1}(\gg\gl_m)$ together with $L, \omega_3,\dots, \omega_n, G^{\pm,r}$.
\end{enumerate}

The coset $\cC^{\psi}(n,m) = \text{Com}(V^{\psi-m-1}(\gg\gl_m), \cW^{\psi}(n,m))$ has Virasoro element $L^{\cC} = L^{\cW} - L^{\gg\gl_m}$ of central charge $$c =  -\frac{(n \psi  - m - n -1) (n \psi - \psi - m - n +1 ) (n \psi +  \psi  -m - n)}{(\psi -1) \psi}.$$ Note that when $\psi = \frac{m+n}{n+1}$, the central charge of $L^{\cC}$ is zero. If $m+n$ and $n+1$ are relatively prime, this is a boundary admissible level for $\gs\gl_{n+m}$. These algebras have been studied by Creutzig in the setting of Argyres-Douglas theories in \cite{C}, as well as in \cite{ACGY}. It is known that $L^{\cC}$ is a singular vector in $\cW^{\psi}(n,m)$, and we have a conformal embedding $L_{\psi-m-1}(\gg\gl_m) \hookrightarrow \cW_{\psi}(n,m)$, where $\cW_{\psi}(n,m)$ denotes the simple quotient \cite{ACLM}. It is expected that $\{G^{\pm,r}\}$ survive in $\cW_{\psi}(n,m)$, which is then a nontrivial extension of $L_{\psi-m-1}(\gg\gl_m)$. Since $\omega_3,\dots, \omega_n$ commute with $L^{\gg}$, they must vanish in $\cW_{\psi}(n,m)$, so that $\cW_{\psi}(n,m)$ would then be strongly generated by the generators of $L_{\psi-m-1}(\gg\gl_m)$ together with $\{G^{\pm,r}\}$.

We now specialize to the case $\cW^2(n-1, n+1) = \cW^{2-2n}(\gs\gl_{2n}, f_{n-1,n+1})$. Then $L^{\cC}$ is a singular vector, and $V^{-n}(\gg\gl_{n+1})$ is conformally embedded in $\cW_{2-2n}(\gs\gl_{2n}, f_{n-1,n+1})$.

\begin{conj} For all $n\geq 2$, $\cS(n(n+1))^{\gs\gl_n[t]} \cong \cW_{2-2n}(\gs\gl_{2n}, f_{n-1,n+1})$.
\end{conj}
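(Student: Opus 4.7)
The plan is to adapt the strategy of Theorem \ref{casen=m} to this setting. By Theorem \ref{main:sln}, $\cS(n(n+1))^{\gs\gl_n[t]}$ is an extension of $\tilde V^{-n}(\gg\gl_{n+1})=V^{-n}(\gg\gl_{n+1})$ (which is simple and so equals $L_{-n}(\gg\gl_{n+1})$) by $2(n+1)$ weight-$n/2$ fields $D^{\pm,r}$ transforming as the standard and dual $\gs\gl_{n+1}$-modules. The universal hook-type $\cW$-algebra $\cW^2(n-1,n+1)$ is freely generated by $V^{-n}(\gg\gl_{n+1})$, the central fields $\omega_3,\ldots,\omega_{n-1}$ (present for $n\geq 4$), and weight-$n/2$ fields $G^{\pm,r}$ in the same $\gs\gl_{n+1}$-representations. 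Thus both algebras carry $V^{-n}(\gg\gl_{n+1})$ and a matching collection of primary weight-$n/2$ fields, while the universal $\cW$-algebra has the additional central generators $\omega_i$ with no counterpart on the coset side.

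My first step is to construct a surjective homomorphism $\phi\colon \cW^2(n-1,n+1)\to \cS(n(n+1))^{\gs\gl_n[t]}$ sending the affine subalgebra identically, $G^{\pm,r}\mapsto c_{\pm}D^{\pm,r}$ for suitable nonzero constants $c_{\pm}$, and $\omega_i\mapsto 0$. Since $\cW^2(n-1,n+1)$ is freely generated, the existence of $\phi$ reduces to verifying that the images obey its OPE algebra. OPEs involving the affine generators are forced by $\gg\gl_{n+1}$-equivariance and conformal weight; the nontrivial content lies in $D^{+,r}(z)D^{-,s}(w)$, which I would compute directly via Wick calculus from the determinantal definition of $D^{\pm,r}$, and match against the structure of $G^{+,r}(z)G^{-,s}(w)$ extracted from \cite{CL3}. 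The matching then proceeds inductively by conformal weight, as in the proof of Theorem \ref{casen=m}, using Jacobi identities and $\gg\gl_{n+1}$-covariance to pin down all constants; the $\omega_i$-components on the $\cW^2(n-1,n+1)$ side are forced to vanish because $\cS(n(n+1))^{\gs\gl_n[t]}$ contains no fields of the required type for them to map to. Surjectivity is then automatic, since the images of $V^{-n}(\gg\gl_{n+1})$ together with the $D^{\pm,r}$ form a strong generating set for $\cS(n(n+1))^{\gs\gl_n[t]}$ by Theorem \ref{main:sln}.

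The second step is to show that $\phi$ descends to an isomorphism $\bar\phi\colon \cW_{2-2n}(\gs\gl_{2n},f_{n-1,n+1})\to \cS(n(n+1))^{\gs\gl_n[t]}$. The kernel of $\phi$ contains the $\omega_i$ and the singular Virasoro vector $L^{\cC}$ responsible for the conformal embedding $L_{-n}(\gg\gl_{n+1})\hookrightarrow \cW_{2-2n}(\gs\gl_{2n},f_{n-1,n+1})$. Once one knows that these suffice to cut out the maximal proper ideal of $\cW^2(n-1,n+1)$ at this level, the induced map $\bar\phi$ exists and its injectivity is automatic from the simplicity of its source combined with non-triviality of its target. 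The main obstacle is precisely this vanishing statement, namely that the $\omega_i$ become null in $\cW_{2-2n}(\gs\gl_{2n},f_{n-1,n+1})$; the excerpt flags it as expected but unproven in general. A clean workaround is to argue by graded characters: by Theorem \ref{main:sln} and Lemma \ref{lem:filtration}, the character of $\cS(n(n+1))^{\gs\gl_n[t]}$ can be read off the classical invariant ring $\mathbb{C}[J_\infty(V\oplus V^*)]^{J_\infty(SL_n)}$ analyzed in Corollary \ref{main:slnclassicalfree}, and one expects this to match the character of $\cW_{2-2n}(\gs\gl_{2n},f_{n-1,n+1})$ obtained via Kazhdan--Lusztig methods for hook-type $\cW$-algebras. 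Establishing such a character identity, combined with the surjectivity of $\bar\phi$ from Step 1, would force injectivity and complete the proof.
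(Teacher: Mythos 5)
This statement is posed in the paper as a \emph{conjecture}; the paper offers no proof of it, so there is nothing to compare your argument against. The question is therefore only whether your proposal actually closes the problem, and it does not: it is a reasonable strategy outline, but both of its essential steps are left open, and you partly acknowledge this yourself.

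First, the existence of the surjection $\phi\colon \cW^2(n-1,n+1)\to \cS(n(n+1))^{\gs\gl_n[t]}$ is not a routine verification. The entire content of the conjecture is hidden in the OPE matching you defer to ``direct Wick calculus'': one must show that the products $D^{+,r}_{(j)}D^{-,s}$ close on the proposed generating set with exactly the structure constants of $G^{+,r}_{(j)}G^{-,s}$ at $\psi=2$. In particular, sending $\omega_3,\dots,\omega_{n-1}$ to $0$ is not justified the way you state it. These fields commute with the affine subalgebra but are \emph{not} central in $\cW^{2}(n-1,n+1)$ (the level $k=2-2n$, i.e.\ $\psi=2$, is not critical for $\gs\gl_{2n}$), and generically they appear with nonzero coefficients in $G^{+,r}_{(j)}G^{-,s}$; if that happens at $\psi=2$, their images must be nonzero elements of $\mathrm{Com}(V^{-n}(\gg\gl_{n+1}),\cS(n(n+1))^{\gs\gl_n[t]})$, and your assertion that the coset ``contains no fields of the required type'' is not established --- the coset certainly has elements of weights $3,\dots,n-1$, and whether any of them commute with the affine subalgebra is precisely what would need to be proved. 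Note that the uniqueness argument of Theorem \ref{casen=m} leaned entirely on the fact that at critical level the extra fields are genuinely central, so the only freedom was a triangular change of central generators; that mechanism is unavailable here.

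Second, even granting $\phi$, identifying the image with the \emph{simple} quotient $\cW_{2-2n}(\gs\gl_{2n},f_{n-1,n+1})$ requires either simplicity of the coset or the character identity you invoke, and you concede that the needed vanishing of the $\omega_i$ in the simple quotient is ``expected but unproven'' and that the character comparison is something ``one expects.'' A conditional argument resting on two expected-but-unproved inputs is not a proof. As written, the proposal is a plausible roadmap consistent with the evidence assembled in the paper (strong generating type, the conformal embedding of $L_{-n}(\gg\gl_{n+1})$ at $\psi=2$), but it does not resolve the conjecture.
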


\subsection{The structure of $\text{Com}(L_{r}(\gs\gl_n), \cE(nr))$} 
As above, let $W =(\mathbb{C}^n)^{\oplus r}$, which we regard as the space of $n \times r$ matrices. Then $\cE(W) \cong \cE(nr)$ and the homomorphism $L_r(\gs\gl_n) \otimes L_n(\gs\gl_r) \otimes \cH \rightarrow \cE(nr)$ given by \eqref{conformalemb:slnslmbc} corresponds to the left and right actions of $\gs\gl_n$ and $\gs\gl_r$ on $W$. We use the generators $b^{ij}, c^{ij}$ for $i= 1,\dots , n$ and $j = 1, \dots, r$, satisfying $b^{ij}(z) c^{kl}(w) \sim \delta_{i,k} \delta_{j,l} (z-w)^{-1}$. The generator of $\cH$ is then $e = -\sum_{i=1}^n \sum_{j=1}^m :b^{ij} c^{ij}:$.

\begin{thm} \label{main:slnE} For all $n\geq 2$ and $r \geq 1$, the coset $$\text{Com}(L_{r}(\gs\gl_n), \cE(nr))$$ is an extension of $ L_n(\gg\gl_r) = \cH \otimes L_n(\gs\gl_r)$, and is strongly generated by the fields
\begin{equation} \label{gen:vnslr} Y^{st} = \sum_{k=1}^n :b^{ks} c^{kt}:\  \in L_n(\gg\gl_r),\qquad s,t = 1,\dots, r,\end{equation} together with following additional strong generators of weight $\frac{n}{2}$:
\begin{equation} \label{gen:oddDD'} D_{i_1,\dots, i_n},\qquad D'_{i_1,\dots, i_n}.\end{equation} Here $i_1,\dots, i_n$ are elements from the set $\{1,\dots, n\}$ which need not be distinct, and $D_{i_1,\dots, i_n}, D'_{i_1,\dots, i_n}$ are given by the same formula as the determinants in Theorem \ref{main:sln}, but without the signs.
\end{thm}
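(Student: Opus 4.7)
The plan is to adapt the arc-space argument used for Theorem~\ref{main:sln} to the exterior (odd) setting, with Theorem~\ref{oddgen}(1) playing the role that the direct surjectivity of \eqref{injgamm2} played there. Since $L_r(\gs\gl_n)$ is the image of $V^r(\gs\gl_n)$ inside $\cE(nr)$, the commutant equals the invariant subalgebra $\cE(nr)^{\gs\gl_n[t]}$. Equip $\cE(nr)$ with its standard good increasing filtration; then $\text{gr}(\cE(nr))$ is the exterior algebra on the jet coordinates of $W\oplus W^*$, where $W=(\mathbb{C}^n)^{\oplus r}$. In the notation of Theorem~\ref{oddgen} this is $L^U$ with $\tilde U=0$ and $U=V^{\oplus r}$, $V=\mathbb{C}^n\oplus(\mathbb{C}^n)^*$. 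Writing $f\colon\cE(nr)^{\gs\gl_n[t]}\hookrightarrow\cE(nr)$ for the inclusion, we obtain the canonical injection $\text{gr}_f(\cE(nr)^{\gs\gl_n[t]})\hookrightarrow(L^U)^{J_\infty(SL_n)}$.

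By Theorem~\ref{arcspaceinvt}(3) the map \eqref{kcopiesofv} is surjective for $G=SL_n$ and every $k\geq 0$, so Theorem~\ref{oddgen}(1) shows that $(L^U)^{J_\infty(SL_n)}$ is generated as a differential algebra by the finite-dimensional space $\Lambda^\bullet(W\oplus W^*)^{SL_n}$ of classical invariants. I would describe these invariants using the first fundamental theorem for $SL_n$ combined with the Cauchy decomposition $\Lambda^\bullet(\mathbb{C}^n\otimes\mathbb{C}^r)\cong\bigoplus_\lambda S^\lambda(\mathbb{C}^n)\otimes S^{\lambda'}(\mathbb{C}^r)$: selecting the partitions $\lambda=(k^n)$ on which $S^\lambda(\mathbb{C}^n)$ is $SL_n$-trivial, the $SL_n$-invariants form the ring $\bigoplus_{k\geq 0}S^{(n^k)}(\mathbb{C}^r)$, which is generated by its degree-$n$ piece $S^n(\mathbb{C}^r)$. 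Together with the contractions $\sum_k b^{ks}c^{kt}$ spanning $W^*\otimes W$, this yields generators given by (i) the quadratic contractions and (ii) the degree-$n$ unsigned permanents of the matrices $(b^{k,i_l})$ and $(c^{k,i_l})$ spanning $S^n(\mathbb{C}^r)$ and $S^n((\mathbb{C}^r)^*)$. Repeated column indices $i_l$ are allowed because the underlying variables anticommute, so that the unsigned permanent formula (rather than the signed determinant used in Theorem~\ref{main:sln}) provides the correct nonvanishing $SL_n$-invariants.

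To finish, the fields $Y^{st}$, $D_{i_1,\dots,i_n}$, $D'_{i_1,\dots,i_n}$ of $\cE(nr)$ lift these classical invariants under the filtration. They lie in $\cE(nr)^{\gs\gl_n[t]}$: each $D$ (resp.\ $D'$) involves only $b$'s (resp.\ only $c$'s), so its OPE with the quadratic $bc$-generators of $V^r(\gs\gl_n)$ reduces to single contractions whose cancellation is precisely the classical $SL_n$-invariance of the permanent; while the $Y^{st}$ commute with $\gs\gl_n$ since they form the commuting $\gg\gl_r$ current algebra. Applying Lemma~\ref{lem:filtration}(1) then promotes this classical generation to strong generation of $\cE(nr)^{\gs\gl_n[t]}$ as a vertex algebra. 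Finally, the $Y^{st}$ generate a copy of $\tilde{V}^n(\gg\gl_r)\cong L_n(\gg\gl_r)=\cH\otimes L_n(\gs\gl_r)$ by the conformal embedding \eqref{conformalemb:slnslmbc}, giving the extension statement.

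The main obstacle is the identification of the classical invariants in the middle step: unlike the even $\beta\gamma$-case of Theorem~\ref{main:sln}, the top-weight invariants arise from permanents rather than signed determinants, and — the subtler point — repeated column indices produce genuinely nonzero invariants, forcing the strong generating set to grow from $\binom{m}{n}$ fields in the even case to the larger symmetric-power-style list in the present odd case. Once this combinatorial point is absorbed correctly, the remainder of the argument is a direct transcription of the $\beta\gamma$ proof.
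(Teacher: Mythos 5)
Your proposal is correct and follows essentially the same route as the paper: reduce to $\cE(nr)^{\gs\gl_n[t]}$, pass to $\text{gr}$, invoke Theorem \ref{arcspaceinvt} together with Theorem \ref{oddgen}(1) to reduce to classical $SL_n$-invariants of $\bigwedge(W\oplus W^*)$, identify these as quadratic contractions plus unsigned degree-$n$ ``permanents'' with possibly repeated indices, lift to $\gs\gl_n[t]$-invariant fields, and conclude via Lemma \ref{lem:filtration}. The only difference is cosmetic: you spell out the classical invariant-theory input via the skew Cauchy decomposition, where the paper simply cites classical invariant theory.
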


\begin{proof} We have $\text{gr}(\cE(nr)) \cong  L^V = \bigwedge \bigoplus_{j\geq 0} V_j$ where $V_j \cong \big(\mathbb{C}^n \oplus (\mathbb{C}^n)^*\big)^{\oplus r} \cong V$ for all $j$. By Theorems \ref{arcspaceinvt} and \ref{oddgen} (1), $\text{gr}(\cE(nr))^{\gs\gl_n[t]}$ is generated as a differential algebra by elements in the subalgebra 
$$\bra (L^V_0)^{SL_n}\ket \cong  \big(\bigwedge (V \oplus V^*)\big)^{SL_n}.$$ By classical invariant theory, this is generated by the quadratics corresponding to a pairing of a copy of $\mathbb{C}^n$ with a copy of $(\mathbb{C}^n)^*$, together with determinants (without signs) that depend on $n$ copies of $\mathbb{C}^n$ which need not be distinct, or on $n$ copies of $(\mathbb{C}^n)^*$ which need not be distinct. It is easy to see that the corresponding fields in $\cE(nr)$ actually are $\gs\gl_n[t]$-invariant, and that the quadratic fields generate the affine vertex algebra $L_{n}(\gg\gl_{r})$. Therefore the map 
$$\text{gr}_f(\cE(nr))^{\gs\gl_n[t]}) \hookrightarrow \text{gr}(\cE(nr))^{\gs\gl_n[t]}$$ is an isomorphism, which completes the proof.
\end{proof}

\subsection{The structure of $\text{Com}(V^{-m+r}(\gs\gl_n), \cS(nm) \otimes \cE(nr))$}
Recall the homomorphism
$$V^{-m+r}(\gs\gl_n) \otimes V^{n}(\gg\gl_{r|m}) \rightarrow \cS(nm) \otimes \cE(nr),$$ whose image is conformally embedded.

\begin{thm}  \label{main:slnSE} For all $n\geq 2$ and $m,r \geq 1$, the coset $$\text{Com}(V^{-m+r}(\gs\gl_n), \cS(nm) \otimes \cE(nr))$$ is an extension of $\tilde{V}^{n}(\gg\gl_{r|m})$. It is strongly generated by the generators of $\tilde{V}^{n}(\gg\gl_{r|m})$, namely
\begin{equation} \label{gen:vnsl(r|m)} X^{ij} = \sum_{k=1}^n :\beta^{ki} \gamma^{kj}:,\qquad Y^{st} = \sum_{k=1}^n : b^{ks} c^{kt}:,\qquad E^{is} = \sum_{k=1}^n :\beta^{ki} c^{ks}:,\qquad F^{sj} = \sum_{k=1}^n :b^{ks} \gamma^{kj}:,\end{equation} together with the fields
\begin{equation} \label{gen:mixedDD'} D_{i_1,\dots, i_s; j_{s+1},\dots, j_n},\qquad D'_{i_1,\dots, i_s; j_{s+1},\dots, j_n}.\end{equation} Here $s = 0,1,\dots, n$, $i_1,\dots, i_s$ are distinct elements from the set $\{1,\dots, m\}$, and $\{j_{s+1}, \dots, j_n\}$ are elements from the set $\{1,\dots, r\}$, not necessarily distinct. The fields $D_{i_1,\dots, i_s; j_{s+1},\dots, j_n}$ and $D'_{i_1,\dots, i_s; j_{s+1},\dots, j_n}$ are supersymmetric analogous of determinants with certain sign changes when the variables are odd. In the extreme case $s = n$, they are ordinary determinants in the even variables, and in the extreme case $s = 0$, all signs are positive.
\end{thm}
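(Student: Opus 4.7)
The plan is to adapt the template used for Theorems \ref{main:sln} and \ref{main:slnE} to the mixed even/odd setting, feeding the associated graded through Theorem \ref{oddgen}(1) and the $SL_n$ case of Theorem \ref{arcspaceinvt}(3), and then lifting to the vertex algebra via Lemma \ref{lem:filtration}.

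First, I would equip $\cS(nm) \otimes \cE(nr)$ with the tensor product of the good increasing filtrations from Section \ref{section:VOAs}, giving
\[
\text{gr}(\cS(nm) \otimes \cE(nr)) \cong S^{\tilde U} \otimes L^U,
\]
where $\tilde U = (\mathbb{C}^n \oplus (\mathbb{C}^n)^*)^{\oplus m}$ and $U = (\mathbb{C}^n \oplus (\mathbb{C}^n)^*)^{\oplus r}$, and the induced $\gs\gl_n[t]$-action on the associated graded coincides with the infinitesimal action of $J_\infty(SL_n)$ coming from the diagonal current of \eqref{def:rhobcbg}. Pulling the filtration back along the inclusion $f:(\cS(nm) \otimes \cE(nr))^{\gs\gl_n[t]} \hookrightarrow \cS(nm) \otimes \cE(nr)$ gives an injection
\[
\text{gr}_f\bigl((\cS(nm) \otimes \cE(nr))^{\gs\gl_n[t]}\bigr) \hookrightarrow (S^{\tilde U} \otimes L^U)^{J_\infty(SL_n)}.
\]

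Next, I would invoke Theorem \ref{oddgen}(1) with $G = SL_n$, which applies because the surjectivity of \eqref{kcopiesofv} for $SL_n$ on arbitrary sums of $V\oplus V^*$ is guaranteed by Theorem \ref{arcspaceinvt}(3). This reduces the problem to listing generators of the zero-level classical invariants $(S^{\tilde U}_0 \otimes L^U_0)^{SL_n}$. The $SL_n$ first fundamental theorem applied to the $\mathbb{Z}_2$-graded module $\tilde U \oplus U$ produces exactly two types of invariants: bilinear pairings of a $\mathbb{C}^n$ summand with a $(\mathbb{C}^n)^*$ summand, which (according to the even/odd parities of the two summands) give rise to the four quadratic species $X^{ij}, Y^{st}, E^{is}, F^{sj}$; and determinantal invariants built via the $SL_n$ volume form applied to $n$ copies of $\mathbb{C}^n$ or $n$ copies of $(\mathbb{C}^n)^*$, with a mix of even and odd slots. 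A Koszul sign analysis shows that an ordinary determinant is alternating in a slot filled by a bosonic column vector (forcing the bosonic indices $i_1,\dots,i_s$ to be distinct, else the invariant vanishes) and is symmetric in a slot filled by a fermionic column vector (so the fermionic indices $j_{s+1},\dots,j_n$ may coincide). This reproduces precisely the families $D_{i_1,\dots,i_s,j_{s+1},\dots,j_n}$ and $D'_{i_1,\dots,i_s,j_{s+1},\dots,j_n}$ of the statement.

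Third, I would lift these classical generators to honest vertex-algebra fields. The quadratic lifts are the generators of the image $\tilde V^n(\gg\gl_{r|m})$ of \eqref{super:1}, whose $\gs\gl_n[t]$-invariance is exactly the content of the conformal embedding \eqref{super:1}. For the mixed determinant lifts, each field in $\cS(nm) \otimes \cE(nr)$ is a linear combination of iterated normally-ordered products involving only the $\beta$'s and $c$'s; contracting with the $\gs\gl_n[t]$-current \eqref{def:rhobcbg} produces only single contractions, and the alternating structure in the bosonic slots (resp.\ the symmetric structure in the fermionic slots) forces these pairwise to cancel by the standard classical argument. Applying Lemma \ref{lem:filtration} then upgrades differential generation of $\text{gr}_f$ to strong vertex-algebraic generation of the coset. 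The identification of the quadratic part with $\tilde V^n(\gg\gl_{r|m})$ is immediate from the formulas in \eqref{gen:vnsl(r|m)} and the OPEs among $X, Y, E, F$.

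The main obstacle is step two: carefully executing the $\mathbb{Z}_2$-graded first fundamental theorem for $SL_n$ on $\tilde U \oplus U$ with the correct Koszul signs so that one recovers exactly the stated families $D, D'$ and no spurious generators. Once this bookkeeping is done, the remaining work is a routine application of the machinery already set up for Theorems \ref{main:sln} and \ref{main:slnE}.
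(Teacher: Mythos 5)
Your proposal is correct and follows essentially the same route as the paper: identify $\text{gr}(\cS(nm)\otimes\cE(nr))$ with $S^{V}\otimes L^{W}$, apply Theorem \ref{oddgen}(1) together with the surjectivity from Theorem \ref{arcspaceinvt}(3) to reduce to the zero-level classical invariants, read off the quadratic and (super)determinantal generators from the first fundamental theorem for $SL_n$, verify $\gs\gl_n[t]$-invariance of the lifted fields via the absence of double contractions, and conclude strong generation by Lemma \ref{lem:filtration}. The only difference is that you spell out the Koszul-sign bookkeeping and the single-contraction argument in more detail than the paper, which simply asserts these points.
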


\begin{proof} We have $\text{gr}(\cS(nm) \otimes \cE(nr)) \cong S^V \otimes L^W$, where 
\begin{equation*} \begin{split} & S^V = \mathbb{C}[\bigoplus_{j\geq 0} V_j \oplus V^*_j],\qquad V_j \cong (\mathbb{C}^n)^{\oplus m} \cong V,
\\ & L^W = \bigwedge \bigoplus_{j\geq 0} (W_j \oplus W^*_j), \qquad W_j \cong (\mathbb{C}^n)^{\oplus r} \cong W.\end{split} \end{equation*}
By Theorems \ref{arcspaceinvt} and \ref{oddgen} (1), $\text{gr}(\cS(nm) \otimes \cE(nr))^{\gs\gl_n[t]}$ is generated as a differential algebra by elements in the subalgebra 
$$\bra (S^V_0\otimes L^W_0)^{SL_n}\ket \cong \big(\mathbb{C}[V\oplus V^*] \otimes \bigwedge (W \oplus W^*)\big)^{SL_n}.$$ By classical invariant theory, this is generated by the quadratics corresponding to a pairing of either an even or odd copy of $\mathbb{C}^n$, and an even or odd copy of $(\mathbb{C}^n)^*$, together with determinants (with appropriate signs) that depend on $n$ copies of $\mathbb{C}^n$ which can be either even or odd, with the even ones distinct, or on $n$ copies of $(\mathbb{C}^n)^*$ which can be either even or odd, with the even ones distinct. It is easy to see that the corresponding fields in $\cS(nm) \otimes \cE(nr)$ actually are $\gs\gl_n[t]$-invariant, and that the quadratic fields generate the affine vertex superalgebra $V^{n}(\gg\gl_{r|n}$). Therefore the map 
$$\text{gr}_f((\cS(nm) \otimes \cE(nr))^{\gs\gl_n[t]}) \hookrightarrow \text{gr}(\cS(nm) \otimes \cE(nr))^{\gs\gl_n[t]}$$ is an isomorphism, which completes the proof.
\end{proof}

\section{The case $\gg = \gg\gl_n$}
In this section, we consider the structure of $\cS(nm)^{\gg\gl_n[t]}$, $\cE(nr)^{\gg\gl_n[t]}$, and $(\cS(nm) \otimes \cE(nr))^{\gg\gl_n[t]}$. We first study $\cS(nm)^{\gg\gl_n[t]}$, and to motivate our results, we begin by recalling the case $n=1$. In this case, $V^{-m}(\gg\gl_n)$ is replaced with the Heisenberg algebra $\cH$ generated by $e$, which satisfies $e(z) e(w) \sim -m (z-w)^{-2}$, and we write $\cS(m)^{\gg\gl_1[t]} = \text{Com}(\cH, \cS(m))$.

\begin{thm} \label{thm:casegl1}  \begin{enumerate}
\item In the case $m=1$, $\text{Com}(\cH, \cS(1))$ is isomorphic to the simple Zamolodchikov $\cW_3$ algebra with central charge $c = -2$.
\item In the case $m=2$, $\text{Com}(\cH, \cS(2))$ is isomorphic to the simple rectangular $\cW$-algebra of $\gs\gl_4$ at level $-\frac{5}{4}$, which is an extension of $L_{-1}(\gs\gl_2)$ and has central charge $c = -3$.
\item In the case $m\geq 3$,  $\text{Com}(\cH, \cS(m))$ is isomorphic to the simple affine vertex algebra $L_{-1}(\gs\gl_m)$.
\end{enumerate}
\end{thm}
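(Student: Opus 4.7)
The plan is to treat the three cases separately, using the common observation that $L = L^{\cS(m)} - L^{\cH}$ has central charge $c = -m - 1$, which matches the claimed target in each case: $\cW_3$ at $c=-2$ for $m=1$, the rectangular $\cW$-algebra at $c=-3$ for $m=2$, and $L_{-1}(\gs\gl_m)$ with $c = -(m+1)$ for $m\geq 3$. The stable case $m\geq 3$ yields to the arc space machinery developed in Theorem \ref{arcspaceinvt} and Lemma \ref{lem:filtration}, while the two exceptional cases require direct identification with specific $\cW$-algebras.

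For part (3), $m\geq 3$: by Theorem \ref{arcspaceinvt}(1) applied with $n=1$ and $p = q = m$, the invariant ring $\mathbb{C}[J_{\infty}(V\oplus V^*)]^{J_{\infty}(GL_1)}$ coincides with $\mathbb{C}[J_{\infty}((V\oplus V^*)/\!\!/ GL_1)]$ and is differentially generated by the $m^2$ bilinears $\beta^i\gamma^j$. Using the Fock-space decomposition of $\cS(m)$ under the Heisenberg, one has $\text{gr}(\cS(m))^{\gg\gl_1[t]} \cong \text{gr}(\cH) \otimes \text{gr}_f(\text{Com}(\cH, \cS(m)))$, so the associated graded of the coset is differentially generated by the $m^2 - 1$ traceless bilinears. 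These lift to the fields $X^{ij} - \frac{\delta_{ij}}{m} e$ inside $\tilde V^{-1}(\gs\gl_m)$, and Lemma \ref{lem:filtration} then implies that they strongly generate $\text{Com}(\cH, \cS(m))$, so the coset equals $\tilde V^{-1}(\gs\gl_m)$. One finally identifies this with the simple quotient $L_{-1}(\gs\gl_m)$ for $m\geq 3$ using admissibility of $k=-1$ or a direct Wakimoto-type realization argument.

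For part (2), $m=2$, the same arc-space count produces three traceless strong generators spanning $L_{-1}(\gs\gl_2)$, but the coset is strictly larger, as visible from a character computation. I would exhibit explicit extension fields in $\text{Com}(\cH, \cS(2))$ built from normally ordered products of $\beta^i, \gamma^j$, compute their OPEs among themselves and with $L_{-1}(\gs\gl_2)$, and match the resulting extension with the known strong generating type and OPEs of the simple rectangular $\cW$-algebra of $\gs\gl_4$ at level $-\frac{5}{4}$. For part (1), $m=1$, the arc-space approach fails outright because the unique quadratic invariant $\beta\gamma$ equals the Heisenberg generator $e$ and is therefore not in the coset. Instead I would directly construct the coset Virasoro $L$ together with a weight-$3$ primary field $W$ cubic in $\beta, \partial\beta, \gamma, \partial\gamma$, verify the $\cW_3$ OPE relations at $c = -2$, and conclude via a character comparison against the simple vacuum $\cW_3$-module.

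The main obstacle is part (2): matching the coset for $m=2$ to the specific rectangular $\cW$-algebra requires either a nontrivial direct OPE computation with the extra generators, or an appeal to structural results such as a Kazama--Suzuki-type coset realization or a Howe-duality argument producing the correct extension type. Part (1) is smaller in scale but still demands a concrete $\cW_3$ OPE verification at $c=-2$ and a character argument to rule out further generators, while part (3) is essentially a formal consequence of the arc space machinery once the Heisenberg tensor-product decomposition of the graded invariants is established.
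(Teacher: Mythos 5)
First, a point of comparison: the paper does not prove Theorem \ref{thm:casegl1} at all. It is recalled as known, with (1) attributed to Wang \cite{Wa}, (2) to Creutzig--Kanade--Linshaw--Ridout \cite{CKLR}, and (3) to Adamovi\'c and Per\v{s}e \cite{AP}; the theorem serves only to motivate the general $\gg\gl_n$ results that follow. So your attempt has to be measured against the paper's general-$n$ arguments, of which this is the $n=1$ prototype, and against the cited literature.

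The genuine gap is in your part (3). The factorization $\text{gr}(\cS(m))^{\gg\gl_1[t]} \cong \text{gr}(\cH) \otimes \text{gr}_f(\text{Com}(\cH, \cS(m)))$ is not something you can read off from Theorem \ref{arcspaceinvt} plus Lemma \ref{lem:filtration}; it is essentially equivalent to what you are trying to prove. The obstruction is that for $\gg = \gg\gl_n$ the embedding \eqref{injgamm1} is \emph{not} surjective: the trace invariant $\sum_i \beta^i_0\gamma^i_0$ lies in $\text{gr}(\cS(m))^{\gg\gl_1[t]}$ but has no preimage in the coset, since the only field with that symbol is $e$ itself, which does not commute with $\cH$. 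This is exactly the obstruction the paper flags at the start of its proof of Theorem \ref{main:glnS}, and the correct route is the one taken there: work inside the charge-zero subalgebra of $\cS(m)$, which is spanned over $\tilde{V}^{-1}(\gg\gl_m)$ by the fields $:\beta^i \partial^t \gamma^j:$ (the $n=1$ analogues of the determinant products $:D\,\partial^t D':$), and prove by induction on $t$, using the classical relations and an auxiliary index $j_0$ lying outside the index lists, that each such field already lies in $\tilde{V}^{-1}(\gg\gl_m)$. That auxiliary index exists precisely when $m \geq 2n+1 = 3$; your argument as written never invokes $m\geq 3$ and would therefore equally "prove" part (3) for $m=2$, contradicting part (2). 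Two further gaps: $k=-1$ is not an admissible level for $\gs\gl_m$, so simplicity of the image $\tilde{V}^{-1}(\gs\gl_m)$ requires a genuine argument (in \cite{AP} it comes from the explicit decomposition of $\cS(m)$ as a module over $\cH \otimes L_{-1}(\gs\gl_m)$) rather than an appeal to admissibility; and parts (1) and (2) are plans rather than proofs --- the $\cW_3$ OPE verification at $c=-2$ and especially the identification with the rectangular $\cW$-algebra of $\gs\gl_4$ at level $-\frac{5}{4}$ are the substantive content there, which is precisely why the paper delegates them to \cite{Wa} and \cite{CKLR}.
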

Note that (1) is due to Wang \cite{Wa}, (2) was proven by Creutzig, Kanade, Ridout and the first author in \cite{CKLR}, and (3) is due to Adamovi\'c and Per\v{s}e \cite{AP}.

If $n \geq 2$, the cases $1 \leq m < n$ are already understood by Theorem 4.4 of \cite{LSS2}. We have
$$\cS(nm)^{\gg\gl_n[t]} \cong  \left\{
\begin{array}{ll}
\mathbb{C} & m = 1,
\smallskip
\\ \tilde{V}^{-n}(\gs\gl_m) & 2\leq m < n .
\end{array} 
\right.
$$
Note that $\tilde{V}^{-n}(\gs\gl_m) \cong V^{-n}(\gs\gl_m)$ since the latter is simple when $n>m$.

For $n\geq 2$ and $m\geq n$, there is a similar pattern to the case $n=1$. In the cases $n \leq m < 2n+1$, $\cS(nm)^{\gg\gl_n[t]}$ is a nontrivial extension of $\tilde{V}^{-n}(\gs\gl_m)$, and for $m\geq 2n+1$ it is just $\tilde{V}^{-n}(\gs\gl_m)$. From now on, we assume $n\geq 2$ and we will consider the cases $m \geq 2n+1$, $n<m < 2 n+1$, and $m=n$ separately.

\subsection{The case $m \geq 2n+1$} The following result generalizes Theorem \ref{thm:casegl1} (3).
\begin{thm} \label{main:glnS} For all $n \geq 2$ and $m\geq 2n+1$, $\cS(nm)^{\gg\gl_n[t]} \cong \tilde{V}^{-n}(\gs\gl_m)$. \end{thm}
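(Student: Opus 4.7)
I plan to establish the equality by proving $\cS(nm)^{\gg\gl_n[t]} \subseteq \tilde V^{-n}(\gs\gl_m)$; the reverse inclusion is immediate from \eqref{conformalemb:slnslmbg}, since the image of $V^{-n}(\gs\gl_m)$ commutes with the entire image of $V^{-n}(\gg\gl_n) = \cH \otimes V^{-n}(\gs\gl_n)$ in $\cS(nm)$, placing $\tilde V^{-n}(\gs\gl_m)$ inside the $\gg\gl_n[t]$-invariants. The natural starting point is Theorem \ref{main:sln}: $\cS(nm)^{\gs\gl_n[t]}$ is strongly generated by $\{X^{ij}\}$, $\{D_I\}$, $\{D'_J\}$, and the further coset of interest is $\text{Com}(\cH,\cS(nm)^{\gs\gl_n[t]})$, where $\cH$ is generated by the diagonal Heisenberg field $e = \sum_{i,j}{:}\beta^{ij}\gamma^{ij}{:}$.

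The central tool will be the arc-space invariant theory together with Lemma \ref{lem:filtration}. By Theorem \ref{arcspaceinvt}(1), $\text{gr}(\cS(nm))^{\gg\gl_n[t]} \cong \C[J_\infty((V\oplus V^*)/\!\!/GL_n)]$ is differentially generated by the classical quadratic invariants $\tilde X^{ij}$. However, unlike the situation in Theorem \ref{main:sln}, the map $\text{gr}_f(\cS(nm)^{\gg\gl_n[t]}) \hookrightarrow \text{gr}(\cS(nm))^{\gg\gl_n[t]}$ is \emph{not} surjective: since $e_{(1)} e = -nm \neq 0$, the Heisenberg field $e$ itself is not in the coset, so $\tilde e = \sum_i \tilde X^{ii}$ is absent from the image. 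The viable generators are only the off-diagonal $\tilde X^{ij}$ ($i\neq j$) and the traceless differences $\tilde X^{ii}-\tilde X^{jj}$, which lift to genuine coset elements living inside $\tilde V^{-n}(\gs\gl_m)$.

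The pivotal claim is then: for $m\geq 2n+1$, the image of $\text{gr}_f(\cS(nm)^{\gg\gl_n[t]})$ inside $\C[J_\infty((V\oplus V^*)/\!\!/GL_n)]$ is exactly the differential subalgebra generated by these traceless invariants, which equals $\text{gr}_f(\tilde V^{-n}(\gs\gl_m))$. Once this is in hand, Lemma \ref{lem:filtration} applied to $f:\cS(nm)^{\gg\gl_n[t]} \hookrightarrow \cS(nm)$ with the traceless $X^{ij}$-lifts as $\omega_i$ forces $\cS(nm)^{\gg\gl_n[t]}$ to be strongly generated by the same fields that strongly generate $\tilde V^{-n}(\gs\gl_m)$, yielding equality.

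The main obstacle is justifying the claim, i.e., showing that no new coset elements arise from charge-zero normally ordered combinations of $D_I$'s and $D'_J$'s. A single product ${:}D_I D'_J{:}$ has charge zero but is \emph{not} $\cH$-invariant, since a direct Wick computation gives $e_{(1)} {:}D_I D'_J{:} = -n(D_I)_{(0)} D'_J \neq 0$; nevertheless, classically ${:}D_I D'_J{:}$ reduces to $\det[\tilde X^{i,j}]_{i\in I,\,j\in J}$ by Cauchy--Binet, so at each Li-degree one expects a quantum correction producing a genuine coset element differing from a polynomial in the $X^{ij}$'s by strictly smaller Li-degree. My plan is to induct on the number of $D$-$D'$ pairs and on Li-filtration degree, iteratively subtracting lifts of the classical polynomial-in-$\tilde X^{ij}$ expression, projecting the residue onto the $\cH$-coset inside $\tilde V^{-n}(\gg\gl_m)\cong \cH\otimes \tilde V^{-n}(\gs\gl_m)$. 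The restriction $m\geq 2n+1$ should enter exactly where this reduction needs to terminate without producing new $\gs\gl_m$-submodules of weight $\leq n$ that are unavailable within $\tilde V^{-n}(\gs\gl_m)$; for $n\leq m<2n+1$ the reduction fails and genuine extensions appear, as noted in the introduction before the theorem. The principal technical hurdle will be making this inductive step rigorous and identifying $m\geq 2n+1$ as the precise stability threshold at which the $D$-degree filtration of $\cS(nm)^{\gs\gl_n[t]}$ collapses onto $\tilde V^{-n}(\gg\gl_m)$ after passing to $\cH$-invariants.
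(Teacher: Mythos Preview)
Your high-level strategy is right and matches the paper's: pass to the charge-zero part of $\cS(nm)^{\gs\gl_n[t]}$ and show that every $:D_I\,\partial^t D'_J:$ already lies in $\tilde V^{-n}(\gg\gl_m)$, then project onto $\tilde V^{-n}(\gs\gl_m)$ by taking $\cH$-invariants. But there is a genuine gap in your plan for the inductive step, and your guess about where $m\geq 2n+1$ enters is off the mark.

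Your Li-filtration scheme handles only the case $t=0$: classically $d_I d'_J=\det[\tilde x^{i_a j_b}]$, so one subtracts the normally ordered determinant in the $X^{ij}$ and iterates down in degree. For $t\geq 1$ this breaks down; there is no closed Cauchy--Binet expression for $d_I\,\partial^t d'_J$ to subtract, and ``iteratively subtract and project'' does not by itself give a terminating procedure. The paper's mechanism is concrete and different. One uses the Laplace-type identity
\[
\sum_{k=0}^n (-1)^k :X^{j_0 j_k}\,D'_{j_0,\ldots,\widehat{j_k},\ldots,j_n}:\ =\ -\,\partial D'_{j_1,\ldots,j_n},
\]
valid inside $\cS(nm)^{\gs\gl_n[t]}$, and takes the normally ordered product with $\partial^t D_{i_1,\ldots,i_n}$ on the right. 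After applying \eqref{commutator}--\eqref{nonasswick}, this expresses $:(\partial D'_J)\,\partial^t D_I:$ as a sum of $:X^{j_0 j_k}(:\partial^t D_I\,D'_{\ldots}:):$-terms, each in $\tilde V^{-n}(\gg\gl_m)$ by induction on $t$; one then handles non-disjoint index sets via $X^{j_s,i_s}_{(1)}$. The identity requires an auxiliary index $j_0$ lying in \emph{neither} $\{i_1,\ldots,i_n\}$ nor $\{j_1,\ldots,j_n\}$, and this is exactly where $m\geq 2n+1$ enters: the two index sets together occupy at most $2n$ slots, so a spare $j_0$ exists precisely when $m>2n$. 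Your speculation that the threshold controls ``new $\gs\gl_m$-submodules of weight $\leq n$ unavailable within $\tilde V^{-n}(\gs\gl_m)$'' is not the operative reason.

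A secondary point: routing the argument through $\text{gr}_f(\cS(nm)^{\gg\gl_n[t]})$ and Lemma~\ref{lem:filtration} is a detour. The elements $:D_I\,\partial^t D'_J:$ are not themselves $\cH$-invariant (your computation $e_{(1)}(:D_I D'_J:)=-n\,(D_I)_{(0)}D'_J$ is correct), so they never lie in the coset whose associated graded you are trying to describe. You must first show they all lie in $\tilde V^{-n}(\gg\gl_m)$ (not $\gs\gl_m$), and only afterwards pass to $\cH$-invariants; once you do that, the associated-graded bookkeeping adds nothing.
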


\begin{proof} This cannot be proven directly using Theorem \ref{arcspaceinvt} (1) because the map $$\text{gr}_f(\cS(nm)^{\gg\gl_n[t]}) \hookrightarrow \text{gr}(\cS(nm))^{\gg\gl_n[t]}$$ given by \eqref{injgamm1}  fails to be surjective. Instead, we will study $\cS(nm)^{\gg\gl_n[t]}$ using the structure of $\cS(nm)^{\gs\gl_n[t]}$ given by Theorem \ref{main:sln}. Since $\tilde{V}^{-m}(\gg\gl_n) \cong \cH \otimes \tilde{V}^{-m}(\gs\gl_n)$, we have $$\cS(nm)^{\gg\gl_n[t]} = \text{Com}(\cH, \cS(nm)^{\gs\gl_n[t]}).$$ Since $\cH$ is generated by the field $e$ given by \eqref{defchargebg}, $\text{Com}(\cH, \cS(nm))$ is just the subalgebra $\cS(nm)^0 \subseteq \cS(nm)$ of charge zero, and $\cS(nm)^{\gg\gl_n[t]} \subseteq \cS(nm)^0$.

Recall that the subalgebra $\tilde{V}^{-n}(\gg\gl_m) = \cH \otimes \tilde{V}^{-n}(\gs\gl_m) \subseteq (\cS(nm)^0)^{\gs\gl_n[t]}$ has strong generators
$$X^{ij} = \sum_{k=1}^n :\beta^{ki} \gamma^{kj}:,\qquad i,j = 1,\dots, m.$$ The additional generators $D_{i_1,\dots, i_n}, D'_{j_1,\dots, j_n}$ of $\cS(nm)^{\gs\gl_n[t]}$ have charges $-n, n$, respectively. Therefore as a module over $\tilde{V}^{-n}(\gg\gl_m)$, $(\cS(nm)^0)^{\gs\gl_n[t]}$ is generated by normally ordered monomials in $D_{i_1,\dots, i_n}$ and $D'_{j_1,\dots, j_n}$ and their derivatives, with the same number of $D, D'$.
By induction on length, any such monomial can be generated over $\tilde{V}^{-n}(\gg\gl_m)$ by the products $:\partial^k D_{i_1,\dots, i_n} \partial^l D'_{j_1,\dots, j_n}:$. In fact, each $:\partial^k D_{i_1,\dots, i_n} \partial^l D'_{j_1,\dots, j_n}:$ can be generated by $:D_{i_1,\dots, i_n}  \partial^tD'_{j_1,\dots, j_n}:$ for $t\geq 0$ under repeated action of $\partial$.

We will show that each product $:D_{i_1,\dots, i_n}  \partial^tD'_{j_1,\dots, j_n}:$ lies in $\tilde{V}^{-n}(\gg\gl_m)$. It follows that $(\cS(nm)^0)^{\gs\gl_n[t]} \subseteq \tilde{V}^{-n}(\gg\gl_m)$. Since $\cS(nm)^{\gg\gl_n[t]} \subseteq (\cS(nm)^0)^{\gs\gl_n[t]}$, and $\cS(nm)^{\gg\gl_n[t]}$ commutes with $\cH$, we obtain $\cS(nm)^{\gg\gl_n[t]} \subseteq \tilde{V}^{-n}(\gs\gl_m)$. This completes the proof that $\cS(nm)^{\gg\gl_n[t]} = \tilde{V}^{-n}(\gs\gl_m)$.

\noindent {\bf The case $t=0$.} We denote by $d_{i_1,\dots, i_n}$, $d'_{j_1,\dots, j_n}$, and $x^{ij}$ the images of the fields $D_{i_1,\dots, i_n}$, $D'_{j_1,\dots, j_n}$, and $X^{ij}$ in $\text{gr}_f(\cS(nm)^{\gs\gl_n})$, respectively. By classical invariant theory, there is a relation of degree $n$ (that is, degree $2n$ in the variables $\beta^{ij}_0, \gamma^{ij}_0$):
$$d_{i_1,\dots, i_n} d'_{j_1,\dots, j_n} - \left|\begin{matrix} x^{i_1 j_1}& \cdots &  x^{i_1j_n} \cr  \vdots  & & \vdots  \cr   x^{i_n j_1}  & \cdots &  x^{i_n j_n} \end{matrix} \right|.$$

In the vertex algebra setting, the corresponding normally ordered expression $$\omega = \  :D_{i_1,\dots, i_n} D'_{j_1,\dots, j_n} : - \left|\begin{matrix} X^{i_1 j_1}& \cdots &  X^{i_1 j_n} \cr  \vdots  & & \vdots  \cr   X^{i_n j_1}  & \cdots &  X^{i_n j_n} \end{matrix} \right|$$
need not vanish but it has degree $d< n$ and is invariant under $\gs\gl_n[t]$. Therefore the same holds for the image $\phi_d(\omega)$ in the degree $d$ part of $\text{gr}_f(\cS(nm)^{\gs\gl_n[t]})$. It follows that $\phi_d(\omega)$ can be expressed as a normally ordered polynomial in the generators of $\mathbb{C}[J_{\infty}((\mathbb{C}^n \oplus (\mathbb{C}^n)^*)^{\oplus m}]$, namely, $x^{ij}$, $d_{i_1,\dots, i_n}$, $d'_{j_1,\dots, j_n}$, and their derivatives. By degree considerations, $\phi_d(\omega)$ must depend only on the quadratics $x^{ij}$ and their derivatives, so we can subtract the corresponding normally ordered polynomial in $X^{ij}$ and their derivatives, and proceed by induction on $d$. Note that the same argument shows that for all $\{i_1,\dots, i_n\}$ and $\{j_1,\dots, j_n\}$ and all $t,s \geq 0$, 
\begin{equation} \label{DD'lowering} (D_{i_1,\dots, i_n})_{(s)} \partial^t D'_{j_1,\dots, j_n} \in \tilde{V}^{-n}(\gg\gl_m).\end{equation}

\noindent {\bf The case $t\geq 1$.} 
We assume inductively that  when $\{i_1,\dots, i_n\}$ and $\{j_1,\dots, j_n\}$ are disjoint, 
$$:D_{i_1,\dots, i_n} \partial^s D'_{j_1,\dots, j_n}:\  \in \tilde{V}^{-n}(\gg\gl_m)\ \text{for}\ s \leq t.$$ Equivalently, 
$:(\partial^i D_{i_1,\dots, i_n}) \partial^{s-i} D'_{j_1,\dots, j_n}:\  \in \tilde{V}^{-n}(\gg\gl_m)$ for all $s \leq t$ and $0\leq i \leq s$. We will then show that $D_{i_1,\dots, i_n} \partial^{t+1} D'_{j_1,\dots, j_n} \in \tilde{V}^{-n}(\gg\gl_m)$.

We need to consider normally ordered relations which are built from the classical relations
\begin{equation} \label{sln:classicalrelation} \sum_{k=0}^n (-1)^k x^{u,v_k} d'_{v_0,\dots, v_{k-1}, v_{k+1},\dots, v_n} = 0.
\end{equation}

Let $j_0\in \{1,\dots,m\}$ be an element which appears on neither list $\{i_1,\dots, i_n\}$ or $\{j_1,\dots, j_n\}$ (which always exists because $m\geq 2n+1$).
 Then we have 
$$\sum_{k=0}^n (-1)^k :X^{j_0 j_k}  D'_{j_0,\dots, j_{k-1}, j_{k+1},\dots, j_n}:\ =-\partial D'_{j_1,\dots,j_n}.$$
Taking normally ordered product with $\partial^tD_{i_1,\dots,i_n}$ on the right yields
$$- :(\partial D'_{j_1,\dots,j_n}) \partial^t D_{i_1,\dots,i_n}:\ = \sum_{k=0}^n (-1)^k  :(:X^{j_0 j_k}  D'_{j_0,\dots, j_{k-1}, j_{k+1},\dots, j_n}:)\partial^t D_{i_1,\dots, i_{n}} :.$$ 
Using \eqref{commutator}-\eqref{nonasswick} and \eqref{DD'lowering}, and the fact that $\{i_1,\dots, i_n\}$, $\{j_1,\dots, j_n\}$ and $\{j_0\}$ are disjoint, it follows that 
\begin{equation} \begin{split} & \sum_{k=0}^n (-1)^k  :(:X^{j_0 j_k}  D'_{j_0,\dots, j_{k-1}, j_{k+1},\dots, j_n}:)\partial^t D_{i_1,\dots, i_{n}} : 
\\ & =
\sum_{k=0}^n (-1)^k  :X^{j_0 j_k}  (\partial^t D_{i_1,\dots, i_{n}}) D'_{j_0,\dots, j_{k-1}, j_{k+1},\dots, j_n}:. \end{split} \end{equation}  
By inductive hypothesis, $$:X^{j_0 j_k} (\partial^t D_{i_1,\dots, i_{n}}) D'_{j_0,\dots, j_{k-1}, j_{k+1},\dots, j_n}:\ \in \tilde{V}^{-n}(\gg\gl_m),\ \text{for} \ k = 0,\dots, n.$$ 
By \eqref{commutator}, $ :(\partial D'_{j_1,\dots,j_n}) \partial^t D_{i_1,\dots,i_n}: = \ :( \partial^tD_{i_1,\dots,i_n})\partial D'_{j_1,\dots,j_n}:$, so we obtain $:( \partial^tD_{i_1,\dots,i_n})\partial D'_{j_1,\dots,j_n}:\ \in  \tilde{V}^{-n}(\gg\gl_m)$. Next, since
 $$\partial (:\partial^t D_{i_1,\dots,i_n} D'_{j_1,\dots,j_n}:)=\ :(\partial^{t+1}D_{i_1,\dots,i_n}) D'_{j_1,\dots,j_n}: \ + \ :(\partial^t D_{i_1,\dots,i_n}) \partial D'_{j_1,\dots,j_n} :,$$ and $\partial (:\partial^t D_{i_1,\dots,i_n} D'_{j_1,\dots,j_n}:) \in \tilde{V}^{-n}(\gg\gl_m)$ by inductive hypotheses, we get 
 $:(\partial^{t+1}D_{i_1,\dots,i_n}) D'_{j_1,\dots,j_n}: \ \in  \tilde{V}^{-n}(\gg\gl_m)$. Since
 $$:(\partial^{t+1} D_{i_1,\dots,i_n}) D'_{j_1,\dots,j_n}: \ = \sum_{i=0}^{t+1} (-1)^i  \binom{t+1}{i}\partial^{t+1-i} (:D_{i_1,\dots,i_n} \partial^{i} D'_{j_1,\dots,j_n}:),$$ we conclude that  $:D_{i_1,\dots,i_n} \partial^{t+1} D'_{j_1,\dots,j_n}: \ \in  \tilde{V}^{-n}(\gg\gl_m)$, as well. This completes the case where $\{i_1,\dots, i_n\}$ and $\{j_1,\dots, j_n\}$ are disjoint.

Finally, by \eqref{ncw} we have 
\begin{equation} \begin{split} X^{j_s, i_s}_{(1)} (:D_{i_1,\dots, i_n} \partial^{t+1} D'_{i_1,\dots, i_{s-1},j_s,\dots,j_n} :) 
& = (t+1):D_{i_1,\dots, i_n} \partial^{t} D'_{i_1,\dots, i_{s-1},i_s,j_{s+1},\dots,j_n}:
\\ & -(D_{i_1,\dots,i_{s-1},j_s,i_{s+1},\dots, i_n})_{(0)} \partial^{t+1} D'_{i_1,\dots, i_{s-1},j_s,\dots,j_n} .\end{split} \end{equation}
Since $(D_{i_1,\dots,i_{s-1},j_s,i_{s+1},\dots, i_n})_{(0)} \partial^{t+1} D'_{i_1,\dots, i_{s-1},j_s,\dots,j_n}  \ \in \tilde{V}^{-n}(\gg\gl_m)$ by \eqref{DD'lowering}, it follows by induction on $s$ that $:D_{i_1,\dots, i_n} \partial^{t} D'_{i_1,\dots, i_{s-1},i_s,j_{s+1},\dots,j_n}: \ \in  \tilde{V}^{-n}(\gg\gl_m)$ for all $s \leq n$. \end{proof}

\subsection{The case $n+1 \leq m < 2n+1$}
Recall the generators $\{X^{ij}|\ i,j = 1,\dots, m\}$ for $\tilde{V}^{-n}(\mathfrak{gl}_m)$, as well as the Heisenberg field $e = \sum_{i=1}^n \sum_{j=1}^{m} :\beta^{ij} \gamma^{ij}:$ which commutes with both $\tilde{V}^{-m}(\mathfrak{sl}_n)$ and $\tilde{V}^{-n}(\mathfrak{sl}_m)$. Note that the zero mode of $e$ induces an action of $U(1)$ on $\cS(nm)^{\mathfrak{sl}_n[t]}$, and that $(\cS(nm)^{\mathfrak{sl}_n[t]})^{U(1)} = \cS(nm)^{\mathfrak{sl}_n[t]} \cap \cS(nm)^0$.

\begin{thm} Fix $n \geq 2$ and $ n+1 \leq m < 2n+1$. Then
\begin{enumerate}
\item $(\cS(nm)^{\mathfrak{sl}_n[t]})^{U(1)}$ is generated by $\{X^{ij}\}$ together with one additional field $:(\partial D'_{2,\dots, n+1}) D_{2,\dots,n+1}:$, of weight $n+1$.
\item $(\cS(nm)^{\mathfrak{sl}_n[t]})^{U(1)}$ has a minimal strong generating set consisting of $\{X^{ij}\}$, together with $\binom{m}{n}^2$ additional fields on weight $n+1$, 
$$:\partial D'_{i_1,\dots, i_n} D_{j_1,\dots, j_n}:,$$ for all subsets $\{i_1,\dots, i_n\}$ and $\{j_1,\dots, j_n\}$ of $\{1,\dots, m\}$.
\end{enumerate}
 
\end{thm}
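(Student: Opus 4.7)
The plan is to deduce this result from Theorem~\ref{main:sln} by imposing $U(1)$-invariance, adapting the inductive strategy used in the proof of Theorem~\ref{main:glnS}. By Theorem~\ref{main:sln}, $\cS(nm)^{\gs\gl_n[t]}$ is strongly generated by $\{X^{ij},\,D_I,\,D'_J\}$ with $U(1)$-charges $0$, $-n$, $+n$ respectively. Hence $(\cS(nm)^{\gs\gl_n[t]})^{U(1)}$ is spanned by $\gs\gl_n[t]$-invariant normally-ordered monomials with equal numbers of $D$- and $D'$-factors, and using \eqref{commutator} and \eqref{nonasswick} to rearrange, the problem reduces to showing that $(\cS(nm)^{\gs\gl_n[t]})^{U(1)}$ is strongly generated by $\{X^{ij}\}$ together with $\{\,:D_I\partial^tD'_J:\mid t\geq 0\,\}$, and then further that only $t=1$ is needed.

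For $t=0$, the classical invariant-theory identity $d_Id'_J=\det[x^{ab}]_{a\in I,\,b\in J}$ lifts to the vertex algebra by induction on filtration degree, giving $:D_ID'_J:\in\tilde{V}^{-n}(\gg\gl_m)$, exactly as in the $t=0$ case of Theorem~\ref{main:glnS}. For $t\geq 2$, I would adapt the $t\geq 1$ case of that proof. The classical identity $-\partial D'_{j_1,\dots,j_n}=\sum_{k=0}^n(-1)^k\,:X^{j_0j_k}D'_{j_0,\dots,\widehat{j_k},\dots,j_n}:$ still applies with some $j_0\notin\{j_1,\dots,j_n\}$, which exists because $m\geq n+1$. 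Taking a normally-ordered product with $\partial^{t-1}D_I$ and iterating reduces $:D_I\partial^tD'_J:$ to lower-$t$ expressions. The decisive new feature in the range $n+1\leq m<2n+1$ is that one cannot always choose $j_0\notin I$ as well; when $j_0\in I$, the Wick contractions $X^{j_0j_k}_{(s)}D_I$ for $s\geq 0$ produced by \eqref{ncw} yield extra terms that do not lie in $\tilde{V}^{-n}(\gg\gl_m)$. Unpacking these terms via \eqref{ncw} shows they reduce, modulo derivatives of already-handled fields, to $t=1$ fields $:\partial D'_{I'}D_{J'}:$, so the induction on $t$ terminates at $t=1$ rather than $t=0$, giving part~(2).

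For minimality in part~(2), pass to the associated graded $\text{gr}_f((\cS(nm)^{\gs\gl_n[t]})^{U(1)})\hookrightarrow\text{gr}(\cS(nm))^{\gg\gl_n[t]}\cong\C[J_\infty(V\oplus V^*)]^{J_\infty(GL_n)}$. By Theorem~\ref{arcspaceinvt}(1), the latter is generated as a differential algebra by the arcs of $x^{ab}$, so the image of $:\partial D'_I D_J:$ in filtration degree $n$ equals a polynomial in the $X^{ab}$'s and their derivatives modulo strictly lower filtration. The ``new information'' in $:\partial D'_I D_J:$ therefore lives in a lower-filtration component of the same conformal weight $n+1$, and for distinct $(I,J)$ these lower-filtration remnants transform faithfully under the $\gg\gl_m$-action of $\{X^{ij}_{(0)}\}$ as the $\binom{m}{n}^2$-dimensional module $\wedge^n\C^m\otimes\wedge^n(\C^m)^*$; hence all $\binom{m}{n}^2$ fields are required.

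For part~(1), the zero modes $X^{ij}_{(0)}$ act on $(\cS(nm)^{\gs\gl_n[t]})^{U(1)}$ by derivations realizing a $\gg\gl_m$-action. Direct computation from $X^{ij}_{(0)}\beta^{k\ell}=-\delta_{j\ell}\beta^{ki}$ and $X^{ij}_{(0)}\gamma^{k\ell}=\delta_{i\ell}\gamma^{kj}$ shows that applying $X^{ij}_{(0)}$ with $i\notin K,\,j\in L$ shifts $L$ by replacing $j$ with $i$ (up to sign) while leaving $K$ fixed, and symmetrically for the other index; iterating such moves, starting from $\omega=:(\partial D'_{\{2,\dots,n+1\}})D_{\{2,\dots,n+1\}}:$, one reaches every basis element $e_I\otimes e^*_J$ of $\wedge^n\C^m\otimes\wedge^n(\C^m)^*$. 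Hence the vertex subalgebra generated by $\{X^{ij}\}\cup\{\omega\}$ contains every $:\partial D'_I D_J:$, and by part~(2) this is all of $(\cS(nm)^{\gs\gl_n[t]})^{U(1)}$. The main obstacle will be the inductive step for $t\geq 2$ in part~(2): when $j_0\in I$ is forced, one must carefully track the Wick contractions from $X^{j_0j_k}_{(s)}D_I$ via \eqref{ncw} to verify that the surviving obstructions are precisely at $t=1$ and not at higher $t$.
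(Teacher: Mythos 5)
Your overall architecture matches the paper's: the $t=0$ case via the classical determinantal identity, the reduction of part (1) to part (2) through the $\gg\gl_m$-action of the zero modes $X^{ij}_{(0)}$, and an induction on $t$ driven by the relation $-\partial D'_{j_1,\dots,j_n}=\sum_{k=0}^n(-1)^k:X^{j_0j_k}D'_{j_0,\dots,\widehat{j_k},\dots,j_n}:$ (with $j_0$ existing since $m\geq n+1$). But the inductive step --- which you yourself flag as the main obstacle --- is resolved incorrectly. After multiplying by $\partial^tD_{j_1,\dots,j_n}$ and rearranging with \eqref{nonasswick} and \eqref{ncw}, the surviving obstruction is \emph{not} a collection of $t=1$ fields. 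It is the congruence
$$(t+1):D'_{j_1,\dots,j_n}\partial^{t+1}D_{j_1,\dots,j_n}:\ +\ \sum_{k=1}^n:D'_{j_0,\dots,\widehat{j_k},\dots,j_n}\,\partial^{t+1}D_{j_0,\dots,\widehat{j_k},\dots,j_n}:\ \sim\ 0$$
modulo the algebra $\cW(nm)$ spanned by normally ordered monomials in the $X^{ij}$, the weight-$(n+1)$ fields, and their derivatives. The extra terms are other fields at the \emph{same} level $t+1$, indexed by the remaining $n$-subsets of $\{j_0,\dots,j_n\}$; they do not ``reduce, modulo derivatives of already-handled fields, to $t=1$ fields'' as you assert, so as written your induction does not close.

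The missing idea is a symmetrization: letting each of $j_0,\dots,j_n$ in turn play the role of the omitted index yields $n+1$ such congruences in the $n+1$ unknowns $:D'_K\partial^{t+1}D_K:$, with coefficient matrix $tI+\mathbf{1}\mathbf{1}^T$. This matrix has eigenvalues $t+n+1$ and $t$, hence is invertible for all $t\geq 1$, and solving the system places every $:D'_K\partial^{t+1}D_K:$ in $\cW(nm)$. (Its degeneracy at $t=0$ is precisely why the weight-$(n+1)$ fields must be adjoined as new generators, which also substantiates the necessity claim more cleanly than your associated-graded argument for minimality, an argument the paper does not spell out either.) With this linear-algebra step supplied, the rest of your outline --- the reduction of general $:\partial^kD'_I\partial^lD_J:$ to the diagonal case by zero modes, and the $\gg\gl_m$-orbit argument for part (1) --- goes through as in the paper.
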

\begin{proof}

First, for all  $\{i_1,\dots, i_n\}$ and $\{j_1,\dots, j_n\}$, we have $D'_{i_1,\dots, i_n} D_{j_1,\dots,j_n}\in \tilde{V}^{-n}(\mathfrak{gl}_m)$. Next, via the action of $\gg\gl_m$ generated by the zero modes $X^{ij}_{(0)}$, all fields $\partial D'_{i_1,\dots, i_n} D_{j_1,\dots,j_n}$ lie in the subalgebra generated by $\{X^{ij}\}$ and $:\partial D'_{2,\dots, n+1} D_{2,\dots,n+1}:$.

Let $\cW(nm)$ denote the span of all normally ordered monomials in $X^{ij}$, $:\partial D'_{i_1,\dots, i_n} D_{j_1,\dots, j_n}:$, and their derivatives, where $\{i_1,\dots, i_n\}$ and $\{j_1,\dots, j_n\}$ range over all subsets of $\{1,\dots, m\}$. As in the proof of Theorem \ref{main:glnS}, $(\cS(nm)^{\mathfrak{sl}_n[t]})^{U(1)}$ is strongly generated by $X^{ij}$ together with $:\partial^k D'_{i_1,\dots, i_n}\partial^l D_{j_1,\dots, j_n}:$ for all $k,l \geq 0$, and all $\{i_1,\dots, i_n\}$, $\{j_1,\dots, j_n\}$. So to prove both statements, it suffices to show that all elements $:\partial^k D'_{i_1,\dots, i_n}\partial^l D_{j_1,\dots, j_n}:$ lie in $\cW(nm)$.

For fixed $k,l\geq 0$, $:\partial^k D'_{2,\dots, n+1}\partial^l D_{2,\dots,n+1}:$ can generate any $:\partial^k D'_{i_1,\dots, i_n}\partial^l D_{j_1,\dots,j_n}:$ under the action of $X^{ij}_{(0)}$. To show that $:\partial^k D'_{2,\dots, n+1}\partial^l D_{2,\dots,n+1}:\ \in \cW(nm)$, it suffices to show that $: D'_{2,\dots, n+1} \partial^t D_{2,\dots,n+1}:\ \in \cW(nm)$ for all $t \geq 1$. We will proceed by induction on $t$, so we assume that for all $s \leq t$, $:D'_{2,\dots n+1}\partial^s D_{2,\dots, n+1}:\ \in \cW(nm)$. Then for all $k+l\leq t$ and all $\{i_1,\dots, i_n\}$ and $\{j_1,\dots, j_n\}$, we have $:\partial^k D'_{i_1,\dots, i_n}\partial^l D_{j_1,\dots,j_n}:\ \in \cW(nm)$.

Note that for distinct $j_0,\dots, j_n$, we have 
$$\sum_{k=0}^n (-1)^k :X^{j_0 j_k}  D'_{j_0,\dots, j_{k-1}, j_{k+1},\dots, j_n}:\ =-\partial D'_{j_1,\dots,j_n}.$$
Taking normally ordered product with $\partial^tD_{j_1,\dots,j_n}$ on the right yields
\begin{equation} \label{secondcasegln} - :(\partial D'_{j_1,\dots,j_n}) \partial^t D_{j_1,\dots,j_n}:\ = \sum_{k=0}^n (-1)^k  :(:X^{j_0 j_k}  D'_{j_0,\dots, j_{k-1}, j_{k+1},\dots, j_n}:)\partial^t D_{j_1,\dots, j_{n}} :.\end{equation}
It follows that 
\begin{equation*} \begin{split} & \sum_{k=0}^n (-1)^k  :(:X^{j_0 j_k}  D'_{j_0,\dots, j_{k-1}, j_{k+1},\dots, j_n}:)\partial^t D_{j_1,\dots, j_{n}} : 
			\\ & =
		\sum_{k=0}^n (-1)^k  :X^{j_0 j_k}  (:D'_{j_0,\dots, j_{k-1}, j_{k+1},\dots, j_n}\partial^t D_{j_1,\dots, j_{n}} :): 
	\\  &+\sum_{k=0}^n(-1)^k\sum_{s\geq 0}\frac 1 {(s+1)!} :\partial^{s+1} X^{j_0j_k} ((D'_{j_0,\dots, j_{k-1}, j_{k+1},\dots, j_n})_{(s)}\partial^t D_{j_1,\dots, j_{n}}):
	\\  &+\sum_{k=0}^n(-1)^k\sum_{s\geq 0}\frac 1 {(s+1)!} :\partial^{s+1} D'_{j_0,\dots, j_{k-1}, j_{k+1},\dots, j_n} (X^{j_0j_k} _{(s)}\partial^t D_{j_1,\dots, j_{n}}):
	\\ &\sim \frac 1 {t+1}  \sum_{k=1}^n\sum_{s=0}^t\binom{t+1}{s+1} :\partial^{s+1} D'_{j_0,\dots, j_{k-1}, j_{k+1},\dots, j_n}\partial^{t-s} D_{j_0,\dots, j_{k-1}, j_{k+1},\dots, j_n} : 
	\\ & \sim  -\frac 1 {t+1}\sum_{k=1}^n  :D'_{j_0,\dots, j_{k-1}, j_{k+1},\dots, j_n}\partial^{t+1} D_{j_0,\dots, j_{k-1}, j_{k+1},\dots, j_n} :.
 \end{split} \end{equation*}  
Here $\sim$ means modulo $\cW(nm)$. Note that by our inductive hypothesis, the left side of \eqref{secondcasegln} is equal to $: D'_{j_1,\dots,j_n} \partial^{t+1} D_{j_1,\dots,j_n}:$ modulo $\cW(nm)$, and we use this hypothesis again in the last two lines. We then obtain $$(t+1) : D'_{j_1,\dots,j_n} \partial^{t+1} D_{j_1,\dots,j_n}: +\sum_{k=1}^n  :D'_{j_0,\dots, j_{k-1}, j_{k+1},\dots, j_n}\partial^{t+1} D_{j_0,\dots, j_{k-1}, j_{k+1},\dots, j_n} :\ \sim 0$$
We have $n+1$ equations if we exchange $j_0$ and $j_i$, so we can solve 
$$: D'_{j_1,\dots,j_n} \partial^{t+1} D_{j_1,\dots,j_n}:\ \sim 0.$$ Specializing to $\{j_1,\dots, j_n\} = \{2,\dots,n+1\}$, completes the proof.
\end{proof}

Since $e_{(k)} ( : \partial D'_{i_1,\dots,i_n} D_{j_1,\dots,j_n}:) \in  \tilde{V}^{-n}(\mathfrak{gl}_m)$ for all $k \geq 1$, there exists a field $\nu_{i_1,\dots, i_n; j_1,\dots, j_n} \in \tilde{V}^{-n}(\mathfrak{gl}_m)$ such that $ : \partial D'_{i_1,\dots,i_n} D_{j_1,\dots,j_n}: - \nu_{i_1,\dots, i_n; j_1,\dots, j_n}$ commutes with $e$, and hence lies in $\cS(nm)^{\gg\gl_n[t]}$. We obtain

\begin{cor} For all $n \geq 2$ and $n+1 \leq m < 2n+1$, 
\begin{enumerate}
\item $\cS(nm)^{\mathfrak{gl}_n[t]}$ is generated by $\{X^{ij}\}$ together with the field $ : \partial D'_{2,\dots n+1} D_{2,\dots,n+1}: - \nu_{2,\dots,n+1; 2,\dots,n+1}$ in weight $n+1$.
\item $\cS(nm)^{\mathfrak{gl}_n[t]}$ has a minimal strong generating set consisting of $\{X^{ij}\}$ together with $\binom{m}{n}^2$ fields in weight $n+1$,
$$: \partial D'_{i_1,\dots,i_n} D_{j_1,\dots,j_n} : - \nu_{i_1,\dots, i_n; j_1,\dots, j_n},$$ for all subsets $\{i_1,\dots,i_n\}$ and $\{j_1,\dots, j_n\}$ of $\{1,\dots, m\}$.
\end{enumerate}
\end{cor}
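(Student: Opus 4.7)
\emph{Proof plan.} The strategy is to descend from the preceding theorem on $(\cS(nm)^{\gs\gl_n[t]})^{U(1)}$ to $\cS(nm)^{\gg\gl_n[t]}$ by recognizing the latter as $\text{Com}(\cH, \cS(nm)^{\gs\gl_n[t]})$, since $\gg\gl_n = \gs\gl_n \oplus \mathbb{C}$ with the extra Heisenberg direction acting through $e$. Because commuting with all of $\cH$ is stronger than commuting with just $e_{(0)}$, we have the inclusion $\cS(nm)^{\gg\gl_n[t]} \subseteq (\cS(nm)^{\gs\gl_n[t]})^{U(1)}$, so the strong generating set for the larger algebra already encompasses ours after correcting each generator to commute with the full field $e(z)$, not just with its zero mode.

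The affine part is immediate: from the tensor decomposition $\tilde V^{-n}(\gg\gl_m) = \cH \otimes \tilde V^{-n}(\gs\gl_m)$, the commutant of $\cH$ inside this subalgebra restricts to $\tilde V^{-n}(\gs\gl_m)$, which is generated by the traceless combinations of $\{X^{ij}\}$. For the weight $n+1$ fields $W_{I,J} := \ :\partial D'_I D_J:$, I would invoke the existence statement placed immediately before the corollary: since $e_{(k)} W_{I,J} \in \tilde V^{-n}(\gg\gl_m)$ for all $k \geq 1$, one may inductively solve the equations $e_{(k)} \nu_{I;J} = e_{(k)} W_{I,J}$ inside $\tilde V^{-n}(\gg\gl_m) \cong \cH \otimes \tilde V^{-n}(\gs\gl_m)$, working downward in conformal weight and using the Heisenberg decomposition of any element as a normally ordered polynomial in $e$ with coefficients in $\tilde V^{-n}(\gs\gl_m)$; this produces $\nu_{I;J} \in \tilde V^{-n}(\gg\gl_m)$ such that $\widetilde W_{I,J} := W_{I,J} - \nu_{I;J}$ commutes with $e$ and hence lies in $\cS(nm)^{\gg\gl_n[t]}$.

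To establish strong generation, take any $v \in \cS(nm)^{\gg\gl_n[t]}$. By the inclusion into $(\cS(nm)^{\gs\gl_n[t]})^{U(1)}$ and the preceding theorem, $v$ is a normally ordered polynomial in $\{X^{ij}\}$, $\{W_{I,J}\}$, and their derivatives. Substituting $W_{I,J} = \widetilde W_{I,J} + \nu_{I;J}$ and using $\tilde V^{-n}(\gg\gl_m) = \cH \otimes \tilde V^{-n}(\gs\gl_m)$ writes $v$ as a normally ordered polynomial in the generators of $\tilde V^{-n}(\gs\gl_m)$, the $\widetilde W_{I,J}$, and $\cH$; since $v$ commutes with $\cH$, the $\cH$-part collapses under the canonical projection to the coset, leaving a polynomial in the claimed generators. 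Statement (1) then follows as in part (1) of the preceding theorem, because the zero modes $X^{ij}_{(0)}$ preserve $\cS(nm)^{\gg\gl_n[t]}$ and act via $\gg\gl_m$ transitively on the pairs $(I,J)$ so that the single field $\widetilde W_{\{2,\dots,n+1\},\{2,\dots,n+1\}}$ suffices.

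Minimality in part (2) is inherited from the preceding theorem: the theorem shows that $\binom{m}{n}^2$ weight $n+1$ fields are needed modulo $\tilde V^{-n}(\gg\gl_m)$ and its derivatives, and since $\nu_{I;J} \in \tilde V^{-n}(\gg\gl_m)$, the subtraction cannot affect that count. The main anticipated obstacle is the construction of the $\nu_{I;J}$ in the second paragraph, which rests on the key fact $e_{(k)} W_{I,J} \in \tilde V^{-n}(\gg\gl_m)$ for $k \geq 1$ and requires a careful inductive use of the Heisenberg decomposition; the remainder of the argument is routine bookkeeping at the level of cosets.
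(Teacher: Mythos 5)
Your proposal is correct and follows essentially the same route as the paper, which derives this corollary directly from the preceding theorem on $(\cS(nm)^{\gs\gl_n[t]})^{U(1)}$ together with the one-line observation that $e_{(k)}(:\partial D'_{I} D_{J}:) \in \tilde{V}^{-n}(\gg\gl_m)$ for $k\geq 1$ guarantees the existence of the correction terms $\nu_{I;J}$. Your fleshing-out of the Heisenberg-coset decomposition and of how the $\nu_{I;J}$ are solved for is consistent with (and somewhat more detailed than) what the paper records.
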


 \subsection{The case $m=n$}
Recall that $\cS(n^2)^{\gs\gl_n[t]} \cong (V^{-n}(\gs\gl_n) \otimes \cW^{-n}(\gs\gl_n, f_{\text{subreg}})) / \cI$, by Corollary \ref{cor:n=m}. The Heisenberg algebra $\cH$ is contained in $\cW^{-n}(\gs\gl_n, f_{\text{subreg}})$ and commutes with $V^{-n}(\gs\gl_n)$, so $$\cS(n^2)^{\gg\gl_n[t]} \cong (V^{-n}(\gs\gl_n) \otimes \cC^{-n}) / \cI, \ \text{where} \ \cC^{-n} \cong \text{Com}(\cH, \cW^{-n}(\gs\gl_n, f_{\text{subreg}})).$$ 

It is known that for generic level $k$, $\cC^k = \text{Com}(\cH, \cW^{k}(\gs\gl_n, f_{\text{subreg}}))$ is of type $\cW(2,3,\dots, 2n+1)$. This is the case $\cC^{\psi}(n-1,1)$ of Lemma 6.1 of \cite{CL3}, and in this notation $\psi = k+n$. What is not immediately clear is that the critical level $k = -n$ is generic in this sense.

The zero mode of the generator of $\cH$ integrates to an action of $U(1)$, and the orbifold $\cW^k(\gs\gl_n, f_{\text{subreg}})^{U(1)}$ is isomorphic to $\cH \otimes \cC^k$. Then $\cC^k$ is of type $\cW(2,3,\dots, 2n+1)$ if and only if $\cW^k(\gs\gl_n, f_{\text{subreg}})^{U(1)}$ is of type $\cW(1,2,3,\dots, 2n+1)$. 

As in \cite{CL3}, we denote the generators of $\cW^k(\gs\gl_n, f_{\text{subreg}})$ by $J, W^2,\dots, W^{n-1}, G^{\pm}$, where $J$ is the Heisenberg field, $W^2,\dots W^{n-1}$ commute with $J$, and $G^{\pm}$ satisfy $J(z) G^{\pm}(w) \sim \pm G^{\pm}(z-w)^{-1}$. Then $\cW^k(\gs\gl_n, f_{\text{subreg}})^{U(1)}$ is strongly generated by $J, L, W^3,\dots, W^{n-1}$, together with the fields 
$$U_{i,j} = \ :(\partial^i G^+)(\partial^j G^-):,\qquad \ i,j \geq 0,$$ which have weight $n+i+j$. These are not all necessary. First of all, we only need $\{U_{0,j}|\ j \geq 0\}$ because $\partial U_{i,j} = U_{i+1,j} + U_{i,j+1}$. Second, there is a normally ordered relation of the form
$$:U_{0,0} U_{1,1}: - :U_{0,1} U_{1,0}:\ = \lambda_{n,1}(k) U_{0,n+2} + P_1.$$ Here $P_1$ is a normally ordered polynomial in 
$J,L,W^3,\dots, W^{n-1}, U_{0,0},U_{0,1},\dots, U_{0,n+1}$, and their derivatives, and $\lambda_{n,1}(k)$ is a nonzero rational function of $k$. The precise formula of $\lambda_{n,1}(k)$ is given for $n= 3$ and $n=4$ in \cite{ACL1,CL2}, but is not known in general. In particular, whenever $\lambda_{n,1}(k) \neq 0$, $U_{0,n+2}$ is not needed in the strong generating set since it can expressed as a normally ordered polynomial in $J,L,W^3,\dots, W^{n-1}, U_{0,0},U_{0,1},\dots, U_{0,n+1}$, and their derivatives. Similarly, for all $m\geq 1$, there are relations
$$:U_{0,0} U_{1,m}: - :U_{0,m} U_{1,0}:\ = \lambda_{n,m}(k) U_{0,n+2} + P_m,$$ where $P_m$ is a normally ordered polynomial in 
$J,L,W^3,\dots, W^{n-1}, U_{0,0},U_{0,1},\dots, U_{0,n+1}$, and their derivatives, and $\lambda_{n,m}(k)$ is a rational function of $k$. The precise formula of $\lambda_{n,m}(k)$ is given for $n= 3$ and $n=4$ and all $m\geq 1$ in \cite{ACL1,CL2}. Although is not known in general, we claim that it is a nonzero rational function for all $n,m$. To see this, recall that in the notation of \cite{CL3}, the large level limit of $\cW^k(\gs\gl_n, f_{\text{subreg}}) = \cC^{\psi}(n-1,1)$ is the following free field algebra:
$$ \left\{
\begin{array}{ll}
\big(\bigotimes_{i=1}^{n-1} \cO_{\text{ev}}(1,2i)\big)  \otimes \cO_{\text{ev}}(2, n), & n \ \text{even},
\smallskip
\\ \big(\bigotimes_{i=1}^{n-1} \cO_{\text{ev}}(1,2i)\big)  \otimes \cS_{\text{ev}}(1, n), & n \ \text{odd}. \\
\end{array} 
\right.$$
It is easy to verify that the corresponding coefficient (which coincides with $\lim_{k\rightarrow \infty}\lambda_{n,m}(k)$ after suitably rescaling the generators), is nonzero, which proves the claim. Therefore when $k$ is generic, $U_{0,j}$ can be eliminated from our strong generating set for all $j \geq n+2$, so that $\cW^k(\gs\gl_n, f_{\text{subreg}})^{U(1)}$ is strongly generated by $J,L,W^3,\dots, W^{n-1}, U_{0,0},U_{0,1},\dots, U_{0,n+1}$. This is equivalent to the fact that $\text{Com}(\cH, \cW^{k}(\gs\gl_n, f_{\text{subreg}}))$ is of type $\cW(2,3,\dots, 2n+1)$, since $\cW^k(\gs\gl_n, f_{\text{subreg}})^{U(1)} \cong \cH \otimes \text{Com}(\cH, \cW^{k}(\gs\gl_n, f_{\text{subreg}}))$.

At the critical level $k = -n$, recall that $\cW^{-n}(\gs\gl_n, f_{\text{subreg}})$ contains central fields $W^2,\dots, W^{n-1}$ which can be identified with the generators of the center of $V^{-n}(\gs\gl_n)$. By Theorem \ref{casen=m}, we may identify $\cW^{-n}(\gs\gl_n, f_{\text{subreg}})$ with $\cS(n^2)^{\gs\gl_n[t] \oplus \gs\gl_n[t]}$, and we use the same notation $U_{i,j}$ to denote the fields $:(\partial^i D^+)(\partial^j D^-):$.

\begin{thm} \label{thm:subregrel} In $\cW^{-n}(\gs\gl_n, f_{\text{subreg}})$, we have the following relations for all $n\geq 3$ and $m\geq 1$:
\begin{equation} :U_{0,0} U_{1,m}: - :U_{0,m} U_{1,0}:\  =  \mu(n,m) U_{0,n+m +1}+ P_{n,m},\qquad \mu(n,m) = \frac{m (2 n + 1 + m)}{(n + 1) (n + m) (n + 1 + m)},\end{equation} where $P_{n,m}$ is a normally ordered polynomial in $J,W^2,\dots, W^{n-1}, U_{0,0}, U_{0,1},\dots, U_{0,n+m}$, and their derivatives.
\end{thm}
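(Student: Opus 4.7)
The plan is to compute the left-hand side directly via Wick calculus in $\cW^{-n}(\gs\gl_n, f_{\text{subreg}})$, working modulo the ``error space'' $\cP_{n,m}$ of normally ordered polynomials in $J, W^2,\dots, W^{n-1}, U_{0,0}, U_{0,1},\dots, U_{0,n+m}$ and their derivatives, and extracting only the coefficient of $U_{0,n+m+1}$. The key structural input is the OPE \eqref{eq:FS1}, which tells us that $G^+_{(n-1)} G^- = n!$ is a scalar, that $G^+_{(n-2)}G^- = -n! J$, and that all remaining contractions $G^+_{(k)}G^-$ for $0\le k\le n-3$ are polynomials in $J$ and $W^2,\dots, W^{n-1}$ and their derivatives. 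In particular, any such nontrivial contraction other than the scalar one produces only terms in $\cP_{n,m}$ once combined with the remaining $G^\pm$-factors.

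The first technical step is to apply the non-associativity formula \eqref{nonasswick} (together with \eqref{commutator} and \eqref{ncw}) to rewrite both $:U_{0,0} U_{1,m}:$ and $:U_{0,m} U_{1,0}:$ as iterated ``deep'' Wick products of the form $:G^+ G^-(\partial G^+)(\partial^m G^-):$ and $:G^+(\partial^m G^-)(\partial G^+)G^-:$, plus correction terms coming from contractions $G^\pm_{(k)}G^\pm$. After reordering the four-fold Wick products via quasi-symmetry \eqref{commutator}, the deep terms in the two expressions differ by further contractions, and the subtraction $X := \ :U_{0,0}U_{1,m}: -\  :U_{0,m}U_{1,0}:$ collapses to a sum of two-fold Wick products $:(\partial^i G^+)(\partial^j G^-):$ with $i+j=n+m+1$, times scalar coefficients obtained from contracting one $G^+$ with one $G^-$ using the scalar pole (all other contractions contribute to $\cP_{n,m}$ by the previous paragraph).

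The next step is to convert each $:(\partial^i G^+)(\partial^j G^-):$ with $i+j=n+m+1$ into a multiple of $U_{0,n+m+1}=\ :G^+(\partial^{n+m+1}G^-):$ modulo $\cP_{n,m}$. This uses the identity $\partial^a :AB: = \sum \binom{a}{b}:(\partial^b A)(\partial^{a-b} B):$ to relate $:(\partial^i G^+)(\partial^j G^-):$ to $\partial^i :G^+(\partial^j G^-):$ plus terms with fewer derivatives on $G^-$; these lower-derivative terms reduce into $\cP_{n,m}$ by downward induction on $j$. The final coefficient $\mu(n,m)$ is then a sum of binomial factors coming from (i) Taylor expansions of the leading pole $n!(z-w)^{-n}$ in $G^+(z)G^-(w)$, (ii) the derivative redistributions, and (iii) signs from \eqref{nonasswick}, which can be simplified to the closed form $\frac{m(2n+1+m)}{(n+1)(n+m)(n+m+1)}$.

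The main obstacle is the combinatorial bookkeeping in the first two steps: \eqref{nonasswick}--\eqref{ncw} produce many terms, and verifying that every contribution outside $\cP_{n,m}$ combines to exactly $\mu(n,m)U_{0,n+m+1}$ with the stated coefficient requires careful cancellation. A useful cross-check (and possibly an alternative route) is provided by Theorem \ref{casen=m}: under the isomorphism $\cW^{-n}(\gs\gl_n, f_{\text{subreg}})\cong \cS(n^2)^{\gs\gl_n[t]\oplus\gs\gl_n[t]}$ one may take $G^\pm = D^\pm$ and perform the same computation inside the $\beta\gamma$-system $\cS(n^2)$, where all OPEs are elementary; matching coefficients on both sides pins down $\mu(n,m)$ and gives an independent derivation of the closed formula.
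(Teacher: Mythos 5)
Your plan has a genuine gap at its central step. You claim that in the reduction of $:U_{0,0}U_{1,m}:-:U_{0,m}U_{1,0}:$ to two-fold products, ``any nontrivial contraction other than the scalar one produces only terms in $\cP_{n,m}$ once combined with the remaining $G^\pm$-factors.'' This is false. A contraction producing (say) a monomial in $J$ and its derivatives leaves a three-fold product of the form $:(\partial^a J)(\partial^c G^+)(\partial^d G^-):$ with $c+d\le n+m$, and rewriting such a term as a normally ordered polynomial in $J$ and the $U_{0,j}$ requires moving the $J$-factor past the $G^\pm$; the quasi-associativity corrections from \eqref{nonasswick}--\eqref{ncw} involve $J_{(0)}G^{\pm}=\pm G^{\pm}$, which shifts modes without reducing the number of $G$'s and therefore produces terms $:(\partial^{c'}G^+)(\partial^{d'}G^-):$ with $c'+d'=n+m+1$. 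So the $J$-valued parts of \emph{all} the poles $G^+_{(k)}G^-$, $0\le k\le n-1$, feed into the coefficient of $U_{0,n+m+1}$, not just the top scalar pole. Dropping these contributions would give the wrong $\mu(n,m)$. A second, related problem is that \eqref{eq:FS1} and Lemma \ref{properties:subreg} only pin down the two leading poles; the precise $J$-dependence of the lower poles, which you would need, is not part of the abstract data (it would have to be extracted from Jacobi identities, a step your plan omits).

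The route you mention only as a ``cross-check'' is in fact the paper's actual proof (Appendix \ref{appendix}): working in $\cS(n^2)^{\gs\gl_n[t]\oplus\gs\gl_n[t]}$ with $G^{\pm}=D^{\pm}$, it first establishes the explicit formulas $D^+_{(n-1-k)}D^-\simeq n!\,p_k$ for \emph{every} $k$, where $p_k$ is an explicit polynomial in the $J_{(-i)}$, proves a commutation lemma for moving $D^{\pm}_{(s)}$ past the $p_k$ (this is exactly where the lower poles re-enter the top coefficient), and only then performs the combinatorial summation yielding $\mu(n,m)$. So your fallback is the correct strategy, but your primary argument would fail, and neither version is actually carried out in the proposal.
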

The proof is quite long and technical so it appears in Appendix \ref{appendix}. By induction starting with the case $m=1$, this implies that $P_{n,m}$ can be replaced with a normally ordered polynomial in $J,W^2,\dots, W^{n-1}, U_{0,0}, U_{0,1},\dots, U_{0,n+1}$, and their derivatives. Therefore Theorem \ref{thm:subregrel} implies that $\cW^{-n}(\gs\gl_n, f_{\text{subreg}})^{U(1)}$ has a minimal strong generating set $$\{J, W^2,\dots, W^{n-1}, U_{0,0},\dots, U_{0,n+1}\},$$ and hence is of type $\cW(1,2,3,\dots, 2n+1)$. Finally, it is easy to see that $U_{i,j} \in \cS(n^2)^{\gs\gl_n[t]\oplus \gs\gl_n[t]}$ can be corrected by adding an element $\nu_{i,j} \in \cH \otimes V^{-n}(\gs\gl_n)$ so that 
$$\tilde{U}_{i,j} = U_{i,j} + \nu_{i,j} \in \text{Com}(\cH, \cS(n^2)^{\gs\gl_n[t]\oplus \gs\gl_n[t]}).$$
We obtain
\begin{cor} $\cS(n^2)^{\gg\gl_n[t]}$ is of type $\cW(1^{n^2-1}, n+1, n+2,\dots, 2n+1)$. In particular, it is an extension of $V^{-n}(\gs\gl_n)$, and has additional strong generators $\{\tilde{U}_{0,i}|\ i = 1,2,\dots,n+1\}$, which have weights $n+1, n+2,\dots,2n+1$.
\end{cor}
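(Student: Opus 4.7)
The plan is to combine Corollary \ref{cor:n=m}, which expresses $\cS(n^2)^{\gs\gl_n[t]}$ as a quotient of $V^{-n}(\gs\gl_n) \otimes \cW^{-n}(\gs\gl_n, f_{\text{subreg}})$, with the description of $\cC^{-n} = \text{Com}(\cH, \cW^{-n}(\gs\gl_n, f_{\text{subreg}}))$ as being of type $\cW(2, 3, \dots, 2n+1)$ just obtained from Theorem \ref{thm:subregrel}, and then to reduce using the ideal.

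First, since $\cH$ sits inside $\cW^{-n}(\gs\gl_n, f_{\text{subreg}})$ and commutes with $V^{-n}(\gs\gl_n)$, applying $\text{Com}(\cH, -)$ to the isomorphism of Corollary \ref{cor:n=m} yields
$$
\cS(n^2)^{\gg\gl_n[t]} = \text{Com}(\cH, \cS(n^2)^{\gs\gl_n[t]}) \cong \bigl(V^{-n}(\gs\gl_n) \otimes \cC^{-n}\bigr)/\cJ,
$$
where $\cJ$ is the image of $\cI$. Combining the $n^2 - 1$ weight-$1$ generators of $V^{-n}(\gs\gl_n)$ with a strong generating set for $\cC^{-n}$ in weights $2, 3, \dots, 2n+1$, which may be taken to be $\{W^2, \dots, W^{n-1}, \tilde{U}_{0,0}, \tilde{U}_{0,1}, \dots, \tilde{U}_{0,n+1}\}$, gives a strong generating set of type $\cW(1^{n^2-1}, 2, 3, \dots, 2n+1)$ for $\cS(n^2)^{\gg\gl_n[t]}$.

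Next, I would use the ideal $\cJ$ to eliminate $n-1$ redundant generators in weights $2, \dots, n$. The relations $\omega_k - \nu_k \in \cI$ for $k = 2, \dots, n-1$ identify each $W^k$ with the weight-$k$ Casimir $\nu_k \in V^{-n}(\gs\gl_n)$, removing $n-2$ generators of weights $2, \dots, n-1$. For the remaining weight-$n$ generator $\tilde{U}_{0,0}$, observe that the relation $:G^+ G^-: - P(J, \nu_2, \dots, \nu_n) \in \cI$ forces $U_{0,0}$ to lie in $\cH \otimes V^{-n}(\gs\gl_n)$ modulo $\cJ$. Since $\tilde{U}_{0,0} = U_{0,0} + \nu_{0,0}$ with $\nu_{0,0} \in \cH \otimes V^{-n}(\gs\gl_n)$ is $\cH$-invariant, and $\text{Com}(\cH, \cH) = \mathbb{C}$ implies that the $\cH$-invariants of $\cH \otimes V^{-n}(\gs\gl_n)$ coincide with $V^{-n}(\gs\gl_n)$, we conclude that $\tilde{U}_{0,0}$ is expressible in $V^{-n}(\gs\gl_n)$ modulo $\cJ$, and hence also drops from the strong generating set.

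After these eliminations, we are left with the generators of $V^{-n}(\gs\gl_n)$ in weight $1$ together with $\tilde{U}_{0,1}, \dots, \tilde{U}_{0,n+1}$ in weights $n+1, n+2, \dots, 2n+1$, giving the claimed type $\cW(1^{n^2-1}, n+1, n+2, \dots, 2n+1)$. Minimality follows from the minimality of the $\cW(2, 3, \dots, 2n+1)$-type for $\cC^{-n}$, since $\cJ$ only imposes relations among $\cC^{-n}$-generators of weight at most $n$. The main obstacle is the weight-$n$ elimination, which hinges on the identification of the $\cH$-invariants of $\cH \otimes V^{-n}(\gs\gl_n)$ with $V^{-n}(\gs\gl_n)$; once this is in hand, the rest of the argument is largely bookkeeping.
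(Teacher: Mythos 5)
Your proposal is correct and follows essentially the same route as the paper: decompose via $\cS(n^2)^{\gg\gl_n[t]} \cong (V^{-n}(\gs\gl_n)\otimes \cC^{-n})/\cI$, invoke Theorem \ref{thm:subregrel} to get the type $\cW(2,\dots,2n+1)$ strong generating set for $\cC^{-n}$, and then use the generators $\omega_k-\nu_k$ and $:G^+G^-:-P(J,\nu_2,\dots,\nu_n)$ of $\cI$ to eliminate the weight $2,\dots,n$ fields. The paper leaves this last elimination implicit ("We obtain"), so your explicit treatment of the weight-$n$ case via $\text{Com}(\cH,\cH\otimes V^{-n}(\gs\gl_n))=V^{-n}(\gs\gl_n)$ is a faithful filling-in of the intended argument rather than a different proof.
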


\subsection{The structure of $\cE(nr)^{\gg\gl_n[t]}$}
Recall the homomorphism
$$L_r(\gs\gl_n) \otimes L_n(\gs\gl_r) \otimes \cH \rightarrow \cE(nr)$$ given by \eqref{conformalemb:slnslmbc}, whose image is conformally embedded. The following result is well known (see Theorem 4.1 of \cite{OS}), but we give an alternative proof.
\begin{thm} \label{main:glnE} For all $n\geq 2$ and $r \geq 1$,
$$\text{Com}(L_r(\gs\gl_n) \otimes\cH, \cE(nr))  = \cE(nr)^{\gg\gl_n[t]} \cong L_n(\gs\gl_r).$$
\end{thm}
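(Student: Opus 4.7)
The plan is as follows. First, $L_n(\gs\gl_r) \subseteq \cE(nr)^{\gg\gl_n[t]}$ is immediate from the conformal embedding \eqref{conformalemb:slnslmbc}: the three tensor factors $L_r(\gs\gl_n)$, $L_n(\gs\gl_r)$, and $\cH$ commute pairwise inside $\cE(nr)$, so $L_n(\gs\gl_r)$ automatically commutes with $L_r(\gs\gl_n) \otimes \cH$. The content of the theorem is the reverse inclusion.

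For that, I would adapt the argument used in the proof of Theorem \ref{main:slnE} by replacing $SL_n$ with $GL_n$. Applying Theorem \ref{arcspaceinvt}(1) with $G = GL_n$, $W = \mathbb{C}^n$, and $p = q = r$, combined with Theorem \ref{oddgen}(1) in the purely odd case (i.e.\ with the $\tilde U$ factor absent), one obtains that $\text{gr}(\cE(nr))^{J_\infty(GL_n)}$ is generated as a differential algebra by the classical $GL_n$-invariants in $\bigwedge\bigl((\mathbb{C}^n \oplus (\mathbb{C}^n)^*)^{\oplus r}\bigr)$. By the first fundamental theorem of invariant theory for $GL_n$, these are generated solely by the quadratic pairings $\sum_k b^{ks} c^{kt}$; crucially, no determinantal invariants appear, in contrast to the $SL_n$ case of Theorem \ref{main:slnE}. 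These quadratics are the leading symbols of the currents $Y^{st}$, which strongly generate $L_n(\gg\gl_r) = \cH \otimes L_n(\gs\gl_r) \subseteq \cE(nr)^{\gs\gl_n[t]}$. The trace-free linear combinations of the $Y^{st}$ lift to generators of $L_n(\gs\gl_r) \subseteq \cE(nr)^{\gg\gl_n[t]}$, whereas the trace $\sum_s Y^{ss} = -e$ lies in $\cH$ and is not in the $\cH$-commutant (since $e_{(1)} e \neq 0$).

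To finish, I would invoke Lemma \ref{lem:filtration}: it suffices to verify that the image of $\text{gr}_f(\cE(nr)^{\gg\gl_n[t]})$ inside $\text{gr}(\cE(nr))^{J_\infty(GL_n)}$ is precisely the differential subalgebra generated by the trace-free quadratics. Granting this, the generators of $L_n(\gs\gl_r)$ constitute a strong generating set for $\cE(nr)^{\gg\gl_n[t]}$, and together with the inclusion above this yields the desired equality.

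The main obstacle is exactly this last filtered identification: one must rule out that some nonzero element of $\cE(nr)^{\gg\gl_n[t]}$ has its associated graded symbol involving the trace $\sum_s y^{ss}$. At the associated graded level, $\sigma_1(e)$ has trivial Poisson-vertex self-bracket, so there is no infinitesimal obstruction preventing the trace from appearing as a symbol of an element of the commutant; the obstruction must come from higher-filtration corrections. The cleanest route I see is to use the Heisenberg coset decomposition of $\cE(nr)^{\gs\gl_n[t]}$ as an $\cH$-module, writing it as $\bigoplus_m F_m \otimes N_m$ with $N_0 = \cE(nr)^{\gg\gl_n[t]}$, and to combine this with the explicit strong generating set from Theorem \ref{main:slnE} in a character comparison that forces $N_0 = L_n(\gs\gl_r)$.
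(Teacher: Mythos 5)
Your containment $L_n(\gs\gl_r) \subseteq \cE(nr)^{\gg\gl_n[t]}$ is fine, and you have correctly located the difficulty, but the proposal does not close it. The direct arc-space route for $G = GL_n$ breaks down exactly where you suspect: the trace quadratic $\sum_s y^{ss}$ is a genuine $J_\infty(GL_n)$-invariant in $\text{gr}(\cE(nr))$, but its lift $e$ satisfies $e_{(1)}e = nr\cdot 1 \neq 0$ (a double contraction invisible in the associated graded), so $e \notin \cE(nr)^{\gg\gl_n[t]}$ and the map \eqref{injgamm1} is not surjective. This is precisely the obstruction the paper records for $\cS(nm)^{\gg\gl_n[t]}$ at the start of the proof of Theorem \ref{main:glnS}. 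Consequently Lemma \ref{lem:filtration} cannot be invoked until one has independently determined $\text{gr}_f(\cE(nr)^{\gg\gl_n[t]})$, which is the whole problem; knowing the full invariant ring $\text{gr}(\cE(nr))^{J_\infty(GL_n)}$ does not tell you which differential subalgebra of it is the image of the commutant. Your fallback via the Heisenberg coset decomposition does make the correct reduction (since $e_{(1)}e = nr > 0$, the charge-zero subspace of $\cE(nr)^{\gs\gl_n[t]}$ equals $\cH \otimes \cE(nr)^{\gg\gl_n[t]}$), but the announced ``character comparison'' is where all the content lies and it is not carried out; as stated it would amount to the classical level-rank/Howe duality decomposition of $\cE(nr)$, i.e.\ the known proof that the paper is explicitly offering an alternative to.

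The paper's argument supplies exactly the missing step, and by a different mechanism. By Theorem \ref{main:slnE}, $\cE(nr)^{\gs\gl_n[t]}$ is strongly generated by the currents $Y^{st}$ together with the fermionic determinants $D_{i_1,\dots,i_n}$, $D'_{j_1,\dots,j_n}$ of charges $\mp n$; hence the charge-zero part is generated as an $L_n(\gg\gl_r)$-module by the normally ordered products $:D_{i_1,\dots,i_n}\,\partial^k D'_{j_1,\dots,j_n}:$. The paper then proves, by the same induction on $k$ using explicit normally ordered lifts of the classical relations (determinant times determinant equals a determinant in the quadratics, plus the Pl\"ucker-type identities) as in the proof of Theorem \ref{main:glnS}, that every such product already lies in $L_n(\gg\gl_r) = \cH \otimes L_n(\gs\gl_r)$. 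This forces the charge-zero part of $\cE(nr)^{\gs\gl_n[t]}$ to equal $L_n(\gg\gl_r)$, and taking the $\cH$-commutant gives $\cE(nr)^{\gg\gl_n[t]} = L_n(\gs\gl_r)$. That normally ordered induction is the core of the proof and is absent from your proposal; without it (or a completed character argument) the reverse inclusion is not established.
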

\begin{proof} Recall from Theorem \ref{main:slnE} that $\cE(nr)^{\gs\gl_n[t]}$ is strongly generated by the fields 
$$Y^{st} = \sum_{k=1}^n :b^{sk} c^{tk}: \ \in L_n(\gg\gl_r) = \cH \otimes L_n(\gs\gl_r),\qquad s,t = 1,\dots, r,$$ together with the fermionic determinants $D_{i_1,\dots, i_n}, D'_{i_1,\dots, i_n}$ for all $1 \leq i_1 \leq \cdots \leq  i_n \leq r$. As above, $\cE(nr)^{\gg\gl_n[t]}$ lies in the subspace of charge zero, hence it suffices to prove that all fields $:D_{i_1,\dots, i_n} \partial^k D'_{j_1,\dots, j_n}:$ lie in the subalgebra $L_n(\gg\gl_r)$ generated by $X^{ij}$. The proof is similar to the proof of Theorem \ref{main:glnS}, and the details are omitted. \end{proof}

\subsection{The structure of $(\cS(nm) \otimes \cE(nr))^{\gg\gl_n[t]}$}
Finally, recall the homomorphism
$$V^{-m+r}(\gg\gl_n) \otimes V^n (\gs\gl_{r|m}) \rightarrow \cS(nm) \otimes \cE(nr)$$ given by \eqref{super:1}, whose image is conformally embedded.

\begin{thm} \label{main:glnSE} For all $n\geq 2$ and $m,r \geq 1$, 
$$\text{Com}(\tilde{V}^{-m+r}(\gg\gl_n), \cS(nm) \otimes \cE(nr)) = (\cS(nm) \otimes \cE(nr))^{\gg\gl_n[t]} \cong \tilde{V}^n (\gs\gl_{r|m}).$$
\end{thm}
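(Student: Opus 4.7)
The plan is to follow the strategy of the proof of Theorem \ref{main:glnS}, now relying on the structure of the $\gs\gl_n[t]$-coset established in Theorem \ref{main:slnSE}. Observe first that $\tilde{V}^{-m+r}(\gg\gl_n) \cong \cH \otimes \tilde{V}^{-m+r}(\gs\gl_n)$, where $\cH$ is the Heisenberg subalgebra generated by the $\gg\gl_n$-trace field on $\cS(nm) \otimes \cE(nr)$. Consequently,
$$(\cS(nm) \otimes \cE(nr))^{\gg\gl_n[t]} = \text{Com}\!\left(\cH,\, (\cS(nm) \otimes \cE(nr))^{\gs\gl_n[t]}\right),$$
which coincides with the subspace of charge zero inside $(\cS(nm) \otimes \cE(nr))^{\gs\gl_n[t]}$. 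By Theorem \ref{main:slnSE}, this $\gs\gl_n[t]$-invariant subalgebra is strongly generated by the charge-zero quadratics $\{X^{ij}, Y^{st}, E^{is}, F^{sj}\}$ that generate $\tilde{V}^n(\gg\gl_{r|m})$, together with the supersymmetric determinants $D_{i_1,\dots,i_s,j_{s+1},\dots,j_n}$ and $D'_{i_1,\dots,i_s,j_{s+1},\dots,j_n}$ of charges $-n$ and $+n$, respectively. Thus the charge-zero subspace is generated as a module over $\tilde{V}^n(\gg\gl_{r|m})$ by normally ordered monomials with equal numbers of $D$'s and $D'$'s, and by induction on length it suffices to show that every product $:D_A\, \partial^t D'_B:$ lies in $\tilde{V}^n(\gg\gl_{r|m})$.

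For $t = 0$, the super analogue of the classical relations expressing $d_A d'_B$ as polynomials in the quadratic $SL_n$-invariants (available via Theorem \ref{arcspaceinvt}(3) and Theorem \ref{oddgen} in the super setting) lifts to a normally ordered identity whose non-associativity corrections from \eqref{nonasswick} have strictly smaller filtration degree and are still $\gs\gl_n[t]$-invariant; a degree induction on $\text{gr}^F$ then yields $:D_A D'_B: \in \tilde{V}^n(\gg\gl_{r|m})$. For $t \geq 1$, I would use a supersymmetric Laplace expansion of the form
$$\sum_k (\pm)\, :Z^{*_0, *_k}\, D'_{B\setminus *_k}:\ =\ -\partial D'_B,$$
where the auxiliary field $Z$ is drawn from $\{X, F\}$ when a bosonic row of $D'_B$ is removed, and from $\{Y, E\}$ when a fermionic row is removed, with $*_0$ a free index outside $B$. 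Taking the normally ordered product with $\partial^{t-1} D_A$, rearranging via \eqref{commutator}--\eqref{nonasswick}, and invoking the inductive hypothesis on the lower-weight pieces exactly as after \eqref{DD'lowering} and around \eqref{secondcasegln} places $:D_A \partial^t D'_B:$ in $\tilde{V}^n(\gg\gl_{r|m})$. The crucial arithmetic point is that in the super setting the odd indices from $\{1,\dots,r\}$ may be repeated, so a free index $*_0$ is always available for any $m, r \geq 1$, avoiding the restriction $m \geq 2n+1$ needed in Theorem \ref{main:glnS}.

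The principal obstacle will be the sign and parity bookkeeping in the supersymmetric Laplace identity and the attendant Wick-product corrections. The determinants $D_{i_1,\dots,i_s,j_{s+1},\dots,j_n}$ have mixed bosonic/fermionic rows, and the Jacobi identities of type $(Z, D, D')$ given by \eqref{jacobi} produce corrections whose parity depends both on which row is being expanded and on the ordering of the remaining odd variables; one must verify that these corrections never reintroduce higher-degree determinantal terms of the same charge. Once this combinatorial bookkeeping is settled, $(\cS(nm) \otimes \cE(nr))^{\gg\gl_n[t]} \cong \tilde{V}^n(\gs\gl_{r|m})$ follows at once, the reverse containment being immediate since $\tilde{V}^n(\gs\gl_{r|m}) \subseteq \tilde{V}^n(\gg\gl_{r|m})$ is manifestly invariant under $\gg\gl_n[t]$.
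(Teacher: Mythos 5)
Your proposal follows the same route as the paper's proof: reduce to the charge-zero part of the $\gs\gl_n[t]$-coset supplied by Theorem \ref{main:slnSE}, handle $:D_A D'_B:$ by lifting the classical relation and inducting on filtration degree, and for $t\geq 1$ use the supersymmetric Laplace expansion with an odd auxiliary index, whose availability for all $m,r\geq 1$ (because the odd indices from $\{1,\dots,r\}$ may repeat) is exactly the paper's key observation. The only step you compress is the reduction of overlapping index sets to the disjoint case, which the paper carries out explicitly by applying the modes $X^{i'_a, i_a}_{(1)}$ and $F^{j'_{u+b+1}, j_{s+b}}_{(1)}$ and inducting on the number of shared indices; otherwise the arguments coincide.
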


\begin{proof} By Theorem \ref{main:slnSE}, $(\cS(nm) \otimes \cE(nr))^{\gs\gl_n[t]}$ is strongly generated by the generators of $\tilde{V}^{n}(\gg\gl_{r|m})$, together with the fields $D_{i_1,\dots, i_s; j_{s+1},\dots, j_n}$ and $D'_{i_1,\dots, i_s; j_{s+1},\dots, j_n}$. As in the proof of Theorem \ref{main:glnS}, it suffices to show that all the fields 
$$:D_{i_1,\dots, i_s; j_{s+1},\dots, j_n} \partial^t D'_{i'_1,\dots, i'_u; j'_{u+1},\dots, j'_n}:,\qquad t \geq 0,$$ lie in $\tilde{V}^{n}(\gg\gl_{r|m})$. 

First, for $t=0$, the same argument as the case $t=0$ of Theorem \ref{main:glnS} shows that for all $\{i_1,\dots, i_s; j_{s+1},\dots, j_n\}$ and $\{i'_1,\dots, i'_u; j'_{u+1},\dots, j'_n\}$,
 \begin{equation} \label{DD'reductionodd}  \begin{split}  & :D_{i_1,\dots, i_s; j_{s+1},\dots, j_n} D'_{i'_1,\dots, i'_u; j'_{u+1},\dots, j'_n}: \ \in \tilde{V}^{n}(\gg\gl_{r|m}),
 \\ & (D_{i_1,\dots, i_s; j_{s+1},\dots, j_n})_{(j)} \partial^t D'_{i'_1,\dots, i'_u; j'_{u+1},\dots, j'_n} \in \tilde{V}^{n}(\gg\gl_{r|m}),\ \text{for all} \ j,t \geq 0. \end{split} \end{equation}

For $t > 0$, we begin with the case where $\{i_1,\dots, i_s\}$ and $\{i'_1,\dots, i'_u\}$ are disjoint, and $\{j_{s+1},\dots, j_n\}$ and $\{j'_{u+1},\dots, j'_n\}$ are disjoint. Assume inductively that $$:D_{i_1,\dots, i_s; j_{s+1},\dots, j_n} \partial^j D'_{i'_1,\dots, i'_u; j'_{u+1},\dots, j'_n}:\  \in \tilde{V}^{n}(\gg\gl_{r|m}),\ \text{for}\ j \leq t.$$ 
In the notation of Theorem \ref{oddgen}, let $V = (\mathbb{C}^n \oplus (\mathbb{C}^n)^*)^{\oplus m}$ and $W = (\mathbb{C}^n \oplus (\mathbb{C}^n)^*)^{\oplus r}$, and  let $$x^{ij},\  y^{st}, \ e^{is}, \ f^{sj}, \ d_{i_1,\dots, i_s; j_{s+1},\dots, j_n}, \ d'_{i_1,\dots, i_s; j_{s+1},\dots, j_n}\  \in S^{V} \otimes L^W \cong \text{gr}(\cS(nm) \otimes \cE(nr)),$$ denote the elements which correspond to the fields \eqref{gen:vnsl(r|m)} and \eqref{gen:mixedDD'}. Consider the classical relation
\begin{equation} \begin{split} & \sum_{k=1}^{u} (-1)^k f^{j'_{n+1} i'_k} d'_{i'_1,\dots, i'_{k-1}, i'_{k+1},\dots, i'_u; j'_{u+1},\dots, j'_n,j'_{n+1}} 
\\ & + (-1)^{u+1} \sum_{l=1}^{n+1-u} y^{j'_{n+1} j'_{u+l}} d'_{i'_1,\dots,i'_u, j'_{u+1},\dots, j'_{u+l-1}, j'_{u+l+1},\dots, j'_n,j'_{n+1}} = 0.\end{split} \end{equation}
Here $\{j'_{u+1},\dots, j'_{n}, j'_{n+1}\}$ need not be distinct.

The corresponding normally ordered relation is
\begin{equation} \begin{split} & \sum_{k=1}^{u} (-1)^k :F^{j'_{n+1} i'_k} D'_{i'_1,\dots, i'_{k-1}, i'_{k+1},\dots, i'_u; j'_{u+1},\dots, j'_n,j'_{n+1}}:
\\ & +(-1)^{u+1}  \sum_{l=u+1}^{n+1-u}  Y^{j'_{n+1} j'_{u+l}} D'_{i'_1,\dots,i'_u, j'_{u+1},\dots, j'_{u+l-1}, j'_{u+l+1},\dots, j'_n,j'_{n+1}}: \ +(-1)^{u}  : \partial D'_{i'_1,\dots, i'_u; j'_{u+1},\dots, j'_{n}}: \ = 0.\end{split} \end{equation}
Taking the normally ordered product on the right by $\partial^t D_{i_1,\dots, i_s; j_{s+1},\dots, j_n}$, and applying the same argument as the proof of Theorem \ref{main:glnS}, we see that 
$$:D_{i_1,\dots, i_s; j_{s+1},\dots, j_n} \partial^t D'_{i'_1,\dots, i'_u; j'_{u+1},\dots, j'_n}: \ \in \tilde{V}^{n}(\gg\gl_{r|m})$$ for all $t$ when $\{i_1,\dots, i_s\}$ and $\{i'_1,\dots, i'_u\}$ are disjoint, and $\{j_{s+1},\dots, j_n\}$ and $\{j'_{u+1},\dots, j'_n\}$ are disjoint.

Suppose first that $s \leq u$. By \eqref{ncw} we have 
\begin{equation} \begin{split} & X^{i'_a, i_a}_{(1)} (:D_{i_1,\dots, i_s; j_{s+1},\dots, j_n} \partial^{t+1} D'_{i_1,\dots, i_{a-1}, i'_a,\dots,i'_u; j'_{u+1},\dots, j'_n}: ) 
\\ & = (t+1):D_{i_1,\dots, i_s; j_{s+1},\dots, j_n} \partial^{t} D'_{i_1,\dots, i_{a-1}, i_a, i'_{a+1},\dots,i'_u; j'_{u+1},\dots, j'_n}
\\ & -(D_{i_1,\dots,i_{a-1}, i'_a, i_{a+1},\dots, i_s; j_{s+1},\dots, j_n} )_{(0)} \partial^{t+1} D'_{i_1,\dots, i_{a-1}, i'_a,\dots,i'_u; j'_{u+1},\dots, j'_n}.\end{split} \end{equation}

Since $(D_{i_1,\dots,i_{a-1}, i'_a, i_{a+1},\dots, i_s; j_{s+1},\dots, j_n} )_{(0)} \partial^{t+1} D'_{i_1,\dots, i_{a-1}, i'_a,\dots,i'_u; j'_{u+1},\dots, j'_n} \ \in  \tilde{V}^{n}(\gg\gl_{r|m})$ by \eqref{DD'reductionodd}, it follows by induction on $a$ that 
$:D_{i_1,\dots, i_s; j_{s+1},\dots, j_n} \partial^{t} D'_{i_1,\dots, i_{a-1}, i_a, i'_{a+1},\dots,i'_u; j'_{u+1},\dots, j'_n}: \ \in \tilde{V}^{n}(\gg\gl_{r|m})$ for all $a \leq s$. 

By \eqref{ncw} again, we have 
\begin{equation} \begin{split} & F^{ j'_{u+b+1},j_{s+b}}_{(1)} \big(:(\partial^{t+1} D_{i_1,\dots, i_s; j'_{u+1},\dots, j'_{u+b-1},j_{s+b},\dots, j_n}) D'_{i_1,\dots, i_{a}, i'_{a+1},\dots,i'_u; j'_{u+1},\dots, j'_n}: \big) 
\\ & = (t+1) :(\partial^{t} D_{i_1,\dots, i_s; j'_{u+1},\dots, j'_{u+b-1}, j'_{u+b}, j_{s+b+1},\dots, j_n}) D'_{i_1,\dots, i_{a}, i'_{a+1},\dots,i'_u; j'_{u+1},\dots, j'_n}: .\end{split} \end{equation}
Again, it follows by induction on $b$ that  $ :(\partial^{t} D_{i_1,\dots, i_s; j'_{u+1},\dots,  j'_{u+b}, j_{s+b+1},\dots, j_n}) D'_{i_1,\dots, i_{a}, i'_{a+1},\dots,i'_u; j'_{u+1},\dots, j'_n}: \ \in \tilde{V}^{n}(\gg\gl_{r|m})$ for all $b \leq n-s$. Finally, the case $u<s$ can be proven in the same way by reversing the roles of $D$ and $D'$. \end{proof}

\section{The case $\gg = \gs\gp_{2n}$}
\subsection{The structure of $\cS(nm)^{\gs\gp_{2n}[t]}$}
Recall the homomorphism $V^{-\frac{m}{2}}(\gs\gp_{2n}) \otimes V^{-2n}(\gs\go_m) \rightarrow \cS(nm)$ given by \eqref{conformalemb:sp2nsombg}, whose image $\tilde{V}^{-\frac{m}{2}}(\gs\gp_{2n}) \otimes \tilde{V}^{-2n}(\gs\go_m)$ is conformally embedded.

\begin{thm} \label{thm:spbg} For all $n, m \geq 1$, $\cS(nm)^{\gs\gp_{2n}[t]} \cong \tilde{V}^{-2n}(\gs\go_{m})$.
\end{thm}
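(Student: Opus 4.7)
The plan is to apply the arc space approach of \cite{LSS2} together with Lemma \ref{lem:filtration}, paralleling the proof of Theorem \ref{main:sln}, but in a simpler form since no determinant-type generators will be required.

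First, the inclusion $\tilde{V}^{-2n}(\gs\go_m) \subseteq \cS(nm)^{\gs\gp_{2n}[t]}$ is immediate from the Howe dual pair structure: the two homomorphisms combining to give \eqref{conformalemb:sp2nsombg} have commuting images, so $\tilde{V}^{-2n}(\gs\go_m)$ lies in the $\gs\gp_{2n}[t]$-invariants. The content of the theorem is therefore the reverse inclusion, which I will establish by showing that $\tilde{V}^{-2n}(\gs\go_m)$ already contains a strong generating set for $\cS(nm)^{\gs\gp_{2n}[t]}$.

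Second, under the good increasing filtration on $\cS(nm)$ described in Section \ref{section:VOAs}, one has $\text{gr}(\cS(nm))\cong \mathbb{C}[J_{\infty}(W)]$, where $W = V \oplus V^*$ for $V = (\mathbb{C}^n)^{\oplus m}$. Identifying $W$ with $(\mathbb{C}^{2n})^{\oplus m}$ via the symplectic structure inherited from the OPEs (so that, for each $i$, $U_i = \text{span}\{\gamma^{i,1},\ldots,\gamma^{i,n},\beta^{i,1},\ldots,\beta^{i,n}\}$ becomes the $i$-th copy of the standard $Sp_{2n}$-module), Theorem \ref{arcspaceinvt}(2) applies with $p = m$ and gives that the map \eqref{injgamm2}
\[
\mathbb{C}[J_{\infty}(W /\!\!/ Sp_{2n})] \;\longrightarrow\; \mathbb{C}[J_{\infty}(W)]^{J_{\infty}(Sp_{2n})}
\]
is an isomorphism. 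Hence $\mathbb{C}[J_{\infty}(W)]^{J_{\infty}(Sp_{2n})}$ is generated as a differential algebra by the classical invariants $\mathbb{C}[W]^{Sp_{2n}}$. By the first fundamental theorem for $Sp_{2n}$, the latter is generated by the $\binom{m}{2}$ quadratic symplectic pairings $\langle U_i, U_j\rangle$ for $1 \leq i < j \leq m$.

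Third, these quadratic classical invariants are precisely the symbols $\phi_1$ of the standard generators of $\tilde{V}^{-2n}(\gs\go_m)\subseteq \cS(nm)^{\gs\gp_{2n}[t]}$ under the embedding $V^{-2n}(\gs\go_m)\to \cS(nm)$ of \eqref{def:rho12even}: one checks directly from the explicit formulas that the quadratic field $\sum_{k=1}^n\bigl(:\gamma^{i,k}\beta^{j,k}: - :\gamma^{j,k}\beta^{i,k}:\bigr)$ (and the analogous fields obtained from the other root vectors of $\gs\go_m$) has symbol equal, up to a scalar, to $\langle U_i, U_j\rangle$. Consequently the inclusion $\text{gr}_f(\cS(nm)^{\gs\gp_{2n}[t]}) \hookrightarrow \text{gr}(\cS(nm))^{\gs\gp_{2n}[t]}$ of \eqref{injgamm1} is surjective, and the generators of $\tilde{V}^{-2n}(\gs\go_m)$ satisfy the hypotheses of Lemma \ref{lem:filtration}. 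Applying Lemma \ref{lem:filtration}(1) yields that $\tilde{V}^{-2n}(\gs\go_m)$ strongly generates $\cS(nm)^{\gs\gp_{2n}[t]}$. Since $\tilde{V}^{-2n}(\gs\go_m)$ is itself a vertex subalgebra of the invariants, this forces equality, completing the proof. The only step requiring actual verification is the identification of the quadratic invariants with the $\gs\go_m$-currents, and this is a direct computation from the explicit formulas in \eqref{def:rho12even} and \eqref{conformalemb:sp2nsombg}; there is no genuine obstacle because, in contrast to the $\gs\gl_n$ case, the first fundamental theorem for $Sp_{2n}$ acting on copies of the standard module produces only quadratic invariants, so no higher-weight determinants need to be added to the generating set.
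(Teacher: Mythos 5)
Your proposal is correct and follows essentially the same route as the paper: both use Theorem \ref{arcspaceinvt}(2) to see that $\mathbb{C}[J_{\infty}((\mathbb{C}^{2n})^{\oplus m})]^{J_{\infty}(Sp_{2n})}$ is generated as a differential algebra by the classical quadratic $Sp_{2n}$-invariants, observe that these are the symbols of the $\gs\go_m$-currents so that \eqref{injgamm1} is surjective, and conclude via Lemma \ref{lem:filtration}. The only cosmetic difference is that the paper defers the cases $m\leq 2n+2$ to Theorem 5.1 of \cite{LSS2}, whereas your argument treats all $m$ uniformly, which is fine since Theorem \ref{arcspaceinvt}(2) holds for all $p$.
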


\begin{proof} The case $m \leq 2n+2$ is given by Theorem 5.1 of \cite{LSS2}. In the general case, it follows from Theorem \ref{arcspaceinvt} (2) that the generators of $\text{gr}(\cS(nm))^{\gs\gp_{2n}[t]}$ as a differential algebra correspond to the generators of $\mathbb{C}[(\mathbb{C}^{2n})^{\oplus m}]^{Sp_{2n}}$, and are the generators of $V^{-2n}(\gs\go_m)$. If follows that the map 
$$\text{gr}_f(\cS(nm)^{\gs\gp_{2n}[t]}) \hookrightarrow \text{gr}(\cS(nm))^{\gs\gp_{2n}[t]},$$ is surjective. So the generators of  the classical invariant ring $\mathbb{C}[(\mathbb{C}^{2n})^{\oplus m}]^{Sp_{2n}}$ give rise to a generating set for $\text{gr}_f(\cS(nm)^{\gs\gp_{2n}[t]})$ as a differential algebra. By Lemma \ref{lem:filtration}, the corresponding fields strongly generate $\cS(nm)^{\gs\gp_{2n}[t]}$ as a vertex algebra.
\end{proof}

\begin{cor} For all $n, m \geq 1$, 
\begin{enumerate}
\item The Zhu algebra $A(\cS(nm)^{\gs\gp_{2n}[t]})$ is isomorphic to the ring of invariant differential operators $\cD(nm)^{\gs\gp_{2n}}$. 
\item The Zhu commutative algebra $R_{\cS(nm)^{\gs\gp_{2n}[t]}}$ is isomorphic to $\mathbb{C}[(\mathbb{C}^{2n})^{\oplus m}]^{Sp_{2n}}$.
\item $\tilde{V}^{-2n}(\gs\go_{m})$ is classically free.
\end{enumerate}
\end{cor}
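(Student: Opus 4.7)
The plan is to leverage Theorem \ref{thm:spbg}, which furnishes a strong generating set for $\cV := \cS(nm)^{\gs\gp_{2n}[t]}$ coming from the classical $Sp_{2n}$-invariants on $V\oplus V^* = (\mathbb{C}^{2n})^{\oplus m}$. These invariants are all quadratic in the generators $\beta^{ij},\gamma^{ij}$, so their conformal weight equals their filtration degree. This means part (3) of Lemma \ref{lem:filtration} applies in addition to part (1) that was already invoked in the proof of Theorem \ref{thm:spbg}.

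Parts (1) and (2) would follow the same argument as Theorem 3.5 of \cite{LSS2} essentially verbatim. The point is that when strong generators of a coset lift to strong generators of the ambient vertex algebra in a filtration-preserving way, the Zhu algebra and its commutative quotient commute with taking invariants. Since $R_{\cS(nm)} \cong \mathbb{C}[V\oplus V^*]$ and $A(\cS(nm)) \cong \cD(nm)$, applying $(-)^{\gs\gp_{2n}[t]}$ yields
$R_{\cV} \cong \mathbb{C}[V\oplus V^*]^{Sp_{2n}} = \mathbb{C}[(\mathbb{C}^{2n})^{\oplus m}]^{Sp_{2n}}$
and $A(\cV)\cong \cD(nm)^{Sp_{2n}}$, giving (2) and (1) respectively.

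For (3), I would chain together four isomorphisms to identify the canonical surjection $\Phi\colon (R_{\cV})_\infty \twoheadrightarrow \text{gr}^F(\cV)$ of \eqref{eq:ss} with an isomorphism. First, $(R_{\cV})_\infty \cong \mathbb{C}[J_\infty((V\oplus V^*)/\!\!/Sp_{2n})]$ by part (2) and the universal property of the arc space. Second, Theorem \ref{arcspaceinvt}(2) upgrades \eqref{injgamm2} to an isomorphism
$$\mathbb{C}[J_\infty((V\oplus V^*)/\!\!/Sp_{2n})]\xrightarrow{\ \sim\ } \mathbb{C}[J_\infty(V\oplus V^*)]^{J_\infty(Sp_{2n})}.$$
Third, \eqref{injgamm1} together with the surjectivity established in the proof of Theorem \ref{thm:spbg} identifies the latter with $\text{gr}_f(\cV) \cong \text{gr}_G(\cV)$. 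Fourth, since the strong generators have weight equal to filtration degree, Lemma \ref{lem:filtration}(3) together with Lemma \ref{Araprop} gives $\text{gr}_G(\cV)\cong \text{gr}^F(\cV)$. Composing the four arrows produces an isomorphism from $(R_{\cV})_\infty$ to $\text{gr}^F(\cV)$.

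The main technical point will be verifying that this composite agrees with the canonical map $\Phi$, not merely some abstract isomorphism. This reduces to checking naturality on the zeroth graded piece, where $R_{\cV} \hookrightarrow \text{gr}^F(\cV)$ is sent to the ring of $Sp_{2n}$-invariants $\mathbb{C}[V\oplus V^*]^{Sp_{2n}} \hookrightarrow \text{gr}(\cS(nm))^{\gs\gp_{2n}[t]}$ via the same quadratic generators used in Theorem \ref{thm:spbg}. Since $\Phi$ is determined by its action on $R_\cV$ (it is a map of differential rings), once this compatibility is confirmed, both $\Phi$ and the constructed composite are uniquely determined, so they coincide and $\cV = \tilde{V}^{-2n}(\gs\go_m)$ is classically free.
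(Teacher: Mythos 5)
Your proposal is correct and follows essentially the same route the paper takes (implicitly for this corollary, and explicitly in the parallel $\gs\gl_n$ case in Section 4): Theorem \ref{thm:spbg} together with Lemma \ref{lem:filtration} identifies $\text{gr}^F(\cS(nm)^{\gs\gp_{2n}[t]})$ with $\mathbb{C}[J_{\infty}(V\oplus V^*)]^{J_{\infty}(Sp_{2n})}$, and Theorem \ref{arcspaceinvt}(2) upgrades \eqref{injgamm2} to an isomorphism, while parts (1) and (2) are the argument of Theorem 3.5 of \cite{LSS2}. Your additional check that the composite coincides with the canonical map $\Phi$ on $R_{\cV}$ is a sensible point of care that the paper leaves implicit.
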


\subsection{Actions on $\cE(2nr)$}
Next, taking $V = (\mathbb{C}^{2n})^{\oplus r}$, recall the conformal embedding
$L_{r}(\gs\gp_{2n}) \otimes L_n(\gs\gp_{2r}) \rightarrow \cE(2nm)$ given by \eqref{sp2n-sp2mE}. The following result is well known (see Proposition 2 of \cite{KP} as well as the appendix of \cite{ORS}), and we give an alternative proof.
\begin{thm} \label{thm:spbc} For all $n,r\geq 1$, we have
$$\text{Com}(L_{r}(\gs\gp_{2n}), \cE(2nr)) \cong L_{n}(\gs\gp_{2r}).$$
\end{thm}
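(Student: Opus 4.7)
The plan is to follow the template used for Theorems \ref{main:slnE}, \ref{main:glnE}, \ref{thm:spbg}, and \ref{thm:spbgsecond}: equip $\cE(2nr)$ with its good increasing filtration, pass to the associated graded, and apply the arc space invariant theory for $Sp_{2n}$ together with Lemma \ref{lem:filtration}.

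Under the filtration, $\text{gr}(\cE(2nr)) \cong L^V = \bigwedge \bigoplus_{j\geq 0}(V_j \oplus V_j^*)$ where $V = (\mathbb{C}^{2n})^{\oplus r}$ and $Sp_{2n}$ acts on each $V_j$ and $V_j^*$ via the standard representation. Since $V \oplus V^*$ is a direct sum of copies of the standard module of $Sp_{2n}$, Theorem \ref{arcspaceinvt}(2) ensures that \eqref{kcopiesofv} is an isomorphism for all $k$, so Theorem \ref{oddgen}(1) applies with $\tilde U = 0$ and $U = V \oplus V^*$. Consequently $(L^V)^{J_{\infty}(Sp_{2n})}$ is generated as a differential algebra by the classical invariants $\big(\bigwedge(V \oplus V^*)\big)^{Sp_{2n}}$.

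By the first fundamental theorem of classical invariant theory for $Sp_{2n}$ acting on the exterior algebra of copies of the standard representation, these invariants are generated by the quadratic symplectic pairings of pairs of odd generators. The corresponding fields in $\cE(2nr)$ are quadratic normally ordered products of two of the $b^{i,s}$, $c^{i,s}$, and hence are manifestly $\gs\gp_{2n}[t]$-invariant since no double contractions with the generators of $\tilde V^r(\gs\gp_{2n})$ can occur. Under the conformal embedding \eqref{sp2n-sp2mE}, these quadratic fields are precisely the generators of the image of $L_n(\gs\gp_{2r})$ in $\cE(2nr)$. This shows that \eqref{injgamm1} is surjective, and by Lemma \ref{lem:filtration} the coset $\text{Com}(L_r(\gs\gp_{2n}), \cE(2nr))$ is strongly generated by this set of quadratic fields.

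The last step is to identify the subalgebra strongly generated by these quadratics with $L_n(\gs\gp_{2r})$. The main obstacle is ruling out the possibility that the image of $V^n(\gs\gp_{2r})$ under \eqref{sp2n-sp2mE} is some quotient strictly larger than $L_n(\gs\gp_{2r})$. This is standard: $\cE(2nr) \cong \cF(4nr)$ is an extension of $L_1(\gs\go_{4nr})$, and under the well-known conformal embedding $L_n(\gs\gp_{2r}) \otimes L_r(\gs\gp_{2n}) \hookrightarrow L_1(\gs\go_{4nr})$, the two affine factors are realized as commuting simple subalgebras, so the image of $V^n(\gs\gp_{2r})$ in $\cE(2nr)$ is already the simple quotient $L_n(\gs\gp_{2r})$. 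Combining this with the strong generation conclusion above yields $\text{Com}(L_r(\gs\gp_{2n}), \cE(2nr)) \cong L_n(\gs\gp_{2r})$.
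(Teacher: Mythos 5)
Your proposal is correct and follows essentially the same route as the paper: the paper's proof of Theorem \ref{thm:spbc} simply says the argument is the same as that of Theorem \ref{main:slnE}, i.e.\ pass to $\text{gr}(\cE(2nr))$, apply Theorem \ref{arcspaceinvt}(2) and Theorem \ref{oddgen}(1) to reduce to the classical symplectic pairings, and conclude by Lemma \ref{lem:filtration}. Your closing paragraph on the simplicity of the image is the same standard fact the paper already builds into \eqref{sp2n-sp2mE} via the conformal embedding into $L_1(\gs\go_{4nr})$, so nothing substantively new is being done there.
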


\begin{proof} The argument is the same as the proof of Theorem \ref{main:slnE}, and follows from Theorem \ref{arcspaceinvt} and Lemma \ref{lem:filtration}.
\end{proof}

This following corollary generalizes Theorem 15.21 of \cite{EH1}, which is the case $r = 1$.

\begin{cor} \label{cor:classicalfreeness} For all integers $r, n\geq 1$, $L_n(\gs\gp_{2r})$ is classically free.
\end{cor}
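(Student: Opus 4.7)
The plan is to combine the coset realization $L_n(\gs\gp_{2r}) \cong \cE(2nr)^{\gs\gp_{2n}[t]}$ from Theorem \ref{thm:spbc} with the arc-space invariant theory for the symplectic group in Theorem \ref{arcspaceinvt}(2), promoted through Theorem \ref{oddgen}. The approach closely parallels Corollary \ref{main:slnclassicalfree}, but with no analogue of the obstruction in case (ii) of Theorem \ref{arcspaceinvt}(3).

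First, the proof of Theorem \ref{thm:spbc} invokes Lemma \ref{lem:filtration} for the inclusion $f\colon L_n(\gs\gp_{2r}) \hookrightarrow \cE(2nr)$, producing
$$\text{gr}^F(L_n(\gs\gp_{2r})) \;\cong\; \text{gr}_f(L_n(\gs\gp_{2r})) \;\cong\; \text{gr}(\cE(2nr))^{\gs\gp_{2n}[t]}.$$
Under $\text{gr}(\cE(2nr)) \cong \bigwedge \bigoplus_{j\geq 0}(V_j \oplus V_j^*)$ with $V = (\mathbb{C}^{2n})^{\oplus r}$, the symplectic form identifies $V\oplus V^* \cong (\mathbb{C}^{2n})^{\oplus 2r}$ as an $Sp_{2n}$-module, so the right-hand side is the arc-space version of the $Sp_{2n}$-invariants of $\bigwedge((\mathbb{C}^{2n})^{\oplus 2r})$.

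Next, since Theorem \ref{arcspaceinvt}(2) asserts that \eqref{kcopiesofv} is an isomorphism for $G = Sp_{2n}$ and every $k\geq 0$, both parts of Theorem \ref{oddgen} apply with $\tilde U = 0$ and $U = V \oplus V^*$. Part (1) shows that $\text{gr}(\cE(2nr))^{\gs\gp_{2n}[t]}$ is generated as a differential superalgebra by its weight-$0$ component $\bigwedge((\mathbb{C}^{2n})^{\oplus 2r})^{Sp_{2n}}$, and part (2) shows that all relations among these generators and their derivatives are generated, as a differential ideal, by the classical relations living in that weight-$0$ component.

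Under the identification above, the weight-$0$ component of $\text{gr}^F(L_n(\gs\gp_{2r}))$ is exactly Zhu's commutative algebra $R_{L_n(\gs\gp_{2r})}$. The two-step presentation from Theorem \ref{oddgen} then describes $\text{gr}^F(L_n(\gs\gp_{2r}))$ as the differential supercommutative algebra generated by $R_{L_n(\gs\gp_{2r})}$ subject only to the differential closure of its classical relations -- that is, as $(R_{L_n(\gs\gp_{2r})})_\infty$. Consequently the canonical surjection $(R_{L_n(\gs\gp_{2r})})_\infty \twoheadrightarrow \text{gr}^F(L_n(\gs\gp_{2r}))$ of \eqref{eq:ss} is an isomorphism, establishing classical freeness. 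The only non-routine input is Theorem \ref{arcspaceinvt}(2): it is the uniform validity of the arc-space FFT for $Sp_{2n}$ (with no rank restriction) that delivers the relation-control in Theorem \ref{oddgen}(2), and this is precisely what fails in the $SL_n$ situation once $m > n+2$.
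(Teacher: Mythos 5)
Your proposal is correct and follows essentially the same route as the paper: realize $L_n(\gs\gp_{2r})$ as the coset $\cE(2nr)^{\gs\gp_{2n}[t]}$ via Theorem \ref{thm:spbc}, use Lemma \ref{lem:filtration} to identify $\text{gr}^F(L_n(\gs\gp_{2r}))$ with $\bigwedge\big(\bigoplus_{j\geq 0}(V_j\oplus V_j^*)\big)^{J_\infty(Sp_{2n})}$, and invoke Theorem \ref{oddgen}(2) (whose hypothesis is supplied by the unrestricted arc-space fundamental theorems for $Sp_{2n}$ in Theorem \ref{arcspaceinvt}(2)) to see that all differential relations come from the degree-zero classical relations, which is exactly the statement that \eqref{eq:ss} is an isomorphism. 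Your extra remarks identifying the weight-zero component with $R_{L_n(\gs\gp_{2r})}$ and contrasting with the $SL_n$ obstruction are accurate elaborations of the same argument.
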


\begin{proof} Let $f: L_r(\gs\gp_{2n}) \rightarrow \cE(2nr)$ be the above map. It follows from Theorem \ref{thm:spbc} and Lemma \ref{lem:filtration} that 

$$\text{gr}_f(\cE(2nr)^{\gs\gp_{2n}[t]}) \cong \text{gr}(\cE(2nr)^{\gs\gp_{2n}[t]}) \cong \text{gr}^F(\cE(2nr)^{\gs\gp_{2n}[t]}) \cong  \bigwedge \big(\bigoplus_{j\geq 0}(V_j \oplus V^*_j)\big)^{J_{\infty}(Sp_{2n})}.$$ By Theorem \ref{oddgen} (2), all relations among the generators of $\bigwedge \big(\bigoplus_{j\geq 0}(V_j \oplus V^*_j)\big)^{J_{\infty}(Sp_{2n})}$ are consequences of the relations in $\bigwedge[(V_0 \oplus V^*_0)]^{Sp_{2n}}$ and their derivatives, so the same statement holds in $\text{gr}^F(\cE(2nr)^{\gs\gp_{2n}[t]})\cong \text{gr}^F(L_n(\gs\gp_{2r}))$. Equivalently, $L_n(\gs\gp_{2r})$ is classically free.
\end{proof}

\begin{remark} Corollary \ref{cor:classicalfreeness} implies that Theorem 10.2.1 of \cite{EH2}, namely the vanishing of the first chiral homology $H_1^{\text{ch}}(V)$, holds for $V = L_n(\gs\gp_{2r})$. This generalizes Corollary 12.2 (b) of \cite{EH2}.
\end{remark}

\subsection{Actions on $\cS(nm) \otimes \cE(2nr)$}
Finally, recall the homomorphism
$$V^{-\frac{m}{2}+r}(\gs\gp_{2n}) \otimes V^{n}(\go\gs\gp_{m|2r}) \rightarrow \cS(nm) \otimes \cE(2nr)$$ given by \eqref{super:2}, whose image  $\tilde{V}^{-\frac{m}{2}+r}(\gs\gp_{2n})\otimes \tilde{V}^{n}(\go\gs\gp_{m|2r})$ is conformally embedded.

\begin{thm} \label{thm:spbgbc}
$\text{Com}(\tilde{V}^{-\frac{m}{2}+r}(\gs\gp_{2n}), \cS(nm) \otimes \cE(2nr)) \cong \tilde{V}^{n}(\go\gs\gp_{m|2r})$.
\end{thm}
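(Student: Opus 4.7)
The plan is to run the same arc-space strategy that established Theorems \ref{thm:spbg}, \ref{thm:spbgsecond}, \ref{thm:spbc}, and \ref{main:glnSE}, now in the mixed even/odd $\gs\gp_{2n}$-setting. First I would identify the associated graded
\[
\text{gr}(\cS(nm)\otimes\cE(2nr))\ \cong\ S^{\tilde U}\otimes L^U,\qquad \tilde U=(\mathbb{C}^{2n})^{\oplus m},\quad U=(\mathbb{C}^{2n})^{\oplus 2r},
\]
where under the $\gs\gp_{2n}$-action each copy of $\mathbb{C}^{2n}$ is the standard (self-dual) symplectic module, the $m$ even copies coming from the $\beta,\gamma$ generators of $\cS(nm)$, and the $2r$ odd copies coming from $W\oplus W^*$ for $W=(\mathbb{C}^{2n})^{\oplus r}$. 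Theorem \ref{arcspaceinvt}(2), applied to every $\tilde U\oplus U^{\oplus k}$, says that the map \eqref{kcopiesofv} is an isomorphism for all $k\geq 0$, so Theorem \ref{oddgen}(1) gives
\[
(S^{\tilde U}\otimes L^U)^{J_\infty(Sp_{2n})}\ =\ \bra (S^{\tilde U}_0\otimes L^U_0)^{Sp_{2n}}\ket.
\]

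Next I would invoke the super First Fundamental Theorem of invariant theory for $Sp_{2n}$ acting on a direct sum of $m$ even and $2r$ odd copies of the standard representation: the ring $(S^{\tilde U}_0\otimes L^U_0)^{Sp_{2n}}$ is generated by the quadratics obtained by applying the symplectic pairing on $\mathbb{C}^{2n}$ to any two of the $m+2r$ copies, with the appropriate signs dictated by the parities. Lifting each such quadratic to its normally ordered vertex-algebra counterpart yields generators of type $\gs\go_m$ (from even/even pairings inside $\cS(nm)$), of type $\gs\gp_{2r}$ (from odd/odd pairings inside $\cE(2nr)$), and of the odd part of $\go\gs\gp_{m|2r}$ (from mixed even/odd pairings). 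By construction these are precisely the images of the generators of $V^n(\go\gs\gp_{m|2r})$ under \eqref{super:2}, so they span $\tilde V^n(\go\gs\gp_{m|2r})$. Since each is manifestly $\gs\gp_{2n}[t]$-invariant in $\cS(nm)\otimes\cE(2nr)$, the injection
\[
\text{gr}_f\bigl((\cS(nm)\otimes\cE(2nr))^{\gs\gp_{2n}[t]}\bigr)\ \hookrightarrow\ \text{gr}(\cS(nm)\otimes\cE(2nr))^{\gs\gp_{2n}[t]}
\]
is surjective. Applying Lemma \ref{lem:filtration} to the inclusion $f:\tilde V^n(\go\gs\gp_{m|2r})\hookrightarrow \cS(nm)\otimes\cE(2nr)$ then promotes the classical generating statement to a strong generating statement on the vertex side, forcing the commutant to coincide with $\tilde V^n(\go\gs\gp_{m|2r})$.

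The main obstacle I foresee is not the vertex-algebraic reduction, which is by now routine given the preceding theorems, but rather the parity bookkeeping required to match the mixed even/odd symplectic pairings with the correct odd generators of $\go\gs\gp_{m|2r}$, and to confirm that the level computed from the double contractions of these generators inside $\cS(nm)\otimes\cE(2nr)$ is exactly $n$ under the convention $h^\vee=(2n+2-m)/2$ fixed in \eqref{osp:convention}. This reduces to the same kind of direct check implicit in the embedding \eqref{super:2}, together with the already-verified ingredient pieces \eqref{conformalemb:sp2nsombg} and \eqref{sp2n-sp2mE}.
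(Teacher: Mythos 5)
Your proposal is correct and follows exactly the route the paper intends: the paper's own proof of Theorem \ref{thm:spbgbc} simply says ``the argument is the same as the proof of Theorem \ref{main:slnSE},'' and what you have written is precisely that argument transported to the symplectic setting, using Theorem \ref{arcspaceinvt}(2) plus Theorem \ref{oddgen}(1) to reduce to the classical first fundamental theorem for $Sp_{2n}$, observing that only quadratic (symplectic pairing) invariants occur so no determinant-type generators survive, and then invoking Lemma \ref{lem:filtration}. The only cosmetic point is that Lemma \ref{lem:filtration} is most naturally applied to the inclusion of the full coset $(\cS(nm)\otimes\cE(2nr))^{\gs\gp_{2n}[t]}$ into $\cS(nm)\otimes\cE(2nr)$ (so that the coset is strongly generated by fields lying in $\tilde V^{n}(\go\gs\gp_{m|2r})$, forcing equality), rather than to the inclusion of $\tilde V^{n}(\go\gs\gp_{m|2r})$ itself, but this does not affect the validity of the argument.
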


\begin{proof} The argument is the same as the proof of Theorem \ref{main:slnSE}.
\end{proof}

\section{Level-rank dualities involving affine vertex superalgebras}
\subsection{Type $A$ case} Recall the embeddings 
$$\tilde{V}^{-m}(\gg\gl_n) \otimes \tilde{V}^{-n}(\gs\gl_m) \rightarrow \cS(nm),\qquad \tilde{V}^{-n+r}(\gg\gl_m) \otimes \tilde{V}^{m}(\gs\gl_{r|n}) \rightarrow  \cS(mn) \otimes \cE(mr).$$
As in \cite{CLR}, we use the notation
\begin{equation} \begin{split} & A^{-n}(\mathfrak{sl}_{m}) = \cS(nm)^{\gg\gl_n[t]},
\\ & A^m(\mathfrak{sl}_{r|n}) =  (\cS(mn) \otimes \cE(mr))^{\gg\gl_m[t]},
\end{split} \end{equation} 
since they are extensions of $\tilde{V}^{-n}(\mathfrak{sl}_{m})$ and $\tilde{V}^m(\mathfrak{sl}_{r|n})$, respectively. We denote by $\tilde{V}^{-n+r}(\mathfrak{sl}_m)$ the image of $V^{-n+r}(\mathfrak{sl}_m)$ under the diagonal map $V^{-n+r}(\mathfrak{sl}_m) \rightarrow A^{-n}(\mathfrak{sl}_{m}) \otimes L_r(\mathfrak{sl}_m)$. In \cite{CLR}, it was proven that
$$\text{Com}(\tilde{V}^{-n+r}(\mathfrak{sl}_m), A^{-n}(\mathfrak{sl}_{m}) \otimes L_r(\mathfrak{sl}_m))\cong \text{Com}(\tilde{V}^{-m}(\mathfrak{sl}_n) \otimes L_m(\mathfrak{sl}_r) \otimes \cH, A^m(\mathfrak{sl}_{r|n})).$$
Since $A^{-n}(\mathfrak{sl}_{m}) = \tilde{V}^{-n}(\gs\gl_m)$ for all $m < n$ and $m\geq 2n+1$, and that $A^m(\mathfrak{sl}_{r|n}) = \tilde{V}^m(\mathfrak{sl}_{r|n})$ for all $m,r \geq 1$, we have the following improvement of this result.

\begin{thm} 
\label{thm:CLRimproved} For all positive integers $r,n,m$, we have
$$
\text{Com}(\tilde{V}^{-n+r}(\mathfrak{sl}_m), A^{-n}(\mathfrak{sl}_{m}) \otimes L_r(\mathfrak{sl}_m)) \cong
\text{Com}(\tilde{V}^{-m}(\mathfrak{sl}_n) \otimes L_m(\mathfrak{sl}_r) \otimes \cH, {\widetilde V}^m(\mathfrak{sl}_{r|n})).
$$
Moreover, if $m<n$ or if $m \geq 2n +1$, then we have 
$$
\text{Com}(\tilde{V}^{-n+r}(\mathfrak{sl}_m), V^{-n}(\mathfrak{sl}_{m}) \otimes L_r(\mathfrak{sl}_m))\cong
\text{Com}(\tilde{V}^{-m}(\mathfrak{sl}_n) \otimes L_m(\mathfrak{sl}_r) \otimes \cH, {\widetilde V}^m(\mathfrak{sl}_{r|n})).
$$
\end{thm}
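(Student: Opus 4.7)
The plan is to bootstrap from the level-rank duality \eqref{thm:CLR} of \cite{CLR}, which is already established for all positive integers $m, r, n$, and to substitute the identifications of $A^{-n}(\gs\gl_m)$ and $A^m(\gs\gl_{r|n})$ obtained in the earlier sections of the paper.  Both sides of \eqref{thm:CLR} are intrinsically defined cosets, so any vertex-algebra isomorphism of the enveloping algebras transports to an isomorphism of commutants, provided the embedded subalgebras on each side are preserved under the identification.

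For the first isomorphism, I would invoke Theorem \ref{main:glnSE}, which gives
$$A^m(\gs\gl_{r|n}) = (\cS(nm) \otimes \cE(nr))^{\gg\gl_m[t]} \cong \tilde{V}^m(\gs\gl_{r|n})$$
for all positive integers $m, r, n$.  Inserting this into the right-hand side of \eqref{thm:CLR} immediately yields the first assertion, since the embedded subalgebra $\tilde V^{-m}(\gs\gl_n) \otimes L_m(\gs\gl_r) \otimes \cH$ is unaffected by this replacement.

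For the second isomorphism, I would apply Theorem 4.4 of \cite{LSS2} in the case $m < n$ and Theorem \ref{main:glnS} in the case $m \geq 2n+1$ to identify $A^{-n}(\gs\gl_m) \cong \tilde{V}^{-n}(\gs\gl_m)$, i.e., the image of $V^{-n}(\gs\gl_m) \to \cS(nm)$.  In both ranges $V^{-n}(\gs\gl_m)$ is itself simple: for $m < n$ the shifted level $m - n$ is negative and non-critical, while for $m \geq 2n+1$ the shifted level $m - n$ is a positive integer strictly smaller than $m$, so the Kac--Wakimoto admissibility condition $k + m = p/q$ with $\gcd(p,q) = 1$ and $p \geq m$ from \cite{GK} fails at $k = -n$ (the only way to write $m-n$ as $p/q$ in lowest terms gives $q=1$, $p = m-n < m$).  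Consequently $\tilde{V}^{-n}(\gs\gl_m) = V^{-n}(\gs\gl_m)$ in these ranges, and substituting this identification into the left-hand side of \eqref{thm:CLR} yields the second assertion.

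The subtlety to be checked is that, under the identification $A^{-n}(\gs\gl_m) = V^{-n}(\gs\gl_m)$, the diagonal image $\tilde V^{-n+r}(\gs\gl_m)$ appearing on the left-hand side of \eqref{thm:CLR} matches the diagonal image of $V^{-n+r}(\gs\gl_m)$ inside the universal tensor product $V^{-n}(\gs\gl_m) \otimes L_r(\gs\gl_m)$.  This is automatic because in both settings $\tilde V^{-n+r}(\gs\gl_m)$ is, by definition, the image of the standard coproduct-type homomorphism $X^\xi \mapsto X^\xi \otimes 1 + 1 \otimes X^\xi$, and the identification of the first tensor factor intertwines these two realizations.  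Thus no real obstacle appears once the earlier theorems of this paper are in hand; the final step is a direct substitution.
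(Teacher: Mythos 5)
Your overall strategy is the same as the paper's: Theorem \ref{thm:CLRimproved} is presented there as a direct consequence of substituting the identifications $A^m(\gs\gl_{r|n}) \cong \tilde{V}^m(\gs\gl_{r|n})$ (Theorem \ref{main:glnSE}) and $A^{-n}(\gs\gl_m) \cong \tilde{V}^{-n}(\gs\gl_m)$ (Theorem 4.4 of \cite{LSS2} for $m<n$, Theorem \ref{main:glnS} for $m \geq 2n+1$) into the duality \eqref{thm:CLR} of \cite{CLR}, and your treatment of the first isomorphism and of the compatibility of the diagonal embeddings is fine.

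However, your justification of the second isomorphism in the range $m \geq 2n+1$ contains a genuine error. You claim that $V^{-n}(\gs\gl_m)$ is simple there because $-n$ is not an admissible level. The Gorelik--Kac criterion (Theorem 0.2.1 of \cite{GK}) is not the admissibility condition: for simply-laced $\gg$ the vacuum module $V^k(\gg)$ fails to be simple whenever $k+h^\vee \in \mathbb{Q}_{\geq 0}$ and $k+h^\vee \neq 1/q$ for a positive integer $q$; this is precisely why the paper invokes simplicity only when the shifted level equals $1$ (the identification $\tilde V^{-n}(\gs\gl_{n+1}) = V^{-n}(\gs\gl_{n+1})$). For $m \geq 2n+1$ the shifted level is $m-n \geq n+1 \geq 2$, an integer at least $2$, so $V^{-n}(\gs\gl_m)$ is \emph{not} simple, and your inference $\tilde V^{-n}(\gs\gl_m) = V^{-n}(\gs\gl_m)$ does not follow. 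Indeed it is false in general: for $n=1$ and $m \geq 3$, Theorem \ref{thm:casegl1} (3) (Adamovi\'c--Per\v{s}e) gives $\tilde V^{-1}(\gs\gl_m) = L_{-1}(\gs\gl_m)$, a proper quotient of the (non-simple) algebra $V^{-1}(\gs\gl_m)$. What the earlier results actually supply for $m \geq 2n+1$ is only $A^{-n}(\gs\gl_m) = \tilde V^{-n}(\gs\gl_m)$, the image of $V^{-n}(\gs\gl_m)$ in $\cS(nm)$, and the substitution into \eqref{thm:CLR} must be made with $\tilde V^{-n}(\gs\gl_m)$; the further identification with the universal algebra is available only when $m<n$, where the shifted level is negative and simplicity genuinely holds.
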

If $n = m$, $A^{-n}(\mathfrak{sl}_{n})$ cannot be replaced with $\tilde{V}^{-n}(\mathfrak{sl}_{n})$. Although $:D^+ D^-:$ can be expressed as a normally ordered polynomial in the generators of $V^{-n}(\gg\gl_n)$ and their derivatives, it is straightforward to check that $:D^+ \partial D^-:$ does not have this property. However, since $:D^+ \partial D^-:$ is invariant under $\gs\gl_n[t]$, it will lie in 
$\text{Com}(\tilde{V}^{-n+r}(\mathfrak{sl}_n), A^{-n}(\mathfrak{sl}_{n}) \otimes L_r(\mathfrak{sl}_n)) $ but not in $\text{Com}(\tilde{V}^{-n+r}(\mathfrak{sl}_n), V^{-n}(\mathfrak{sl}_{n}) \otimes L_r(\mathfrak{sl}_n))$. In the range $n < m<2n+1$ it is possible that the left hand side is unchanged by replacing $A^{-n}(\mathfrak{sl}_{m})$ with $\tilde{V}^{-n}(\mathfrak{sl}_{m})$, but we are unable to answer this question at present.

\subsection{Type $C$ case} 
Recall the homomorphism, 
$$V^{-\frac{n}{2}}(\gs\gp_{2m})\otimes V^{-2m}(\gs\go_{n}) \rightarrow \cS(mn),$$ whose image $\tilde{V}^{-\frac{n}{2}}(\gs\gp_{2m})\otimes \tilde{V}^{-2m}(\gs\go_{n})$ is conformally embedded. Therefore the coset
$$A^{-\frac{n}{2}}(\mathfrak{sp}_{2m}) = \text{Com}(\tilde{V}^{-2m}(\gs\go_{n}), \cS(mn)),$$ is an extension of $\tilde{V}^{-\frac{n}{2}}(\mathfrak{sp}_{2m})$.
Similarly, recall the homomorphisms
$$L_{m}(\gs\gp_{2r}) \otimes L_r(\gs\gp_{2m}) \rightarrow \cE(2mr),\qquad V^{-\frac{n}{2}+r}(\gs\gp_{2m}) \otimes V^{m}(\go\gs\gp_{n|2r}) \rightarrow \cS(mn) \otimes \cE(2mr),$$ and that by Theorems \ref{thm:spbc} and \ref{thm:spbgbc}, we have
$$\text{Com}(L_r(\gs\gp_{2r}), \cE(2mr)) \cong L_{m}(\gs\gp_{2r}),\qquad \text{Com}(\tilde{V}^{-\frac{n}{2}+r}(\gs\gp_{2m}), \cS(mn) \otimes \cE(2mr)) \cong \tilde{V}^{m}(\go\gs\gp_{n|2r}).$$ 

\begin{thm} 
\label{thm:levelranktypeC}
Let $r, n, m $ be positive integers. Then
$$\text{Com}(V^{-\frac{n}{2}+r}(\mathfrak{sp}_{2m}), A^{-\frac{n}{2}}(\mathfrak{sp}_{2m}) \otimes L_r(\mathfrak{sp}_{2m}))\cong
\text{Com}(\tilde{V}^{-2m}(\mathfrak{so}_{n}) \otimes L_m(\mathfrak{sp}_{2r}), \tilde{V}^{m}(\mathfrak{osp}_{n|2r})).$$
Moreover, if $m<\frac{n}{2}$, then we have
$$\text{Com}(\tilde{V}^{-\frac{n}{2}+r}(\mathfrak{sp}_{2m}), V^{-\frac{n}{2}}(\mathfrak{sp}_{2m}) \otimes L_r(\mathfrak{sp}_{2m}))\cong
\text{Com}(\tilde{V}^{-2m}(\mathfrak{so}_{n}) \otimes L_m(\mathfrak{sp}_{2r}), \tilde{V}^{m}(\mathfrak{osp}_{n|2r})).$$
\end{thm}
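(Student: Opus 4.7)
The plan is to express both sides of the claimed isomorphism as a common triple commutant inside $\cS(2mn)\otimes\cE(2mr)$, following the template of the type $A$ duality proved in \cite{CLR}. The essential inputs are Theorems \ref{thm:spbc} and \ref{thm:spbgbc} of the present paper together with the defining realization of $A^{-n}(\gs\gp_{2m})$ as a commutant.

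First I would rewrite the right hand side. By Theorem \ref{thm:spbgbc}, the orthosymplectic coset is itself a commutant, namely
\[
\tilde{V}^{-2m}(\go\gs\gp_{2n|2r}) = \text{Com}\bigl(\tilde{V}^{-n+r}(\gs\gp_{2m}),\,\cS(2mn)\otimes\cE(2mr)\bigr).
\]
The standard identity $\text{Com}(\cB,\text{Com}(\cC,\cA)) = \text{Com}(\cB\otimes\cC,\cA)$, valid whenever $\cB$ and $\cC$ commute in $\cA$, then yields
\[
\mathrm{RHS} = \text{Com}\bigl(\tilde{V}^{-n+r}(\gs\gp_{2m})\otimes\tilde{V}^{-2m}(\gs\go_{2n})\otimes L_m(\gs\gp_{2r}),\,\cS(2mn)\otimes\cE(2mr)\bigr).
\]
The required mutual commutation is guaranteed by classical Howe duality: the diagonal $\gs\gp_{2m}$ commutes with $\gs\go_{2n}$ acting on the first tensor factor and with $\gs\gp_{2r}$ acting on the second.

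Next I would rewrite the left hand side. By definition $A^{-n}(\gs\gp_{2m}) = \text{Com}(\tilde{V}^{-2m}(\gs\go_{2n}),\cS(2mn))$, and by Theorem \ref{thm:spbc}, $L_r(\gs\gp_{2m}) = \text{Com}(L_m(\gs\gp_{2r}),\cE(2mr))$. Using the compatibility of commutants with tensor products, $\text{Com}(\cV,\cA)\otimes\text{Com}(\cW,\cB) = \text{Com}(\cV\otimes\cW,\cA\otimes\cB)$, this gives
\[
A^{-n}(\gs\gp_{2m})\otimes L_r(\gs\gp_{2m}) = \text{Com}\bigl(\tilde{V}^{-2m}(\gs\go_{2n})\otimes L_m(\gs\gp_{2r}),\,\cS(2mn)\otimes\cE(2mr)\bigr).
\]
Taking the commutant of $\tilde{V}^{-n+r}(\gs\gp_{2m})$ on both sides and applying the commutant-of-tensor-product identity once more produces exactly the same triple commutant as on the right hand side, which proves the first assertion.

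For the moreover clause I would need to establish $A^{-n}(\gs\gp_{2m}) = V^{-n}(\gs\gp_{2m})$ whenever $m<n/2$. This range is precisely the stable range for $SO_{2n}$ acting on $V\oplus V^*\cong(\mathbb{C}^{2n})^{\oplus 2m}$: when $2m<n$ one cannot form a nonzero pseudoscalar, so the $SO_{2n}$-invariants coincide with the $O_{2n}$-invariants, and these are generated by the bilinear pairings that match the $\gs\gp_{2m}$-generators of $\tilde{V}^{-n}(\gs\gp_{2m})\subseteq\cS(2mn)$. This is precisely the $\gs\go_n$-case treated in \cite{LSS2}, which in this range lifts cleanly to the arc-space level and, via Lemma \ref{lem:filtration} combined with simplicity of $V^{-n}(\gs\gp_{2m})$ at this level (which can be read off from the Gorelik-Kac criterion \cite{GK}), yields $A^{-n}(\gs\gp_{2m}) = V^{-n}(\gs\gp_{2m})$. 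The main obstacle throughout this part is that Theorem \ref{arcspaceinvt} does not cover orthogonal groups, so the stable-range argument must be borrowed from \cite{LSS2} rather than derived from the new arc-space results of the present paper.
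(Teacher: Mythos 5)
Your proof is correct and essentially identical to the paper's: both reduce the two sides to the common commutant $\text{Com}(\tilde{V}^{-n+r}(\gs\gp_{2m})\otimes\tilde{V}^{-2m}(\gs\go_{2n})\otimes L_m(\gs\gp_{2r}),\,\cS(2mn)\otimes\cE(2mr))$ via Theorems \ref{thm:spbc} and \ref{thm:spbgbc} together with the standard commutant identities, and both settle the moreover clause by showing $A^{-n}(\gs\gp_{2m})\cong V^{-n}(\gs\gp_{2m})$ for $m<\frac{n}{2}$ from the fact that the $SO_{2n}$-invariants of the relevant sum of standard modules are the quadratics and the quotient is an affine space, so that (as you correctly observe) this step rests on \cite{LSS1,LSS2} rather than on Theorem \ref{arcspaceinvt}. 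The one inaccuracy is your claim that $m<\frac{n}{2}$ is \emph{precisely} the pseudoscalar threshold: a pseudoscalar for $SO_{2n}$ requires $2n$ vectors, so it is already absent in a larger range, and the stricter hypothesis reflects the range in which the quotient is an affine space so that the invariant theory lifts to arc spaces; since $m<\frac{n}{2}$ is assumed in the statement, this imprecision does not affect the validity of your argument.
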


\begin{proof} The proof of the first statement is similar to the argument of \cite[Thm. 13.1]{ACL2}. We have
\begin{equation}\nonumber
\begin{split}
&\text{Com}(\tilde{V}^{-\frac{n}{2}+r}(\mathfrak{sp}_{2m}), A^{-\frac{n}{2}}(\mathfrak{sp}_{2m}) \otimes L_r(\mathfrak{sp}_{2m}))\cong \\
&\qquad\qquad\qquad\cong \text{Com}(\tilde{V}^{-\frac{n}{2}+r}(\mathfrak{sp}_{2m}),  \text{Com}(\tilde{V}^{-2m}(\mathfrak{so}_{n}), \cS(mn)) \otimes L_r(\mathfrak{sp}_{2m}))\\
&\qquad\qquad\qquad\cong \text{Com}(\tilde{V}^{-\frac{n}{2}+r}(\mathfrak{sp}_{2m}),  \text{Com}(\tilde{V}^{-2m}(\mathfrak{so}_{n}) \otimes L_m(\mathfrak{sp}_{2r}), \mathcal S(mn) \otimes \mathcal E(2mr)))\\
&\qquad\qquad\qquad\cong  \text{Com}(\tilde{V}^{-\frac{n}{2}+r}(\mathfrak{sp}_{2m}) \otimes \tilde{V}^{-2m}(\mathfrak{so}_{n}) \otimes L_m(\mathfrak{sp}_{2r}), \mathcal S(mn) \otimes \mathcal E(2mr))  \\
&\qquad\qquad\qquad\cong  \text{Com}(\tilde{V}^{-2m}(\mathfrak{so}_{n}) \otimes L_m(\mathfrak{sp}_{2r}), \text{Com}(\tilde{V}^{-\frac{n}{2}+r}(\mathfrak{sp}_{2m}), \mathcal S(mn) \otimes \mathcal E(2mr))) \\
&\qquad\qquad\qquad\cong  \text{Com}(\tilde{V}^{-2m}(\mathfrak{so}_{n}) \otimes L_m(\mathfrak{sp}_{2r}), \tilde{V}^{m}(\mathfrak{osp}_{n|2r})).
\end{split}
\end{equation}

For the second statement, we claim that $A^{-\frac{n}{2}}(\mathfrak{sp}_{2m})  \cong  V^{-\frac{n}{2}}(\mathfrak{sp}_{2m})$ for $m < \frac{n}{2}$. This is clear because $\text{gr}(\cS(nm)) \cong \mathbb{C}[J_{\infty}((\mathbb{C}^{n})^{\oplus 2m})]$ where $\mathbb{C}^{n}$ is the standard $SO_{n}$-module. Since $m< \frac{n}{2}$, $(\mathbb{C}^{n})^{\oplus 2m} /\!\!/ SO_{n}$ is an affine space, so the map \eqref{equ:invariantringmap} is surjective. In this case, the invariants are quadratic and correspond to the generators of $V^{-\frac{n}{2}}(\gs\gp_{2m})$, so the map \eqref{injgamm2} is surjective as well. This completes the proof. \end{proof}

Unfortunately, since we are unable to describe $A^{-\frac{n}{2}}(\mathfrak{sp}_{2m})$ when $m \geq \frac{n}{2}$, this statement cannot be improved at present.

\subsection{Type $D$ case}

Recall the homomorphism
$$V^{-m}(\gs\gp_{2n})\otimes V^{-2n}(\gs\go_{2m}) \rightarrow \cS(2mn),$$ whose image $\tilde{V}^{-m}(\gs\gp_{2n})\otimes \tilde{V}^{-2n}(\gs\go_{2m})$ is conformally embedded. Recall that by Theorem \ref{thm:spbg}, $\text{Com}(\tilde{V}^{-m}(\gs\gp_{2n}), \cS(2mn)) \cong \tilde{V}^{-2n}(\mathfrak{so}_{2m})$. Similarly, we have a conformal embedding
$$L_r(\gs\go_{2m}) \otimes L_{2m}(\gs\go_r) \rightarrow \cF(2mr),$$ so the coset
$$A_r(\gs\go_{2m}) = \text{Com}(L_{2m}(\gs\go_r), \cF(2mr)),$$ is an extension of $L_r(\gs\go_{2m})$.

Finally, we have a homomorphism
$$V^{-2n+r}(\gs\go_{2m}) \otimes V^{-m}(\go\gs\gp_{r|2n}) \rightarrow \cS(2nm) \otimes\cF(2mr)$$ whose image is conformally embedded. Therefore
$$A^{-m}(\go\gs\gp_{r|2n}) = \text{Com}(\tilde{V}^{-2n+r}(\gs\go_{2m}), \cS(2nm) \otimes\cF(2mr)),$$ is an extension of the image $\tilde{V}^{-m}(\go\gs\gp_{r|2n})$.

\begin{thm} 
\label{thm:levelranktypeD}
Let $r, n, m $ be positive integers. Then
$$\text{Com}(\tilde{V}^{-2n+r}(\mathfrak{so}_{2m}), \tilde{V}^{-2n}(\mathfrak{so}_{2m}) \otimes A_r(\mathfrak{so}_{2m}))\cong
\text{Com}(\tilde{V}^{-m}(\mathfrak{sp}_{2n}) \otimes L_{2m}(\mathfrak{so}_{r}), A^{-m}(\mathfrak{osp}_{r|2n})).$$
\end{thm}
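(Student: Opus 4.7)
The plan is to follow the double-commutant argument used in the proof of Theorem~\ref{thm:levelranktypeC}, adapted to the type $D$ setting. The first step is to rewrite each factor appearing on the left-hand side in terms of a coset inside a free field algebra: by Theorem~\ref{thm:spbgsecond}, $\tilde V^{-2n}(\gs\go_{2m}) \cong \text{Com}(\tilde V^{-m}(\gs\gp_{2n}),\cS(2mn))$, while by definition $A_r(\gs\go_{2m}) = \text{Com}(L_{2m}(\gs\go_r),\cF(2mr))$. The next step is to verify that under these identifications the diagonal embedding $\tilde V^{-2n+r}(\gs\go_{2m}) \hookrightarrow \tilde V^{-2n}(\gs\go_{2m}) \otimes A_r(\gs\go_{2m})$ coincides with the diagonal affine action $V^{-2n+r}(\gs\go_{2m}) \to \cS(2mn) \otimes \cF(2mr)$ (the levels $-2n$ on $\cS(2mn)$ and $r$ on $\cF(2mr)$ add to $-2n+r$), which is precisely the homomorphism defining $A^{-m}(\go\gs\gp_{r|2n})$.

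After these substitutions, the argument reduces to the standard chain of commutant manipulations used in Theorem~\ref{thm:levelranktypeC}. Since $\tilde V^{-m}(\gs\gp_{2n})$ and $L_{2m}(\gs\go_r)$ act on disjoint tensor factors of $\cS(2mn) \otimes \cF(2mr)$ and both commute with the diagonal $\tilde V^{-2n+r}(\gs\go_{2m})$, I would first combine the two outer commutants into $\text{Com}(\tilde V^{-m}(\gs\gp_{2n}) \otimes L_{2m}(\gs\go_r), \cS(2mn)\otimes \cF(2mr))$, and then apply the identity $\text{Com}(X,\text{Com}(Y,\cC)) = \text{Com}(X\otimes Y,\cC) = \text{Com}(Y,\text{Com}(X,\cC))$ to exchange the roles of $\tilde V^{-2n+r}(\gs\go_{2m})$ and $\tilde V^{-m}(\gs\gp_{2n}) \otimes L_{2m}(\gs\go_r)$. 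The inner commutant produced by this swap is $\text{Com}(\tilde V^{-2n+r}(\gs\go_{2m}),\cS(2mn) \otimes \cF(2mr))$, which equals $A^{-m}(\go\gs\gp_{r|2n})$ by definition, yielding the claimed isomorphism.

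The main obstacle is bookkeeping of embeddings. One needs to confirm that the diagonal action $V^{-2n+r}(\gs\go_{2m}) \to \cS(2mn) \otimes \cF(2mr)$ really factors through $\tilde V^{-2n}(\gs\go_{2m}) \otimes A_r(\gs\go_{2m})$, and that $\tilde V^{-m}(\gs\gp_{2n})$ and $L_{2m}(\gs\go_r)$ commute with all of $\tilde V^{-2n+r}(\gs\go_{2m})$. Both are immediate: $\tilde V^{-m}(\gs\gp_{2n})$ acts only on the $\cS$-factor and commutes with $\tilde V^{-2n}(\gs\go_{2m})$ there by Theorem~\ref{thm:spbgsecond}, while $L_{2m}(\gs\go_r)$ acts only on the $\cF$-factor and commutes with $L_r(\gs\go_{2m}) \subseteq A_r(\gs\go_{2m})$ by the conformal embedding \eqref{son-somF}. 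Modulo these routine verifications the proof is a direct transcription of the argument given for Theorem~\ref{thm:levelranktypeC}.
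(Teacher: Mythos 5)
Your proposal is correct and follows essentially the same route as the paper: rewrite $\tilde V^{-2n}(\gs\go_{2m})$ and $A_r(\gs\go_{2m})$ as commutants inside $\cS(2mn)$ and $\cF(2mr)$ respectively, combine everything into a single commutant in $\cS(2mn)\otimes\cF(2mr)$, and swap the order of taking commutants to recover $A^{-m}(\go\gs\gp_{r|2n})$ as the inner coset. The level bookkeeping you flag ($-2n+r = -2n + r$ for the diagonal $\gs\go_{2m}$-action) matches the paper's setup, so nothing further is needed.
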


\begin{proof} Again, this is similar to the argument of \cite[Thm. 13.1]{ACL2}. We have
\begin{equation}\nonumber
\begin{split}
&\text{Com}(\tilde{V}^{-2n+r}(\mathfrak{so}_{2m}), \tilde{V}^{-2n}(\mathfrak{so}_{2m}) \otimes A_r(\mathfrak{so}_{2m}))\cong 
\\ &\qquad\qquad\qquad\cong \text{Com}(\tilde{V}^{-2n+r}(\mathfrak{so}_{2m}),  \text{Com}(\tilde{V}^{-m}(\mathfrak{sp}_{2n}), \cS(2mn)) \otimes A_r(\mathfrak{so}_{2m})) \\
&\qquad\qquad\qquad\cong \text{Com}(\tilde{V}^{-2n+r}(\mathfrak{so}_{2m}),  \text{Com}(\tilde{V}^{-m}(\mathfrak{sp}_{2n}) \otimes L_{2m}(\mathfrak{so}_{r}), \cS(2mn) \otimes \mathcal F(2mr)))\\
&\qquad\qquad\qquad\cong  \text{Com}(\tilde{V}^{-2n+r}(\mathfrak{so}_{2m}) \otimes \tilde{V}^{-m}(\mathfrak{sp}_{2n}) \otimes L_{2m}(\mathfrak{so}_{r}), \cS(2mn) \otimes \cF(2mr))  \\
&\qquad\qquad\qquad\cong  \text{Com}(\tilde{V}^{-m}(\mathfrak{sp}_{2n}) \otimes L_{2m}(\mathfrak{so}_{r}), \text{Com}(\tilde{V}^{-2n+r}(\mathfrak{so}_{2m}), \cS(2mn) \otimes \cF(2mr))) \\
&\qquad\qquad\qquad\cong  \text{Com}(\tilde{V}^{-m}(\mathfrak{sp}_{2n}) \otimes L_{2m}(\mathfrak{so}_{r}), A^{-m}(\mathfrak{osp}_{r|2n}))
.\end{split}
\end{equation}
 \end{proof}

Unfortunately, since we are unable to describe $A^{-m}(\mathfrak{osp}_{r|2n})$, this statement cannot be improved at present.

\subsection{Type $B$ case} 
Recall the homomorphism, 
$$V^{-2n+1}(\gs\go_{2m+1}) \otimes V^{-m-\frac{1}{2}}(\mathfrak{osp}_{1|2n}) \rightarrow \cS(n(2m+1)) \otimes \cF(2m+1),$$ whose image $\tilde{V}^{-2n+1}(\gs\go_{2m+1}) \otimes \tilde{V}^{-m-\frac{1}{2}}(\mathfrak{osp}_{1|2n})$ is conformally embedded. Therefore the coset
$$A^{-2n+1}(\mathfrak{so}_{2m+1}) = \text{Com}(\tilde{V}^{-m-\frac{1}{2}}(\mathfrak{osp}_{1|2n}), \cS(n(2m+1)) \otimes \cF(2m+1)),$$ is an extension of $\tilde{V}^{-2n+1}(\mathfrak{so}_{2m+1})$.

Similarly, we have a conformal embedding
$$L_r(\gs\go_{2m+1}) \otimes L_{2m+1}(\gs\go_r) \rightarrow \cF(r(2m+1)),$$ so the coset
$$A_r(\gs\go_{2m+1}) = \text{Com}(L_{2m+1}(\gs\go_r), \cF(r(2m+1))),$$ is an extension of $L_r(\gs\go_{2m+1})$.

Finally, we have a homomorphism
$$V^{-2n+1+r}(\gs\go_{2m+1}) \otimes V^{-m-\frac{1}{2}}(\go\gs\gp_{r+1|2n}) \rightarrow \cS(n(2m+1)) \otimes\cF(2m+1) \otimes \cF(r(2m+1))$$ whose image is conformally embedded. Therefore
$$A^{-m-\frac{1}{2}}(\go\gs\gp_{r+1|2n}) = \text{Com}(\tilde{V}^{-2n+1+r}(\gs\go_{2m+1}), \cS(n(2m+1)) \otimes\cF(2m+1) \otimes \cF(r(2m+1))),$$ is an extension of the image $\tilde{V}^{-m-\frac{1}{2}}(\go\gs\gp_{r+1|2n})$. Note finally that $\tilde{V}^{-m-\frac{1}{2}}(\go\gs\gp_{r+1|2n})$ admits an action of $V^{-m-\frac{1}{2}}(\go\gs\gp_{1|2n}) \otimes L_{2m+1}(\gs\go_r)$.

For notational convenience below, we will write 
$\cS(n(2m+1)) \otimes\cF(2m+1) \otimes \cF(r(2m+1))$ in the form $\cS(n(2m+1)) \otimes \cF((r+1)(2m+1))$.

\begin{thm} 
\label{thm:levelranktypeB}
Let $r, n, m $ be positive integers. Then
\begin{equation} \begin{split} & \text{Com}\big(\tilde{V}^{-2n+1+r}(\mathfrak{so}_{2m+1}), A^{-2n+1}(\mathfrak{so}_{2m+1}) \otimes A_r(\mathfrak{so}_{2m+1}) \big)
\\ & \cong
\text{Com}\big(\tilde{V}^{-m - \frac{1}{2}}(\mathfrak{osp}_{1|2n}) \otimes L_{2m+1}(\gs\go_{r}), A^{-m- \frac{1}{2}}(\mathfrak{osp}_{r+1|2n})\big).\end{split} \end{equation}
\end{thm}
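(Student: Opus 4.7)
The plan is to follow the same template used to prove Theorems \ref{thm:levelranktypeC} and \ref{thm:levelranktypeD}, namely to unfold the three coset extensions $A^{-2n+1}(\gs\go_{2m+1})$, $A_r(\gs\go_{2m+1})$, and $A^{-m-\frac{1}{2}}(\go\gs\gp_{r+1|2n})$ into their defining commutants inside the common ambient free field algebra $\cS(n(2m+1)) \otimes \cF((r+1)(2m+1))$, and then to repeatedly apply the identity $\text{Com}(X, \text{Com}(Y,Z)) = \text{Com}(X \otimes Y, Z)$, which holds whenever $X$ and $Y$ are mutually commuting subalgebras of a vertex (super)algebra $Z$.

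Concretely, starting with the left-hand side, I substitute
$A^{-2n+1}(\gs\go_{2m+1}) = \text{Com}(\tilde{V}^{-m-\frac{1}{2}}(\go\gs\gp_{1|2n}), \cS(n(2m+1)) \otimes \cF(2m+1))$ and $A_r(\gs\go_{2m+1}) = \text{Com}(L_{2m+1}(\gs\go_r), \cF(r(2m+1)))$. Since $\tilde{V}^{-m-\frac{1}{2}}(\go\gs\gp_{1|2n})$ and $L_{2m+1}(\gs\go_r)$ are supported on disjoint tensor factors and therefore mutually commute, the nested commutants collapse and the three commutant constraints assemble into
$$\text{Com}\big(\tilde{V}^{-2n+1+r}(\gs\go_{2m+1}) \otimes \tilde{V}^{-m-\frac{1}{2}}(\go\gs\gp_{1|2n}) \otimes L_{2m+1}(\gs\go_r),\ \cS(n(2m+1)) \otimes \cF((r+1)(2m+1))\big).$$
Pulling $\tilde{V}^{-2n+1+r}(\gs\go_{2m+1})$ back inside to form the inner commutant and recognizing the result via the defining presentation of $A^{-m-\frac{1}{2}}(\go\gs\gp_{r+1|2n})$ gives exactly $\text{Com}(\tilde{V}^{-m-\frac{1}{2}}(\go\gs\gp_{1|2n}) \otimes L_{2m+1}(\gs\go_r), A^{-m-\frac{1}{2}}(\go\gs\gp_{r+1|2n}))$, which is the right-hand side.

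The only substantive point to check—and the main obstacle—is the mutual commutation of the three affine (super)vertex subalgebras on the inner line. The pair $\tilde{V}^{-m-\frac{1}{2}}(\go\gs\gp_{1|2n})$ and $L_{2m+1}(\gs\go_r)$ commute by tensor-factor disjointness, as just noted. That $\tilde{V}^{-2n+1+r}(\gs\go_{2m+1})$ commutes with $\tilde{V}^{-m-\frac{1}{2}}(\go\gs\gp_{1|2n}) \otimes L_{2m+1}(\gs\go_r)$ requires combining the conformal embedding \eqref{super:3} with the remark immediately following it: the diagonal map $V^{-m-\frac{1}{2}}(\go\gs\gp_{1|2n}) \otimes L_{2m+1}(\gs\go_r) \to \tilde{V}^{-m-\frac{1}{2}}(\go\gs\gp_{r+1|2n})$ lands inside the $\gs\go_{2m+1}[t]$-invariants of the ambient algebra because its component factors already do. Once these compatibilities are in place, every step of the calculation above is formal, and no large-$k$ limit or invariant-theoretic input beyond what already appears in the excerpt is required.
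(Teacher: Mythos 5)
Your proposal is correct and follows essentially the same route as the paper: the proof given there is exactly the chain of isomorphisms obtained by unfolding $A^{-2n+1}(\mathfrak{so}_{2m+1})$, $A_r(\mathfrak{so}_{2m+1})$, and $A^{-m-\frac{1}{2}}(\mathfrak{osp}_{r+1|2n})$ into their defining commutants inside $\cS(n(2m+1))\otimes\cF((r+1)(2m+1))$ and repeatedly merging and re-splitting commutants of mutually commuting subalgebras, as in Theorem 13.1 of \cite{ACL2}. Your explicit attention to why the three affine (super)subalgebras mutually commute is the right thing to flag, even though the paper leaves it implicit.
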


\begin{proof} This is again similar to the proof of \cite[Thm. 13.1]{ACL2}. We have

\begin{equation}
\begin{split}
& \text{Com}(\tilde{V}^{-2n+1 + r}(\mathfrak{so}_{2m+1}), A^{-2n+1}(\mathfrak{so}_{2m+1}) \otimes A_r(\mathfrak{so}_{2m+1})) \\
& \cong \text{Com}\big(\tilde{V}^{-2n+1 + r}(\mathfrak{so}_{2m+1}),  \text{Com} \big(\tilde{V}^{-m-\frac{1}{2}}(\mathfrak{osp}_{1|2n}), \cS(n(2m+1)) \otimes \cF(2m+1)\big)\otimes A_r(\mathfrak{so}_{2m+1})\big)
\\ & \cong \text{Com}\big(\tilde{V}^{-2n+1 + r}(\mathfrak{so}_{2m+1}),  \text{Com}\big(\tilde{V}^{-m-\frac{1}{2}}(\mathfrak{osp}_{1|2n}) \otimes L_{2m+1}(\gs\go_r), \cS(n(2m+1)) \otimes \cF((r+1)(2m+1))\big)\big)
 \\ & \cong  \text{Com}\big(\tilde{V}^{-2n+1 + r}(\mathfrak{so}_{2m+1}) \otimes \tilde{V}^{-m-\frac{1}{2}}(\mathfrak{osp}_{1|2n}) \otimes  L_{2m+1}(\gs\go_r), \mathcal \cS(n(2m+1)) \otimes \cF((r+1)(2m+1))\big)
 \\ & \cong  \text{Com}\big(\tilde{V}^{-m-\frac{1}{2}}(\mathfrak{osp}_{1|2n}) \otimes L_{2m+1}(\gs\go_r), \text{Com}(\tilde{V}^{-2n+1 + r}(\mathfrak{so}_{2m+1}), \mathcal \cS(n(2m+1)) \otimes\cF((r+1)(2m+1))\big)
 \\ & \cong  \text{Com}\big(\tilde{V}^{-m-\frac{1}{2}}(\mathfrak{osp}_{1|2n}) \otimes L_{2m+1}(\gs\go_r), A^{-m-\frac{1}{2}}(\go\gs\gp_{r+1|2n})\big).
\end{split}
\end{equation}
\end{proof}

It is an interesting question whether Theorem \ref{thm:levelranktypeB} remains true if $A^{-2n+1}(\mathfrak{so}_{2m+1})$, $A_r(\mathfrak{so}_{2m+1})$, and $A^{-m-\frac{1}{2}}(\go\gs\gp_{r+1|2n})$ are replaced with $\tilde{V}^{-2n+1}(\mathfrak{so}_{2m+1})$, $\tilde{V}_r(\mathfrak{so}_{2m+1})$, and $\tilde{V}^{-m-\frac{1}{2}}(\go\gs\gp_{r+1|2n})$, but we are not able to answer this question using the methods in this paper.

\appendix

\section{} \label{appendix}

In this Appendix we prove Theorem \ref{thm:subregrel}. Recall that $\cW^{-n}(\mathfrak{sl}_n, f_{\text{subreg}})$ is isomorphic to $\cS(n^2)^{\gs\gl_n[t] \oplus \gs\gl_n[t]}$, which has strong generators 
$$D^+ = \left|\begin{matrix} \beta^{11}& \cdots &  \beta^{1n} \cr  \vdots  & & \vdots  \cr   \beta^{n1}  & \cdots &  \beta^{nn} \end{matrix} \right|, \qquad D^-  =  \left|\begin{matrix} \gamma^{11} & \cdots & \gamma^{1n} \cr  \vdots  & & \vdots  \cr  \gamma^{n1} & \cdots & \gamma^{nn} \end{matrix} \right|,\qquad J = \sum_{i,j = 1}^n :\beta^{ij} \gamma^{ij}:,$$ together with central elements $\omega_2,\dots, \omega_{n-1}$ of conformal weights $2,\dots, n-1$. Let $I \subseteq \cS(n^2)^{\gs\gl_n[t] \oplus \gs\gl_n[t]}$ be the ideal generated by $\omega_2,\dots, \omega_{n-1}$. For elements $A,B \in \cS(n^2)^{\gs\gl_n[t] \oplus \gs\gl_n[t]}$, we say
$$A\simeq B, \quad \text{if} \ A-B\in I.$$

We have the following relations
\begin{equation}
\begin{split} & D^{+}_{(n-1)}D^{-}\simeq n! 1
\\ & D^{+}_{(n-2)}D^{-}\simeq n!\frac {J_{(-1)}} n 1,
\\ & D^{+}_{(n-3)}D^{-}\simeq n!(\frac 1 {2!}(\frac {J_{(-1)}} n)^2+\frac {J_{(-2)}}{2n})1,
\\ & D^{+}_{(n-4)}D^{-}\simeq n!(\frac 1 {3!} (\frac {J_{(-1)}}  n)^3+\frac {J_{(-1)}} n \frac {J_{(-2)}} {2n} +  \frac{J_{(-3)} }{3n})1.\end{split} \end{equation}
Similarly, we have
\begin{equation}
\begin{split} & D^{-}_{(n-1)}D^{+}\simeq (-1)^nn! 1,
\\ & D^{-}_{(n-2)}D^{+}\simeq (-1)^nn!\frac {J_{(-1)}} {-n} 1,
\\ & D^{-}_{(n-3)}D^{+}\simeq (-1)^nn!(\frac 1 {2!}(\frac {J_{(-1)}} {-n})^2+\frac {J_{(-2)}}{-2n})1,
\\ & D^{-}_{(n-4)}D^{+}\simeq (-1)^nn!(\frac 1 {3!} (\frac {J_{(-1)}}  {-n})^3+\frac {J_{(-1)}} {-n} \frac {J_{(-2)}} {-2n} +  \frac{J_{(-3)} }{-3n})1.\end{split} \end{equation}

Next, let $P_k$ be the polynomial in $x_1,\dots,x_k$ given by
$$P_k(x_1,\dots,x_k)=\sum_{\sum i s_i=k}  \prod_{i=1}^k \frac {x_i^{s_i}}{s_i!}.$$
If $Q=\exp(\sum x_it^i)$, then $Q=\sum P_k t^k$. Let $x_i=\frac{J_{(-i)}}{i n}$.
One can verify that 
\begin{equation} \begin{split} & \frac 1 {n!}D^{+}_{(n-1-k)}D^{-}\simeq p_k, \ \text{where} \ p_k =P_k(\frac {J_{(-1)}}{n},\dots, \frac {J_{(-k)}}{kn})1,
\\ &  \frac{(-1)^n}{n!}D^{-}_{(n-1-k)}D^{+}\simeq \bar p_k,\ \text{where} \ \bar p_k =P_k(\frac {J_{(-1)}}{-n},\dots, \frac {J_{(-k)}}{-kn})1.
\end{split} \end{equation}
Next, observe that if $J_{(-i)}$ is replaced by $n$,
$$Q(n)=\exp(-\log (1-t))=\sum t^s.$$
We obtain
\begin{lemma}

\noindent \begin{enumerate}
\item For all $k\geq 0$, $p_k(n)=1$.
\item If $J_{(-i)}$ is replaced by $-n$, we have
$$p_0(-n)=1,\quad p_1(-n)=-1, \quad p_k(-n)=0, \quad \text{for}\  k>1.$$ 
\item Similarly,
$$\bar p_k(-n)=1, \quad \bar p_0(n)=1,\quad \bar p_1(n)=-1,\quad \bar p_k(n)=0,\quad \text{for} \  k>1.$$
\end{enumerate}
\end{lemma}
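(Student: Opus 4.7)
The plan is to recognize that every assertion of the lemma is an immediate reading-off of coefficients from the generating function identity
$$Q(t) = \exp\!\Big(\sum_{i\geq 1} x_i t^i\Big) = \sum_{k\geq 0} P_k(x_1,\ldots,x_k)\, t^k,$$
which holds formally by expanding the exponential and collecting monomials of total $t$-weight $k$. The key observation, already noted in the paragraph preceding the lemma, is that in each of the four evaluations the one-variable series $\sum_{i\geq 1} x_i t^i$ telescopes to $\pm\log(1-t)$, so $Q(t)$ collapses to either $\frac{1}{1-t}$ or $1-t$, and matching coefficients is then trivial.

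For part (1), the substitution $J_{(-i)} = n$ into $p_k = P_k\!\left(\tfrac{J_{(-1)}}{n},\ldots,\tfrac{J_{(-k)}}{kn}\right)$ sends $x_i \mapsto \tfrac{1}{i}$, so $\sum_{i\geq 1} \tfrac{t^i}{i} = -\log(1-t)$ and $Q(t) = \tfrac{1}{1-t} = \sum_{k\geq 0} t^k$, giving $p_k(n) = 1$. For part (2), with the same definition of $p_k$ but $J_{(-i)} = -n$, we get $x_i = -\tfrac{1}{i}$, so $\sum_{i\geq 1} x_i t^i = \log(1-t)$ and $Q(t) = 1-t$, yielding $p_0(-n)=1$, $p_1(-n)=-1$, and $p_k(-n)=0$ for $k>1$.

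For part (3), the polynomial $\bar p_k = P_k\!\left(\tfrac{J_{(-1)}}{-n},\ldots,\tfrac{J_{(-k)}}{-kn}\right)$ carries an extra sign on each argument, so the two evaluations swap roles relative to (1) and (2): substituting $J_{(-i)} = -n$ gives $x_i = \tfrac{1}{i}$ and hence $Q(t) = \tfrac{1}{1-t}$, so $\bar p_k(-n) = 1$; substituting $J_{(-i)} = n$ gives $x_i = -\tfrac{1}{i}$ and hence $Q(t) = 1 - t$, so $\bar p_0(n) = 1$, $\bar p_1(n) = -1$, and $\bar p_k(n) = 0$ for $k>1$.

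There is no genuine obstacle here; the only thing to verify is that the two formal power series $\exp(\sum x_i t^i)$ and $\sum P_k t^k$ really do agree under the stated evaluations, and this is a one-line application of $-\log(1-t) = \sum_{i\geq 1} t^i/i$. The lemma is therefore purely combinatorial bookkeeping, used only to substitute closed-form expressions for $D^\pm_{(n-1-k)}D^\mp$ modulo the ideal $I$ in the proof of Theorem \ref{thm:subregrel}.
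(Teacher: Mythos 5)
Your proof is correct and is essentially the paper's own argument: the paper deduces the lemma from the single observation $Q(n)=\exp(-\log(1-t))=\sum_s t^s$, i.e.\ exactly the generating-function collapse $\sum_{i\geq 1} x_i t^i = \mp\log(1-t)$ that you carry out in all four cases. Your write-up merely makes explicit the remaining three evaluations ($Q=1-t$ and the sign swap for $\bar p_k$), which the paper leaves to the reader.
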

We also need the following computations. For $s\geq -1$,
\begin{equation} \begin{split}  (p_k )_{(s)} D^+ &  =(p_{k-s-1})p_{s+1}(n)(-1)^{s+1}D^+,
\\  (p_k )_{(s)} D^- & =(p_{k-s-1})p_{s+1}(-n)(-1)^{s+1}D^-.
\end{split} \end{equation}

Recall the fields
$$U_{i,j} = \ :(\partial^i D^+)(\partial^j D^-):,\qquad \ i,j \geq 0,$$ which have weight $n+i+j$. For fields $A,B$ of weight $2n+m+1$, we say that $A\sim B$ if $A-B$ is a normally ordered polynomial in $J,W^2,\dots, W^{n-1}, U_{0,0}, U_{0,1},\dots, U_{0,n+m}$, and their derivatives; equivalently, the coefficient of $U_{0,n+m+1}$ in $A-B$ is zero. We have
\begin{lemma} For $k-s_0-s_1=n+m+3$, we have
\begin{equation} \label{eq:app2}  \begin{split} & D_{\left(s_{0}\right)}^{-} D_{\left(s_{1}\right)}^{+} p_{k}\sim D_{\left(s_{0}\right)}^{-} D_{\left(s_{1}-k\right)}^{+}-D_{\left(s_{0}-1\right)}^{-} D_{\left(s_{1}-k_{1}+1\right)}^{+},
\\ & D_{\left(s_{0}\right)}^{+} D_{\left(s_{1}\right)}^{-} \bar p_{k}\sim D_{\left(s_{0}\right)}^{+}
 D_{\left(s_{1}-k\right)}^{-}-D_{\left(s_{0}-1\right)}^{+} D_{\left(s_{1}-k_{1}+1\right)}^{-} . \end{split} \end{equation}
 \end{lemma}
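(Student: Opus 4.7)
The plan is to reduce both sides to a common form via the identity $p_k \simeq \tfrac{1}{n!}\, D^+_{(n-1-k)} D^-$ established earlier in the appendix, where $\simeq$ denotes equality modulo the ideal $I$ generated by $\omega_2,\dots,\omega_{n-1}$. Because the $\omega_i$ lie among the allowed generators appearing on the right side of $\sim$, any congruence modulo $I$ is \emph{a fortiori} a congruence modulo $\sim$, so this substitution is legitimate for our purposes. Substituting gives
\[
D^-_{(s_0)} D^+_{(s_1)} p_k \;\simeq\; \tfrac{1}{n!}\, D^-_{(s_0)}\, D^+_{(s_1)}\, D^+_{(n-1-k)}\, D^-.
\]
Since the $\beta^{ij}$ are mutually local, $D^+(z) D^+(w) \sim 0$, so the two $D^+$-modes commute, and the right-hand side rearranges to $\tfrac{1}{n!}\, D^-_{(s_0)}\, D^+_{(n-1-k)}\, D^+_{(s_1)}\, D^-$.

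Next I would iterate the Jacobi identity \eqref{jacobi} to move each $D^-$-mode past the intervening $D^+$-mode, producing terms of the shape $(D^-_{(i)} D^+)_{(\cdot)} X$. The inputs $D^-_{(i)} D^+$ have already been computed: modulo $I$ they equal $(-1)^n n!\,\bar p_{n-1-i}$ for $0 \leq i \leq n-1$, and they vanish for $i \geq n$ (which follows by the quasi-commutativity formula \eqref{commutator} from $D^+_{(p)} D^- = 0$ for $p \geq n$ together with the vacuum axiom). Any two $D^-$-modes that become adjacent are commuted freely using $D^-(z) D^-(w) \sim 0$.

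After these manipulations every surviving vector has the shape $Q\cdot D^{\pm}_{(a)} D^{\mp}_{(b)}|0\rangle$, where $Q$ is a polynomial in the Heisenberg modes built from the $p_j$'s and $\bar p_j$'s produced along the way. The identities $(p_k)_{(s)} D^{\pm} = (-1)^{s+1} p_{s+1}(\pm n)\, p_{k-s-1} D^{\pm}$ and their $\bar p$-analogues, proved immediately before the target lemma, together with the normalizations $p_j(n) = \bar p_j(-n) = 1$, let each Heisenberg factor $Q$ be absorbed into a shift of a $D^{\pm}$-mode index. A telescoping then reduces the whole expression, modulo normally ordered polynomials in $\{J, W^2,\dots,W^{n-1}, U_{0,0}, \dots, U_{0,n+m}\}$ and their derivatives, to the two terms $D^-_{(s_0)} D^+_{(s_1-k)}$ and $-D^-_{(s_0-1)} D^+_{(s_1-k+1)}$ displayed on the right-hand side.

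The main obstacle will be the combinatorial bookkeeping of the iterated Jacobi expansion: many intermediate terms arise, and one must verify that every contribution outside the two surviving summands lies in the admissible span. The weight constraint $k - s_0 - s_1 = n+m+3$ is precisely what forces each residual term to involve only $U_{0,j}$ with $j \leq n+m$, so that it vanishes modulo $\sim$. Finally, the second identity of the lemma follows by the symmetric argument obtained from the exchange $(D^+, J, p_k) \leftrightarrow (D^-, -J, \bar p_k)$, which preserves every OPE and mode identity used above.
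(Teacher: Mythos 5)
Your strategy is genuinely different from the paper's, and as written it has a real gap: the entire content of the lemma is concentrated in the sentence ``a telescoping then reduces the whole expression\dots to the two terms displayed on the right-hand side,'' and no computation is offered to show that the iterated Jacobi expansion collapses to exactly $D^-_{(s_0)}D^+_{(s_1-k)} - D^-_{(s_0-1)}D^+_{(s_1-k+1)}$ with coefficients $1$ and $-1$. Once you commute $D^-_{(s_0)}$ past $D^+_{(n-1-k)}$ you pick up a sum over $j=0,\dots,n-1$ of terms $\binom{s_0}{j}\,(D^-_{(j)}D^+)_{(s_0+n-1-k-j)}(\cdots)$, plus a leftover $D^+_{(n-1-k)}D^-_{(s_0)}D^+_{(s_1)}D^-$ whose inner piece has charge $n$ and requires a further expansion of its own; nothing in your write-up identifies which of these many contributions survive modulo $\sim$ and why. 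Moreover, the identities $(p_k)_{(s)}D^{\pm}=(-1)^{s+1}p_{s+1}(\pm n)\,p_{k-s-1}D^{\pm}$ that you invoke to absorb the Heisenberg factors are stated, and valid in that form, only for $s\ge -1$, whereas the mode indices $s_0+n-1-k-j$ arising in your expansion are typically $\le -2$ (since $s_0$ is very negative in every application); for such $s$ the expression $(\bar p_{n-1-j})_{(s)}X$ is a normally ordered product whose quantum corrections are governed by \eqref{nonasswick}, and these corrections are precisely where spurious contributions to the coefficient of $U_{0,n+m+1}$ could hide. So the proposal is a plan rather than a proof, and the plan has technical obstructions that are not addressed.

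For comparison, the paper never replaces $p_k$ by a $D^+D^-$ contraction at all. It uses only the charge grading: since $J_{(0)}D^{\pm}=\mp n D^{\pm}$ and $J_{(j)}D^{\pm}=0$ for $j\ge 1$, one has $[D^{\pm}_{(s)},\tfrac{1}{n}J_{(-k)}]=\pm D^{\pm}_{(s-k)}$, and packaging the $p_k$ into the generating function $Q=\exp(\sum_i x_i t^i)$ gives $D^+_{(s)}Q=Q\sum_m D^+_{(s-m)}t^m$ and $D^-_{(s)}Q=Q\,(D^-_{(s)}-tD^-_{(s-1)})$ directly from $p_m(n)=1$ and $\bar p_0(n)=1$, $\bar p_1(n)=-1$, $\bar p_m(n)=0$ for $m>1$. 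Extracting the coefficient of $t^k$ and discarding the terms of $Q$ of positive degree in $t$ (which carry a positive-weight factor in $J$ and hence contribute only monomials involving $U_{0,j}$ with $j\le n+m$) yields the two claimed terms at once. If you wish to salvage your route you would need to carry out the Jacobi bookkeeping in full and extend the $(p_k)_{(s)}D^{\pm}$ identities to $s\le -2$; it is considerably easier to work with the commutators against $J_{(-k)}$ directly.
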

\begin{proof}
	\begin{equation*}
		\begin{split} 
			 D_{(s)}^{+} \frac{J_{(-k)}}{n}-\frac{J_{(-k)}}{n} D_{(s)}^{+} & =-\frac{1}{n}\left(J_{(0)} D^{+}\right)_{(s-k)}=D^{+}_{(s-k)},
			\\  D^{-}_{(s)} \frac{J_{(-k)}}{n}-\frac{J_{(-k)}}{n} D_{(s)}^{-} & =-\frac{1}{n}\left(J_{(0)} D^{-}\right)_{(s-k)}=-D_{(s-k)}^{-}.\\
			D^{+}_{(s)} \frac{\left(x_{k} t\right)^{m}}{m !}&= \sum_{s=0}^{m} \frac{\left(x_{k} t\right)^{n}}{n !} D_{(s-(m-n) k)}^{+} \frac{1}{(m-s) !}\left(\frac{1}{k} t\right)^{m-n},
			\\ D^{+}_{(s)} e^{x_{k} t}&= e^{x_{k} t} \sum_{m=0}^{\infty} D^{+}_{(s-m k)} \frac{1}{m !}\left(\frac{1}{k} t\right)^{m},\\
			D^{-}_{(s)} e^{x_{k} t}&= e^{x_{k} t} \sum_{m=1}^{\infty} D_{\left(s-m_{k}\right)}^{-} \frac{1}{m !}\left(\frac{-1}{k} t\right)^{m},\\
			D_{(s)}^{+} Q&= Q \sum_{m=0}^{\infty} D_{(s-m)}^{+} p_{m}(n) t^{m}=Q \sum_{m=0}^{\infty} D_{(s-m)}^{+} t^{m},\\
			D_{(s)}^{-}  Q&= Q \sum_{m=0}^{\infty} D_{(s-m)}^{-} \bar{p}_{m}(n) t^{m}=Q\left(D_{(s)}^{-}-tD_{(s-1)}^{-}\right).\end{split} \end{equation*}
		Thus 
		$$D_{\left(s_{0}\right)}^{-} D_{\left(s_{1}\right)}^{+} Q=Q\left(D_{(s_0)}^{-}-tD_{(s_0-1)}^{-}\right)\sum_{m=0}^{\infty} D_{(s_1-m)}^{+} t^{m}.$$
			$$D_{\left(s_{0}\right)}^{+} D_{\left(s_{1}\right)}^{-} Q=Q\sum_{m=0}^{\infty} D_{(s_0-m)}^{+} t^{m}\left(D_{(s_1)}^{-}-tD_{(s_1-1)}^{-}\right).$$
		In the above equations, the  coefficients of $t^k$ give the equations in the lemma.
\end{proof}
We now consider 
\begin{equation*} \begin{split} :U_{1,m}U_{0,0}:-:U_{0,m}U_{1,0}:  \ \sim & \ :(:\partial^mD^-\partial D^+:)(:D^+D^-:):-:(:\partial^mD^- D^+: )(:\partial D^+D^-:):
\\  \sim & \sum_{k\geq 0}\partial^mD^-_{(-2-k)}D^+_{(-1)}\partial D^+_{(k)} D^- - \sum_{k\geq 0}\partial^mD^-_{(-2-k)}\partial D^+_{(-1)}D^+_{(k)} D^- 
\\ & +\sum_{k\geq 0}\partial D^+_{(-2-k)}\partial^m D^-_{(k)} D^+_{(-1)} D^- - \sum_{k\geq 0}D^+_{(-2-k)}\partial^m D^-_{(k)}\partial D^+_{(-1)} D^-.
\end{split} \end{equation*} 
We write 
\begin{equation*} \begin{split} & A = \sum_{k\geq 0}\partial^mD^-_{(-2-k)}D^+_{(-1)}\partial D^+_{(k)} D^- - \sum_{k\geq 0}\partial^mD^-_{(-2-k)}\partial D^+_{(-1)}D^+_{(k)} D^-,
\\ & B = \sum_{k\geq 0}\partial D^+_{(-2-k)}\partial^m D^-_{(k)} D^+_{(-1)} D^- - \sum_{k\geq 0}D^+_{(-2-k)}\partial^m D^-_{(k)}\partial D^+_{(-1)} D^-.\end{split} \end{equation*}
In the remainder of this Appendix, we shall compute the contributions of $A$ and $B$ to the coefficient of  $:D^{+} \partial^{n+m+1} D^{-}:$. First, we have
\begin{eqnarray*}
		A & \sim& {n !}\sum_{k=1}^{n} \frac{-k(m+k+1) !}{(k+1) !} D_{(-2- k-m)}^{-} D^{+} p_{n-k}-n ! \sum_{k=0}^{n-1} \frac{(m+k+1) !}{(k+1) !} D_{(-2-k-m)} D_{(-2)}^{+} p_{n-1-k} \\
		& \sim &n ! \sum_{k=1}^{n} \frac{-k(m+k+1) !}{(k+1) !}\left(D_{(-2-k-m)}^{-} D_{(-1-n+k)}^{+}- (1-\delta_k^{n})D_{(-3-k-m)}^{-} D_{(-n+k)}^{+}\right)  \\
		&&-n ! \sum_{k=0}^{n-1} \frac{(m+k+1) !}{(k+1) !}\left(D_{(-2-k-m)}^{-} D_{(-1-n+k)}^{+}- (1-\delta_k^{n-1}) D_{(-3-k-m)}^{-} D_{(-n+k)}^{+}\right) \ \text{by}\ \eqref{eq:app2} \\
		& \sim &n ! \sum_{k=1}^{n} \frac{-k}{(k+1) !(n-k) !} :\partial^{k+n+1} D^{-} \partial^{n-k} D^{+}: \\
		&& -n ! \sum_{k=1}^{n-1} \frac{-k}{(k+1) !(n-k-1) !} \frac{1}{2+k+m} :\partial^{k+m+2} D^{-} \partial^{n-k-1} D^{+}: \\
		&&-n ! \sum_{k=0}^{n-1} \frac{1}{(k+1) !(n-k) !} \partial^{k+n+1} : D^{-} \partial^{n-k} D^{+} :\\
		&&+n ! \sum_{k=0}^{n-2} \frac{1}{(k+1) !(n-k-1) !} \frac{1}{2+k+m} : \partial^{k+m+2} D^{-} \partial^{n-k+1} D^{+}: \\
		&\sim &\left(\sum_{k=0}^{n} \frac{n !(-1)^{n-k}}{k !(n-k) !}+\frac{1}{n+1}+\sum_{k=0}^{n-1} \frac{n !}{k !(n-k-1) !} \frac{(-1)^{n-k-1}}{2+k+m}-\frac{1}{n+m+1}\right) :D^{+} \partial^{m+n+1} D^{-} :\\
		&=&\left(\frac{1}{n+1}-\frac{1}{n+m+1}+\sum_{k=0}^{n-1} \frac{n !}{k !(n-k-1) !} \frac{(-1)^{n-k-1}}{2+k+m}\right) :D^{+} \partial^{n+m+1} D^{-}:.
\end{eqnarray*}
The contribution from $B$ is more difficult to compute, and we need the following preliminary calculations.
For $k>m$,
\begin{equation} \label{eq:calc2} \begin{split}
D^-_{(k-m)} (D^{+}_{(-1)} D^{-})& = \sum_{s=0}^{k-m}\left(\begin{array}{c}k-m \\ s\end{array}\right) n ! (\bar{p}_{n-1-s})_{(k-m-1-s)} D^{-} 
\\ & = \sum_{s=0}^{k-m}\left(\begin{array}{c}k-m \\ s\end{array}\right) n ! \bar p_{n+m-k-1}(-1)^{k-m-s} D^{-} 
\\ & = 0, 
\\ D^{-}_{(k-m)} (D_{(-2)}^{+} D^{-}) &= \sum_{s=0}^{k-m}\left(\begin{array}{c}
		k-m \\
		s
	\end{array}\right) (\bar{p}_{n-1-s})_{(k-m-2-s)} D^{-}
	\\ &= \sum_{s=0}^{k-m-1}\left(\begin{array}{c}
		k-m \\
		s
	\end{array}\right) \bar p_{n+m-k}(-1)^{k-m-s-1} D^{-}+\partial \bar p_{n+m-1-k} D^- 
	\\ & =\left(\bar{p}_{n+m-k}+\partial \bar{p}_{n+m-1-k}\right) D^{-}.
\end{split} \end{equation}
We also need
\begin{equation} \label{eq:calc3} \begin{split}
		\bar{p}_{k} D^{-} & =\sum_{l=0}^{k} \frac{(-1)^{l}}{l !} \partial^{l}\left(D^{-} \bar p_{k-l}\right) 
		\\ \partial \bar{p}_{k} D^{-}-D^{-} \partial \bar{p}_{k} & =-\sum_{t=0}^{k-1} \frac{(-1)^{t+2}}{(t+2) !} \partial^{t+2}\left(D^{-} \bar p_{k-t-1}\right).
\end{split} \end{equation}
\begin{eqnarray*}
B &\sim &\sum_{k\geq m}(-1)^m\frac {k!(2+k)}{(k-m)!}D^+_{(-3-k)}D^{-}_{(k-m)}D^+_{(-1)}D^{-}- \sum_{k\geq m}(-1)^m\frac {k!}{(k-m)!} D^+_{(-2-k)}D^{-}_{(k-m)}D^+_{(-2)}D^{-}\\
&\sim &(-1)^{m+n}m!n!(2+m)D^+_{(-3-m)}\bar p_{n-1}D^{-}-(-1)^{m+n}m!n!D^+_{(-2-m)}\partial \bar p_{n-1} D^{-} 
\\
&& -\sum_{k=m+1}^{n+m}(-1)^{m+n}\frac {k!n!}{(k-m)!}D^+_{(-2-k)} (\partial \bar p_{n+m-1-k}+\bar p_{n+m-k})D^{-} \\
&=& (-1)^{m+n}m!n!(2+m)D^+_{(-3-m)}\bar p_{n-1}D^{-}\\
&& -\sum_{k=m+1}^{n+m}(-1)^{m+n}\frac {k!n!}{(k-m)!}D^+_{(-2-k)} \bar p_{n+m-k}D^{-} \\
&& -\sum_{k=m}^{n+m-2}(-1)^{m+n}\frac {k!n!}{(k-m)!}D^+_{(-2-k)} \partial \bar p_{n+m-1-k}D^{-}.
\end{eqnarray*}
Note that we have used both \eqref{eq:calc2} and \eqref{eq:calc3} in this calculation. The first term of $B$ yields
\begin{equation*} \begin{split} & (-1)^{m+n}m!n!(2+m)D^+_{(-3-m)}\bar p_{n-1}D^{-}
\\ & = (-1)^{m+n}m!n!(2+m) D^+_{(-3-m)}  \sum_{l=0}^{n-1} \frac{(-1)^l}{l!} \partial^l (D^- \bar p_{n-1-l})
\\ & \sim -(-1)^nn!\frac{1}{m+1} \sum_{l=0}^{n-2} \frac{(-1)^{l+1}}{l !(n-l-2) !(m+l+3)}  :D^+\partial^{m+n+1} D^{-}:.\end{split} \end{equation*}
For the second term of $B$, we compute
\begin{equation*} \begin{split}
	  - & \sum_{k=m+1}^{n+m}(-1)^{m+n}\frac {k!n!}{(k-m)!}D^+_{(-2-k)} \bar p_{n+m-k}D^{-} 
	 \\  = & \sum_{k=m+1}^{n+m} \frac{(-1)^{m+n}n!}{(k-m) !(k+1)} \sum_{l=0}^{m+n-k} :(\partial^{k+1} D^{+}) \bigg(\frac{(-1)^{l}}{l !} \partial^{l}\left(D^{-} \bar p_{n+m-k-l}\right)\bigg):
	 \\  \sim &  \sum_{k=m+1}^{n+m} \frac{(-1)^{m+n}n!}{(k-m) !(k+1)} \sum_{i=0}^{m+n-k} \frac{1}{l !} :(\partial^{k+l+1} D^{+}) D^{-} \bar p_{n+m-k-l}:
	\\ = & \sum_{k=m+1}^{n+m} \frac{(-1)^{m+n}n!}{(k-m) !(k+1)}\left(\sum_{l=0}^{m+n-k} \frac{1}{l !(m+n-k-l) !} :(\partial^{k+l+1} D^{+}) \partial^{m+n-k-l} D^{-}:\right.
	\\ & \left.-  \sum_{l=0}^{m+n-k-1} \frac{1}{l !(m+n-k-l-1) !(k+l+2)} :(\partial^{k+1+2} D^{+}) \partial^{m+n-k-l-1} D^{-}:\right) \end{split} \end{equation*}	
	\begin{equation*} \begin{split}
	\\  \sim &  \sum_{k=m+1}^{m+n} \frac{(-1)^{m+n}n!}{(k-m) !(k+1)}\left(\sum_{l=0}^{m+n-k} \frac{(-1)^{k+l+1}}{l !(m+n-k-l) !}\right.
	\\ & \left.-\sum_{l=0}^{m+n-k-1} \frac{(-1)^{k+l} }{l !(m+n-k-l-1) !(k+l+2)}{D^{+}}\right) :D^+\partial^{m+n+1}{D}^{-}:
	\\ = & (-1)^n n!\left(\frac{(-1)^{n+1}}{n !(m+n+1)}-\sum_{s=1}^{n} \frac{1}{s !(m+s+1)} \sum_{l=0}^{n-s-1} \frac{(-1)^{l+s}}{l !(n-s-l-1) !(m+s+l+2)}\right)
	\\ &  :D^+\partial^{m+n+1} D^{-}:.
\end{split} \end{equation*}

For the third term of $B$, we compute
\begin{equation*} \begin{split}
	\ & \sum_{k=m}^{n+m-2} \frac{(-1)^{m+n}n! k !}{(k-m) !} D_{(-2-k)}^{+} \partial \bar p_{n+m-1-k} D^{-}
	\\ = &\sum_{k=m}^{n+m-2} \frac{(-1)^{m+n}n! k!}{(k-m) !}\bigg(D_{(-2-k)}^{+} D^{-} \partial \bar p_{n+m-1-k}-\sum_{t=0}^{m+n-k-2} \frac{(-1)^{t+2}}{(t+2) !} D^{+}_{(-2-k)} \partial^{t+2}\bigg(D^{-} \bar p_{n+m-k-t-2}\bigg)\bigg)
	\\ = &\sum_{k=m}^{n+m-2} \frac{(-1)^{m+n}n!}{(k-m) !(k+1)}\bigg(-:(\partial^{k+2} D^{+}) D^{-} \bar p_{n+m-1-k}:-:(\partial^{k+1} D^{+}) \partial D^{-} \bar p_{n+m-1-k}:\bigg)
	\\ &-\sum_{k=m}^{n+m-2} \frac{(-1)^{m+n}n!}{(k-m) !(k+1)} \sum_{t=0}^{m+n-k-2} \frac{1}{(t+2) !} :(\partial^{k+t+3} D^{+}) D^{-} p_{n+m-k-t-2}:
	\end{split} \end{equation*}
	\begin{equation*} \begin{split}
	 =&\sum_{k=m}^{n+m-2}\frac{(-1)^{m+n}n!}{(k-m) !(k+1)}\bigg(-:(\partial^{k+2} D^{+}) \frac{\partial^{n+m-1-k} D^{-}}{(n+m-k-1)!}:+\frac{1}{k+3} :(\partial^{k+3} D^{+})\frac{\partial^{n+m-k-2} D^{-}}{(n+m-k-2) !}:
	\\ & -:(\partial^{k+1} D^{+}) \frac{\partial^{n+m-k} D^{-}}{(n+m-k) !}:+\frac{1}{k+2} :(\partial^{k+2} D^{+}) \frac{\partial^{n+m-k-1} D^{-}}{(n+m-k-1)!}:
	\\ &- \sum_{t=0}^{m+n-k-2} \frac{1}{(t+2) !}\bigg(:(\partial^{k+t+3} D^{+}) \frac{\partial^{n+m-k-t-2} D^{-}}{(n+m-k-t-2) !}: 
	\\ & -\frac{1}{k+t+4} :(\partial^{k+t+4} D^{+}) \frac{\partial^{n+m-k-t-3} D^{-}}{(n+m-k-t-3) !}:\bigg)\bigg) \end{split} \end{equation*}
	\begin{equation*} \begin{split}
	 \sim & (-1)^nn! \sum_{s=0}^{n-2} \bigg(-\frac{(-1)^{s}}{s !(m+s+1)(n-s-1) !}+\frac{(-1)^{s+1}}{s !(m+s+1)(m+s+3)(n-s-2) !}
	\\ & - \frac{(-1)^{s+1}}{s !(m+s+1)(n-s) !}+\frac{(-1)^{s}}{s !(m+s+1)(n-s-1) !(m+s+2)}\bigg) :D^+\partial^{m+n+1} D^{-}: 
	\\ & +(-1)^nn!\sum_{s=0}^{n-2} (\frac{(-1)^{s+1}}{s !(m+s+1)(n-s) !}+\frac{(-1)^{s}}{s !(m+s+1)(n-s-1) !}) :D^+\partial^{m+n+1} D^{-}:
	\\ &+(-1)^nn!\sum_{s=0}^{n-2} \frac{1}{s !(m+s+1)} \sum_{t=0}^{n-s-3} \frac{(-1)^{s+t}}{(t+2) !(m+s+t+4)(n-s-t-3) !} :D^+\partial^{m+n+1} D^{-}: 
		\end{split} \end{equation*}
	\begin{equation*} \begin{split}
	 \sim & (-1)^nn!\sum_{s=0}^{n-2} \frac{1}{s !(m+s+1)} \sum_{t=0}^{n-s-1} \frac{(-1)^{s+t}}{t !(m+s+t+2)(n-s-t-1) !} :D^+\partial^{m+n+1} D^{-}:. 
	\end{split} \end{equation*}

Combining these contributions, we get
\begin{equation*} \begin{split}
	B\sim &-(-1)^nn!\frac{1}{m+1} \sum_{l=0}^{n-2} \frac{(-1)^{l+1}}{l !(n-l -2) !(m+l+3)}:D^+\partial^{m+n+1} D^{-}:
	\\ &-(-1)^nn!\left(\frac{(-1)^{n+1}}{n !(m+n+1)}-\sum_{s=1}^{n} \frac{1}{s !(m+s+1)} \sum_{l=0}^{n-s-1} \frac{(-1)^{l+s}}{l !(n-s-l-1) !(m+s+l+2)}\right) 
	\\ & :D^+\partial^{m+n+1} D^{-}:
	 \\ &-(-1)^nn!\sum_{s=0}^{n-2} \frac{1}{s !(m+s+1)} \sum_{t=0}^{n-s-1} \frac{(-1)^{s+t}}{t !(m+s+t+2)(n-s-t-1) !} :D^+\partial^{m+n+1} D^{-}: 
	 \\ &=\sum_{l=0}^{n-1}\frac{(-1)^{n-l}n!}{l!(n-l-1)!(m+l+2)}+\frac{m}{(m+n)(m+n+1)} :D^+\partial^{m+n+1} D^{-}:.
	\end{split} \end{equation*}
	
Finally, this yields
\begin{equation*} \begin{split} A+B & \sim (\frac 1{n+1}-\frac1 {n+m+1}+\frac m{(n+m)(n+m+1)}) :D^+\partial^{m+n+1} D^{-}: 
\\ & =\frac{m(m+2n+1)}{(n+1)(n+m)(n+m+1)} :D^+\partial^{m+n+1} D^{-}:,	\end{split} \end{equation*} which completes the proof of Theorem \ref{thm:subregrel}.

\end{document}